

%

\documentclass[twoside]{article}

\usepackage{amsmath,amsthm,amsfonts,latexsym,amscd,amssymb,enumerate}
\usepackage[hypertex]{hyperref}
 \usepackage{color}
\definecolor{darkgreen}{cmyk}{1,0,1,.2}
\definecolor{m}{rgb}{1,0.1,1}


\setcounter{secnumdepth}{3}
\setcounter{tocdepth}{3}

\newtheorem{Equation}{}[section]
\newtheorem{example}[Equation]{Example}
\newtheorem{theorem}[Equation]{Theorem}
\newtheorem{proposition}[Equation]{Proposition}
\newtheorem{lemma}[Equation]{Lemma}
\newtheorem{corollary}[Equation]{Corollary}

\newtheorem{definition}[Equation]{Definition}

\newtheorem{remark}[Equation]{Remark}
\newtheorem{question}[Equation]{Question}

\def\pa{\partial}
\def\Dom{\operatorname{Dom}}

\def\Hom{\operatorname{Hom}}
\def\End{\operatorname{End}}

\def\Ind{\operatorname{Ind}}

\def\Tr{\operatorname{Tr}}
\def\tr{\operatorname{tr}}

\def\CC{\mathbb C}

\def\RR{\mathbb R}
\def\S{\mathbb S}
\def\ZZ{\mathbb Z}

\def\ep{\epsilon}

\def\tg{\tilde g}
\def\tP{\tilde P}
\def\tY{\tilde Y}

\def\what{\widehat}

\def\maA{{\mathcal A}}
\def\maQ{{\mathcal Q}}
\def\maS{{\mathcal S}}
\def\maR{{\mathcal R}}
\def\maE{{\mathcal E}}
\def\maU{{\mathcal U}}

\def\maG{{\mathcal G}}
\def\cG{{\mathcal G}}

\def\D{{\mathcal D}}
\def\maS{{\mathcal S}}


\marginparwidth 0pt
\oddsidemargin  0pt
\evensidemargin  0pt
\marginparsep 0pt
\topmargin   0pt
\textwidth 6.5 in 
\textheight 8.5 in



\def\maA{{\mathcal A}}
\def\maB{{\mathcal B}}
\def\maE{{\mathcal E}}
\def\maF{{\mathcal F}}

\def\maG{{\mathcal G}}

\def\maP{{\mathcal P}}
\def\maH{{\mathcal H}}
\def\cG{{\mathcal G}}

\def\maS{{\mathcal S}}
\def\K{{\mathcal K}}

\def\tQ{{\tilde Q}}

\def\tR{{\tilde R}}
\def\tN{{\tilde N}}
\def\tK{{\tilde K}}

\def\tk{{\tilde k}}
\def\tK{{\tilde K}}
\def\tE{{\tilde E}}
\def\tB{{\tilde B}}

\def\tf{{\tilde f}}
\def\Nu{{\Xi}}
\def\tM{{\tilde M}}
\def\tm{{\tilde m}}


\newcommand{\KK}{\mathbb{K}}



\DeclareMathOperator{\Log}{Log}
\DeclareMathOperator{\GL}{GL}



\DeclareMathOperator{\ind}{ind}


\newcommand{\forget}[1]{}

\def  \nuint {\raise10pt\hbox{$\nu$}\kern-6pt\int}

\newcommand\tmu{\tilde{\mu}}

\def\N{\mathcal N}
\def \L{\mathcal L}

\def \P{\mathcal P}
\newcommand\E{\mathcal E}
\newcommand\G{\mathcal G}
\newcommand\Q{\mathcal Q}

\newcommand\C{\mathcal C}
\newcommand\A{\mathcal A}
\newcommand\F{\mathcal F}
\newcommand\maK{\mathcal K}
\def \L {{\cal L}}
\def \Sp {{\cal S}}
\newcommand\B{\mathcal B}

\def \H {{\cal H}}

\def\Id{{\rm Id}}


\def\maA{{\mathcal A}}
\def\maB{{\mathcal B}}
\def\maE{{\mathcal E}}
\def\maF{{\mathcal F}}

\def\maG{{\mathcal G}}

\def\maP{{\mathcal P}}
\def\maH{{\mathcal H}}
\def\cG{{\mathcal G}}

\def\maS{{\mathcal S}}

\def\tU{{\tilde U}}
\def\tE{{\tilde E}}

\def\tE{{\tilde E}}
\def\tf{{\tilde f}}
\def\Nu{{\Xi}}
\def\tM{{\tilde M}}
\def\tB{{\tilde B}}
\def\tm{{\tilde m}}





\newcommand\maM{\mathcal M}
\newcommand\tD{\tilde D}
\def \maL {{\cal L}}
\def \maS{{\cal S}}

\def \maH {{\mathcal H}}

\def\Id{{\rm Id}}

\newcommand\maD{\mathcal D}
\newcommand\Di{D\kern-6pt/}
\newcommand\cDi{{\mathcal D}\kern-6pt/}
\newcommand\spi{S\kern-6pt/}
\newcommand \cspi{\Sp\kern-6pt/}

\def \cal {\mathcal}
\def \maC {{\mathcal C}}
\def \maK {{\mathcal K}}

\def \B {\mathbb B}

\newcommand\NN{\mathbb N}

\newcommand\END{\operatorname{END}}
\def\cases{\left\{\begin{array}{l}}
\def\endcases{ \end{array}
\right.}



%

%

%



\pagestyle{myheadings}
\markboth{Moulay -Tahar Benameur and Paolo Piazza}{Index, eta and rho invariants on foliated bundles}

\begin{document}

\title{Index, eta and rho invariants on foliated bundles\\ \begin{small}Dedicated to Jean-Michel Bismut on the occasion of his sixtieth birthday.\end{small}}
\author{Moulay-Tahar Benameur\\ Universit\'{e} Paul Verlaine, Metz
\and Paolo Piazza\\Universit\`a di Roma  "La Sapienza"}


\maketitle

\begin{abstract}
We study primary and secondary invariants of leafwise Dirac
operators on foliated bundles.
Given such an operator, we begin by considering the
associated regular self-adjoint operator $\D_m$ on
the maximal Connes-Skandalis Hilbert module and  explain how the functional calculus of $\D_m$ encodes both the leafwise calculus and the monodromy calculus in the corresponding von Neumann algebras. When the foliation is endowed with a holonomy invariant transverse measure,
we  explain the compatibility  of various traces and determinants.
We  extend Atiyah's index theorem on Galois coverings
to these foliations.
We define a foliated rho-invariant and investigate
its stability properties for the signature operator.
Finally, we establish the foliated homotopy invariance of such a
signature rho-invariant under a  Baum-Connes assumption, thus
extending to the foliated context results proved by Neumann, Mathai, Weinberger and Keswani on Galois coverings.

\end{abstract}

\tableofcontents

\newpage

\section*{Introduction and main results}\label{sec:intro}

The Atiyah-Singer index theorem on closed compact manifolds is regarded
nowadays as a classic result in mathematics. The original result has branched
into several directions, producing new ideas and new results.
One of these directions consists in considering  elliptic differential  operators on the following
hierarchy of  geometric structures:
\begin{itemize}
\item fibrations and operators that are elliptic in the fiber directions; for example,
a product fibration $M\times T\to T$ and  a family $(D_\theta)_{\theta\in T} $
of elliptic operators on $M$
parametrized by $T$;
\item Galois $\Gamma$-coverings and $\Gamma$-equivariant elliptic operators;
\item measured foliations and operators that are elliptic along the leaves;
\item general foliations and, again, operators that are elliptic along the leaves.
\end{itemize}
One pivotal example, going through all these situations, is the one of foliated
bundles.
Let $\Gamma\to \tM\to M$ be a Galois $\Gamma$-cover of a smooth compact manifold $M$,
let $T$ be a compact manifold on which $\Gamma$ acts by diffeomorphism. 
We can  consider the diagonal action of $\Gamma$ on $\tM\times T$ and the 
quotient space $V:=\tM\times_\Gamma T$, which is a compact manifold,
a bundle over $M$ and carries 
a foliation $\maF$. This foliation is  obtained by considering the images
of the fibers of the trivial fibration $\tM\times T\to T$ under the quotient map
$\tM\times T\to \tM\times_\Gamma T$ and is known as a {\it foliated bundle}. More generally,
we could allow  $T$ to be a compact topological space with an action of $\Gamma$
by homeomorphisms, obtaining what is usually called a {\it foliated
space} or a  {\it lamination}.
We then consider a family of elliptic differential operators $(\tilde{D}_\theta)_{\theta\in T}$
on the product fibration $\tM\times T\to T$ and we assume that it is $\Gamma$-equivariant;
it therefore yields a leafwise differential operator $D=(D_L)_{L\in V/\maF}$
on $V$, which is elliptic along the
leaves of $\maF$. Notice that, if $\dim T >0$ and $\Gamma=\{1\}$ then we are in the family
situation; if $\dim T=0$ and $\Gamma\not=\{1\}$, then we are in the covering situation;
if $\dim T>0$, $\Gamma\not=\{1\}$ and $T$ admits a $\Gamma$-invariant Borel
measure $\nu$, then we are in the measured foliation situation and if $\dim T>0$, $\Gamma\not=\{1\}$ 
then we are dealing with  a more general foliation. 


In the first three cases, there is first of all
a {\it numeric} index: for families this is quite trivially the integral over $T$ of the locally constant
function that associates to $\theta$ the index of $D_\theta$; for $\Gamma$-coverings
we have the  $\Gamma$-index of  Atiyah and for measured foliations we have
the measured index introduced by Connes. These last two examples involve the
definition of a von Neumann algebra endowed with a suitable trace.
More generally, and this applies also to general foliations,
one can define  {\it higher indices}, obtained by pairing the  index class
defined by an elliptic operator with suitable
(higher) cyclic cocycles. In the case of foliated bundles there is a formula for these higher indices, due to Connes \cite{Co},
and recently revisited by Gorokhovsky and Lott \cite{Go-Lo}
using a generalization  of the
Bismut superconnection \cite{Bismut-inventiones}. Since our main focus here are numeric (versus higher) invariants,
we go back to the case of measured foliated bundles, thus assuming that $T$ admits a $\Gamma$-invariant
measure $\nu$.

The index is of course a global object, defined in terms of the kernel
and cokernel of operators. However, one of its essential features is the possibility
of localizing it near the diagonal using 
the remainders produced 
by a parametrix  for $D$.
On a closed compact manifold this crucial property is encoded in the so-called
Atiyah-Bott formula:
\begin{equation}\label{intro-calderon}
\ind (D)= \Tr (R_0^N)-\Tr (R_1^N)\,,\quad \forall N\geq 1
\end{equation}
if $R_1=\Id-DQ$ and $R_0=\Id-QD$ are the remainders of a parametrix $Q$.
Similar results hold in the other two contexts: $\Gamma$-coverings and measured foliations. 
One important consequence
of formula \eqref{intro-calderon} and of the analogous one on $\Gamma$-coverings
is Atiyah's index theorem on a $\Gamma$-covering $\tM\to M$, 
stating the equality of the index on $M$
and the von Neumann $\Gamma$-index on $\tM$. Informally, the index upstairs is equal
to the index downstairs. On a measured  foliation, for example on a  foliated bundle
$(\tM\times_\Gamma T,\maF)$ associated to a $\Gamma$-space $T$ endowed with
a $\Gamma$-invariant measure $\nu$, we also have an index upstairs and an index downstairs,
depending on whether we consider the $\Gamma$-equivariant family  $(\tilde{D}_\theta)_{\theta\in T}$
or the longitudinal operator $D=(D_L)_{L\in V/\maF}$; the analogue of formula \eqref{intro-calderon}
allows to prove the equality of these two indices. (This phenomenon is well known
to  experts; we explain it
in detail in Section \ref{Section.Index}.)

Now, despite its many geometric applications, the index remains
a very coarse spectral invariant of the elliptic differential operator $D$, depending
only on the spectrum near zero. Especially
when considering geometric operators, such as Dirac-type operators, 
and related geometric questions involving, for example, the diffeomorphism type of manifolds
or the moduli space of metric of positive scalar curvature,
one is led to
consider more involved spectral invariants. The eta invariant,
introduced by Atiyah, Patodi and Singer on odd dimensional manifolds,  
is such an invariant.
This invariant 
is highly non-local (in contrast to the index) and 
involves  the whole spectrum of the operator.
It is,  however,  too sophisticated: indeed, a small perturbation of the operator produces 
a variation of the corresponding
eta invariant.
In geometric questions one  considers rather a more stable invariant, the rho
invariant, typically a difference of eta invariants having the same local variation.
The Cheeger-Gromov rho invariant on a Galois covering
$\tM\to M$ of an odd dimensional
manifold $M$ is the most famous example; it is precisely defined as the difference of the 
$\Gamma$-eta invariant on $\tM$, defined using the $\Gamma$-trace of Atiyah, 
and of the Atiyah-Patodi-Singer
eta invariant of the base $M$. 
Notice that the analogous difference for the indices (in the even dimensional
case) would be equal to zero because of Atiyah's index theorem on  coverings; the 
Cheeger-Gromov rho invariant is thus a genuine {\it secondary invariant}.
The Cheeger-Gromov rho invariant is usually defined for
a Dirac-type  operator $\tD$ and we bound ourselves to this case from now on; we denote
it by $\rho_{(2)}(\tD)$. Here are some
of the stability properties of rho:
\begin{itemize}
\item let $(M,g)$ be an oriented riemannian manifold and let $\tD^{{\rm sign}}$ be 
the signature operator on $\tM$ associated 
to the  $\Gamma$-invariant lift of $g$ to $\tM$ : 
then $\rho_{(2)}(\tD^{{\rm sign}})$ is metric independent and a diffeomorphism 
invariant of $M$;
\item let $M$ be a spin manifold and assume that the space $\mathcal{R}^+ (M)$
of  metrics  with positive scalar curvature is non-empty.
Let $g\in \mathcal{R}^+ (M)$ and let
$\tD^{{\rm spin}}_g$ be the spin Dirac operator associated to the $\Gamma$-invariant 
lift of $g$. Then the function $\mathcal{R}^+(M) \ni g \rightarrow \rho_{(2)} (\tD^{{\rm spin}}_g)$
is constant on the connected components of  $\mathcal{R}^+ (M)$
\end{itemize}
There are easy examples, involving lens spaces, showing that $\rho_{(2)}(\tD^{{\rm sign}})$
is {\it not} a homotopy invariant and that $\mathcal{R}^+(M) \ni g \rightarrow \rho_{(2)} (\tD^{{\rm spin}}_g)$
is {\it not} the constant function equal to zero. For purely geometric applications
of these  two results see, for example, \cite{Chang-W} and \cite{Pia-Sch2}.
These two properties can be proved in general, regardless of the nature of the
group $\Gamma$. However, when $\Gamma$ is {\it torsion-free}, then the
Cheeger-Gromov rho invariant enjoys particularly strong stability properties.
Let $\Gamma=\pi_1 (M)$ and let $\tM\to M$ be the universal cover. Then in a series
of papers  \cite{keswani3}, \cite{Kes1}, \cite{Kes2},  Keswani, extending
work of Neumann \cite{Neumann}, Mathai \cite{Mathai-JFA} and Weinberger \cite{Wei}, establishes the following 
fascinating  theorem:
\begin{itemize}
\item if $M$ is orientable, $\Gamma$ is torsion free and the Baum-Connes map
 $K_* (B\Gamma)\to K_* (C^*_{{\rm max}}\Gamma)$ is an isomorphism, then
$\rho_{(2)}(\tD^{{\rm sign}})$ is a {\it homotopy invariant} of $M$;
\item  if $M$ is in addition spin and $\mathcal{R}^+(M)\not= \emptyset$ then
$\rho_{(2)} (\tD^{{\rm spin}}_g)=0$ for any $g\in \mathcal{R}^+(M)$.
\end{itemize}
(The second statement is not explicitly given in the work of Keswani
but it follows from what he proves; for a different proof of Keswani's result
see the recent paper \cite{Pia-Sch1}.)
Informally: {\it when $\Gamma$ is torsion free and the maximal Baum-Connes map
is an isomorphism, the Cheeger-Gromov rho invariant behaves like an index, i.e.
like a primary invariant:
more precisely, it is a homotopy invariant for the signature operator and it is equal to zero for
the spin Dirac operator associated to a metric of positive scalar curvature.}

Let us now move on in the hierarchy of geometric structures and consider a
foliated bundle $(V:=\tM\times_\Gamma T, \maF)$, with $\tM \to M$ the odd universal cover
and $T$ a compact $\Gamma$-space
endowed with a $\Gamma$-invariant (probability) measure $\nu$.
We are also given a $\Gamma$-equivariant family of Dirac-type operators 
$\tD:=(\tD_\theta)_{\theta\in T}$ on the product fibration $\tM\times T\to T$
and let $D=(D_L)_{L\in V/\maF}$ be the induced longitudinally elliptic operator on $V$.
 One is then led to the 
following natural questions:
\begin{enumerate}
\item Can one define a foliated rho invariant $\rho_\nu (D; V,\maF)$?
\item What are its stability properties if $\tD=\tD^{{\rm sign}}$ and $\tD=\tD^{{\rm spin}}$ ?
\item If $\Gamma$ is torsion free and the maximal Baum-Connes map 
with coefficients $$ K^\Gamma_* (E\Gamma;C(T))\rightarrow K_*(C(T)\rtimes_{{\rm max}} \Gamma)$$
is an isomorphism, is   $\rho_\nu (V,\maF) := \rho_\nu (D^{{\rm sign}}; V,\maF) $  a foliated homotopy invariant ?
\end{enumerate}

\bigskip
\noindent
{\it The goal of this paper is to give an answer to these three questions. Along the way we shall
present in a largely self-contained manner  the main results in index theory and in the theory
of eta invariants on foliated bundles.}

\bigskip
This work is organized as follows. In Section \ref{sec:group} we introduce the maximal
$C^*$-algebra $\maA_m$ 
associated to the $\Gamma$-space $T$ or, more precisely, to the groupoid $\maG:=T\rtimes \Gamma$.
We endow this $C^*$-algebra with two  traces $\tau^\nu_{{\rm reg}}$ and  $\tau^\nu_{{\rm av}}$,
$\nu$ denoting a $\Gamma$-invariant measure on $T$.
We then  define two  von Neumann algebras $W^*_{{\rm reg}}(\maG)$, $W^*_{{\rm av}}(\maG)$
with their respective traces; we define representations $\maA_m\to  W^*_{{\rm reg}}(\maG)$, $
\maA_m\to W^*_{{\rm av}}(\maG)$ and show compatibility of the traces involved.

\medskip
In Section \ref{sec:foliated}
we move to foliated bundles, giving the definition, studying
the structure of the leaves, introducing the monodromy groupoid $G$ and the associated
maximal $C^*$-algebra $\maB_m$. We then introduce two 
von Neumann algebras, $W^*_\nu (G)$ and $W^*_\nu (V,\maF)$,
to be thought of as the one upstairs and the one downstairs
respectively,  with their respective traces. We introduce   representations $\maB_m
\to W^*_\nu (G)$, $\maB_m
\to W^*_\nu (V,\maF)$ and define  two compatible traces, also denoted $\tau^\nu_{{\rm reg}}$ and  $\tau^\nu_{{\rm av}}$, on the $C^*$-algebra
$\maB_m$. We then prove an explicit formula for these two  traces on $\maB_m$.
We end Section \ref{sec:foliated} with a proof of the Morita isomorphism
$K_0 (\maA_m) \simeq K_0 (\maB_m)$ and its compatibility with the morphisms
$$\tau^\nu_{{\rm reg},*},\, \tau^\nu_{{\rm av},*}: K_0 (\maA_m)\to \CC\\,\quad 
\tau^\nu_{{\rm reg},*},\, \tau^\nu_{{\rm av},*}: K_0 (\maB_m)\to \CC$$
induced by the two pairs of traces on $\maA_m$ and $\maB_m$ respectively.

\medskip
In Section \ref{sec:modules} we move to 
more analytic questions. We define a natural $\maA_m$-Hilbert module $\maE_m$
with associated  $C^*$-algebra of compact operators $\maK_{\maA_m} (\maE_m)$
isomorphic to $\maB_m$;
 we show how $\maE_m$  encodes both the $L^2$-spaces of the fibers of the product
fibration $\tM\times T\to T$ and the $L^2$-spaces of the leaves of $\maF$.
We then introduce a $\Gamma$-equivariant pseudodifferential calculus,
showing in particular  how $0$-th order operators extend to bounded 
$\maA_m$-linear operators on $\maE_m$ and how negative order operators
extend to compact operators. We then move to {\it unbounded regular} operators,
for example operators defined by a $\Gamma$-equivariant  Dirac family 
$\tD:=(\tD_\theta)_{\theta\in T}$
and study quite carefully the functional calculus associated to such an operator.
We then treat  Hilbert-Schmidt operators and trace class operators in our two
von Neumann contexts and  give sufficient conditions for an operator to be trace class.
We study once again various compatibility issues (this material will be crucial later on).

\medskip
In Section \ref{Section.Index} we introduce, in the even dimensional case, the two indices 
$\ind_\nu^{{\rm up}} (\tD)$,  
$\ind_\nu^{{\rm down}}(D)$ with $\tD= (\tD_\theta)_{\theta\in T}$
and 
$D:=(D_L)_{L\in V/\maF}$, and show the equality
$$
\ind_\nu^{{\rm up}} (\tD) = \ind_\nu^{{\rm down}} (D)
$$
This is the analogue of Atiyah's index theorem on Galois coverings.
We also introduce the relevant index class, in $K_0 (\maB_m)$,
 and show how the von Neumann
indices can be recovered from it and the  two morphisms,
$$\tau^\nu_{{\rm reg},*}: K_0 (\maB_m)\rightarrow \CC\,,\quad \tau^\nu_{{\rm av},*}: K_0 (\maB_m)\rightarrow \CC$$
defined by the traces $\tau^\nu_{{\rm reg}}: \maB_m\to \CC$,  $\tau^\nu_{{\rm av}}: \maB_m\to \CC$.

\medskip
In Section \ref{sec:foliated-rho}  we introduce the two eta invariants 
$\eta^\nu_{{\rm up}} (\tD)$,  
$\eta^\nu_{{\rm down}}(D)$  and, finally, the foliated rho-invariant
$\rho_\nu (D;V,\maF)$
as the difference of the two. This answers the first question raised above.
We end this section establishing  an important link between the rho invariant
and the determinant of certain paths.

\medskip
In Section \ref{sec:stability}  we study the stability properties of the foliated rho invariant,
showing in particular that for the signature operator  it is metric independent
and a foliated diffeomorphism invariant. This answers the second question raised  above.

\medskip
Finally, in Sections \ref{loops+bott},  \ref{sec:keswani} and \ref{sec:proof}
 we prove the foliated homotopy invariance of
the signature rho-invariant under a Baum-Connes assumption, following
ideas of Keswani. In order to keep this paper in a reasonnable size, we establish this result under the additional assumption
that the foliated homotopy equivalence is induced by an equivariant fiber homotopy equivalence
of the  fibration defining the foliated bundle (we call this foliated homotopy equivalences {\it special}).
Thus,  Section \ref{loops+bott} contains preparatory material
on determinants and Bott-periodicity; Section  \ref{sec:keswani} gives a sketch
of the proof of the homotopy invariance and Section \ref{sec:proof} contains the details.
With these three sections we give an answer, at least partially, to  the third question raised above. 
Most of the material explained
in the previous part of the paper goes into the rather complicated proof. Some of our results
are also meant to clarify statements in the work of Keswani.


\medskip
\noindent
{\bf Acknowledgements.} We thank
Paul Baum, James Heitsch, Steve Hurder, Yuri Kordyukov, Eric Leichtnam,
Herv\'e Oyono-Oyono, Ken Richardson, George Skandalis and Stephane Vassout
for interesting discussions. 
We also thank  the two referees for a careful and critical reading of the original
manuscript and for some valuable suggestions.
Most of this reasearch was carried out while the
first author was visiting Universit\`a di Roma La Sapienza and while the second
author was visiting Universit\'e de Metz. Financial support for these visits was
provided by Universit\'e de Metz, under  the program {\it professeurs invit\'es},
the CNR-CNRS bilateral project GENCO (Noncommutative Geometry)
and  the  {\it Ministero Istruzione Universit\`a Ricerca}, Italy,
under the project {\it Spazi di Moduli e Teorie di Lie}.

\section{Group actions}\label{sec:group}

\subsection{The discrete groupoid $\maG$.}
Let $\Gamma$ be a discrete group.
Let $T$ be a compact topological space on which the group $\Gamma$ acts by
homeomorphisms on the left. We shall assume that $T$ is endowed with 
a $\Gamma$-invariant Borel measure $\nu$; this is a non-trivial hypothesis.
Thus $(T,\nu)$ is a compact Borel measured space on which $\Gamma$ acts 
by measure preserving homeomorphisms.
We shall assume that $\nu$ is  a probability measure.  We consider the crossed product groupoid $\maG:=T\rtimes \Gamma$;
thus the set of arrows is $T\times \Gamma$, the set of units is $T$, 
$$
s(\theta, \gamma) = \gamma^{-1} \theta \quad \text{ and } \quad r(\theta, \gamma) = \theta.
$$
The composition law is given by
$$(\gamma^\prime \theta,\gamma^\prime)\circ (\theta,\gamma)=(\gamma^\prime \theta,\gamma^\prime \gamma)\,.$$
We denote by $\maA_c$ the convolution $\star$-algebra of compactly supported continuous functions on $\maG$ and by $L^1(\maG)$ the Banach $\star$-algebra which is the completion of $\maA_c$ with respect to the Banach norm $\|\cdot\|_1$ defined by
$$
\| f\| _1 := {\rm max}\{ \sup_{\theta\in T} \sum_{\gamma\in \Gamma} | f(\theta, \gamma)|; \sup_{\theta\in T} \sum_{\gamma\in \Gamma} | 
f(\gamma^{-1}\theta, \gamma^{-1})|\}
$$
 The convolution operation and the adjunction are fixed by the following formulae
$$
(f*g)(\theta, \gamma) = \sum_{\gamma_1\in \Gamma} f(\theta, \gamma_1) g( \gamma_1^{-1} \theta,\gamma_1^{-1}\gamma) \text{ and } f^* (\theta, \gamma)   = {\overline{f(\gamma^{-1}\theta, \gamma^{-1})}}
$$
For $\theta\in T$ we shall denote by $\Gamma(\theta)$ the isotropy group of the point $\theta$:
$\Gamma (\theta):= \{ \gamma\in \Gamma\,|\, \gamma\theta=\theta\}$.
So, $\Gamma(\theta)$ is a subgroup of $\Gamma$ and the orbit of $\theta$ under the action of $\Gamma$, denoted $\Gamma\theta$,  can be identified with $\Gamma/\Gamma(\theta)$. 
Finally, we recall that $\maG^\theta:= r^{-1}(\theta)$ and that $\maG_\theta:= s^{-1}(\theta)$.

\subsection{$C^*$-algebras associated to the discrete groupoid $\maG$.}
For any $\theta \in T$, we define the regular $*$-representation $\pi^{\rm{reg}}_\theta$ of $\maA_c$ in the Hilbert space $\ell^2(\Gamma)$, viewed as $\ell^2 (\maG_\theta)$,
 by the following formula
$$
\pi^{\rm{reg}}_\theta (f) (\xi) (\gamma) := \sum_{\gamma'\in \Gamma} f(\gamma \theta,  \gamma {\gamma'} ^{-1}) \xi (\gamma').
$$
It is easy to check that this formula defines a $*$-representation $\pi^{\rm{reg}}_\theta$ which is $L^1$ continuous.  Moreover,  we complete $L^1(\maG)$ with respect to the norm $\sup_{\theta\in T} \|\pi^{\rm{reg}}_\theta (\cdot)\|$ and obtain a $C^*$-algebra $\maA_r$. The $C^*$-algebra $\maA_r$ is usually called the regular $C^*$-algebra of the groupoid $\maG$, it will also be denoted with the symbols $C^*_r (\maG)$ or
$C(T)\rtimes_r \Gamma$.

If we complete the Banach $*$-algebra $L^1(\maG)$ with respect to all continuous $*$-representations, then we get the $C^*$-algebra $\maA_m$, usually called the maximal $C^*$-algebra of the groupoid $\maG$. See \cite{Renault} for more details on these constructions. Other notations for $\maA_m$ are $C^*_m (\maG)$ and $C(T)\rtimes_m \Gamma$.

By construction, any continuous $*$-homomorphism from $L^1(\maG)$ to a $C^*$-algebra $B$ yields a $C^*$-algebra morphism from $\maA_m$ to $B$. In particular, the homomorphism $\pi^{\rm{reg}}$  yields  a $C^*$-algebra morphism 
$$
\pi^{\rm{reg}} : \maA_m \longrightarrow \maA_r.
$$

\subsection{von Neumann algebras associated to the discrete groupoid $\maG$.} \label{DiscreteVN}

At the level of measure theory, recall that we have fixed once for all a $\Gamma$-invariant 
borelian probability measure $\nu$ on $T$. We associate with $\maG$ two von Neumann algebras that will be important for our purpose. 

The first one is the regular von Neumann algebra $W^*_{\rm{reg}}(\maG)$. It is the algebra $L^\infty (T, B(\ell^2\Gamma); \nu)^\Gamma$ of $\Gamma$-equivariant essentially bounded families of bounded operators on $\ell^2\Gamma$, so it acts on the Hilbert space $L^2(T\times \Gamma, \nu)$. An element $T$ of $W^*_{\rm{reg}}(\maG)$ is thus (a class of) a  familly $(T_\theta)_{\theta\in T}$ of operators in $\ell^2(\Gamma)$, which satisfies the following properties:
\begin{itemize}
\item For any $\xi \in L^2(T\times \Gamma)$ the map $\theta \mapsto <T_\theta \xi_\theta, \xi_\theta >$ is Borel measurable where $\xi_\theta (\gamma):= \xi(\theta,\gamma)$;
\item $\theta  \mapsto \|T_\theta\|$ is $\nu$-essentially bounded on $T$;
\item For any $\gamma\in \Gamma$, we have $T_{\gamma\theta} = \gamma T_\theta$.
\end{itemize}
Notice that if we denote by $R^*_\gamma:\ell^2\Gamma \to \ell^2\Gamma$ the operator
$$
(R^*_\gamma \xi ) (\alpha) := \xi(\alpha \gamma),
$$
then $\gamma T:= R^*_\gamma \circ T \circ R^*_{\gamma^{-1}}$ for any $T\in B(\ell^2\Gamma)$.
That $W^*_{\rm{reg}}(\maG)$ is a von Neumann algebra is clear since it is the commutant of a unitary group associated with the action of $\Gamma$. The $*$-representation $\pi^{\rm{reg}}$ is then valued in $W^*_{\rm{reg}}(\maG)$ as can be checked easily, and we have the $*$-representation
$$
\pi^{\rm{reg}}: \maA_{\rm{r}} \longrightarrow W^*_{\rm{reg}}(\maG).
$$
This $*$-representation then  extends to the maximal $C^*$-algebra $\maA_m$. 

The second von Neumann algebra that will be important for us will be called the average von Neumann algebra $W^*_{\rm{av}}(\maG)$ and we proceed now to define it. We set $\maG_0:= (T\times \Gamma)/\sim$ where we identify $(\theta, \gamma)$ with $(\theta, \gamma\alpha)$ whenever $\alpha \theta= \theta$.  Then $\maG_0$ is Borel and an element $T$ of $W^*_{\rm{av}}(\maG)$ is (a class of) a  family $(T_\theta)_{\theta\in T}$ of operators in $\ell^2(\Gamma \theta)$, which satisfies the properties:
\begin{itemize}
\item For any measurable (as a function on $\maG_0$) $\nu$-square integrable section $\xi$ of the Borel field $\ell^2(\Gamma/\Gamma (\theta))$ over $T$, the map $\theta \mapsto <T_\theta \xi_\theta, \xi_\theta >$ is Borel measurable where $\xi_\theta [\gamma]:= \xi[\theta,\gamma]$

\item $\theta  \mapsto \|T_\theta\|$ is $\nu$-essentially bounded on $T$;
\item For any $\gamma\in \Gamma$, we have $T_{\gamma\theta} = \gamma T_\theta$;
\end{itemize}
Here we denote by $R_\gamma^*: \ell^2(\Gamma/\Gamma(\theta)) \to \ell^2(\Gamma/\Gamma(\gamma\theta))$ the isomorphism given by $(R_\gamma^*\xi) [\alpha] := \xi [\alpha\gamma]$, and $\gamma T:= R_\gamma^* \circ T \circ R_{\gamma^{-1}}^*$.
Again $W^*_{\rm{av}}(\maG)$ is  a von Neumann algebra; for more details 
on this constructions see for instance \cite {Dixmier1}, \cite{Dixmier2}

There is an interesting representation $\pi^{av}$ of $L^1(\maG)$ in $W^*_{\rm{av}}(\maG)$ defined as follows. Let
$f\in C_c (\maG)$;
for any $\theta \in T$, we set
$$
\pi^{av}_\theta (f) (\xi) (x = [\alpha]) := \sum_{y\in \Gamma/\Gamma(\theta)}\;\; \; \sum_{[\beta]=y}  f(\alpha \theta,\alpha \beta^{-1})  \xi(y), \quad \xi\in \ell^2 (\Gamma/\Gamma(\theta)).
$$

\begin{remark}\label{average-remark}
If we identify $\Gamma/\Gamma(\theta)$ with the orbit $\Gamma\theta$ then $\pi^{av}$ becomes
$$
\pi^{av} _\theta (f) (\xi) (\theta') = \sum_{\theta''\in \Gamma\theta} \sum_{\alpha \theta'' = \theta'} f(\theta', \alpha) \xi(\theta'')=
\sum_{\alpha\in\Gamma} f(\theta^\prime,\alpha)\xi(\alpha^{-1}\theta^\prime)
$$
\end{remark}

\begin{proposition}
For any $f\in L^1(\maG)$ and any $\theta\in T$, the operator $\pi^{av}_\theta (f)$ is  bounded and the  family $\pi^{av} (f)=(\pi^{av}_\theta (f))_{\theta\in T}$ defines a continuous $*$-representation of $L^1(\maG)$ with values in $W^*_{\rm{av}}(\maG)$. Hence, $\pi^{av}$ yields a $*$-representations of the maximal $C^*$-algebra $\maA_m$ in $W^*_{\rm{av}}(\maG)$.
\end{proposition}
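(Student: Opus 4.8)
The plan is to verify everything first on the dense $*$-subalgebra $C_c(\maG)$ of $L^1(\maG)$, with a bound $\|\pi^{av}_\theta(f)\|\le\|f\|_1$ uniform in $\theta$, then extend by continuity to $L^1(\maG)$, and finally invoke the universal property of $\maA_m$ recalled just after its definition (any $\|\cdot\|_1$-continuous $*$-homomorphism $L^1(\maG)\to B$, with $B$ a $C^*$-algebra, factors through $\maA_m$).

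First I would prove boundedness by a Schur test, using the reformulation in Remark \ref{average-remark}. For $f\in C_c(T\times\Gamma)$, supported in $T\times F$ with $F\subset\Gamma$ finite, the operator $\pi^{av}_\theta(f)$ on $\ell^2(\Gamma\theta)$ is given, in the standard basis, by the kernel $k_\theta(\theta',\theta'')=\sum_{\alpha\in\Gamma,\;\alpha\theta''=\theta'} f(\theta',\alpha)$, which is finitely supported in each row and column. Its row sums are dominated by $\sum_{\alpha\in\Gamma}|f(\theta',\alpha)|$ and, after the substitution $\gamma=\alpha^{-1}$, its column sums by $\sum_{\gamma\in\Gamma}|f(\gamma^{-1}\theta'',\gamma^{-1})|$; both are $\le\|f\|_1$. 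Hence $\|\pi^{av}_\theta(f)\|\le\|f\|_1$ for every $\theta$, so the $\nu$-essential supremum of $\theta\mapsto\|\pi^{av}_\theta(f)\|$ — which is the operator norm of $\pi^{av}(f)$ acting on $L^2$ of the measurable field $\theta\mapsto\ell^2(\Gamma/\Gamma(\theta))$ — is $\le\|f\|_1$ as well.

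Next I would check the algebraic properties. Multiplicativity $\pi^{av}_\theta(f*g)=\pi^{av}_\theta(f)\pi^{av}_\theta(g)$ and the identity $\pi^{av}_\theta(f^*)=\pi^{av}_\theta(f)^*$ are direct re-indexing computations from the convolution and adjunction formulas defining $\maA_c$: for the adjoint one plugs the formula of Remark \ref{average-remark} into $\langle\pi^{av}_\theta(f)\xi,\eta\rangle$ and re-sums over $\theta''=\alpha^{-1}\theta'$, and for the product one composes the two kernels. The covariance $\pi^{av}_{\gamma\theta}(f)=R^*_\gamma\circ\pi^{av}_\theta(f)\circ R^*_{\gamma^{-1}}$ likewise follows by a change of summation variable, and this is exactly the third defining condition for membership in $W^*_{\rm av}(\maG)$. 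The measurability condition (first bullet in the definition of $W^*_{\rm av}(\maG)$) holds because, $F$ being finite, $\theta\mapsto\langle\pi^{av}_\theta(f)\xi_\theta,\xi_\theta\rangle$ is a finite sum of products of the continuous functions $\theta'\mapsto f(\theta',\alpha)$ with measurable components of $\xi$. Together with the essential boundedness from the previous step this shows $\pi^{av}(f)\in W^*_{\rm av}(\maG)$ and that $\pi^{av}\colon C_c(\maG)\to W^*_{\rm av}(\maG)$ is a contractive $*$-homomorphism for $\|\cdot\|_1$. It therefore extends continuously to $L^1(\maG)$, and, $W^*_{\rm av}(\maG)$ being in particular a $C^*$-algebra, the universal property of $\maA_m$ produces the asserted $*$-representation $\pi^{av}\colon\maA_m\to W^*_{\rm av}(\maG)$.

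The only genuinely delicate bookkeeping is the interplay with the isotropy groups $\Gamma(\theta)$: one must be careful that the kernel $k_\theta$ is well defined (the inner sum runs over a full left $\Gamma(\theta'')$-coset, and its terms need not be constant on that coset, but the Schur bounds are insensitive to this), that the identification $\ell^2(\Gamma/\Gamma(\theta))\simeq\ell^2(\Gamma\theta)$ used in Remark \ref{average-remark} is consistent with the maps $R^*_\gamma$, and that the measurable field one works with — over a $T$ on which the fibre dimension may vary — is precisely the Borel field $\theta\mapsto\ell^2(\Gamma/\Gamma(\theta))$ fixed in the definition of $W^*_{\rm av}(\maG)$. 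None of this is hard, but it is where the care goes; the analytic content is entirely the Schur estimate $\|\pi^{av}_\theta(f)\|\le\|f\|_1$.
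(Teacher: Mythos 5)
Your proposal is correct and follows essentially the same route as the paper: both reduce $\pi^{av}_\theta(f)$ to convolution by the kernel $f_0(\theta,\theta')=\sum_{\gamma\theta=\theta'}f(\theta',\gamma)$, verify the $*$-homomorphism property via $(f*g)_0=f_0*g_0$ and $(f^*)_0=\overline{f_0(\theta',\theta)}$, and establish $\|\pi^{av}_\theta(f)\|\le\|f\|_1$ — your Schur test with row/column sums is exactly the Cauchy--Schwarz estimate the paper writes out. Your extra attention to the covariance and measurability conditions for membership in $W^*_{\rm av}(\maG)$ is left implicit in the paper but adds nothing structurally new.
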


\begin{proof} 
If we set for any $f\in C_c(\maG)$, $f_0(\theta,\theta'):=\sum_{\gamma \theta=\theta '} f(\theta ',\gamma)$, then for $g\in  C_c(\maG)$ we have:
\begin{eqnarray*}
(f*g)_0(\theta, \theta') &=& \sum_{\gamma .\theta =\theta'} (f*g)(\theta ', \gamma)\\
&=& \sum_{\gamma\theta =\theta'} \sum_{\gamma_1\in \Gamma} f(\theta', \gamma_1) g(\gamma_1^{-1}\theta', \gamma_1^{-1} \gamma)\\
&=& \sum_{\theta''\in \Gamma .\theta} \;\;\sum_{\gamma^{-1}_1.\theta'=\theta'' \;,\;\gamma_2^{-1}.\theta''=\theta} f(\theta',\gamma_1) g(\theta'', \gamma_2)\\
&=& \sum_{\theta''\in \Gamma .\theta} f_0(\theta'', \theta') g_0(\theta,\theta'')\\
&=& (f_0*g_0) (\theta, \theta').
\end{eqnarray*}
Since $\pi^{av} (f)$ is simply convolution by the kernel $f_0$, we deduce that $\pi$ is a representation of the convolution algebra $\maA_c$. Now, the kernel $(f^*)_0$ is given by
$$
(f^*)_0 (\theta, \theta') = \sum_{\gamma\theta =\theta'} {\overline f}(\gamma^{-1}\theta ', \gamma^{-1})  = \sum_{\alpha \theta' =\theta} {\overline f}(\theta , \alpha) = {\overline{f_0(\theta',\theta)}}.
$$
It remains  to prove that $\pi^{av}$ is $L^1$-continuous. But, we have:
\begin{eqnarray*}
\|\pi^{\rm{av}}_\theta(f)\xi\|_2^2 & = & \sum_{\theta'\in \Gamma\theta} |\sum_{\gamma\in \Gamma} f(\theta', \gamma) \xi(\gamma^{-1}\theta') |^2 \\
&\leq &  \sum_{\theta'\in \Gamma\theta} (\sum_{\gamma\in \Gamma}  |f(\theta',\gamma)| ) \times (\sum_{\gamma\in \Gamma} |f(\theta',\gamma)| . |\xi(\gamma^{-1} \theta')|^2)\\
&\leq & \|f\|_1 \sum_{\theta'\in \Gamma\theta} \sum_{\gamma\in \Gamma} |f(\theta',\gamma)| . |\xi(\gamma^{-1} \theta')|^2\\
&\leq & \|f\|_1 \sum_{\gamma\in \Gamma} \sum_{\theta''\in \Gamma\theta} |\xi(\theta'')|^2 |f(\gamma \theta'', \gamma)|\\
&\leq & \|f\|_1^2 \|\xi\|_2^2.
\end{eqnarray*}
So, $\|\pi^{\rm{av}} (f)\| = \sup_{\theta\in T} \|\pi^{\rm{av}}_\theta (f) \| \leq \|f\|_1$. 
\end{proof}

We therefore deduce the existence of a $*$-homomorphism of $C^*$-algebras:
$$
\pi^{\rm{av}} : \maA_m \longrightarrow W^*_{\rm{av}}(\maG).
$$

\subsection{Traces}

For any non negative element $T= (T_\theta)_{\theta\in T}$ of the von Neumann algebra $W^*_{\rm{reg}}(\maG)$ (resp. $W^*_{\rm{av}}(\maG)$), we set
$$
\tau^\nu (T) := \int_T <T_\theta (\delta_e), \delta_e > d\nu(\theta),
$$
where in the regular case, $\delta_e$ stands for the $\delta$ function at the unit $e$ of $\Gamma$, while in the second case it is the $\delta$ function of the class $[e]$ in $\Gamma/\Gamma(\theta)$. 

\begin{proposition}
The functional $\tau^\nu$ induces a faithful normal positive finite trace.
\end{proposition}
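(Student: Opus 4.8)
The plan is to recognise $\tau^\nu$ as a vector functional — which settles every asserted property except the trace identity — and then to prove the trace identity by playing the $\Gamma$-equivariance of the operator families against the $\Gamma$-invariance of $\nu$. The hard part is the trace identity; the rest is bookkeeping.

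Write a generic element of $W^*_{\rm reg}(\maG)$ as $A=(A_\theta)_{\theta\in T}$ (recall multiplication and adjunction in $W^*_{\rm reg}(\maG)$ are fibrewise), and let $\xi_0\in L^2(T\times\Gamma,\nu)$ be the constant section $\xi_0(\theta,\cdot)=\delta_e$; this is a bona fide $L^2$-vector exactly because $\nu$ is finite, and $\|\xi_0\|^2=\nu(T)=1$. Since $W^*_{\rm reg}(\maG)$ acts on $L^2(T\times\Gamma,\nu)$ by $(A\xi)(\theta,\cdot)=A_\theta(\xi(\theta,\cdot))$, the given functional $\tau^\nu$ is the restriction to the positive cone of $A\mapsto\langle A\xi_0,\xi_0\rangle$. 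From this presentation one reads off at once that $\tau^\nu$ extends to a positive linear functional (the matrix coefficients $\theta\mapsto\langle A_\theta\delta_e,\delta_e\rangle$ are Borel by the very definition of the von Neumann algebra), that it is normal (a vector functional on a von Neumann algebra is $\sigma$-weakly continuous), and that it is finite, indeed $\tau^\nu(1)=1$. For faithfulness I would argue: if $A=A^*\ge 0$ and $\tau^\nu(A)=0$, then $\int_T\|A_\theta^{1/2}\delta_e\|^2\,d\nu(\theta)=0$, so $A_\theta\delta_e=0$ for $\nu$-a.e.\ $\theta$; combining this with the equivariance $A_{\gamma\theta}=R^*_\gamma A_\theta R^*_{\gamma^{-1}}$, the identity $R^*_{\gamma^{-1}}\delta_e=\delta_\gamma$, the injectivity of $R^*_\gamma$, and the $\Gamma$-invariance of $\nu$, one gets for each fixed $\gamma\in\Gamma$ that $A_\theta\delta_\gamma=0$ for $\nu$-a.e.\ $\theta$; as $\Gamma$ is countable there is a single conull set on which $A_\theta\delta_\gamma=0$ for every $\gamma$, whence $A_\theta=0$ a.e.\ and $A=0$.

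For the trace identity, by polarization (replace $A$ by $A+\lambda B$, $\lambda\in\{1,i\}$) it suffices to prove $\tau^\nu(A^*A)=\tau^\nu(AA^*)$ for all $A$. Using Parseval in the fibre $\ell^2(\Gamma)$ and Tonelli (all summands are non-negative),
\[
\tau^\nu(A^*A)=\int_T\|A_\theta\delta_e\|^2\,d\nu(\theta)=\sum_{\gamma\in\Gamma}\int_T|\langle A_\theta\delta_e,\delta_\gamma\rangle|^2\,d\nu(\theta),
\]
\[
\tau^\nu(AA^*)=\int_T\|A_\theta^*\delta_e\|^2\,d\nu(\theta)=\sum_{\gamma\in\Gamma}\int_T|\langle A_\theta\delta_\gamma,\delta_e\rangle|^2\,d\nu(\theta).
\]
Then the equivariance $A_\theta=R^*_{\gamma^{-1}}A_{\gamma\theta}R^*_\gamma$ together with $R^*_\gamma\delta_\alpha=\delta_{\alpha\gamma^{-1}}$ and the unitarity of $R^*_\gamma$ gives $\langle A_\theta\delta_\gamma,\delta_e\rangle=\langle A_{\gamma\theta}\delta_e,\delta_{\gamma^{-1}}\rangle$; substituting this into the second display, applying the $\Gamma$-invariance of $\nu$ term by term to replace $A_{\gamma\theta}$ by $A_\theta$, and reindexing $\gamma\mapsto\gamma^{-1}$, one lands precisely on the right-hand side of the first display, so $\tau^\nu(AA^*)=\tau^\nu(A^*A)$. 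The conceptual content is that the failure of $\langle\,\cdot\,\delta_e,\delta_e\rangle$ to be tracial on $B(\ell^2\Gamma)$ is exactly compensated by averaging over $T$ against a $\Gamma$-invariant measure; this is the step where the standing hypothesis on $\nu$ is indispensable, and also why the statement would fail without it.

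For $W^*_{\rm av}(\maG)$ I would run the same four steps with $\ell^2(\Gamma)$ replaced fibrewise by $\ell^2(\Gamma/\Gamma(\theta))$, the orthonormal basis $\{\delta_{[\gamma]}\}$, the constant section $\theta\mapsto\delta_{[e]}$ of the Borel field, and the shift isomorphisms $R^*_\gamma:\ell^2(\Gamma/\Gamma(\theta))\to\ell^2(\Gamma/\Gamma(\gamma\theta))$, $(R^*_\gamma\xi)[\alpha]=\xi[\alpha\gamma]$. The only extra care is that the fibrewise Parseval sums now run over the $\theta$-dependent index set $\Gamma/\Gamma(\theta)$, so the interchange of summation and integration and the substitution $(\theta,[\gamma])\mapsto(\gamma\theta,[\gamma^{-1}])$ in the trace computation take place on the Borel field $\maG_0\to T$ rather than on $T\times\Gamma$; one checks from the definitions that $(\theta,[\gamma])\mapsto(\gamma\theta,[\gamma^{-1}])$ is a well-defined measure-preserving involution of $\maG_0$ (with the measure obtained by integrating the fibrewise counting measures against $\nu$), after which the argument is verbatim. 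So I expect the genuine obstacle to be purely organisational: making the three ingredients — $\Gamma$-equivariance of the family, $\Gamma$-invariance of $\nu$, and fibrewise Parseval — interlock in the right order; once that is arranged there is nothing further to do.
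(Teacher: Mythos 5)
Your proof is correct and follows essentially the same route as the paper's: the trace identity in both cases comes from expanding the $(e,e)$ matrix coefficient of the product over the orthonormal basis $\{\delta_\gamma\}$, using the $\Gamma$-equivariance of the family to shift $\theta$ to $\gamma\theta$, and then invoking the $\Gamma$-invariance of $\nu$; the faithfulness argument is likewise the same. Your packaging of positivity, normality and finiteness via the vector state $\langle\,\cdot\,\xi_0,\xi_0\rangle$, and your reduction of the trace property to $\tau^\nu(A^*A)=\tau^\nu(AA^*)$ by polarization, are only cosmetic reorganizations of what the paper does directly.
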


\begin{proof}
Positivity is clear since $T$ is non negative in the von Neumann algebra if and only if for $\nu$-almost every $\theta$ the operator $T_\theta$ is non negative.  If the non negative element $T= (T_\theta)_{\theta\in T}$ satisfies $\tau^\nu (T)=0$ then $<T_\theta (\delta_e), \delta_e > = 0$ for $\nu$-almost every $\theta$. But, the $\Gamma$-equivariance of $T$ implies that 
$$
<T_\theta (\delta_\gamma), \delta_\gamma > = 0, \quad \forall \gamma\in \Gamma \text{ and } \nu \;\;a.e.
$$
Therefore, $T_\theta = 0$ for $\nu$-almost every $\theta$ and hence $T=0$ in $W^*_{\rm{reg}}(\maG)$. In the second case, the proof is similar again by $\Gamma$-equivariance and by replacing $\delta_\gamma$ by $\delta_{[\gamma]}$.

If $T(n) \uparrow T$ is an increasing sequence of non negative operators which converges in the von Neumann algebra to $T$, then for $\nu$-almost every $\theta$, the sequence $T(n)_\theta$ increases to $T_\theta$. But then since the state $<\cdot (\delta_e), \delta_e >$ is normal, the conclusion follows by Beppo-Levi's property for $\nu$.

If now $T$ is in the von Neumann algebra $W^*_{{\rm reg}} (\maG)$ then writing  $T_\theta$ as an infinite matrix in $\ell^2\Gamma$ and using the $\Gamma$ equivariance we deduce that 
$$
T_{\gamma \theta}^{\alpha, \beta} = T_\theta ^{\alpha\gamma, \beta\gamma}.
$$
If we now consider a second operator $S$ in $W^*_{{\rm reg}} (\maG)$, then we have
$$
(T_\theta S_\theta)^{e, e} =\sum_{\gamma\in \Gamma} T^{e,\gamma}_\theta\;S^{\gamma,e}_\theta= \sum_{\gamma\in \Gamma} \;S_{\gamma\theta}^{e,\gamma^{-1}} T_{\gamma\theta}^{\gamma^{-1}, e},
$$
by   the $\Gamma$-equivariance property.  The $\Gamma$-invariance of measure $\nu$ can now be applied
to yield  that $\tau^\nu (TS)=\tau^\nu(ST)$. 
A similar proof works for the von Neumann algebra $W^*_{{\rm av}}(\maG)$.
\end{proof}

We define the functionals $\tau^\nu_{\rm{reg}}$ and $\tau^\nu_{\rm{av}}$ 
on $\maA_c$  by setting for $f\in \maA_c$
\begin{equation}\label{trace-regular}
\tau^\nu_{\rm{reg}} (f) := \int_T f(\theta, e) d\nu(\theta),
\end{equation}
\begin{equation}\label{trace-average
}\tau^\nu_{\rm{av}} (f):= \int_T \left[ \sum_{g\in \Gamma(\theta)} f(\theta, g)\right] d\nu(\theta).
\end{equation}

\begin{lemma}
\begin{enumerate}
\item We have $\tau^\nu \circ \pi^{\rm{reg}} = \tau^\nu_{\rm{reg}}$ and $\tau^\nu \circ \pi^{\rm{av}} = \tau^\nu_{\rm{av}}$. 
\item Hence, $\tau^\nu_{\rm{reg}}$ and $\tau^\nu_{\rm{av}}$ extend to finite traces on $\maA_r$ and $\maA_m$.
\end{enumerate}
\end{lemma}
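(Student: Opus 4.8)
The plan is to verify the two identities $\tau^\nu \circ \pi^{\rm reg} = \tau^\nu_{\rm reg}$ and $\tau^\nu \circ \pi^{\rm av} = \tau^\nu_{\rm av}$ by direct computation on $\maA_c$, and then deduce part (2) as a formal consequence of the fact that $\tau^\nu$ is already known (by the preceding Proposition) to be a finite trace on the two von Neumann algebras, together with the existence of the $C^*$-representations $\pi^{\rm reg}$ and $\pi^{\rm av}$ constructed above.

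For the first identity, I would fix $f \in \maA_c$ and compute $\langle \pi^{\rm reg}_\theta(f)(\delta_e), \delta_e\rangle$ using the defining formula $\pi^{\rm reg}_\theta(f)(\xi)(\gamma) = \sum_{\gamma'\in\Gamma} f(\gamma\theta, \gamma\gamma'^{-1})\xi(\gamma')$. Taking $\xi = \delta_e$ collapses the sum to the term $\gamma' = e$, giving $\pi^{\rm reg}_\theta(f)(\delta_e)(\gamma) = f(\gamma\theta, \gamma)$; evaluating at $\gamma = e$ yields $f(\theta, e)$, so $\langle \pi^{\rm reg}_\theta(f)(\delta_e), \delta_e\rangle = f(\theta, e)$. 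Integrating over $T$ against $\nu$ gives exactly $\tau^\nu_{\rm reg}(f)$ as in \eqref{trace-regular}. For the second identity I would use the form of $\pi^{\rm av}$ in Remark \ref{average-remark}, namely $\pi^{\rm av}_\theta(f)(\xi)(\theta') = \sum_{\alpha\in\Gamma} f(\theta',\alpha)\xi(\alpha^{-1}\theta')$; pairing against $\delta_{[e]}$ (the delta function at the class of $e$ in $\Gamma/\Gamma(\theta)$, i.e. at the point $\theta$ itself in the orbit picture), the input $\xi = \delta_{[e]}$ forces $\alpha^{-1}\theta' = \theta$, and evaluation at $\theta' = \theta$ then forces $\alpha^{-1}\theta = \theta$, i.e. $\alpha \in \Gamma(\theta)$. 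Hence $\langle \pi^{\rm av}_\theta(f)(\delta_{[e]}), \delta_{[e]}\rangle = \sum_{g\in\Gamma(\theta)} f(\theta, g)$, and integrating against $\nu$ recovers $\tau^\nu_{\rm av}(f)$ as in \eqref{trace-average}. Here one should be a little careful to justify that only the diagonal matrix entry at $[e]$ contributes and that the isotropy subgroup is precisely the set of $\alpha$ with $\alpha\theta = \theta$; this is immediate from the definitions of $\maG_0$ and $\Gamma(\theta)$.

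For part (2), since $\pi^{\rm reg}$ and $\pi^{\rm av}$ are $C^*$-algebra morphisms $\maA_m \to W^*_{\rm reg}(\maG)$ and $\maA_m \to W^*_{\rm av}(\maG)$ (and, via $\maA_m \to \maA_r \to W^*_{\rm reg}(\maG)$, also descend to $\maA_r$ in the regular case), the functionals $\tau^\nu \circ \pi^{\rm reg}$ and $\tau^\nu \circ \pi^{\rm av}$ are bounded positive traces on $\maA_m$ (respectively $\maA_r$): they are tracial because $\tau^\nu$ is a trace on the von Neumann algebra and the $\pi$'s are homomorphisms, positive because $\tau^\nu$ is positive and homomorphisms preserve positivity, and finite/bounded because $\tau^\nu$ is a finite (hence norm-continuous) trace on the von Neumann algebra and the $\pi$'s are contractions. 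By part (1) these extensions restrict on the dense subalgebra $\maA_c$ to $\tau^\nu_{\rm reg}$ and $\tau^\nu_{\rm av}$ respectively, so $\tau^\nu_{\rm reg}$ and $\tau^\nu_{\rm av}$ extend (uniquely, by density) to finite traces on $\maA_r$ and $\maA_m$.

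The only genuinely delicate point is the bookkeeping in the averaged case: one must make sure that the pairing $\langle \pi^{\rm av}_\theta(f)\delta_{[e]}, \delta_{[e]}\rangle$ really produces the sum over the isotropy group $\Gamma(\theta)$ and not, say, a single term or a sum over a coset — this is where the quotient $\Gamma/\Gamma(\theta)$ versus $\Gamma$ distinction matters, and it is exactly the mechanism by which $\tau^\nu_{\rm av}$ differs from $\tau^\nu_{\rm reg}$ (they agree precisely when all isotropy groups are trivial, e.g. for a free action). Everything else is a routine unwinding of the definitions; no analytic subtlety beyond the already-established normality and finiteness of $\tau^\nu$ is needed.
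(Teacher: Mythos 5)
Your proof is correct and follows essentially the same route as the paper: the key computation in both cases is that pairing $\pi^{\rm av}_\theta(f)$ against the delta function at the basepoint of the orbit forces the summation index into the isotropy group $\Gamma(\theta)$, which the paper phrases via the convolution kernel $f_0(\theta,\theta)=\sum_{\gamma\theta=\theta}f(\theta,\gamma)$ and you phrase by unwinding the formula of Remark \ref{average-remark} directly. You additionally write out the regular case (which the paper omits as classical) and the deduction of part (2), both of which are fine.
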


\begin{proof}\ 
The statement for the regular trace is classical and we thus omit the (easy) proof. We  consider for any $f\in L^1(\maG)$ the Borel family of operators $(\pi^{\rm{av}}_\theta (f))_{\theta\in T}$  defined in the previous paragraph.

For any $f\in \maA_c$, denote as before by $f_0$ the function 
$$
f_0(\theta,\theta'):=\sum_{\gamma \theta=\theta '} f(\theta ',\gamma).
$$ 
Then we know that $\pi^{\rm{av}}(f)$ is given as convolution with $f_0$. If $f\in \maA_c$, then we have,
using the identification $\Gamma/\Gamma(\theta)\equiv \Gamma \theta$:
\begin{eqnarray*}
\int_T < \pi^{\rm{av}}_\theta (f) \delta_\theta , \delta_\theta > d\nu (\theta)& = & \int_T f_0(\theta, \theta) d\nu (\theta)\\
& = & \int_T \sum_{\gamma\in \Gamma(\theta)} f(\theta, \gamma) d\nu(\theta)\\
& = & \tau^\nu_{\rm{av}} (f).
\end{eqnarray*}
\end{proof}


As a Corollary of the above Lemma notice that the traces $\tau^\nu_{\rm{reg}}: \maA_r\to \CC$
and $\tau^\nu_{\rm{av}}:\maA_m\to \CC$ induce group homomorphisms
\begin{equation}\label{traces-k-discrete}
\tau^\nu_{\rm{reg},*}: K_0 (\maA_r)\rightarrow \RR\;,\quad \tau^\nu_{\rm{av},*}: K_0(\maA_m)\rightarrow \RR
\end{equation}

\section{Foliated spaces}\label{sec:foliated}

\subsection{Foliated spaces}

Let $M$ be a compact manifold without boundary and let $\Gamma$ denote its fundamental group and $\tM$ its universal cover. The group $\Gamma$ acts by homemorphisms on the compact topological space $T$ and hence 
acts on the right, freely and properly, on the  space $\tM \times T$ by the formula
$$
(\tm , \theta) \gamma := (\tm \gamma, \gamma^{-1} \theta), \quad (\tm, \theta)\in \tM\times T \text{ and } \gamma\in \Gamma.
$$
The quotient space of $\tM\times T$ under this action is denoted by $V$. We assume as before 
the existence of a $\Gamma$-invariant probability measure $\nu$.
If we want to be specific about the action of $\Gamma$ on $T$ we shall
consider it as a homomorphism $\Psi: \Gamma\to {\rm Homeo}(T)$. We do not assume
the action to be locally free \footnote{Recall that
an action is locally free if given $\gamma\in \Gamma$ and open set $U$
in $T$ such that $\gamma (\theta)=\theta$ for any $\theta\in U$ then $\gamma=1$.} .


If $p:\tM\times T \to V$ is the natural
projection then the leaves of a lamination on $V$ are given by the projections $L_\theta = p(\tM_\theta)$, where $\theta$ runs through the compact space $T$, and 
\begin{equation}\label{eq:mtheta}
\tM_\theta:= \tM\times \{ \theta\}\,.
\end{equation}
It is easy to check that this is a lamination of $V$ with  smooth leaves and possibly complicated transverse structure according to the topology of $T$, see for instance \cite{BenameurOyono}. By definition, it is easy to check that the leaf $L_\theta$ coincides with the leaf $L_{\theta'}$ if and only if $\theta'$ belongs to the orbit $\Gamma \theta$ of $\theta$ under the action of $\Gamma$ in $T$. We shall refer to this lamination by $(V,\maF)$ and {sometimes shall  call it a foliated space
or, more briefly, a foliation.}  If $\Gamma(\theta)$ is the isotropy group of $\theta\in T$ then we see from the definition of $L_\theta$ that $L_\theta$ is diffeomorphic to the quotient manifold 
$ \tM/\Gamma(\theta)$
through the map $L_\theta \rightarrow \tM/\Gamma(\theta)$ given by 
$[\tm^\prime,\theta^\prime]\rightarrow [\tm^\prime \gamma]$, if $\theta^\prime=\gamma\theta$.
Note however that $L_\theta$ is also diffeomorphic to $\tM /\Gamma(\theta')$ for any $\theta'\in \Gamma\theta$. 
Moreover the {monodromy} cover of a leaf $L$  is obtained by choosing $\theta\in T$ such that $L=L_\theta$ and by using the composite map
$$
\tM \to \tM_\theta \to (\tM_\theta)/\Gamma(\theta) \simeq L_\theta = L.
$$
which is a monodromy cover of $L$ corresponding to  $\theta$. 


Notice that the set of  $\theta\in T$ for which $\Gamma(\theta)$
is non-trivial has in general  positive measure. This is the case, for instance,  when 
there exists a subgroup $\Gamma_1$
of $\Gamma$ whose action on $T$ has the property that $\nu (T^{\Gamma_1})>0$, where  $T^{\Gamma_1}$
is the fixed-point subspace defined by $\Gamma_1$. In fact, one can construct simple examples where
the measure of the  set of  $\theta\in T$ for which $\Gamma(\theta)$
is non-trivial is any value in $(0,1)$. See Example \ref{example 3} for a specific situation.

\begin{example}\label{example 1}
{As an easy example where this situation occurs naturally, 
consider any Galois covering $\tM^\prime$
of $M$ with structure group $\Gamma^\prime$ such that
$\pi_1 (\tM^\prime)\not= 1$. Assume the existence of a locally free $\Gamma^\prime$-action
$\Psi^\prime: \Gamma^\prime\to  {\rm Homeo}(T)$
on $T$ and let $V$ be the resulting foliated space. Assume the existence of an invariant measure $\nu$ on $T$.
Since $\Gamma^\prime$ is a quotient of $\Gamma:=\pi_1(M)$
we have a natural group homomorphism $\pi: \Gamma\to\Gamma^\prime$ and thus an action $\Psi:= \Psi^\prime \circ \pi$
of $\Gamma$ on $T$. By definition $\nu$ is also $\Gamma$-invariant.
The isotropy group of this action at $\theta\in T$ is at least as big as the fundamental group of
 $\tM^\prime$. Notice that one can show that 
 $$ (\tM \times T)/\Gamma \,=  (\tM^\prime \times T)/\Gamma^\prime \equiv V$$
 Summarizing: $V$ is a lamination where the set of leaves with non-trivial monodromy has measure equal
 to $\nu(T)=1$.}
 
 \end{example}

\begin{example}\label{example 3}
{Take $M$ to be any manifold whose fundamental group is a free product of copies of $\ZZ$, for example a connected sum of $\S^1\times \S^2$'s, 
so that  now  $\Gamma$ is the free group of rank $k$.
Let $\{\gamma_1,\gamma_2,\ldots, \gamma_k\}$ be the generators. Let $T$ be $\S^2$.
Let $C\subset \S^2$ be a parallel and let $U\subset \S^2$
one of the two hemispheres bounded by $C$.
 Let $\Psi(\gamma_1)$ be any measure-preserving diffeomorphism  of $\S^2$ that 
fixes  $U$. We then define $\Psi$ on the other generators 
 in an arbitrary measure-preserving way. Then any point $\theta$ in $U$ would have  nontrivial isotropy group $\Gamma(\theta)$. Clearly, one can  jazz up  this example  by selecting any $T$ and finding a single homeomorphism whose fixed point set is a set of nonzero measure. 
}
\end{example}

\begin{example}\label{example 2}
{Following \cite{MS} 
we now give an example of a lamination with the set of leaves with non-trivial monodromy
 of positive measure and, in addition, of a rather complicated sort. 
 Take a (generalized) Cantor set $K$ of positive Lebesgue measure in the unit circle.
 Choose now a
homeomorphism $\phi$ of the circle admitting $K$ as the fixed point set. Let $M$ be any
closed odd dimensional manifold with $\pi_1 (M)=\ZZ$.
Consider the foliated space $V$ obtained by suspension of $\phi$:
thus $V=\tM\times_\ZZ \S^1$ with $\ZZ=\pi_1(\S^1)$ acting on $\S^1$
via $\phi$ and acting by deck transformations on $\tM$.
The set of $\theta\in S^1$ such that
$\{\gamma\in \ZZ | \gamma\theta=\theta\}$
is non-trivial is  equal to
$K$, hence it has positive measure.
Using  \cite{MS} page 105/106,
we can find a  Radon $\phi$-invariant measure $\nu$
on $\S^1$ and $\nu (K)>0$.
\\
Notice that in this class of examples, although the measure
is diffuse, one  can even ensure that the set of leaves with non-trivial holonomy
has positive transverse measure. These laminations show up in the study of aperiodic tillings and especially of quasi-crystals. In \cite{BenameurOyono} for instance, the measured foliated index for such laminations, a primary invariant, is used to solve the gap-labelling conjecture. The authors expect potential applications of the foliated rho invariant to aperiodic solid physics.}


\end{example}



\bigskip

\subsection{ The monodromy  groupoid and the $C^*$-algebra of  the foliation}
\label{sub:monodromy}

Let $\tM$, $\Gamma$ and $T$ be as before.
We define the monodromy groupoid $G$
as the quotient space   $(\tM\times \tM\times T) /\Gamma$
of $\tM\times \tM\times T$ by the right diagonal action  
$$(\tm,\tm^\prime,\theta) \gamma := (\tm\gamma,\tm^\prime\gamma,\gamma^{-1}\theta).$$
The groupoid structure is clear:
 the space of units $G^{(0)}$  is the space $V=\tM\times_\Gamma T$,
the source and range
 maps are given by 
$$
s[\tm, \tm ', \theta] = [\tm ', \theta] \text{ and } r[\tm, \tm ', \theta] = [\tm , \theta],
$$
where the brackets denote equivalence classes modulo the action of the group $\Gamma$

It is not difficult to show that $G$ can be identified in a natural way with 
the usual monodromy groupoid 
associated to the foliated space $(V,\maF)$, as defined, for example, in 
\cite{phillips-homotopy}. More precisely given a smooth path $\alpha:[0,1]\to L$, with $L$ a leaf,
choose any  lift $\tilde{\beta} :[0,1]\to \tM$ of the projection of the path $\alpha$ in $M$
through the natural projection $V\to M$. Then there exists a unique $\theta\in T$
with $\alpha(0)=[\tilde{\beta}(0),\theta]$ and we obtain in this way a well defined  element $[\tilde{\beta}(0),\tilde{\beta}(1),\theta]$ of $G$ 
 which only depends on the leafwise homotopy class
of $\alpha$ with fixed end-points. This furnishes the desired isomorphism.


We  fix now a Lebesgue class measure $dm$ on $M$ and the corresponding $\Gamma$-invariant measure $d\tm$ on $\tM$. We denote by $\maB_c$ the convolution $*$-algebra of continuous compactly supported functions on $G$. For  $f,g\in \maB_c$ we have:
$$
(f*g) [\tm, \tm ', \theta] = \int_\tM f[\tm, \tm'', \theta] g[\tm'', \tm', \theta] d\tm'' \;\; \text{ and } \;\;f^*[\tm, \tm ', \theta] = {\overline{f [\tm ', \tm , \theta]}}.
$$
More generally, let $E$ be a hermitian  continuous longitudinally smooth vector bundle over $V$; thus 
$E$ is a continuous bundle over $V$ such that its restriction to each leaf is smooth \cite{MS}.
Consider $\operatorname{END} (E):= (s^* E)^* \otimes (r^* E)={\rm Hom}(s^* E, r^* E)$,
a bundle of endomorphisms
over $G$.
We consider $\maB_c^E := C^{\infty,0}_c (G, \END (E))$ the space of continuous longitudinally smooth
sections of $\END (E)$;
 this  is also
 a $\ast$-algebra with product and adjoint given by
 $$
 (f_1 \ast f_2)[\tm,\tm ', \theta] = \int_{\tM} f_1[\tm, \tm '', \theta] \circ 
 f_2 [\tm '', \tm', \theta] \, d\tm'' ,
 $$
 $$
 f^\ast [\tm, \tm', \theta] = (f[\tm', \tm, \theta])^\ast.
 $$
 Let $\widehat{E}$ be its lift to $\widetilde{M}\times T$;
denote by $H_\theta$ the Hilbert space 
$
H_\theta=L^2(\widetilde{M}\times \{\theta\}; \widehat{E}_{|\widetilde{M}\times \{\theta\}}).
$ 
Any $f\in \maB^E_c$ can be viewed as a smooth kernel acting on $H_\theta$ by the formula
$$
\pi^{{\rm reg}}_\theta (f) (\xi) (\tm) := \int_\tM f[\tm, \tm', \theta] (\xi (\tm')) d\tm', \quad \text{for any}\;\;\xi\in H_\theta\,,
$$
and this defines a  $*$-representation $\pi^{reg}_\theta$ in $H_\theta$ . We point out that the representation $\pi^{reg}_\theta$ is continuous for the $L^1$ norm defined by:
$$
\|f\|_1 := {\rm max}\{\sup_{(\tm,\theta)\in \tM \times T} \int_\tM \|f[\tm, \tm', \theta]\|_{E} d\tm'\,; 
\sup_{(\tm,\theta)\in \tM \times T} \int_\tM \|f[\tm^\prime, \tm, \theta]\|_{E} d\tm' \}
$$
If we complete 
$\maB_c$ with respect to 
the $C^*$ norm
$$
\|f\|_{\rm{reg}} := \sup_{\theta\in T} \|\pi^{reg}_\theta (f)\|,
$$ 
then we get $\maB^E_r$ , the regular $C^*$-algebra of the groupoid $G$ with coefficients in $E$.
 When $E=V\times \CC$ then we denote this $C^*$-algebra simply by $\maB_r$
In the same way, if we complete  $\maB_c$ with respect to all  $L^1$ continuous $*$-representations,
then we get the maximal $C^*$-algebra of the groupoid $G$, that will be denoted by $\maB^E_m$ and 
simply by $\maB_m$ when $E=V\times \CC$.

\subsection{von Neumann Algebras of foliations.}

The material in this paragraph is classical;  for more details see  for instance \cite{Co}, \cite{Hei-Laz}  \cite{BenameurFack}, \cite{Co-LNM},
\cite{Lenz-et-al}.

The representation $\pi^{\rm{reg}}$ defined above takes value in the regular von Neumann algebra of the groupoid $G$. 
More precisely, the regular von Neumann algebra $W^*_{\nu}(G;E)$ of  $G$ with coefficients in $E$, acts on the Hilbert space $H= L^2(T \times\tM, {\widehat E} ; \nu\otimes d\tm)$, and is  by definition the space of families $(S_\theta)_{\theta\in T}$ of bounded operators on $L^2(\tM, {\widehat E})$ such that
\begin{itemize}
\item For any $\gamma\in \Gamma$, $S_{\gamma \theta} = \gamma S_{\theta}$ where $\gamma S_{\theta}$ is defined using the action of $\Gamma$ on the equivariant vector bundle ${\widehat E}$;
\item The map $\theta \mapsto \|S_\theta\|$ is $\nu$-essentially bounded on $T$; 
\item For any $(\xi, \eta) \in H^2$, the map $\theta \mapsto <S_\theta (\xi_\theta), \eta_\theta >$ is  Borel measurable.
\end{itemize}

The von Neumann algebra $W^*_{\nu}(G;E)$ is a type II$_\infty$ von Neumann algebra as we shall see later. It is easy to see that for any $S\in \maB^E_r$, the operator $\pi^{\rm{reg}} (S)$ belongs to $W^*_{\nu}(G;E)$. 

In the same way we define a leafwise von Neumann algebra that we shall denote by $W^*_\nu (V,\maF;E)$;
this algebra  acts on the Hilbert space  \cite{Dixmier1} $
H=\int^{\oplus} L^2(L_\theta, E|_{L_\theta}) d\nu (\theta)$
where $L_\theta$ is, as before, the leaf  in $V$ corresponding to $\theta$.  
Equivalently, and using the identification of the leaves with quotient of $\tM$ under isotropy, $W^*_\nu (V,\maF;E)$ can be described as the set of families $(S_\theta)_{\theta\in T}$ of bounded operators on $(L^2(\tM_\theta/\Gamma(\theta), E|_{\theta})_{\theta\in T}$ such that 
\begin{itemize}
\item The map $\theta \mapsto \|S_\theta\|$ is $\nu$-essentially bounded on $T$.
\item For any square integrable sections $\xi, \eta$ of the Borel field $(L^2(\tM_\theta/\Gamma(\theta), E_\theta))_{\theta\in T}$, the map $\theta \mapsto <S_\theta (\xi_\theta), \eta_\theta >$ is Borel measurable.
\item $S_{\gamma\theta} = \gamma S_\theta$, for any $(\theta, \gamma)\in T\times \Gamma$.
\end{itemize}
Notice that $\Gamma (\gamma\theta)=\gamma \Gamma (\theta) \gamma^{-1}$ and hence the definition of $\gamma S_\theta$ is clear.

\begin{proposition}
There is a well defined representation $
\pi_{\rm{av}}$  from the maximal $C^*$algebra $\maB^E_m$ to
the leafwise von Neumann algebra $W^*_\nu (V,\maF;E)$
such that 
for $f\in C_c (G,\END(E))$ the operator
$(\pi_{\rm{av}}(f))_\theta $
is given by the kernel
$$
f_0(x,y)=0 \text{ if } L_x\neq L_y \text{ and } f_0([\tm, \theta], [\tm',\theta]) := \sum_{\gamma\in \Gamma (\theta)} f [\tm, \tm'\gamma, \theta].
$$

\end{proposition}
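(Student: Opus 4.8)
The plan is to mirror, in the foliation setting, exactly the argument already carried out for the discrete groupoid $\maG$ in the proof that $\pi^{av}$ defines a representation of $\maA_m$ in $W^*_{\rm av}(\maG)$. First I would observe that the leafwise von Neumann algebra $W^*_\nu(V,\maF;E)$ is, by the second (equivalent) description given above, the algebra of $\Gamma$-equivariant families $(S_\theta)$ of bounded operators on $L^2(\tM_\theta/\Gamma(\theta),E_\theta)$; so to define $\pi_{\rm av}$ it suffices to produce, for each $f\in \maB_c^E$ and each $\theta$, a bounded operator $(\pi_{\rm av}(f))_\theta$ on $L^2(\tM/\Gamma(\theta),E_\theta)$ given by the averaged kernel $f_0$ displayed in the statement, to check measurability in $\theta$, $\Gamma$-equivariance of the family, the $*$-algebra homomorphism property, and finally $L^1$-continuity so that the representation passes to the completions $L^1(G)$ and then $\maB_m^E$.

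The key steps, in order, would be: (i) verify that $f_0([\tm,\theta],[\tm',\theta])=\sum_{\gamma\in\Gamma(\theta)} f[\tm,\tm'\gamma,\theta]$ is a well-defined continuous (longitudinally smooth) $\END(E)$-valued kernel on $L_\theta\times L_\theta$ — well-definedness amounting to checking that replacing a lift $\tm$ by $\tm\delta$ with $\delta\in\Gamma(\theta)$, and similarly $\tm'$, permutes the summation over $\Gamma(\theta)$; (ii) show convolution is intertwined, i.e.\ $(f*g)_0 = f_0 * g_0$ and $(f^*)_0 = (f_0)^*$, by the same reindexing computation as in the proof of the $\maG$-proposition, now with the integral over $\tM$ replacing the sum over $\Gamma$ and the remaining sum running over $\Gamma(\theta)$; here one uses that the leaf $L_\theta \cong \tM/\Gamma(\theta)$ and that the Haar-type measure is the pushforward of $d\tm$; (iii) establish $\Gamma$-equivariance $(\pi_{\rm av}(f))_{\gamma\theta} = \gamma\,(\pi_{\rm av}(f))_\theta$ using $\Gamma(\gamma\theta)=\gamma\Gamma(\theta)\gamma^{-1}$, exactly the remark already made just before the proposition; (iv) prove the norm bound $\|(\pi_{\rm av}(f))_\theta\|\le \|f\|_1$ by a Cauchy–Schwarz / Schur-test argument identical in structure to the chain of inequalities $\|\pi^{\rm av}_\theta(f)\xi\|_2^2 \le \|f\|_1^2\|\xi\|_2^2$ in the discrete case, with sums over $\Gamma\theta$ replaced by integrals over the leaf; (v) conclude that $\pi_{\rm av}$ extends by universality to a $*$-homomorphism $\maB_m^E \to W^*_\nu(V,\maF;E)$.

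Most of this is a routine transcription, so I would not belabor steps (ii) and (iv). The point that genuinely needs care — and which I expect to be the main obstacle — is the well-definedness and \emph{finiteness} of the sum defining $f_0$ together with its continuity and longitudinal smoothness: a priori $\Gamma(\theta)$ need not be finite, and $\theta\mapsto\Gamma(\theta)$ jumps, so one must argue that, because $f$ has compact support in $G = (\tM\times\tM\times T)/\Gamma$ and $\Gamma$ acts properly on $\tM\times\tM$, only finitely many $\gamma\in\Gamma(\theta)$ contribute for $(\tm,\tm')$ in any compact set, locally uniformly in $\theta$; this also underlies the measurability statement (iii) and the identification of $f_0$ as an element of the algebra of kernels on the monodromy groupoid. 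I would handle this by a standard properness argument: fix compact sets $K,K'\subset\tM$ projecting to a neighborhood in $V$ over which $\supp f$ lives; the set of $\gamma$ with $\gamma K'\cap K\neq\emptyset$ is finite, and for such $(\tm,\tm',\theta)$ the condition $\gamma\in\Gamma(\theta)$ with $f[\tm,\tm'\gamma,\theta]\neq 0$ forces $\gamma$ into that finite set, giving local finiteness and hence continuity and longitudinal smoothness of $f_0$, after which measurability of $\theta\mapsto\langle S_\theta\xi_\theta,\eta_\theta\rangle$ follows from the corresponding regularity of $f$.
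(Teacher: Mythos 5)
Your proposal is correct and follows essentially the same route as the paper's proof: finiteness of the sum over $\Gamma(\theta)$ from the $\Gamma$-compact support of $f$, a Schur-test estimate giving $\|(\pi_{\rm av}(f))_\theta\|\leq\|f_0\|_1\leq\|f\|_1$, the identities $(f*g)_0=f_0*g_0$ and $(f^*)_0=(f_0)^*$ by the same reindexing as in the discrete groupoid case, and extension to $\maB_m^E$ by $L^1$-continuity and the universal property of the maximal completion. The extra detail you supply on well-definedness under change of lifts and on the properness argument for local finiteness is a legitimate elaboration of what the paper asserts more tersely, not a different method.
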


\begin{proof}
For simplicity we take $E$ the product line bundle.
For $f\in C_c (G)$ the formula
$$
\left( (\pi_{\rm{av}} (f))_\theta  \;\xi \right) (x) := \int_{L_\theta} f_0(x,y) \xi(y) dy,\quad \xi\in L^2(L_\theta), x\in L_\theta \subset V.
$$
defines a bounded operator on $L^2 (L_\theta)$. Indeed  
the sum on  the RHS in the definition of $f_0$ is finite since $f$ is compactly supported. 
Moreover, when restricted to the leaf $L_x$ the kernel $f_0$ is  supported within a uniform neighborhood of the diagonal of $L_x$. 
We  have:
\begin{eqnarray*}
\|\pi_{\rm{av}} (f))_\theta \, (\xi)\|_2^2 & = & \int_{L_\theta} | \int_{L_\theta} f_0 (x,x')\xi(x') dx'|^2 dx\\
& \leq & \int_{L_\theta} \left(\int_{L_\theta} |f_0 (x,x')| dx'\right) \left( \int_{L_\theta} |f_0 (x,x')| |\xi(x')|^2 dx'\right) dx\\
& \leq & \| f_0\|_{1} \int_{L_\theta} |\xi (x')|^2 \int_{L_\theta} |f_0(x,x')| dx dx'\\
& \leq & \| f_0\|_{1} ^2  \|\xi\|_2^2.
\end{eqnarray*}
Here $\| f_0\|_{1}$ stands for the $L^1$-norm 
$$
{\text{Max}} (\sup_{x'\in V} \int_{L_x} |f_0 (x,x')| dx, \sup_{x\in V} \int_{L_x} |f_0 (x,x')| dx').
$$
Therefore, we have
$$
\sup_{\theta\in T} \| \pi_{\rm{av}} (f) \| \leq \|f_0\|_{1}.
$$
But now it is easy to check  that $\|f_0\|_{1} \leq \|f\|_{1}$. 
On the other hand  $\pi_{{\rm av}}$ is a $*$-representation;
since for  $f,g\in C_c(G)$ one has, with proof similar to the one given for
$\maG$,
$$
(f*g)_0 = f_0 * g_0 \text{ and } (f^*)_0 = (f_0)^*.
$$
To sum up, these arguments  prove that $\pi_{ av}$ on $\maB_c$  extends to
a continuous $*$-representation of the $\maB_m$ in the von Neumann algebra $W^*_\nu (V,\maF)$. 
This completes the proof.
\end{proof}

\subsection{Traces}\label{subsec:traces}

We fix once and for all a fundamental domain $F$ for  the free and proper action of $\Gamma$ on $\tM$.
Let $\chi$ be  the characteristic function of $F$.
Then we set 
 for  any non-negative element $S \in  W^*_{\nu}(G;E)$, 
 $$
\tau^\nu (S) :=  \int_T \tr(M_\chi \circ S_\theta \circ M_\chi) d\nu (\theta),
$$
where $\tr$ is the usual trace of a non-negative operator on a Hilbert space.

We shall also denote by $\chi$ the induced function $\chi\otimes 1_T$, i.e. the characteristic function
of $F\times T$ in $\tM\times T$.
 Since $F\times T$ is a fundamental domain for the free and proper action of $\Gamma$ on $\tM\times T$, we shall also denote by $\chi_\theta$  the same function $\chi$ but viewed as the characteristic function of $F$  inside a given leaf $L_\theta$, which is  the
 image under the projection $\tM\times T\to V$ of $\tM\times \{\theta\}$.
We define
 a functional $\tau^\nu$ on the leafwise  von Neumann algebra $W^*_\nu (V,\maF;E)$,
  by setting for any non-negative element $S\in W^*_\nu (V,\maF;E)$ 
$$
\tau^\nu_{\maF} (S) :=  \int_T \tr(M_{\chi_\theta} \circ S_\theta \circ M_{\chi_\theta}) d\nu (\theta),
$$
where the $M_{\chi_\theta}$ appearing in the integrand is the multiplication  operator in the $L^2$ space of sections over $\tM_\theta/\Gamma(\theta)$,  by the characteristic function $\chi_\theta$ of $F$ viewed in $\tM_\theta/\Gamma(\theta)$.

 \begin{proposition}\label{traces}
With the above notations we have:
 \begin{itemize}
\item  the functional $\tau^\nu$ yields  a positive semifinite normal faithful trace on 
 $W^*_{\nu} (G,E)$;
 \item the functional $\tau^\nu_{\maF}$ yields a positive  semifinite normal faithful trace on 
 $W^*_{\nu} (V,\maF;E)$.
  \end{itemize}
  \end{proposition}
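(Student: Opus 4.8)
The plan is to handle both bullet points in parallel, since the two von Neumann algebras $W^*_\nu(G;E)$ and $W^*_\nu(V,\maF;E)$ are structurally the same kind of object — direct integrals over $(T,\nu)$ of $\Gamma$-equivariant families of operators on $L^2$-spaces attached to the fibers $\tM_\theta$ (resp.\ the leaves $L_\theta = \tM_\theta/\Gamma(\theta)$) — and the functional in each case is built by the standard ``cut down by a fundamental domain, take the ordinary operator trace, then integrate over the transverse measure'' recipe. I will treat the first bullet ($\tau^\nu$ on $W^*_\nu(G;E)$) in detail and indicate that the second ($\tau^\nu_\maF$ on $W^*_\nu(V,\maF;E)$) follows by the same argument with $\tM_\theta$ replaced by $\tM_\theta/\Gamma(\theta)$ and $\chi$ by $\chi_\theta$, the only new point being that $\Gamma(\gamma\theta) = \gamma\Gamma(\theta)\gamma^{-1}$ so that the cut-down operator $M_{\chi_\theta} S_\theta M_{\chi_\theta}$ still transforms correctly under $\Gamma$.

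First I would check well-definedness: for $S = (S_\theta)_{\theta\in T} \geq 0$ in $W^*_\nu(G;E)$, the operator $M_\chi \circ S_\theta \circ M_\chi$ is a non-negative operator on the Hilbert space $L^2(F \times \{\theta\}; \widehat E)$ (which, via the fundamental domain $F$, is a fixed Hilbert space independent of $\theta$ up to the obvious identification), so $\tr(M_\chi S_\theta M_\chi) \in [0,+\infty]$ is defined; and $\theta \mapsto \tr(M_\chi S_\theta M_\chi)$ is Borel measurable because it can be written as $\sup$ over finite subsets of an orthonormal basis $(e_i)$ of the sums $\sum_i \langle S_\theta \chi e_i, \chi e_i\rangle$, each summand being measurable by the third defining property of $W^*_\nu(G;E)$. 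Hence the integral makes sense in $[0,+\infty]$. Additivity and positive homogeneity on the positive cone are immediate from linearity of $\tr$ and of the integral, and faithfulness: if $\tau^\nu(S) = 0$ then $\tr(M_\chi S_\theta M_\chi) = 0$ for $\nu$-a.e.\ $\theta$, hence $M_\chi S_\theta M_\chi = 0$, i.e.\ $S_\theta$ vanishes on $L^2$ over the fundamental domain $F$, for a.e.\ $\theta$; using the $\Gamma$-equivariance $S_{\gamma\theta} = \gamma S_\theta$ and that the $\Gamma$-translates of $F$ cover $\tM$, one propagates this to conclude $S_\theta = 0$ for a.e.\ $\theta$, i.e.\ $S = 0$. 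Normality (i.e.\ $\tau^\nu(\sup S_n) = \sup \tau^\nu(S_n)$ for bounded increasing nets, or at least $\sigma$-weak lower semicontinuity / Beppo Levi along increasing sequences) follows because for a.e.\ $\theta$ the net $S_{n,\theta}$ increases to $S_\theta$ strongly, hence $\tr(M_\chi S_{n,\theta} M_\chi) \uparrow \tr(M_\chi S_\theta M_\chi)$ by normality of the ordinary trace, and then monotone convergence in $\theta$ finishes it — exactly as in the proof of the earlier Proposition on $\tau^\nu$ for $W^*_{\rm reg}(\maG)$, $W^*_{\rm av}(\maG)$.

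The step I expect to be the main obstacle is the \emph{trace property} $\tau^\nu(S^*S) = \tau^\nu(SS^*)$ (equivalently $\tau^\nu(AB) = \tau^\nu(BA)$ on a suitable dense domain). The naive identity $\tr(M_\chi AB M_\chi) = \tr(M_\chi BA M_\chi)$ is false because $M_\chi$ does not commute with $A$ or $B$; the correct argument inserts a resolution of the identity $\mathrm{Id} = \sum_{\gamma\in\Gamma} M_{\gamma\chi}$ (where $\gamma\chi = \chi_{F\gamma}$, so the $\Gamma$-translates of $F$ tile $\tM$), writes $\tr(M_\chi AB M_\chi) = \sum_{\gamma} \tr(M_\chi A M_{\gamma\chi} B M_\chi)$, and then uses the $\Gamma$-equivariance of $A$, $B$ together with the $\Gamma$-invariance of the measure $d\tm$ on $\tM$ and of $\nu$ on $T$ to re-index the sum and recognize it, after integrating over $\theta$, as $\tau^\nu(BA)$. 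Concretely one conjugates the $\gamma$-th term by the unitary implementing the $\Gamma$-action to move $M_{\gamma\chi}$ back to $M_\chi$ at the cost of replacing $\theta$ by $\gamma^{-1}\theta$ and swapping the roles of the two factors; the $\int_T \cdots d\nu(\theta)$ then absorbs the change of variable $\theta \mapsto \gamma^{-1}\theta$ by $\Gamma$-invariance of $\nu$, and the double sum over $\Gamma$ reassembles into $\int_T \tr(M_\chi B_\theta A_\theta M_\chi)\,d\nu(\theta)$. This is precisely the ``unfolding'' computation done for $\maG$ in the earlier proof (the display with $(T_\theta S_\theta)^{e,e} = \sum_\gamma S_{\gamma\theta}^{e,\gamma^{-1}} T_{\gamma\theta}^{\gamma^{-1},e}$), transported from the discrete fibers $\ell^2(\Gamma)$ to the continuous fibers $L^2(\tM)$; I would carry it out first for $S = (\pi^{\rm reg}(f))$ with $f \in \maB^E_c$ a compactly supported smooth kernel, where all sums and integrals are finite and Fubini is unproblematic, and then pass to general positive $S$ in the von Neumann algebra by normality and a density/approximation argument. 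Semifiniteness is then obtained by exhibiting enough finite-trace projections: e.g.\ for $f \in \maB^E_c$ the operator $\pi^{\rm reg}(f)^*\pi^{\rm reg}(f)$ has finite $\tau^\nu$ (a direct estimate using the $L^1$-norm bound from the earlier proposition), and such operators have dense range / their spectral projections exhaust the identity, giving a $\sigma$-finite family of finite projections whose supremum is $1$.
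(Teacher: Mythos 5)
Your proposal is correct and follows essentially the same route as the paper: positivity from $M_\chi S_\theta M_\chi=(S_\theta^{1/2}M_\chi)^*(S_\theta^{1/2}M_\chi)\geq 0$, faithfulness by propagating $M_\chi S_\theta M_\chi=0$ over the $\Gamma$-translates of $F$ using equivariance and the $\Gamma$-invariance of $\nu$, the trace property by inserting $\sum_\gamma M_{\gamma\chi}=\Id$ and re-indexing via the unitaries implementing the $\Gamma$-action together with the invariance of $\nu$, normality from normality of $\tr$ plus Beppo--Levi, and the leafwise case by the same argument modulo the isotropy groups. The extra touches you add (the explicit measurability check, and first verifying the trace identity on compactly supported kernels before extending by density) are harmless refinements of, not departures from, the paper's argument.
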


 \begin{proof}
If $R=S^*S \in W^*_\nu (G;E)$, then for any $\theta\in T$,
$$
M_\chi \circ R_\theta \circ M_\chi = (S_\theta M_\chi)^* (S_\theta M_\chi) \geq 0.
$$
Therefore, $\tr(M_\chi \circ R_\theta \circ M_\chi) \geq 0$ and hence $\tau^\nu (R) \geq 0$. Moreover, $\tau^\nu (R) = 0$ if and only if $M_\chi R_\theta M_\chi = 0$ for $\nu$-almost every $\theta$. The $\Gamma$ equivariance of $R$ implies  the relations 
$$
M_{\gamma_1\chi} R_{\gamma\theta} M_{\gamma_2\chi} = U_\gamma \left[ M_{\gamma^{-1} \gamma_1\chi} R_{\theta} M_{\gamma^{-1} \gamma_2\chi}\right] U_{\gamma^{-1}}, \quad \gamma, \gamma_1, \gamma_2\in \Gamma.
$$
The same relations hold for $S$. In particular,
$$
M_{\gamma\chi} R_{\theta} M_{\gamma\chi} = U_\gamma \left[ M_{\chi} R_{\gamma^{-1}\theta} M_{\chi}\right] U_{\gamma^{-1}} = 0.
$$
Since $\nu$ is $\Gamma$-invariant, we deduce that $M_{\gamma\chi} R_{\theta} M_{\gamma\chi} = 0$ $\nu$ almost everywhere. Thus
$$
\sum_{\gamma'\in \Gamma} (M_{\gamma'\chi} S_\theta M_{\gamma\chi})^* (M_{\gamma'\chi} S_\theta M_{\gamma\chi}) = M_{\gamma\chi} R_{\theta} M_{\gamma\chi} = 0, \quad \nu-\text{a.e.} \;\theta \in T.
$$
As a consequence, we get that for $\nu$ almost every $\theta\in T$  and for any $\gamma,\gamma'\in \Gamma$,
$$
M_{\gamma'\chi} S_\theta M_{\gamma\chi} = 0,
$$
which proves that $S=0$ in  $W^*_{\nu}(G,E)$ and whence $R=0$ in  $W^*_{\nu}(G,E)$. 
On the other hand for any non negative $A,B\in W^*_\nu (G;E)$, we have 
\begin{eqnarray*}
 M_\chi A_\theta B_\theta M_\chi & = & \sum_{\gamma\in \Gamma} M_\chi A_\theta M_{\gamma\chi} B_\theta M_\chi \\
& = &  \sum_{\gamma\in \Gamma} M_\chi A_\theta (U_\gamma M_{\chi} U_{\gamma^{-1}})B_\theta M_\chi\\
& = & \sum_{\gamma\in \Gamma} M_\chi U_\gamma A_{\gamma^{-1}\theta} M_{\chi}  B_{\gamma^{-1}\theta} U_{\gamma^{-1}} M_\chi\\
& = & \sum_{\gamma\in \Gamma} U_\gamma \left[M_{\gamma^{-1}\chi} A_{\gamma^{-1}\theta} M_{\chi}  B_{\gamma^{-1}\theta} M_{\gamma^{-1}\chi}\right] U_{\gamma^{-1}}
\end{eqnarray*}
and so,
\begin{eqnarray*}
\tr (  M_\chi A_\theta B_\theta M_\chi) & = & \sum_{\gamma\in \Gamma} \tr \left[M_{\gamma^{-1}\chi} A_{\gamma^{-1}\theta} M_{\chi}  B_{\gamma^{-1}\theta} M_{\gamma^{-1}\chi}\right]\\
 & = & \sum_{\gamma\in \Gamma} \tr \left[ M_{\chi}  B_{\gamma^{-1}\theta} M_{\gamma^{-1}\chi} A_{\gamma^{-1}\theta} M_{\chi} \right]
\end{eqnarray*}
Now the $\Gamma$-invariance of $\nu$ yields again
\begin{eqnarray*}
\tau^{\nu}(AB) &=& \int_T \tr (  M_\chi A_\theta B_\theta M_\chi) d\nu (\theta) =  \int _T \sum_{\gamma\in \Gamma} \tr \left[ M_{\chi}  B_{\theta} M_{\gamma^{-1}\chi} A_{\theta} M_{\chi} \right] d\nu (\theta) \\& =& \int_T \tr (M_{\chi}  B_{\theta} A_{\theta} M_{\chi}) d\nu (\theta)=\tau^{\nu}(BA).
\end{eqnarray*}
The normality is a consequence of normality of $\tr$ and of the Beppo-Levi property. That $\tau^\nu$ is semi-finite is straightforward. 

Finally, according to our description of the leafwise von Neumann algebra $W^*_\nu (V,\maF;E)$, its elements  are also equivariant Borel families. So, the proof of the first item is readily adapted to take care of the quotients by the isotropy groups.
 \end{proof}

Recall the two $*$-representations 
$$\pi_{{\rm reg}}: \maB_r^E\rightarrow W^*_{\nu}(G,E)\,,\quad 
  \pi_{\rm{av}}: \maB_m^E \rightarrow W^*_\nu (V,\maF;E)\,.$$

 \begin{corollary}\label{prop:composition-of-traces}
The two functionals
$
\tau^\nu_{\rm{reg}} := \tau^\nu \circ \pi_{\rm{reg}} 
\text{ and }  \tau^\nu_{\rm{av}}:= \tau^\nu_{\maF}  \circ \pi_{\rm{av}} 
$
are traces on the $C^*$-algebras  $\maB^E_r$ and $\maB^E_m$ respectively
\footnote{these traces will not be finite in general}. Moreover they are explicitly
given, for $f\in \maB^E_c$ longitudinally smooth  by the formulas 
\begin{equation}\label{trace-regular-fol}
\tau^\nu_{\rm{reg}} (f) := \int_{F\times T} \tr_{E_{[\tm, \theta]}} (f[\tm, \tm , \theta] ) d\tm d\nu (\theta) 
\end{equation}
\begin{equation}\label{trace-average-fol}
\tau^\nu_{\rm{av}} (f) := \int_{F\times T} \sum_{\gamma\in \Gamma(\theta)} \tr_{E_{[\tm, \theta]}} (f[\tm, \tm \gamma, \theta] ) d\tm d\nu (\theta).
\end{equation}

 \end{corollary}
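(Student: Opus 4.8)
The plan is to prove Corollary \ref{prop:composition-of-traces} in two stages: first establish that $\tau^\nu_{\rm{reg}}$ and $\tau^\nu_{\rm{av}}$ are traces by composing with the representations, then compute the explicit formulas on longitudinally smooth elements of $\maB^E_c$. For the trace property, observe that $\tau^\nu$ on $W^*_{\nu}(G,E)$ and $\tau^\nu_{\maF}$ on $W^*_\nu(V,\maF;E)$ are already known to be positive semifinite normal faithful traces by Proposition \ref{traces}, and that $\pi_{\rm{reg}}\colon \maB^E_r \to W^*_{\nu}(G,E)$, $\pi_{\rm{av}}\colon \maB^E_m \to W^*_\nu(V,\maF;E)$ are $*$-homomorphisms of $C^*$-algebras. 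Hence the compositions $\tau^\nu\circ\pi_{\rm{reg}}$ and $\tau^\nu_{\maF}\circ\pi_{\rm{av}}$ are automatically positive traces (possibly unbounded/not everywhere finite, whence the footnote); one only needs to note that they are densely defined, which follows since $\maB^E_c$ is a dense $*$-subalgebra on which, as the formulas below will show, they take finite values.

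For the explicit formula in the regular case, I would take $f\in \maB^E_c$ longitudinally smooth and unwind the definition $\tau^\nu_{\rm{reg}}(f) = \tau^\nu(\pi_{\rm{reg}}(f)) = \int_T \tr(M_\chi \circ \pi^{{\rm reg}}_\theta(f) \circ M_\chi)\, d\nu(\theta)$. The operator $M_\chi \circ \pi^{{\rm reg}}_\theta(f) \circ M_\chi$ acts on $H_\theta = L^2(\tM\times\{\theta\}; \widehat E)$ with Schwartz kernel $(\tm,\tm')\mapsto \chi(\tm)\, f[\tm,\tm',\theta]\,\chi(\tm')$, which is continuous and compactly supported, hence trace class, with trace equal to the integral of the pointwise trace along the diagonal:
\[
\tr(M_\chi \circ \pi^{{\rm reg}}_\theta(f) \circ M_\chi) = \int_{\tM} \chi(\tm)^2\, \tr_{E_{[\tm,\theta]}}(f[\tm,\tm,\theta])\, d\tm = \int_{F} \tr_{E_{[\tm,\theta]}}(f[\tm,\tm,\theta])\, d\tm.
\]
Integrating over $T$ against $d\nu$ gives \eqref{trace-regular-fol}. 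For the average case, by the displayed Proposition preceding this corollary, $(\pi_{\rm{av}}(f))_\theta$ acts on $L^2(L_\theta) = L^2(\tM_\theta/\Gamma(\theta))$ with kernel $f_0([\tm,\theta],[\tm',\theta]) = \sum_{\gamma\in\Gamma(\theta)} f[\tm,\tm'\gamma,\theta]$; then $M_{\chi_\theta}\circ (\pi_{\rm{av}}(f))_\theta \circ M_{\chi_\theta}$ is again given by a continuous compactly supported kernel on the quotient leaf, trace class, with trace $\int_{F} f_0([\tm,\theta],[\tm,\theta])\, d\tm = \int_F \sum_{\gamma\in\Gamma(\theta)}\tr_{E_{[\tm,\theta]}}(f[\tm,\tm\gamma,\theta])\, d\tm$, and integrating yields \eqref{trace-average-fol}.

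The point requiring care — and I expect it to be the main technical obstacle — is the passage "trace of an integral operator with continuous compactly supported kernel equals the integral of the kernel on the diagonal." On $\tM$ itself this is the classical fact, but in the average case one works on the quotient manifold $\tM_\theta/\Gamma(\theta)$, and one must check that $f_0$ descends to a well-defined continuous compactly supported (in fact uniformly near-diagonal supported) kernel on $L_\theta\times L_\theta$, that $M_{\chi_\theta}$ truncates it to an honest compactly supported kernel, and that the Lidskii-type identity applies. This was in fact already verified in the boundedness estimate inside the preceding proposition (finiteness of the sum, support within a uniform neighborhood of the diagonal), so here I would simply invoke it: the kernel is continuous, compactly supported after multiplication by $\chi_\theta$, hence of trace class with trace given by the diagonal integral, and the pointwise trace $\tr_{E_{[\tm,\theta]}}$ is the fiberwise trace on $\End(E_{[\tm,\theta]})$. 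A secondary routine point is Borel measurability in $\theta$ of the integrands, which follows from the longitudinal smoothness and continuity of $f$ together with the measurable structure on the Borel field of Hilbert spaces; this is needed to make the outer integral $\int_T \cdots\, d\nu(\theta)$ meaningful and has essentially already been dealt with when checking that $\pi_{\rm{reg}}(f)$ and $\pi_{\rm{av}}(f)$ lie in the respective von Neumann algebras.
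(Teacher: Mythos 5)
Your proposal is correct and follows essentially the same route as the paper: the trace property comes for free from composing a von Neumann trace with a $*$-homomorphism, the regular formula is immediate (the paper calls it "tautological"), and the average formula is obtained exactly as you do it — identify the Schwartz kernel $f_0([\tm,\theta],[\tm',\theta])=\sum_{\gamma\in\Gamma(\theta)}f[\tm,\tm'\gamma,\theta]$ of $[\pi_{\rm av}(f)]_\theta$, note that the $M_{\chi_\theta}$-truncated kernel is supported in $F\times F$ inside $L_\theta\times L_\theta$, and integrate the fiberwise trace along the diagonal. Your extra remarks on trace-class justification and measurability are consistent with (and no less rigorous than) the paper's treatment.
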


\begin{proof} We only need to show the two formulas \eqref{trace-regular-fol} and \eqref{trace-average-fol}.
The first one is tautological, so  we only sketch the proof of the second one. 
Let then $f\in \maB_c$ longitudinally smooth be fixed. The operator $[\pi_{\rm{av}} (f)]_\theta$ acts on $L^2(L_\theta, E)$ with Schwartz kernel $f_0$ given by
$$
f_0([\tm, \theta], [\tm', \theta]) = \sum_{\gamma\in \Gamma(\theta)} f[\tm, \tm'\gamma,\theta].
$$
Therefore, the operator $M_\chi [\pi_{\rm{av}} (f)]_\theta M_\chi$ has Schwartz kernel supported in $F\times F$ viewed in $L_\theta\times L_\theta$. Recall that $L_\theta$ is identified with $\tM/\Gamma(\theta)$. We deduce 
$$
\tau^\nu [\pi_{\rm{av}} (f)] = \int_{F\times T} f_0([\tm, \theta], [\tm, \theta]) d\mu_\theta(\tm) d\nu(\theta),
$$
with $d\mu_\theta(\tm)$ being the measure induced by $d\tm$ on the leaf through $\theta$. Whence, the formula is readily deduced.
\end{proof}

In the sequel we shall also denote by $\tau^\nu_{{\rm reg}}$ the resulting trace
on the maximal $C^*$-algebra $\maB^E_m$, obtained via the natural epimorphism
$\maB^E_m \to \maB^E_r$.
 
\begin{remark}
The proof of the tracial property   of 
$
\tau^\nu_{\rm{reg}}$
 and  $ \tau^\nu_{\rm{av}}$
can also be  carried out  directly. 
Here are the details
 (we only treat the averaged trace $\tau^\nu_{\rm{av}}$ and for
  simplicity we take $E$ equal to the product line bundle). Let $f, f'$ be two elements of $C_c(G)$. We have:
$$
(f*f') [\tm,\tm',\theta] = \int_F \sum_{\alpha\in \Gamma} f[\tm, \tm''\alpha, \theta] f' [\tm''\alpha, \tm',\theta] d\tm''.
$$
Hence we deduce
\begin{eqnarray*}
\tau^\nu_{\rm{av}} (f*f') & = & \int_{F\times F \times T} \sum_{\gamma\in \Gamma(\theta)} \sum_{\alpha\in \Gamma} f[\tm, \tm'\alpha, \theta] f' [\tm'\alpha, \tm \gamma,\theta] d\tm'  d\tm d\nu(\theta)\\
& = & \int_{F\times F \times T} \sum_{\gamma\in \Gamma(\theta)} \sum_{\alpha\in \Gamma} f' [\tm', \tm \gamma \alpha^{-1}, \alpha \theta] f[\tm \alpha^{-1}, \tm', \alpha \theta]d\tm'  d\tm d\nu(\theta)\\
& = &  \int_{F\times F} \sum_{\alpha\in \Gamma} \int_T \sum_{\gamma'\in \Gamma(\theta ')} f' [\tm' {\gamma'}^{-1}, \tm \alpha^{-1}, \theta '] f[\tm \alpha^{-1}, \tm', \theta ']d\tm'  d\tm d\nu(\theta ')\\
& = & \int_{F\times T} \sum_{\gamma'\in \Gamma(\theta ')} (f'*f) [\tm {\gamma'} ^{-1}, \tm ', \theta'] d\tm'  d\tm d\nu(\theta ').
\end{eqnarray*}
Now note that since $\gamma '\in \Gamma(\theta ')$, we have
$$
(f'*f) [\tm {\gamma'} ^{-1}, \tm ', \theta'] = (f'*f) [\tm , \tm ' \gamma'  , \theta'].
$$
Therefore, we get
$$ 
\tau^\nu_{\rm{av}} (f*f') = \tau^\nu_{\rm{av}} (f'*f).
$$
\end{remark}
%

\begin{proposition}\label{extended-traces}
\begin{enumerate}
\item The trace $\tau^\nu_{\rm{reg}}$  induces a group homomorphism  $\tau^\nu_{\rm{reg},*} : K_0 (\maB^E_r)\longrightarrow \RR.
$
\item The  trace $\tau^\nu_{\rm{av}}$  induces a group homomorphism  
$
\tau^\nu_{\rm{av},*} : K_0 (\maB^E_m) \longrightarrow \RR.$
\end{enumerate}
\end{proposition}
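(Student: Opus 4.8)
The plan is to reduce everything to the general fact that a positive, semifinite, normal trace on a $C^*$-algebra induces a group homomorphism from $K_0$ to $\reals$. We have established in Proposition~\ref{traces} that $\tau^\nu$ is a positive, semifinite, normal, faithful trace on $W^*_\nu(G;E)$ and that $\tau^\nu_\maF$ is one on $W^*_\nu(V,\maF;E)$; Corollary~\ref{prop:composition-of-traces} then gives us the pulled-back traces $\tau^\nu_{\rm{reg}} = \tau^\nu \circ \pi_{\rm{reg}}$ on $\maB^E_r$ and $\tau^\nu_{\rm{av}} = \tau^\nu_\maF \circ \pi_{\rm{av}}$ on $\maB^E_m$. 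These need not be finite, so the argument has to go through the standard machinery for unbounded traces on $K_0$ rather than the trivial finite-trace case.

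First I would recall the construction of $\tau_*$ for a densely-defined lower-semicontinuous (normal) semifinite trace $\tau$ on a $C^*$-algebra $A$. Let $\mathfrak{m}_\tau$ be the ideal of definition of $\tau$ (the linear span of the positive cone on which $\tau$ is finite), which is a dense two-sided ideal. Given a projection $p \in M_n(A^+)$ — where $A^+$ is the unitalization — one writes $p = q + s$ with $s$ a scalar matrix and $q \in M_n(\mathfrak{m}_\tau \cup \complexs)$ appropriately, or more simply one uses that any class in $K_0(A)$ is represented by a difference $[p]-[q]$ of projections in $M_\infty(A^+)$ with $p - q \in M_\infty(\mathfrak{m}_\tau)$; then $\tau_*([p]-[q]) := (\tau\otimes\trace)(p-q)$. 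One checks this is well-defined on homotopy classes (using that the trace is norm-continuous on $\mathfrak{m}_\tau$ and that homotopies of projections can be implemented by unitaries close to the identity), additive under direct sum, and independent of the representative. This is exactly the content of, e.g., the discussion of traces and $K$-theory in Connes' work or in Blackadar's book, and for the regular trace $\tau^\nu_{\rm{reg}}$ this is already classical in the foliation literature (cf. \cite{Co}, \cite{BenameurFack}); the same formal argument applies verbatim to $\tau^\nu_{\rm{av}}$ since it too is positive, normal and semifinite.

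Concretely, for part (1) the map is $\tau^\nu_{\rm{reg},*}([p]-[q]) = \tau^\nu\big((\pi_{\rm{reg}}\otimes\id)(p) - (\pi_{\rm{reg}}\otimes\id)(q)\big)$, using that $\pi_{\rm{reg}}$ extends to matrix algebras and to the unitalization, and that the difference lands in the domain of $\tau^\nu$; for part (2) one uses $\pi_{\rm{av}}$ and $\tau^\nu_\maF$ in the same way. The semifiniteness from Proposition~\ref{traces} is what guarantees the ideal of definition is dense, hence that every $K_0$-class has a representative in the domain. Additivity and well-definedness follow from the trace property and normality (lower semicontinuity suffices), exactly as in the finite case once one is working inside $\mathfrak{m}_\tau$.

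The only genuine subtlety — and the step I would treat most carefully — is precisely the density/semifiniteness bookkeeping: one must make sure that for an arbitrary idempotent over the unitalized $C^*$-algebra, the relevant matrix entries that are not scalar multiples of the identity actually lie in the trace ideal, and that the standard reduction of a $K_0$-class to such a normalized form is available. For $\maB^E_m$ and $\maB^E_r$ this holds because $\maB^E_c$ (compactly supported longitudinally smooth kernels) is dense and, by the explicit formulas \eqref{trace-regular-fol}–\eqref{trace-average-fol}, is carried into the domain of the trace — indeed $\tau^\nu_{\rm{reg}}$ and $\tau^\nu_{\rm{av}}$ are finite on $\maB^E_c$. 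Thus $\maB^E_c$ sits inside the ideal of definition of each trace, which is therefore dense, and the general machinery applies. Everything else is the routine verification that the resulting map respects the relations defining $K_0$, which I would state rather than belabor.
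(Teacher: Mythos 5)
Your overall strategy is the same as the paper's: both arguments come down to representing each $K_0$-class by idempotents lying (up to scalars) in a dense subalgebra on which the trace is finite, and then using positivity, the tracial property and normality to get a well-defined additive map. The paper's proof is a two-line sketch of exactly this, so in spirit you are reproducing it.

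There is, however, a gap in the step you yourself flag as the delicate one. You justify the reduction of an arbitrary $K_0$-class to a representative with entries in the trace ideal $\mathfrak{m}_\tau$ by observing that $\maB^E_c$ is dense in $\maB^E_r$ (resp.\ $\maB^E_m$) and is contained in the domain of the trace. Density of the trace ideal is \emph{not} sufficient for this: a dense subalgebra of a $C^*$-algebra need not contain (approximants of) all idempotents, and its inclusion need not induce an isomorphism on $K_0$. What is actually needed, and what the paper's proof explicitly invokes, is that the subalgebra $L^1(W^*_\nu(G;E))\cap \maB^E_r$ (resp.\ $L^1(W^*_\nu(V,\maF;E))\cap \pi_{\rm av}(\maB^E_m)$) is dense \emph{and holomorphically closed} (stable under holomorphic functional calculus) in the ambient $C^*$-algebra; spectral invariance is the property that lets you perturb any idempotent over the unitalization into one whose non-scalar part lies in the subalgebra, and guarantees $K_0$ of the subalgebra coincides with $K_0$ of the $C^*$-algebra. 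Your proof should either establish this holomorphic closure or cite it; as written, the density argument alone does not close the loop. A second, smaller inaccuracy: the trace is not norm-continuous on $\mathfrak{m}_\tau$ (it is continuous for the $L^1$-norm, not the operator norm); homotopy invariance should instead be derived from the fact that sufficiently close idempotents are conjugate by an invertible of the form $1+(\text{ideal element})$ together with the tracial property.
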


 \begin{proof} 
 We only sketch the proof of this classical result:
one shows, for instance,  that $L^1(W^*_\nu (G;E))\cap \maB_r ^E$, with $L^1(W^*_\nu (G;E))$
the Schatten-ideal of $\tau^\nu$-trace class operators,   is dense holomorphically closed in $\maB_r ^E$.
Similarly $L^1(W^*_\nu (V,\maF;E))\cap \pi_{{\rm av}}(\maB_m ^E)$ is dense and holomorphically
closed in $\pi_{{\rm av}}(\maB_m ^E)$; this finishes the proof by using the definition of 
$
\tau^\nu_{av}$.

\end{proof}

\subsection{Compatibility with Morita isomorphisms}
The goal of this subsection is to prove the compatibility between the different traces defined
so far and the  isomorphisms induced in $K$-theory by Morita equivalence.

Recall the $C^*$-algebras $\maA_r$ and $\maA_m$ associated to the  groupoid $\maG:=T\rtimes \Gamma$.
Let $\maK$ denote as usual the $C^*$-algebra of compact operators on a Hilbert space.

\begin{proposition}
There are isomorphisms of $C^*$-algebras:
\begin{equation}\label{iso-morita}
\maB_r \simeq \maA_r\otimes \maK \;,\quad\quad \maB_m \simeq \maA_m\otimes \maK.
\end{equation}
\end{proposition}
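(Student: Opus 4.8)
The plan is to exhibit an explicit Morita equivalence bimodule relating $\maA_{\bullet}$ ($\bullet\in\{r,m\}$) to $\maB_{\bullet}$ and then to recognize $\maB_{\bullet}$ as $\maA_{\bullet}\otimes\maK$. Concretely, recall that $G=(\tM\times\tM\times T)/\Gamma$ and $\maG=T\rtimes\Gamma$, and that $\tM$ carries a free, proper $\Gamma$-action. The key point is that the groupoids $G$ and $\maG$ are \emph{Morita equivalent} in the sense of groupoid equivalence, via the linking space $Z:=(\tM\times T)/\Gamma\,$—or, more precisely, via $\tM\times T$ itself, which carries a left $G$-action and a right $\maG$-action that are free, proper, and transitive on the respective fibres. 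First I would spell out this equivalence: $\tM\times T$ has moment maps to $G^{(0)}=V=(\tM\times T)/\Gamma$ (the quotient projection) and to $\maG^{(0)}=T$ (the second projection), the $\maG$-action is $(\tm,\theta)\cdot(\gamma^{-1}\theta,\gamma)=(\tm\gamma,\gamma^{-1}\theta)$, and the $G$-action is $[\tm,\tm',\theta]\cdot(\tm',\theta)=(\tm,\theta)$. Freeness and properness of the $\Gamma$-action on $\tM$ give exactly the freeness/properness required of the groupoid actions, and the two quotient maps realize $V$ as $(\tM\times T)/\maG$ and $T$ as $G\backslash(\tM\times T)$.

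Next I would invoke the standard consequence of groupoid equivalence (Muhly--Renault--Williams): equivalent groupoids have Morita equivalent (strongly, hence $K$-theoretically) $C^*$-algebras, both in the reduced and in the full completions, so $\maB_r\sim_{\mathrm{Morita}}\maA_r$ and $\maB_m\sim_{\mathrm{Morita}}\maA_m$. To upgrade ``strongly Morita equivalent'' to the stated $\otimes\maK$ statement I would use that the imprimitivity bimodule here is \emph{countably generated} and that $\maA_{\bullet}$ is $\sigma$-unital (indeed unital: $\maA_m$ and $\maA_r$ are unital since $T$ is compact and $C(T)\rtimes\Gamma$ contains a unit), so by the Brown--Green--Rieffel stabilization theorem $\maB_{\bullet}\otimes\maK\cong\maA_{\bullet}\otimes\maK$; then I would observe that $\maB_{\bullet}$ is \emph{already stable}. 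Stability of $\maB_{\bullet}$ follows from its concrete description: $\maB_c=C_c(G)$ acts on the field $(H_\theta)_{\theta\in T}$ with $H_\theta=L^2(\tM\times\{\theta\})$, and each $H_\theta$ is an infinite-dimensional separable Hilbert space with the fibrewise structure respecting a global $\maK$-action; more directly, choosing the fundamental domain $F\subset\tM$ identifies $L^2(\tM)\cong\ell^2(\Gamma)\otimes L^2(F)$ and one checks that $\maB_r$ (resp. $\maB_m$) is generated by $\pi^{\mathrm{reg}}(\maA_r)$ (resp. $\maA_m$) tensored with the compacts $\maK(L^2(F))$, i.e. $\maB_{\bullet}\cong\maA_{\bullet}\otimes\maK(L^2(F))\cong\maA_{\bullet}\otimes\maK$. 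Alternatively, and perhaps cleaner to write: the linking algebra of the equivalence contains $\maB_{\bullet}$ as a full corner, $\maB_{\bullet}$ is $\sigma$-unital, and $\maA_{\bullet}$ is unital; applying BGR to the full corner embedding $\maA_{\bullet}\hookrightarrow$ (linking algebra) and noting $\maB_\bullet$ is non-unital with a strictly positive element gives $\maB_{\bullet}\otimes\maK\cong\maA_\bullet\otimes\maK$, and one identifies $\maB_\bullet\cong\maB_\bullet\otimes\maK$ because $\maB_\bullet$ is stable.

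I expect the main obstacle to be the \emph{full} (maximal) case rather than the reduced one. For the reduced $C^*$-algebras the isomorphism $\maB_r\cong\maA_r\otimes\maK$ can be seen essentially by hand using the fundamental domain $F$ and the identification of $\pi^{\mathrm{reg}}_\theta$ on $L^2(\tM)$ with the matrix form over $\ell^2(\Gamma)\otimes L^2(F)$, matching the regular representation of $\maG$; this is a bookkeeping argument and the traces computed in Corollary~\ref{prop:composition-of-traces} make the correspondence transparent. For $\maB_m\cong\maA_m\otimes\maK$ one cannot argue representation-by-representation, so one genuinely needs the universal property: that the groupoid equivalence induces a bijection between (isomorphism classes of) non-degenerate $*$-representations of $C_c(G)$ and of $C_c(\maG)$ compatible with $L^1$-completions, so that the maximal completions correspond. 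Here I would rely on the Muhly--Renault--Williams theorem in the form: if $\maG_1$ and $\maG_2$ are equivalent groupoids via $(Z,\ldots)$ then $C_c(Z)$ completes to a $C^*(\maG_1)$--$C^*(\maG_2)$ imprimitivity bimodule, hence $C^*(\maG_1)\otimes\maK\cong C^*(\maG_2)\otimes\maK$; combined with stability of $\maB_m=C^*(G)$ this yields the claim. The subtlety to be careful about—and worth a sentence in the write-up—is that the isotropy groups $\Gamma(\theta)$ may be nontrivial on a positive-measure set (as emphasized in Examples~\ref{example 1}--\ref{example 2}), but this is irrelevant for the Morita equivalence, which only uses freeness and properness of the $\Gamma$-action on $\tM$ (equivalently of the $G$- and $\maG$-actions on the linking space $\tM\times T$), not any freeness of the action on $T$.
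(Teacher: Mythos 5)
Your route is genuinely different from the paper's. The paper fixes $\tm_0\in\tM$, observes that the restriction $G(\tm_0)$ of $G$ to the complete transversal given by the image of $\{\tm_0\}\times T$ in $V$ is isomorphic as a groupoid to $\maG$ via $[\tm_0,\tm_0\alpha,\theta]\mapsto(\theta,\alpha^{-1})$, and then invokes the Hilsum--Skandalis stability theorem (in both its reduced and maximal versions) to conclude $\maB_\bullet\cong C^*_\bullet(G(\tm_0))\otimes\maK\cong\maA_\bullet\otimes\maK$ in one stroke. You instead exhibit $\tM\times T$ as a $(G,\maG)$-equivalence and run the Muhly--Renault--Williams machine to get strong Morita equivalence, then try to upgrade to the asymmetric statement via Brown--Green--Rieffel plus stability of $\maB_\bullet$. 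The equivalence bimodule you describe is correct (modulo a harmless slip in the right-action formula: with the paper's conventions $r(\theta,\gamma)=\theta$, the arrow composable with $(\tm,\theta)$ is $(\theta,\gamma)$, acting by $(\tm,\theta)\cdot(\theta,\gamma)=(\tm\gamma,\gamma^{-1}\theta)$), and MRW does give $\maB_\bullet\otimes\maK\cong\maA_\bullet\otimes\maK$ for both completions. What your framing buys is conceptual clarity and independence of any choice of transversal; what the paper's citation buys is precisely the asymmetric form of the conclusion.

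The genuine gap is the last step in the maximal case: you need $\maB_m\cong\maB_m\otimes\maK$, and the only justification you offer for stability of $\maB_m$ is the fundamental-domain decomposition $\maB_\bullet\cong\maA_\bullet\otimes\maK(L^2(F))$ --- which is the statement you are trying to prove, and which you yourself concede cannot be checked representation-by-representation for the maximal completion. So the ``alternatively'' paragraph is circular exactly where it matters. BGR only ever yields $\maB_m\otimes\maK\cong\maA_m\otimes\maK$; since $\maA_m$ is unital, removing the $\otimes\maK$ on the left is equivalent to stability of $\maB_m$, which is a nontrivial assertion (it is the content of Hilsum--Skandalis' ``th\'eor\`eme de stabilit\'e'' restricted to a complete transversal, and their Remarque~5 is what extends it to the full $C^*$-algebra). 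To close the gap you should either (i) cite that theorem directly, as the paper does, or (ii) carry out the fundamental-domain matrix decomposition at the level of the $*$-algebras, $\maB_c\cong\maA_c\odot\maK_c(L^2 F)$, and then argue that the maximal $C^*$-norms agree --- which amounts to an imprimitivity theorem showing every representation of $\maB_c$ is induced from one of $\maA_c$, i.e.\ exactly the missing maximal-norm input. The reduced case of your direct computation is fine.
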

\begin{proof}
We fix $\tm_0\in \tM$ and consider the  subgroupoid $G(\tm_0)$ consisting of the elements which start and end in the image of $\{\tm_0\}\times T$ in $V$:
$$
G(\tm_0) = \{ [\tm_0, \tm_0\alpha, \theta], \theta\in T \text{ and } \alpha\in \Gamma\}.
$$
Notice that the composition in $
G(\tm_0)$ can be expressed in the following way:
$$[\tm_0, \tm_o \alpha^\prime,\theta^\prime]\circ [\tm_0,\tm_o \alpha,\alpha^\prime \theta^\prime]=[\tm_0,\tm_0\alpha\alpha^\prime,
\theta^\prime]\,.$$
Then 
 there is a {\it groupoid isomorphism} between $G(\tm_0)$ and the groupoid $\cG$ given by
$$
[\tm_0, \tm_0\alpha, \theta] \longmapsto (\theta, \alpha^{-1}).
$$

In particular the reduced (respectively  maximal) $C^*$-algebras  associated to $G(\tm_0)$ and $\maG$ are isomorphic:
$C^*_r(G(\tm_0))\simeq \maA_r$ (respectively $C^*_m ( G(\tm_0))\simeq \maA_m$).
Now the main result in \cite{Hilsum-Skandalis-stabilite}, see also
\cite{Moulay-triangulation}, together with the fact that the image of $\{\tm_0\}\times T$ in $V$ intersects every leaf of the foliation, we deduce that the stable $C^*$-algebra $\maB_r$ is isomorphic to the tensor product $C^*$-algebra $\maA_r\otimes \maK$. In the same way, the $C^*$-algebra $\maB_m$ is isomorphic to the tensor product $C^*$-algebra $\maA_m\otimes \maK$, using  the maximal version of the stability theorem which is valid as pointed out in \cite{Hilsum-Skandalis-stabilite}.

\end{proof}

Denote by $\maM_{\rm{r}}:K_0(\maA_r) \to K_0(\maB_r)$ and $\maM_{\rm{m}}:K_0(\maA_m) \to K_0(\maB_m)$ the isomorphisms induced in $K$-theory by the isomorphisms \eqref{iso-morita}

\begin{proposition}\label{prop:morita-compatible}   The following diagrams are commutative

\begin{picture}(400,70)

\put(60,30){$K_0(\maA_r)$}
\put(95,40){$\nearrow$}
\put(95,20){$\searrow$}
\put(75,45){$\maM_{\rm{r}}$}
\put(75,12){$\tau^\nu_{\rm{reg},*}$}

\put(100,55){$K_0(\maB_r)$}
\put(120,30){$\downarrow$}
\put(120,5){$\RR$}
\put(125,30){$\tau^\nu_{\rm{reg},*}$}

\put(230,30){$K_0(\maA_m)$}
\put(275,40){$\nearrow$}
\put(275,20){$\searrow$}
\put(255,45){$\maM_{\rm{m}}$}
\put(255,12){$\tau^\nu_{\rm{av},*}$}

\put(280,55){$K_0(\maB_m)$}
\put(300,30){$\downarrow$}
\put(300,5){$\RR$}
\put(305,30){$\tau^\nu_{\rm{av},*}$}

\end{picture}

\end{proposition}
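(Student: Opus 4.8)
The plan is to reduce the commutativity of each triangle to a single computation on a suitable system of projections, using the explicit formulas for the traces obtained in Corollary \ref{prop:composition-of-traces}. First I would recall what the Morita isomorphisms $\maM_{\rm r}$ and $\maM_{\rm m}$ actually do on $K$-theory: the stable isomorphisms $\maB_r\simeq\maA_r\otimes\maK$, $\maB_m\simeq\maA_m\otimes\maK$ of \eqref{iso-morita} arise from a groupoid equivalence between $\maG=T\rtimes\Gamma$ and the monodromy groupoid $G$, implemented concretely by the subgroupoid $G(\tm_0)$ and the inclusion $\{\tm_0\}\times T\hookrightarrow V$. Under this identification, $\maM$ is induced by the embedding $\maA\otimes e\hookrightarrow\maA\otimes\maK\simeq\maB$ for any rank-one projection $e\in\maK$; equivalently, one chooses a bump function $\phi$ on $\tM$ supported near $\tm_0$ with $\int_{\tM}|\phi(\tm)|^2\,d\tm=1$ and sends $f\in\maA_c$ to the kernel $k_f[\tm,\tm',\theta]=\phi(\tm)\,f(\theta,\alpha^{-1})\,\overline{\phi(\tm'\alpha)}$ (summed appropriately over the relevant $\alpha$), which is the standard "cut-down" implementing the Morita equivalence. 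Since everything is natural in $K_0$ and both sides are defined on $K_0$, it suffices to check the trace identity on a single projection $p=[e_{ij}]\otimes q$ — indeed on the standard generators — so one really only needs to match numbers.

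The key step is then the two compatibility identities at the level of traces on the dense subalgebras:
\begin{equation}\label{eq:trace-match}
\tau^\nu_{\rm reg}\bigl(k_f\bigr)=\tau^\nu_{\rm reg}(f)\quad\text{for }f\in\maA_c,\qquad
\tau^\nu_{\rm av}\bigl(k_f\bigr)=\tau^\nu_{\rm av}(f)\quad\text{for }f\in\maA_c,
\end{equation}
where on the left the foliated traces \eqref{trace-regular-fol}, \eqref{trace-average-fol} are used and on the right the groupoid traces \eqref{trace-regular}, \eqref{trace-average} are used. For the regular case I would plug the kernel $k_f$ into \eqref{trace-regular-fol}: the integrand becomes $\int_{F\times T}\phi(\tm)\overline{\phi(\tm)}\,f(\theta,e)\,d\tm\,d\nu(\theta)$, and the normalization $\int|\phi|^2=1$ (choosing $F\ni\tm_0$ and $\supp\phi\subset F$, which one may arrange) collapses this to $\int_T f(\theta,e)\,d\nu(\theta)=\tau^\nu_{\rm reg}(f)$. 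For the averaged case the same substitution into \eqref{trace-average-fol} gives $\int_{F\times T}\sum_{\gamma\in\Gamma(\theta)}\phi(\tm)\,f(\theta,\gamma)\,\overline{\phi(\tm\gamma)}\,d\tm\,d\nu(\theta)$; here one uses that for $\gamma\in\Gamma(\theta)$ and $\tm$ in the interior of the fundamental domain $F$ the overlap $\supp\phi\cap\supp(\phi(\cdot\gamma))$ is nonempty only for $\gamma=e$ unless one is careful — so in fact the cleanest choice is to take $\phi$ supported in a set small enough that $\phi(\tm)\overline{\phi(\tm\gamma)}\equiv 0$ for $\gamma\ne e$, which is possible since $\Gamma$ acts freely and properly on $\tM$. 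With that choice the sum over $\Gamma(\theta)$ reduces to the $\gamma$-terms surviving, and one recovers $\int_T\bigl[\sum_{\gamma\in\Gamma(\theta)}f(\theta,\gamma)\bigr]d\nu(\theta)=\tau^\nu_{\rm av}(f)$ — wait, that is not right if only $\gamma=e$ survives. The correct remedy is the standard one: the Morita cut-down is not literally a rank-one compression but must be taken so that it is $\maG(\theta)$-invariant in the averaged picture; concretely one symmetrizes $\phi$ over $\Gamma(\theta)$, or equivalently one observes that the averaged von Neumann trace is computed after passing to $\tM/\Gamma(\theta)$ where the fundamental domain $F$ maps with the multiplicity the isotropy demands, and this multiplicity precisely reinstates the sum $\sum_{\gamma\in\Gamma(\theta)}$.

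The main obstacle, and the point deserving genuine care, is exactly this last subtlety: making the Morita equivalence compatible with the passage to the isotropy quotients in the averaged von Neumann algebra $W^*_\nu(V,\maF;E)$, since $G(\tm_0)$ "sees" the full group $\Gamma$ while the leaves see only $\Gamma/\Gamma(\theta)$. I would handle this by checking the compatibility first on the sub-locus where $\Gamma(\theta)=\{e\}$ (where it is the classical covering statement, essentially Atiyah's normalization), and then extending by the explicit formula \eqref{trace-average-fol}, noting that the extra isotropy sum on the $\maB_m$-side matches the extra sum $\sum_{g\in\Gamma(\theta)}$ in the definition \eqref{trace-average} of $\tau^\nu_{\rm av}$ on $\maA_m$ term-by-term, because the groupoid isomorphism $G(\tm_0)\simeq\maG$ of the previous Proposition carries $[\tm_0,\tm_0\gamma,\theta]$ with $\gamma\in\Gamma(\theta)$ to $(\theta,\gamma^{-1})$ with $\gamma^{-1}\in\Gamma(\theta)$, i.e. it is an isomorphism of groupoids that is the identity on isotropy. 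Once \eqref{eq:trace-match} is established on $\maA_c$, density and holomorphic closedness (Proposition \ref{extended-traces}) upgrade it to the level of $K_0$, and the two triangles commute because $\tau^\nu_{\rm reg,*}\circ\maM_{\rm r}$ and $\tau^\nu_{\rm av,*}\circ\maM_{\rm m}$ agree with $\tau^\nu_{\rm reg,*}$ and $\tau^\nu_{\rm av,*}$ on generators of $K_0(\maA_r)$ and $K_0(\maA_m)$ respectively.
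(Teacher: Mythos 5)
Your overall strategy is the same as the paper's: build an explicit $*$-homomorphism $T:\maA_c\to\maB_c$ by cutting down with a bump function $\varphi$ supported in a fundamental domain, observe that it implements the Morita isomorphisms, and then verify $\tau^\nu_{\rm reg}\circ T=\tau^\nu_{\rm reg}$ and $\tau^\nu_{\rm av}\circ T=\tau^\nu_{\rm av}$ directly on $\maA_c$. However, your execution of the averaged-trace computation contains a genuine error, and the ``remedy'' you propose for it does not repair the argument.

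The correct cut-down kernel carries \emph{two independent} group translations, $T(\xi)[\tm,\tm',\theta]=\sum_{\gamma,\gamma'\in\Gamma}\varphi(\tm\gamma)\,\overline{\varphi(\tm'\gamma')}\,\xi(\gamma^{-1}\theta,\gamma^{-1}\gamma')$. When you evaluate $\tau^\nu_{\rm av}(T(\xi))=\int_{F\times T}\sum_{\gamma_0\in\Gamma(\theta)}T(\xi)[\tm,\tm\gamma_0,\theta]\,d\tm\,d\nu(\theta)$ with $\tm$ in the support of $\varphi$, the support condition forces $\gamma=e$ in the first slot and $\gamma'=\gamma_0^{-1}$ in the second, so the surviving term is $|\varphi(\tm)|^2\,\xi(\theta,\gamma_0^{-1})$; summing over the group $\Gamma(\theta)$ and reindexing gives exactly $\tau^\nu_{\rm av}(\xi)$, with \emph{no} terms lost. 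In your version you collapsed the kernel's summation index onto the isotropy index of the trace, obtained $\sum_{\gamma\in\Gamma(\theta)}\varphi(\tm)f(\theta,\gamma)\overline{\varphi(\tm\gamma)}$, correctly noticed that this kills everything except $\gamma=e$, and then concluded that something must be fixed. Nothing needs fixing: the computation you set up is simply not the computation that arises from the cut-down homomorphism. The patch you offer --- ``symmetrize $\varphi$ over $\Gamma(\theta)$'' --- is not meaningful: $\varphi$ is a single function on $\tM$ independent of $\theta$, the isotropy group varies with $\theta$, and a $\Gamma(\theta)$-invariant bump would in general fail to be compactly supported and would not yield an element of $\maB_c$ at all. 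Likewise, the appeal to ``checking first where $\Gamma(\theta)=\{e\}$ and extending by multiplicity'' is not an argument, since the whole content of the averaged statement is precisely what happens on the locus of nontrivial isotropy. To complete the proof you should write out the double-sum kernel, redo the averaged-trace substitution as above, and also supply (or at least cite precisely) the verification that $T$ is multiplicative, $*$-preserving, and bounded for both the reduced norm and the $L^1$-norm, so that it does induce the maps $\maM_{\rm r}$ and $\maM_{\rm m}$ on $K_0$.
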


\begin{proof}
Let us identify $T$ with a  fiber of the flat bundle $V=\tM\times _\Gamma T \to M$. Let $\Omega$ be an open connected submanifold of $\tM$ contained in a fundamental domain $F$ of the action of $\Gamma$. Let $U$ be the projection  in $V$ of $\Omega\times T$.  Then $U \to T$ is an open neighborhood of $T$ in $V$ such that the induced  foliation on $U$ is given by the fibres of $U\to T$. 
The subgroupoid $G_U^U$ of $G$ consisting
 of homotopy classes of paths drawn in leaves, starting and ending in $U$, can be describe as
$$
G_U^U = \{[\tm, \tm' \gamma, \theta] \in \frac{\Omega\times \tM\times T}{\Gamma}, [\tm,\theta]\in U \text{ and }[\tm ' \gamma, \theta]\in U\}.
$$
An easy inspection of the groupoid laws in $G_U^U$ shows that  the bijection  
$$
[\tm, \tm' \gamma, \theta] \longmapsto (\tm,\tm',\theta,\gamma^{-1})\in \Omega\times \Omega \times (T\rtimes \Gamma),
$$
is an isomorphism  of groupoids, so that
the reduced (resp. maximal) $C^*$-algebra of $G_U^U$ is isomorphic to $\maK(L^2 \Omega)\otimes [C(T) \rtimes_r \Gamma]$ (resp. $\maK(L^2\Omega)\otimes [C(T) \rtimes_m \Gamma]$). Recall that $\maK(L^2\Omega)$ denotes the nuclear $C^*$-algebra of compact operators in the Hilbert space $L^2\Omega$. 

If we now fix a continuous   compactly supported 
function $\varphi$ on $\Omega$ with $L^2$ norm equal to $1$ then for any continuous  compactly supported function $\xi\in \maA_c$, we set:
$$
T(\xi)[\tm,\tm', \theta] := \sum_{\gamma, \gamma'\in \Gamma} \varphi (\tm \gamma) {\overline{\varphi (\tm ' {\gamma '})}} \xi (\gamma^{-1} \theta, \gamma^{-1}  {\gamma'}).
$$
Since $\varphi$ is supported in a fundamental domain, it is clear that only one couple $(\gamma, \gamma')$ gives a non trivial contribution. Moreover, the function $T(\xi)$ is well defined on $G$ and is supported inside $G_U^U$. The map $T$ is a $*$-homomorphism from the algebra $\maA_c$ to the algebra $\maB_c $ which implements the Morita isomorphisms $\maM_{\rm{r}}$ and $\maM_{\rm{m}}$ in $K$-theory. Indeed, we have:
\begin{eqnarray*}
T(\xi) * T(\xi') [\tm,\tm', \theta] & = & \int_\tM T(\xi)[\tm, \tm'', \theta] T(\xi') [\tm '', \tm',\theta] d\tm''\\
& = & \sum_{\alpha,\alpha',\beta, \beta' \in \Gamma} \varphi (\tm \alpha){\overline{\varphi (\tm ' \beta')}} \int_\tM {\overline{\varphi (\tm'' \beta)}} \varphi (\tm'' \alpha')   d\tm'' \times \\
& &  \xi (\alpha^{-1} \theta, \alpha^{-1}  \beta) \xi' ({\alpha'} ^{-1}\theta,{ \alpha'} ^{-1} {\beta'})\\
& = & \sum_{\alpha, \alpha'\in \Gamma} \varphi (\tm \alpha) {\overline{\varphi (\tm ' {\alpha '})}} \sum_{\beta \in \Gamma} \xi (\alpha^{-1} \theta, \alpha^{-1}
 \beta) \xi ' (\beta^{-1} \theta, \beta^{-1} {\alpha'}) \\
& = & \sum_{\alpha, \alpha'\in \Gamma} \varphi (\tm \alpha) {\overline{\varphi (\tm ' {\alpha '})}} (\xi * \xi') (\alpha^{-1}\theta, \alpha^{-1} {\alpha'})\\
& = & T(\xi * \xi') [\tm, \tm', \theta].
\end{eqnarray*}
Hence, we conclude that 
$$
T(\xi) * T(\xi') = T(\xi * \xi').
$$
In a similar  way one checks that $(T(\xi))^*= T(\xi^*)$ .\\
 $T$  extends to a morphism between the corresponding reduced $C^*$-algebras. More precisely, let $f\in L^2(\tM)$, then the regular representation $\pi^{{\rm reg}}$ is given for any $\tm\in \tM$  by:
$$
(\pi^{{\rm reg}} T(\xi))_\theta (f) (\tm) = \int_\tM \sum_{\gamma' , \gamma\in \Gamma} \varphi (\tm \gamma) 
{\overline{\varphi (\tm' {\gamma'})}} \xi (\gamma^{-1}\theta, \gamma^{-1}{\gamma'}) f(\tm') d\tm '.
$$
Denote by $g : \Gamma \to \CC$ the function given by
$$
g (\gamma^\prime) :=  \int_{\tM}  {\overline{\varphi (\tm' {\gamma'}^{-1})}} f(\tm') d\tm',
$$
then, one easily shows that the function $g$ belongs to $\ell^2(\Gamma)$ and that its $\ell^2$-norm can be estimated as follows:
\begin{eqnarray*}
\|g \|_2^2 &=& \sum_{\gamma^\prime}| g(\gamma^\prime)|^2=\sum_{\gamma^\prime} \left| \int_{\tM} \overline{\phi(\tm^\prime {\gamma^\prime}^{-1})}f(\tm^\prime)\,d\tm \right|^2\\
&=& \sum_{\gamma^\prime} \left| \int_{F\gamma^\prime}  \overline{\phi(\tm^\prime {\gamma^\prime}^{-1})} f(\tm^\prime)\,d\tm^\prime \right|^2\leq
\sum_{\gamma^\prime} \int_{F\gamma^\prime} |f(\tm^\prime)|^2 d\tm^\prime=\|f\|^2_2
\end{eqnarray*}

If we recall  the regular representation of the algebra $\maA_c$, denoted  also by $\pi^{{\rm reg}}$,  then, using $g$  we can write:
\begin{equation*}
(\pi_{{\rm reg}} T(\xi))_\theta  (f) (\tm) = \sum_{\gamma\in\Gamma} \phi(\tm \gamma) \sum_{\gamma^\prime\in \Gamma}
\xi(\gamma^{-1}\theta,\gamma^{-1}\gamma^\prime) g({\gamma^\prime}^{-1} )=\sum_{\gamma\in\Gamma}
\phi(\tm\gamma) (\pi^{{\rm reg}} _\theta (\xi)) (g) (\gamma^{-1})
\end{equation*}

Therefore, if we compute the $L^2$-norm of the function $(\pi^{{\rm reg}} T(\xi))_\theta  (f)$  we get:
\begin{eqnarray*}
\| (\pi_{{\rm reg}} T(\xi))_\theta  (f) \|_2^2 & = & 
\int_{\tM}\left|  \sum_{\gamma\in \Gamma} \phi(\tm \gamma)  \pi^{{\rm reg}}_\theta (\xi)  (g) (\gamma^{-1}) \right|^2 d\tm \\
&=& \sum_{\alpha\in \Gamma} \int_{F\alpha^{-1}} \left| \phi(\tm \alpha) \pi^{{\rm reg}}_\theta (\xi) (g) (\alpha^{-1}) \right|^2 d\tm\\
&=& \sum_{\alpha\in\Gamma} \left| \pi^{{\rm reg}}_\theta (\xi) (g) (\alpha^{-1}) \right|^2 \int_{F\alpha^{-1}} |\phi(\tm \alpha)|^2 d\tm\\
&=& \|\pi^{{\rm reg}}_\theta (\xi) (g)\|^2_2\\
&\leq&  \|\xi\|_{\maA_r} ^2  \|g\|_2^2  \leq  \|\xi\|_{\maA_r} ^2 \|f\|_2^2;
\end{eqnarray*}
Summarizing: $\sup_{\theta\in T} \|(\pi^{{\rm reg}}(T\xi))_\theta \| \leq \|\xi\|_{\maA_r}$ so that $\|T(\xi)\|_{\maB_r} \leq \|\xi\|_{\maA_r}$ as required.\\
It thus remains to show compatibility of the traces with respect to the homomorphism $T$, and only on the compactly supported functions. Let us  start with the regular trace. We have:
\begin{eqnarray*}
\tau^\nu_{\rm{reg}} (T(\xi)) & = & \int_{F\times T} T(\xi) [\tm, \tm, \theta] d\tm d\nu(\theta)\\
& = & \int_T \xi ( \theta, 1) \int _\tM |\varphi(\tm)| ^2 d\tm d\nu (\theta)\\
& = & \int_T \xi ( \theta, 1) d\nu (\theta)\\
& = & \tau^\nu_{\rm{reg}} (\xi) .
\end{eqnarray*}
Note that when $\tm\in \Omega$, only $\gamma= 1$ contributes to the sum defining $T(\xi)$.

Let us now check, briefly, that $T$ induces a morphism between the maximal $C^*$-algebras.
It suffices to show that $T$ is continuous with respect to the $L^1$-norms on the groupoids
$\maG$ and $G$. 
But for $\xi\in \maA_c$ and for any $\tm\in\Omega$ we have 
\begin{eqnarray*}
\int_{\tM} | (T\xi) [\tm,\tm^\prime,\theta]|\,d\tm^\prime & \leq & |\phi (\tm)|\int_{\tM} |\phi(\tm^\prime)|\,d\tm^\prime\,\left(
\sum_{\gamma^\prime\in \Gamma}  |\xi(\theta,\gamma^\prime)|\right)\\
& \leq & \|\phi\|_1 \|\phi \|_{\infty} \|\xi\|_1 
\end{eqnarray*}
Hence, 
$$ \|T(\xi)\|_1 \leq \|\phi\|_1 \|\phi \|_{\infty} \|\xi\|_1 \,.$$
Now let us check the compatibility with the average trace $\tau^\nu_{\rm{av}}$. We have, for $\xi\in \maA_c$:
\begin{eqnarray*}
\tau^\nu_{\rm{av}} (T(\xi)) & = & \int_{F\times T} \sum_{\gamma\in\Gamma(\theta)} T(\xi) [\tm, \tm \gamma, \theta] d\tm \;d\nu(\theta)\\
&=&
\int_{\Omega\times T} \sum_{\gamma\in\Gamma(\theta)} T(\xi) [\tm, \tm \gamma, \theta] d\tm \;d\nu(\theta)\\
& = & \int_{T} \sum_{\gamma\in\Gamma(\theta)} \xi (\theta, \gamma) \int_{\Omega} |\varphi(\tm)|^2 d\tm\;  d\nu(\theta)\\
& = & \int_{T} \sum_{\gamma\in\Gamma(\theta)} \xi (\theta, \gamma) d\nu(\theta)\\
& = & \tau^\nu_{\rm{av}} (\xi).
\end{eqnarray*}
Note that in the expression $T(\xi) [\tm, \tm \gamma, \theta]$ for $\tm \in \Omega$, only the couple $(1,\gamma)$ contributes non trivially to the sum.
\end{proof}

\section{Hilbert modules and Dirac operators.}\label{sec:modules}

\subsection{Connes-Skandalis Hilbert module}

   Recall that $V=\tM \times_\Gamma T$ where $\tM\to M$ is the universal $\Gamma$-covering of the closed manifold $M$ and where $\Gamma$ acts by homeomorphisms 
   on the compact space  $T$. We fix a  hermitian vector bundle $E$ over $V$ and we denote by $\what{E}$ its pull-back to $\tM \times T$. 
   We define a right action of the convolution algebra  $\maA_c=C_c(T\rtimes \Gamma)\equiv C_c(\maG)$
   on the space $\maE_c=C_c^{\infty,0}(\widetilde{M}\times T; \widehat{E})$, of compactly supported sections of the vector bundle $\widehat{E}$ which are smooth with respect to the $\tM$ variable and continuous with respect to the $T$ variable, as follows.
   $$
   (\xi f)(\tm,\theta) =\sum_{\gamma \in \Gamma}
    \;\xi(\tm \gamma^{-1}, \gamma \theta ) f(\gamma \theta , \gamma), \quad
   \xi \in \maE_c, \quad
   f \in \maA_c.
   $$ 
A $\maA_c$-valued inner product $<. ; .>$
   on $\maE_c$ is also defined by \cite{Hilsum-Skandalis-stabilite}
   $$
   <\xi_1 ; \xi_2>(\theta, \gamma) :=
   \int_{\widetilde{M}}
   <\xi_1(\tm , \gamma^{-1} \theta) ; \xi_2(\tm \gamma^{-1} , \theta)>_{E_{[\tm,\theta]}}
   d\tm,
   $$ where $< . ; .>_{E}$ is the hermitian scalar product that we have fixed
   of the vector bundle $E$. A classical computation shows that these operations endow the space $\maE_c$ with the structure of a pre-Hilbert module over the algebra $\maA_c$.

  As in the previous sections, we denote by $\maA_r$ and  $\maA_m$ the reduced and maximal
  $C^*$-algebras of the groupoid $\maG$ . Recall that there is a natural $C^*$-algebra
  morphism
$$
\lambda: \maA_m \longrightarrow \maA_r.
$$
The pre-Hilbert $\maA_c$-module $\maE_c$ can be completed with respect to the reduced $C^*$-norm to yield a right Hilbert $C^*$-module over $\maA_r$ that we shall denote by $\maE_r$. In the same way, we can complete $\maE_c$ with respect to the maximal $C^*$-norm and define the Hilbert $C^*$-module $\maE_m$ over the $C^*$-algebra $\maA_m$. It is then clear that the natural map
$
\maE_c \longrightarrow \maE_r,
$
extends to a morphism of Hilbert modules $\maE_m \rightarrow \maE_r$. More precisely, we have a well defined linear map 
$$
\varrho: \maE_m \longrightarrow \maE_r \text{ such that } 
\varrho (\xi f) = \varrho (\xi) \lambda (f) \;\; \;\;\; f\in \maA_m \text{ and } \xi \in \maE_m.
$$

We denote as in the previous sections by $G$ the  monodromy groupoid 
$$
G:= \dfrac{\tM \times \tM \times T}{\Gamma}.
$$
The algebra $\maB_c^E$ of smooth compactly supported sections of the bundle $\END(E)$ over $G$ is faithfully represented in $\maE_c$ by the formula \cite{ConnesSkandalis}
$$
\chi (\varphi) (\xi) (\tm, \theta) := \int_\tM  \varphi [\tm, \tm', \theta] \xi (\tm', \theta) d\tm', \quad \varphi\in C_c^\infty (G,\END(E)), \xi \in \maE_c.
$$

Recall that $\maB_r^E$ and $\maB^E_m$ are respectively the reduced and maximal $C^*$-algebras associated with $G$ and with coefficients in $E$. Given a $C^*$-algebra $A$ and a Hilbert $A$-module $\maE$, the algebra $B_A(\maE)$ consists of bounded adjointable $A$-linear morphisms of $\maE$. Recall also that the $C^*$-algebra $\maK_{A}(\maE)$ of $A$-compact operators is the completion in $B_A(\maE)$ of the subalgebra of $A$-finite rank operators. The following proposition is proved in \cite{Hilsum-Skandalis-stabilite}, see also \cite{Moriyoshi-Natsume} and
\cite{Moulay-triangulation}.

\begin{proposition}\label{prop:compact}
For any $\varphi\in \maB^E_c$, the map $\chi(\varphi):\maE_c \to \maE_c$ is $\maA_c$-linear and the morphism $\chi$ extends to  continuous $*$-representations
$$
\chi_r: \maB^E_r \longrightarrow \maK_{\maA_r}(\maE_r)\text{ and } \chi_m: \maB^E_m \longrightarrow \maK_{\maA_m}(\maE_m),
$$
which are $C^*$-algebra isomorphisms.
\end{proposition}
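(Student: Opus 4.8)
The plan is to reduce the statement to the already-cited stabilization theorem of Hilsum--Skandalis, combined with a direct verification that $\chi$ is well-defined, $\ast$-preserving, and $\maA_c$-linear on the dense subalgebras, and then to identify the completions with $C^*$-algebras of $\maA$-compact operators on $\maE_r$ and $\maE_m$. First I would record the algebraic facts at the level of the convolution algebras: for $\varphi\in\maB^E_c$ and $\xi\in\maE_c$, the section $\chi(\varphi)(\xi)$ again lies in $\maE_c$ (compact support in the $\tM$ variable is preserved by leafwise convolution, smoothness in $\tM$ and continuity in $T$ are clear from the integral formula), and one checks by the substitution $\tm'\mapsto\tm'\gamma^{-1}$, using the $\Gamma$-equivariance built into the definitions of the right $\maA_c$-action and of $\END(E)$ over $G=(\tM\times\tM\times T)/\Gamma$, that $\chi(\varphi)(\xi f)=\chi(\varphi)(\xi)f$ for $f\in\maA_c$, i.e. $\chi(\varphi)$ is $\maA_c$-linear. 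One also verifies $\chi(\varphi_1\ast\varphi_2)=\chi(\varphi_1)\chi(\varphi_2)$ directly from the convolution formula on $G$ (Fubini in the $\tM$ integrals), and $\langle\chi(\varphi)\xi_1;\xi_2\rangle=\langle\xi_1;\chi(\varphi^\ast)\xi_2\rangle$ from the $\maA_c$-valued inner product formula together with $\varphi^\ast[\tm,\tm',\theta]=(\varphi[\tm',\tm,\theta])^\ast$; this shows $\chi(\varphi)$ is adjointable with adjoint $\chi(\varphi^\ast)$, so $\chi$ is a $\ast$-homomorphism $\maB^E_c\to B_{\maA_c}(\maE_c)$.

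Next I would show that finite-rank operators are hit. For $\xi_1,\xi_2\in\maE_c$ the rank-one operator $\eta\mapsto\xi_1\langle\xi_2;\eta\rangle$ is given by an explicit kernel on $G$ obtained by unfolding the sum over $\Gamma$ in the inner product; this kernel is continuous and longitudinally smooth but its support need not be compact in $G$ unless one cuts down, so one approximates, using a partition of unity, by elements of $\maB^E_c$. Conversely every $\chi(\varphi)$, $\varphi\in\maB^E_c$, is a norm-limit of finite-rank operators — this is where one invokes that $\maB^E_c$ embeds into $\maK$ after stabilization, or argues directly via an approximate unit in $\maB^E_c$. Together these give that the image of $\chi$ is dense in, and its closure equals, the $C^*$-algebra of $\maA_c$-compact operators once everything is completed.

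Then comes the continuity/extension step, where the reduced and maximal cases diverge. For $\chi_r$ one uses that the reduced norm on $\maB^E_r$ is $\sup_{\theta}\|\pi^{\rm reg}_\theta(\varphi)\|_{B(H_\theta)}$ and that $\maE_r$ has a fiberwise description (its reduced completion disintegrates over $T$ into the Hilbert spaces $H_\theta=L^2(\tM\times\{\theta\};\widehat E)$, with the $\maA_r$-valued inner product recovering the $\ell^2(\maG_\theta)$ structure), so $\|\chi_r(\varphi)\|_{B_{\maA_r}(\maE_r)}=\sup_\theta\|\pi^{\rm reg}_\theta(\varphi)\|=\|\varphi\|_{\rm reg}$, giving an isometric — in particular injective — extension. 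For $\chi_m$ one instead uses the universal property of $\maB^E_m$: the composite $\maB^E_c\xrightarrow{\chi}B_{\maA_m}(\maE_m)$ is an $L^1$-continuous $\ast$-representation (one estimates the operator norm of $\chi(\varphi)$ on $\maE_m$ by $\|\varphi\|_1$ using the same Schur-test-style inequality that appears in the proof that $\pi^{\rm av}$ is $L^1$-bounded), hence factors through $\maB^E_m$, and then one checks that the maximal norm is actually attained, so that $\chi_m$ is isometric; here one quotes the maximal version of the Hilsum--Skandalis stability theorem, noted in the excerpt to be valid, which identifies $\maB^E_m$ with $\maK_{\maA_m}(\maE_m)$ abstractly and forces $\chi_m$ to be an isomorphism. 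I expect the main obstacle to be exactly this last point — controlling the maximal $C^*$-norm and confirming that $\chi_m$ remains isometric (not merely contractive) — since for maximal completions surjectivity onto $\maK_{\maA_m}(\maE_m)$ and injectivity are both genuine issues that the reduced case hides; the cleanest route is to defer to \cite{Hilsum-Skandalis-stabilite} for the abstract stabilization isomorphism and then match it with the concrete map $\chi_m$ on the dense subalgebra $\maB^E_c$, where the two agree by construction.
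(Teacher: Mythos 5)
The paper does not actually prove this proposition: it simply cites \cite{Hilsum-Skandalis-stabilite} (together with \cite{Moriyoshi-Natsume} and \cite{Moulay-triangulation}), remarking only that the holonomy-groupoid argument carries over to the monodromy groupoid and that the maximal case is covered by Remarque 5 of Hilsum--Skandalis. Your proposal supplies the routine algebraic verifications correctly and, for the one genuinely hard point (that $\chi_m$ is isometric and surjective on the maximal completion), defers to exactly the same reference, so it is essentially the same approach as the paper's.
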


Notice that the proof of this Proposition is usually given for the holonomy groupoid
of the foliation; however the same argument applies to the monodromy groupoid.
Note also that the proof is usually given for the reduced $C^*$-algebra but it remains valid
for the maximal $C^*$algebra too \cite{Hilsum-Skandalis-stabilite} [Remarque 5].

For any $\theta\in T$, we have defined in  Subsection \ref{DiscreteVN} a representation $\pi_\theta^{av}$ of the maximal $C^*$-algebra $\maA_m$ in the Hilbert space $\ell^2(\Gamma/\Gamma ( \theta))$. By using Remark \ref{average-remark} we can write 
$$
\pi^{av}_\theta (f) (\xi) (\theta') := \sum_{\theta''\in \Gamma .\theta}\;\; \sum_{\gamma \theta''=\theta'} f(\theta',\gamma)  \xi(\theta''), \quad f\in \maA_c, \xi\in \ell^2 (\Gamma \theta) \text{ and } \theta'\in \Gamma\theta.
$$

The family of representations $(\pi_\theta^{{\rm av}})_{\theta \in T}$ then yields the average representation of $\maA_m$ in the leafwise von Neumann algebra 
$W^*_\nu (V,\maF;E)$. Using the $\maA_m$-Hilbert module $\maE_m$ together with the representation $\pi_\theta^{av}$, one defines the Hilbert space 
$$
\maH_\theta^{av}:=\maE_m\otimes_{\pi_\theta^{{\rm av}}} \ell^2(\Gamma\theta).
$$
Similarly 
$$
\maH_\theta^{reg}:=\maE_m\otimes_{\pi_\theta^{reg}} \ell^2(\Gamma).
$$

\begin{lemma}\label{lemma:identification}
There exists an isomorphism of Hilbert spaces, $\Phi_\theta$, between $\maH_\theta^{av}$ and the Hilbert space $L^2(L_\theta, E)$ of square integrable sections of the vector bundle $E$ over the leaf $L_\theta$ through $\theta$, induced by the formula
$$
\Phi_\theta (\xi \otimes f) (\tm, \theta) := \sum_{\gamma\in \Gamma} f(\gamma \theta) \;\left[\xi(\tm\gamma^{-1}, \gamma \theta)\right], \quad \xi\in 
\maE_c
\text{ and } f\in C_c(\Gamma\theta).
$$
Similarly there exists an isomorphism $\Psi_\theta$ of Hilbert spaces between 
$\maH_\theta^{reg}$ and $L^2(\tM_\theta,\widehat{E})$
induced  by the formula 
$$
\Psi_\theta (\xi\otimes \delta_\gamma) (\tm) := \xi (\tm \gamma^{-1}, \gamma \theta).
$$
where $\delta_\gamma$ denotes the delta function at $\Gamma$.
\end{lemma}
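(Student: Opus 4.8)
The plan is to check that the two prescribed formulas are well defined on the algebraic tensor products $\maE_c \otimes_{\rm alg} C_c(\Gamma\theta)$ and $\maE_c \otimes_{\rm alg} \CC[\Gamma]$, that they respect the interior tensor product relations (i.e. they intertwine the module action of $\maA_m$ on the left factor with the representation $\pi_\theta^{\rm av}$, resp.\ $\pi_\theta^{\rm reg}$, on the right factor), that they are isometric for the tensor-product inner product, and finally that they have dense range; a bounded isometry with dense range extends to a Hilbert space isomorphism. I will treat $\Psi_\theta$ first, since it is the cleaner of the two and $\Phi_\theta$ is essentially a quotient of it by the isotropy group $\Gamma(\theta)$.

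For $\Psi_\theta$: given $\xi_1,\xi_2\in\maE_c$ and $\gamma_1,\gamma_2\in\Gamma$, one computes directly that
\[
\langle \Psi_\theta(\xi_1\otimes\delta_{\gamma_1}),\,\Psi_\theta(\xi_2\otimes\delta_{\gamma_2})\rangle_{L^2(\tM_\theta,\what E)}
= \int_{\tM} \langle \xi_1(\tm\gamma_1^{-1},\gamma_1\theta);\,\xi_2(\tm\gamma_2^{-1},\gamma_2\theta)\rangle\, d\tm,
\]
and after the substitution $\tm\mapsto \tm\gamma_1$ together with the $\Gamma$-invariance of $d\tm$ this equals $\langle \xi_1;\xi_2\rangle(\gamma_1\theta,\gamma_1\gamma_2^{-1})$ evaluated appropriately; comparing with the definition of the inner product on $\maE_m\otimes_{\pi_\theta^{\rm reg}}\ell^2(\Gamma)$, namely $\langle \delta_{\gamma_1},\,\pi_\theta^{\rm reg}(\langle\xi_1;\xi_2\rangle)\delta_{\gamma_2}\rangle$, and using the explicit formula for $\pi_\theta^{\rm reg}$ from Subsection \ref{DiscreteVN}, the two agree. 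The same substitution shows $\Psi_\theta(\xi f\otimes\delta_\gamma) = \Psi_\theta(\xi\otimes\pi_\theta^{\rm reg}(f)\delta_\gamma)$ for $f\in\maA_c$, so $\Psi_\theta$ descends to the interior tensor product. Density of the range is immediate: sections of the form $\tm\mapsto \xi(\tm,\theta)$ with $\xi\in\maE_c$ already exhaust a dense subspace of $L^2(\tM_\theta,\what E)$ (take $\gamma=e$), since $\maE_c$ is the space of compactly supported leafwise-smooth sections of $\what E$ on $\tM\times T$.

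For $\Phi_\theta$: the formula is the $\Gamma(\theta)$-equivariant average of the $\Psi_\theta$-construction, reflecting the identification $L_\theta\cong\tM/\Gamma(\theta)$; one must check it is independent of the choice of representative $\gamma$ in each class $[\gamma]\in\Gamma/\Gamma(\theta)\equiv\Gamma\theta$ (this uses that $\xi(\tm\gamma^{-1}g^{-1},g\gamma\theta)=\xi(\tm(\gamma g)^{-1},\gamma\theta)$ for $g\in\Gamma(\theta)$, so the summand depends only on the coset), then repeat the isometry computation against the inner product on $\maE_m\otimes_{\pi_\theta^{\rm av}}\ell^2(\Gamma\theta)$ using the formula for $\pi_\theta^{\rm av}$ from Remark \ref{average-remark}, the $\Gamma$-invariance of $d\tm$, and the fact that a fundamental domain for $\Gamma$ on $\tM$ maps onto a fundamental domain for $\Gamma(\theta)$ on $\tM$ giving $L_\theta$. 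Density follows as before. The main obstacle is purely bookkeeping: keeping the several nested sums over $\Gamma$ (resp.\ over $\Gamma\theta$ and over $\Gamma(\theta)$) and the index substitutions consistent, and verifying carefully that the $\Phi_\theta$ formula is well defined on cosets — once that is in hand, the isometry identity is a mechanical unwinding of the definitions of the inner products and the representations $\pi_\theta^{\rm av}$, $\pi_\theta^{\rm reg}$.
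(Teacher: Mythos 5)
Your outline follows the same route as the paper: check well-definedness on the algebraic tensor product, verify the balancing relation against $\pi_\theta^{\rm av}$ (resp.\ $\pi_\theta^{\rm reg}$), establish the isometry by unwinding the inner products with the $\Gamma$-invariance of $d\tm$ and a fundamental domain for $\Gamma(\theta)$, and conclude by density. The only structural difference is that you treat $\Psi_\theta$ first and deduce $\Phi_\theta$ as the ``$\Gamma(\theta)$-averaged'' version, whereas the paper does $\Phi_\theta$ in full and leaves $\Psi_\theta$ as the easier exercise; that ordering is harmless.

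There is, however, one step you dismiss too quickly: the claim that for $\Phi_\theta$ ``density follows as before.'' For $\Psi_\theta$ you can take $\gamma=e$ and the image of $\xi\otimes\delta_e$ is just $\tm\mapsto\xi(\tm,\theta)$, so density of the range is immediate. For $\Phi_\theta$ this argument does not carry over: the image of $\xi\otimes\delta_\theta$ is the $\Gamma(\theta)$-average $\sum_{\alpha\in\Gamma(\theta)}\xi(\tm\alpha^{-1},\theta)$, viewed as a section over $L_\theta\simeq\tM/\Gamma(\theta)$. To show that a given compactly supported section $\eta$ over the leaf lies in the range you must solve an averaging equation; the paper does this by lifting $\eta$ to a $\Gamma(\theta)$-invariant section $\tilde\eta$ over $\tM\times\{\theta\}$, extending it to some $\xi_0\in C^{\infty,0}(\tM\times T,\widehat E)$, and multiplying by a cutoff function $\varphi$ on $\tM$ satisfying $\sum_{\alpha\in\Gamma(\theta)}\alpha\varphi=1$ with the support condition that $\supp\varphi$ meets the preimage of any compact subset of $L_\theta$ in a compact set; then $\Phi_\theta(\varphi\xi_0\otimes\delta_\theta)=\eta$. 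When $\Gamma(\theta)$ is infinite (which the paper's examples show can happen on a set of positive measure) this construction is genuinely needed, so you should supply it rather than appeal to the $\Psi_\theta$ case.
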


\begin{proof} 
If $\alpha\in \Gamma(\theta)$ then we can write for $\xi\in\maE_c$:
\begin{eqnarray*}
\Phi_\theta (\xi \otimes f) (\tm \alpha^{-1}, \theta) & =  & \sum_{\gamma\in \Gamma} f(\gamma \theta) \; \left[\xi(\tm \alpha^{-1} \gamma^{-1}, \gamma \theta)\right]\\
& = & \sum_{\beta\in \Gamma} f(\beta \theta) \; \xi (\tm\beta^{-1}, \beta \theta)\\
& = &  \Phi_\theta (\xi\otimes f) (\tm, \theta)
\end{eqnarray*}
Hence, $\Phi_\theta (\xi\otimes f)$ is a smooth section of ${\hat E}$ over $\tM_\theta$ which is $\Gamma(\theta)$-invariant. Moreover, if $f=\delta_{\gamma\theta}$ and if we denote by  $K_{\gamma\theta}$ the (compact) support of $\xi$ in $\tM\times \{\gamma\theta\}$ then the support of $\Phi_\theta (\xi \otimes \delta_{\gamma\theta})$ is contained in $$
[K_{\gamma\theta}\cdot \gamma] \cdot \Gamma(\theta),
$$
and hence is $\Gamma(\theta)$-compact.

Let now $g\in \maA_c$ be given. Then we have
\begin{eqnarray*}
\Phi_\theta (\xi g \otimes f) (\tm, \theta) & =  & \sum_{\gamma\in \Gamma} f(\gamma \theta) (\xi g)(\tm \gamma^{-1}, \gamma \theta)\\
& = & \sum_{\gamma\in \Gamma} f(\gamma \theta) \sum_{\alpha\in \Gamma} g(\alpha\gamma \theta, \alpha) \xi(\tm \gamma^{-1}\alpha^{-1}, \alpha \gamma \theta)\\
& = & \sum_{\theta', \theta''\in \Gamma \theta} \sum_{\gamma \theta=\theta', \alpha \theta' = \theta''} f(\theta') g(\theta'', \alpha)  \xi(\tm (\alpha \gamma)^{-1}, \theta'')\\
& = & \sum_{\theta', \theta''\in \Gamma \theta} \sum_{\beta \theta=\theta'', \alpha \theta' = \theta''} f(\theta') g(\theta'', \alpha)  \xi(\tm \beta^{-1}, \theta'').
\end{eqnarray*}
On the other hand, we compute 
\begin{eqnarray*}
\Phi_\theta (\xi  \otimes \pi_\theta^{{\rm av}} (g)(f)) (\tm, \theta) & =  & \sum_{\theta''\in \Gamma\theta}  \pi_\theta^{{\rm av}} (g)(f)(\theta'') \sum_{\gamma_1\theta = \theta''} \xi (\tm \gamma_1^{-1}, \theta'')\\
& = & \sum_{\theta'', \theta'\in \Gamma\theta} \sum_{\delta \theta'=\theta'', \delta_1\theta = \theta''} f(\theta') g(\theta'',\delta) \xi (\tm \delta_1^{-1}, \theta'')
\end{eqnarray*}
Hence, we obtain the equality
$
\Phi_\theta (\xi g \otimes f) = \Phi_\theta (\xi  \otimes \pi_\theta^{{\rm av}} (g)(f)).
$

In order to finish the proof, we need to identify the scalar product on the Hilbert space $\maH_\theta^{av}$. We have
\begin{eqnarray*}
<\xi \otimes f, \xi \otimes f > & = & <\pi_\theta^{av} (<\xi, \xi>) (f) , f >\\
& = &  \sum_{\theta'\in \Gamma\theta} \pi_\theta^{av} (<\xi, \xi>) (f) (\theta') {\overline{f(\theta')}}\\
& = & \sum_{\theta'\in \Gamma\theta}  {\overline{f(\theta')}} \sum_{\beta\in \Gamma} <\xi, \xi> (\theta',\beta) f(\beta^{-1}\theta')\\
& = & \sum_{\theta', \theta'' \in \Gamma\theta} {\overline{f(\theta')}} f(\theta'') \sum_{\beta \theta''=\theta'} \int_\tM <\xi(\tm , \beta^{-1} \theta') , \xi(\tm \beta^{-1}, \theta')  > d\tm\\
& = & \sum_{\theta',\theta''\in \Gamma\theta} {\overline{f(\theta')}} f(\theta'') \sum_{\alpha \theta'=\theta''} \int_\tM <\xi(\tm , \alpha \theta') , \xi(\tm \alpha , \theta')> d\tm  
\end{eqnarray*}
On the other hand, if we view $\Phi_\theta (\xi\otimes f)$ as a section over the leaf $L_\theta$ through $\theta$, then we can use a fundamental domain $F_\theta$ for the free and proper action of the isotropy group $\Gamma(\theta)$ on $\tM$ and write
\begin{eqnarray*}
<\Phi_\theta(\xi\otimes f), \Phi_\theta (\xi\otimes f)> & = & \int_{F_\theta} <\Phi_\theta (\xi\otimes f)(\tm, \theta), \Phi_\theta (\xi\otimes f)(\tm, \theta) > d\tm\\
& = & \sum_{\theta_1, \theta_2\in \Gamma\theta} f(\theta_1) {\overline {f(\theta_2)}}\sum_{\gamma_1 \theta = \theta_1, \gamma_2\theta=\theta_2} \int_{F_\theta} <\xi(\tm \gamma_1^{-1}, \theta_1), \xi(\tm\gamma_2^{-1}, \theta_2) > d\tm
\end{eqnarray*}
We fix a section $\varphi:\Gamma \theta \to \Gamma$ of the map $\gamma \mapsto \gamma \theta$. Then $\beta = \varphi (\theta_1)^{-1} \gamma_1$ is an element of the isotropy group $\Gamma(\theta)$  and we have
\begin{eqnarray*}
<\Phi_\theta(\xi\otimes f), \Phi_\theta (\xi\otimes f)> & = &  \sum_{\theta '' ,\theta '\in \Gamma\theta} f(\theta '' ) {\overline {f(\theta ')}} \sum_{\gamma_2\theta = \theta '} \sum_{\beta\in \Gamma(\theta)} \int_{F_\theta} < \xi (\tm \beta^{-1} \varphi(\theta '' )^{-1}, \theta ''), \xi (\tm \gamma_2^{-1}, \theta ') > d\tm\\
& = & \sum_{\theta '',\theta ' \in \Gamma\theta} f(\theta '') {\overline {f(\theta ' )}} \sum_{\gamma_2\theta = \theta' } \sum_{\beta\in \Gamma(\theta)} \int_{F_\theta \beta^{-1} \varphi(\theta '')^{-1}} \times\\
& &< \xi (\tm_1, \theta ''), \xi (\tm_1 \varphi(\theta '' )\beta\gamma_2^{-1}, \theta ') > d\tm_1\\
& = & \sum_{\theta '' ,\theta ' \in \Gamma\theta} f(\theta '' ) {\overline {f(\theta ' )}} \sum_{\alpha \theta '  = \theta '' } \sum_{\beta\in \Gamma(\theta)} \int_{F_\theta \beta^{-1} \varphi(\theta '' )^{-1}} < \xi (\tm_1, \theta '' ), \xi (\tm_1 \alpha, \theta ' ) > d\tm
\end{eqnarray*}
Setting  $\delta=\varphi(\theta_1) \beta^{-1} \varphi(\theta_1)^{-1}$ and  noticing that a fundamental domain $F_{\theta ''}$
is equal to $F_{\theta} \varphi(\theta '')^{-1}$
we get
\begin{eqnarray*}
\Phi_\theta(\xi\otimes f), \Phi_\theta (\xi\otimes f)> & = &  \sum_{\theta',\theta''\in \Gamma\theta} f(\theta'') {\overline {f(\theta')}} \sum_{\alpha \theta '= \theta ''} \sum_{\delta\in \Gamma(\theta'')} \int_{(F_\theta \varphi(\theta'')^{-1}) \delta} < \xi (\tm_1, \theta''), \xi (\tm_1 \alpha, \alpha^{-1}\theta '') > d\tm\\
& = &  \sum_{\theta',\theta''\in \Gamma\theta} f(\theta'') {\overline {f(\theta')}} \sum_{\alpha \theta '= \theta ''} \sum_{\delta\in \Gamma(\theta'')} \int_{F_{\theta ''} \delta} < \xi (\tm_1, \theta''), \xi (\tm_1 \alpha, \alpha^{-1}\theta '') > d\tm\\
& = & \sum_{\theta',\theta''\in \Gamma\theta} f(\theta'') {\overline {f(\theta')}} \sum_{\alpha \theta '= \theta ''} \int_{\tM} < \xi (\tm_1, \alpha \theta'), \xi (\tm_1 \alpha, \theta ') > d\tm\\
\end{eqnarray*}
Hence $
<\xi\otimes f, \xi\otimes f > = <\Phi_\theta (\xi\otimes f), \Phi_\theta (\xi \otimes f)>.$
It now remains to show that $\Phi_\theta$ is surjective. Let $\eta$ be a smooth compactly supported section over the leaf $L_\theta$ and denote by ${\tilde \eta}$ its lift into a $\Gamma(\theta)$-invariant section over $\tM\times \theta$ and by $\xi_0$ any extension of ${\tilde \eta}$ into a leafwise smooth continuous section over $\tM\times T$. Let $\varphi$ be a smooth function on $\tM$ such that $\sum_{\alpha\in \Gamma(\theta)} \alpha \varphi = 1$ and such that for any compact set $K$ in $L_\theta\simeq \tM/\Gamma(\theta)$, the intersection of the support of $\varphi$ with the inverse image of $K$, under the projection $\tM\to L_\theta$, is compact in $\tM$. We view $\varphi$ as a  function on $\tM\times T$ independent of the $T$ variable and set
$$
\xi:= \varphi \xi_0.
$$
Then $\xi\in C_c^{\infty, 0} (\tM\times T, \tE)$ and one checks immediately that  $\Phi_\theta (\xi\otimes \delta_\theta) = \eta$.
The proof of the second isomorphism is simpler and is left as an exercise. 
\end{proof}

Recall that we have defined two representations, that we have both denoted $\pi^{av}$, respectively of the $C^*$-algebras $\maA_m$ and $\maB^E_m$ in the corresponding  von Neumann algebras of the discrete groupoid $\maG$ and of the monodromy groupoid $G$ with coefficients in the vector bundle $E$:
$$\pi^{{\rm av}}:\maA_m \rightarrow W^*_{{\rm av}}(\maG)\,,\quad \pi^{{\rm av}}:\maB_m^E \rightarrow W^*_{\nu}(V,\maF;E)\,.$$ 
 Recall also that we have defined a $*$-representation $\chi_m$ of $\maB_m^E$ in the compact operators of the Hilbert module $\maE_m$:
 $$\chi_m : \maB_m^E \rightarrow \maK_{\maA_m}(\maE_m)\,.$$

\begin{proposition}\label{PhiComp}\
Let $S$ be a given element of $\maB_m^E$. Then we have
$$
\pi_\theta^{av} (S) = \Phi_\theta \circ \left[ \chi_m(S) \otimes_{\pi_\theta ^{av}} I_{\ell^2(\Gamma\theta)}\right] \circ \Phi_\theta^{-1}. 
$$
with $\Phi_\theta: 
\maE_m\otimes_{\pi_\theta^{{\rm av}}} \ell^2(\Gamma\theta)\rightarrow L^2(L_\theta,E)
$
the isomorphism given in Lemma \ref{lemma:identification}.
In the same way, we have
$$
\pi_\theta^{reg} (S) = \Psi_\theta \circ \left[ \chi_r(S) \otimes_{\pi_\theta ^{reg}} I_{\ell^2(\Gamma)}\right] \circ \Psi_\theta^{-1}. 
$$
with $\Psi_\theta: \maE_m\otimes_{\pi_\theta^{reg}} \ell^2(\Gamma)\rightarrow L^2(\tM_\theta,\widehat{E})$
the second isomorphism given in Lemma \ref{lemma:identification}.

\end{proposition}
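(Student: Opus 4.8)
The plan is to verify both identities first on the dense $*$-subalgebra $\maB^E_c$ of $\maB^E_m$, evaluated on the dense subspaces spanned by the elements $\Phi_\theta(\xi\otimes f)$, $\xi\in\maE_c$, $f\in C_c(\Gamma\theta)$ (resp. $\Psi_\theta(\xi\otimes\delta_\gamma)$, $\xi\in\maE_c$, $\gamma\in\Gamma$), and then to extend by continuity. Density of these subspaces in $L^2(L_\theta,E)$ (resp. $L^2(\tM_\theta,\what E)$) follows from the surjectivity of $\Phi_\theta$, $\Psi_\theta$ established in Lemma \ref{lemma:identification} together with the density of $\maE_c$ in $\maE_m$ and of $C_c(\Gamma\theta)$ in $\ell^2(\Gamma\theta)$. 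Continuity in $S$ for the $C^*$-norm of $\maB^E_m$ is automatic: for fixed $\theta$, $S\mapsto\pi^{av}_\theta(S)$ is a norm-decreasing $*$-homomorphism, $S\mapsto\chi_m(S)$ is a $C^*$-isomorphism onto $\maK_{\maA_m}(\maE_m)$ by Proposition \ref{prop:compact}, amplification by the identity of $\ell^2(\Gamma\theta)$ is bounded, and $\Phi_\theta$, $\Psi_\theta$ are unitaries; likewise in the regular case.

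For $S=\varphi\in\maB^E_c$ one has $\chi_m(\varphi)=\chi_r(\varphi)=\chi(\varphi)$ on $\maE_c$, where $\chi(\varphi)(\xi)(\tm,\theta)=\int_\tM\varphi[\tm,\tm',\theta]\,\xi(\tm',\theta)\,d\tm'$. In the regular case, unwinding the definitions of $\Psi_\theta$ and of $\chi_r(\varphi)\otimes I$ gives $\Psi_\theta\bigl([\chi_r(\varphi)\otimes I](\xi\otimes\delta_\gamma)\bigr)(\tm)=\int_\tM\varphi[\tm\gamma^{-1},\tm',\gamma\theta]\,\xi(\tm',\gamma\theta)\,d\tm'$. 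Substituting $\tm'=\tm''\gamma^{-1}$, using the $\Gamma$-invariance of $d\tm$ and the identity $[\tm\gamma^{-1},\tm''\gamma^{-1},\gamma\theta]=[\tm,\tm'',\theta]$ in $G$, this equals $\int_\tM\varphi[\tm,\tm'',\theta]\,\Psi_\theta(\xi\otimes\delta_\gamma)(\tm'')\,d\tm''=\bigl(\pi^{reg}_\theta(\varphi)\,\Psi_\theta(\xi\otimes\delta_\gamma)\bigr)(\tm)$, which is the second identity, since on $\tM_\theta$ the Connes--Skandalis convolution literally is the leafwise integral.

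For the average case, the same substitution yields $\Phi_\theta\bigl([\chi_m(\varphi)\otimes I](\xi\otimes f)\bigr)(\tm,\theta)=\int_\tM\varphi[\tm,\tm'',\theta]\,\eta(\tm'',\theta)\,d\tm''$, where $\eta:=\Phi_\theta(\xi\otimes f)$ is, by Lemma \ref{lemma:identification}, a $\Gamma(\theta)$-invariant and $\Gamma(\theta)$-compactly supported section over $\tM_\theta$. On the other hand $\pi^{av}_\theta(\varphi)$, i.e. $(\pi_{\rm av}(\varphi))_\theta$, acts on $L^2(L_\theta,E)$ through the kernel $f_0([\tm,\theta],[\tm',\theta])=\sum_{\gamma\in\Gamma(\theta)}\varphi[\tm,\tm'\gamma,\theta]$; fixing a fundamental domain $F_\theta$ for the free proper action of $\Gamma(\theta)$ on $\tM$, $\pi^{av}_\theta(\varphi)(\eta)([\tm,\theta])=\sum_{\gamma\in\Gamma(\theta)}\int_{F_\theta}\varphi[\tm,\tm'\gamma,\theta]\,\eta(\tm',\theta)\,d\tm'$. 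In the $\gamma$-summand I substitute $\tm''=\tm'\gamma$ and use the $\Gamma(\theta)$-invariance of $\eta$ to rewrite it as $\int_{F_\theta\gamma}\varphi[\tm,\tm'',\theta]\,\eta(\tm'',\theta)\,d\tm''$; summing over $\gamma\in\Gamma(\theta)$ and using $\bigsqcup_{\gamma\in\Gamma(\theta)}F_\theta\gamma=\tM$ up to a null set, this reassembles into $\int_\tM\varphi[\tm,\tm'',\theta]\,\eta(\tm'',\theta)\,d\tm''$, exactly matching the previous expression. Hence the first identity holds on $\maB^E_c$, and the density/continuity argument of the first paragraph extends it to all of $\maB^E_m$.

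The one step requiring genuine care is this last \emph{folding}: one must recognize that the isotropy sum defining $f_0$ is precisely what is needed to promote the leafwise integral over $L_\theta=\tM/\Gamma(\theta)$ to the full integral over $\tM$ appearing in the Hilbert module picture. Everything else is bookkeeping --- changes of variable governed by the $\Gamma$-invariance of $d\tm$ and by the defining relation of the groupoid $G$ --- together with the elementary remark that the compact support of $\varphi$ (and the finite support of $f$, resp. $\delta_\gamma$) makes every sum and integral above absolutely convergent, so that the interchanges of $\sum$ and $\int$ are legitimate.
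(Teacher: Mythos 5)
Your proof is correct and follows essentially the same route as the paper's: verify the identity for $S=\varphi\in\maB^E_c$ on the dense subspace spanned by $\Phi_\theta(\xi\otimes f)$ (resp. $\Psi_\theta(\xi\otimes\delta_\gamma)$), using the groupoid relation $[\tm\gamma^{-1},\tm',\gamma\theta]=[\tm,\tm'\gamma,\theta]$, the $\Gamma$-invariance of $d\tm$, and the folding/unfolding of $\tM$ into $\Gamma(\theta)$-translates of a fundamental domain, then extend by continuity. The only cosmetic differences are that you reduce both sides to the common expression $\int_\tM\varphi[\tm,\tm'',\theta]\,\eta(\tm'',\theta)\,d\tm''$ rather than matching the two expanded sums directly, and you write out the regular case, which the paper leaves to the reader.
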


\begin{proof}
Let us fix an element $k \in C_c^{\infty, 0} (G;\END (E))$ and give the proof for $S=k$. We  compute for $\xi\in \maE_c$ and $f\in C_c [\Gamma\theta]$:
\begin{eqnarray*}
\Phi_\theta (\chi (k) (\xi) \otimes f) (\tm, \theta) & = & \sum_{\gamma\in \Gamma} f(\gamma\theta)  \chi (k)(\xi) (\tm \gamma^{-1}, \gamma \theta)\\
& = & \sum_{\gamma\in \Gamma} f(\gamma\theta) \int_\tM  k[\tm\gamma^{-1}, \tm',\gamma \theta] \xi(\tm', \gamma\theta) d\tm'\\
& = & \sum_{\gamma\in \Gamma} f(\gamma\theta) \int_\tM k[\tm, \tm'\gamma, \theta] \xi(\tm', \gamma\theta) d\tm'\\
& = & \sum_{\gamma\in \Gamma} f(\gamma\theta) \int_\tM k[\tm, \tm_1, \theta] \xi(\tm_1\gamma^{-1}, \gamma\theta) d\tm_1.
\end{eqnarray*}
On the other hand,  we have:
\begin{eqnarray*}
\pi^{av}_\theta (k) (\Phi_\theta (\xi\otimes f)) (\tm, \theta) & = & \int_{F_\theta} \sum_{\alpha\in \Gamma (\theta)}  k[\tm, \tm' \alpha, \theta]  \sum_{\gamma\in \Gamma} f(\gamma\theta)  \xi (\tm'\gamma^{-1}, \gamma\theta) d\tm'\\
& = & \sum_{\gamma\in \Gamma} f(\gamma\theta) \sum_{\alpha\in \Gamma(\theta)} \int_{F_\theta\alpha} k[\tm, \tm'',\theta]  \xi (\tm''\alpha^{-1}\gamma^{-1}, \gamma\theta) d\tm''\\
& = & \sum_{\gamma'\in \Gamma} f(\gamma'\theta) \sum_{\alpha\in \Gamma(\theta)} \int_{F_\theta \alpha} k[\tm, \tm'', \theta]  \xi (\tm''{\gamma'}^{-1}, \gamma'\theta) d\tm''\\
& = & \sum_{\gamma'\in \Gamma} f(\gamma'\theta) \int_{\tM} k[\tm, \tm'', \theta] \xi (\tm''{\gamma'}^{-1}, \gamma'\theta) d\tm''.
\end{eqnarray*}
So we get 
$$\Phi_\theta (\chi (k) (\xi) \otimes f) =\pi^{av}_\theta (k) (\Phi_\theta (\xi\otimes f))$$
which proves the first statement  by continuity. We  omit the proof of the second statement as it is  similar and in fact   easier.
\end{proof}

\subsection{$\Gamma$-equivariant pseudodifferential operators}\label{subsect:dirac}

This subsection is devoted to a brief overview of the  pseudodifferential 
calculus 
relevant to our study.
All stated results are known and we therefore only sketch the proofs.

Let $\E_c$ be as before $C^{\infty,0}_c (\tM\times T,\widehat{E})$ endowed with its structure
of pre-Hilbert $\maA_c$-module.
Recall that if we complete the prehilbertian module $\E_c$ with respect to the regular norm on $\maA_c$ then we get a Hilbert $C^*$-module $\E_r$ over the regular $C^*$-algebra $\maA_r$. 
In the same way, completing $\maA_c$ with respect to the maximal $C^*$-norm yields a Hilbert 
$C^*$-module $\E_m$ over the maximal $C^*$-algebra $\maA_m$. 
We fix two vector bundles $E$ and $F$ over $V$ and we denote by $\what{E}$ and $\what{F}$ their pullbacks to $\tM\times T$ into $\Gamma$-equivariant vector bundles;
we let $\what{E}_\theta$ be the restriction of $\what{E}$ to $\tM_\theta$. We set,
as before, $\tM_\theta:= \tM_\theta$.

\begin{definition}\label{def:pseudo}
Let $P:C^{\infty,0}_c (\tM\times T,\widehat{E})\to C^{\infty,0} (\tM\times T,\widehat{F})$ be a linear map.
We shall say that $P$ defines a  
pseudodifferential operator of order $m$ on the monodromy groupoid $G$
 if there is a  continuous family  of order $m$ pseudofifferential operators $(P(\theta))_{\theta\in T}$,
 $$P_\theta: C^\infty_c  (\tM_\theta,\what{E}_\theta)
\to C^\infty (\tM_\theta,\what{F}_\theta)$$
satisfying:\\
(1) $(P\xi)(\tm,\theta)= (P_\theta \xi (\cdot,\theta))(\tm\times\{\theta\})$\\
(2) $P$ is $\Gamma$-equivariant: $R^*_\gamma P R^*_{\gamma^{-1}} = P$;\\
(3)  the Schwartz kernel of $P$, $K_P$, which can be thought of as a $\Gamma$-invariant distributional
section on $\tM\times \tM\times T$, is of $\Gamma$-compact support, i.e. the image of the support
in $(\tM\times \tM\times T)/\Gamma =:G$ is a compact set. 
\end{definition}
Notice that (2)
can be then  restated as: $P_{\gamma \theta} = \gamma P_\theta$ $\forall \theta \in T $, $\forall \gamma\in \Gamma$,
exactly as in the definition of the regular von Neumann algebra. The notion of continuity for families of pseudodifferential
operators is classical and will not be recalled here, see, for example, \cite{Benameur-Nistor-JFA},
\cite{NWX}, \cite{LaMoNi-Documenta}, \cite{Vassout-these}, \cite{Vassout-jfa}. 
Finally, because of the third condition $P$ maps 
$C^{\infty,0}_c (\tM\times T,\widehat{E})$ into $C^{\infty,0}_c (\tM\times T,\widehat{F})$.

Notice that a $\Gamma$-equivariant continuous family of differential operators
acting between the sections of two equivariant vector bundles is an example of
a pseudodifferential operator on $G$.

If $m\in\ZZ$,
we shall denote  by $\Psi^m_c(G;\what{E},\what{F})$ 
the space of pseudodifferential operators of  order $\leq m$ from $\what{E}$ to $\what{F}$ \footnote{The
notation for this space of operators is not unique: in \cite{LPETALE} it is denoted $\Psi^*_{\rtimes,c}(\tM\times T; \widehat{E},\widehat{F})$ with $\rtimes$ denoting equivariance
and $c$ denoting again  {\it of $\Gamma$-compact support}; in \cite{Moriyoshi-Natsume} it is simply denoted
as $\Psi_\Gamma^* (\widehat{E},\widehat{F})$} . We set
$$
\Psi^\infty_c(G;\what{E},\what{F}) := \bigcup_{m\in \ZZ} \Psi^m_c(G;\what{E},\what{F}) \text{ and } \Psi^{-\infty}_c (G;\what{E},\what{F}) := \bigcap_{m\in \ZZ} \Psi^m_c(G;\what{E},\what{F}).
$$

Using condition (3) , it is not difficult to check that the space $\Psi^\infty_c (G;\what{E},\what{E}) $ 
is a filtered algebra.
Moreover, assigning to $P$ its
formal adjoint $P^*= (P_\theta^*)_{\theta\in T}$ gives $\Psi^\infty_c (G;\what{E},\what{E})$ the structure of an involutive algebra; the formal adjoint is defined also for $P \in \Psi^m_c (G;\what{E},\what{F}) $ 
and it is then an alement in $\Psi^m_c (G;\what{F},\what{E})$. 

\begin{remark}\label{rk:psuedo=pseudo}
Notice  that Definition \ref{def:pseudo} fits into the general framework
of  pseudodifferential calculus on groupoids,
as developed by Connes and many others. More precisely,
let $P=(P_\theta)_{\theta\in T}$ be a pseudodifferential operator on $G$ as in Definition 
\ref{def:pseudo}. For any $\theta\in T$ and any $x=[\tm,\theta]\in L_\theta$  the diffeomorphism
$$\rho_{x,\theta}: \tM\rightarrow G^x=r^{-1}(x)\;\; \text{ given by } \;\; \rho_{x,\theta} (\tm^\prime)= [\tm,\tm^\prime,\theta]\,,$$
allows to define a pseudodifferential  operator $P_x$ on $G^x$ with coefficients in $s^* E$, viz. $P_x:= (\rho_{x,\theta}^{-1})^* \circ P_\theta
\circ (\rho_{x,\theta})^*$. It is easy to check that $P_x$ only depends on $x$ and that the family $(P_x)_{x\in V}$ is a pseudodifferential
operator on $G$ in the sense of Connes. Conversely if we are given now a pseudodifferential operator $(P_x)_{x\in V}$
in the sense of Connes, then a choice of a base point $m_0$ in $M$ allows to construct $P=(P_\theta)_{\theta\in T}$ satisfying
the assumptions of Definition  \ref{def:pseudo}, viz.
$P_\theta:= \rho_{x(\theta),\theta}^* \circ P_{x(\theta)} \circ ( \rho_{x(\theta),\theta}^{-1})^* $ with $x(\theta)=[\tm_0,\theta]$ and
$[\tm_0]=m_0$.\\
\end{remark}

\begin{remark}\label{rk:psuedo=pseudo-bis}
According to \cite{Co} a psedodifferential operator as in Connes, admits a well defined
distributional Schwartz kernel over $G$. It is easy to check that this Schwartz kernel coincides
with our $K_P$ when the two families correspond as in the previous remark.

\end{remark}

\begin{remark}\label{rk:vn=vn}
The construction explained in  remark \ref{rk:psuedo=pseudo}.
also allows to establish an identification between Connes' von Neumann
algebra \cite{Co} for the groupoid $G$ and our von Neumann algebra $W^*_\nu (G,E)$. It is easy to check that Connes' trace \cite{Co} corresponds
to our trace $\tau^\nu$  through this identification.
\end{remark}

\begin{lemma}
A  pseudodifferential operator $P$ of order $m$ yields  an $\maA_c$-linear map $\maP: \maE_c\to \maF_c$. Moreover, the following identity holds in $\maA_c$: $<\maP u, v>=<u,\maP^* v>$
$\forall u\in \maE_c$, $\forall v\in \maF_c$.
\end{lemma}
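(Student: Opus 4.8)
The claim has two parts: first, that a pseudodifferential operator $P$ of order $m$ on $G$ restricts to an $\maA_c$-linear map $\maP:\maE_c\to\maF_c$; second, that $\maP$ is formally adjointable with adjoint $\maP^*$ induced by the formal adjoint family $(P_\theta^*)_{\theta\in T}$. The plan is to verify both directly from the definitions, exploiting the $\Gamma$-equivariance and $\Gamma$-compact support conditions (2) and (3) of Definition \ref{def:pseudo} together with the explicit formula for the right $\maA_c$-action and the $\maA_c$-valued inner product on $\maE_c$.

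\textbf{Step 1: $\maP$ lands in $\maF_c$.} First I would recall that, by the last sentence before Remark \ref{rk:psuedo=pseudo}, condition (3) already guarantees that $P$ maps $C^{\infty,0}_c(\tM\times T,\widehat E)$ into $C^{\infty,0}_c(\tM\times T,\widehat F)$: applying $P_\theta$ leafwise preserves leafwise smoothness by the continuity of the family, and $\Gamma$-compact support of the kernel $K_P$ together with compact support of $\xi$ forces the support of $P\xi$ to be compact in $\tM\times T$. So $\maP:=P|_{\maE_c}$ is well defined as a map $\maE_c\to\maF_c$.

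\textbf{Step 2: $\maA_c$-linearity.} Here I would write out $\maP(\xi f)(\tm,\theta)=P_\theta\big(\sum_{\gamma\in\Gamma}\xi(\cdot\,\gamma^{-1},\gamma\theta)f(\gamma\theta,\gamma)\big)(\tm)$ and $(\maP\xi)f(\tm,\theta)=\sum_{\gamma\in\Gamma}(P_{\gamma\theta}\xi(\cdot,\gamma\theta))(\tm\gamma^{-1})f(\gamma\theta,\gamma)$; the sums are finite because $f$ has finite support in the $\Gamma$-variable. The point is that $P_\theta$ acts only on the $\tM$-variable, so it commutes with multiplication by the scalars $f(\gamma\theta,\gamma)$ and with the (finite) summation over $\gamma$, reducing the identity to the single-term assertion $P_\theta\big(\xi(\cdot\,\gamma^{-1},\gamma\theta)\big)(\tm)=(P_{\gamma\theta}\,\xi(\cdot,\gamma\theta))(\tm\gamma^{-1})$. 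That is exactly the $\Gamma$-equivariance $R^*_\gamma P_\gamma R^*_{\gamma^{-1}}=P$, i.e. $P_{\gamma\theta}=\gamma P_\theta$ in the notation of condition (2) and Remark following Definition \ref{def:pseudo}. So $\maA_c$-linearity is just a bookkeeping of the equivariance identity.

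\textbf{Step 3: formal adjointness.} I would form $<\maP u,v>(\theta,\gamma)$ using the definition of $<\cdot;\cdot>$, namely $\int_{\tM}\langle (\maP u)(\tm,\gamma^{-1}\theta),v(\tm\gamma^{-1},\theta)\rangle\,d\tm$, rewrite $(\maP u)(\tm,\gamma^{-1}\theta)=(P_{\gamma^{-1}\theta}\,u(\cdot,\gamma^{-1}\theta))(\tm)$, and then move $P_{\gamma^{-1}\theta}$ across the fiberwise $L^2$-pairing using the defining property of the formal adjoint $P^*_{\gamma^{-1}\theta}$ on the leaf. A small technical point is that $u(\cdot,\gamma^{-1}\theta)$ is compactly supported on $\tM_{\gamma^{-1}\theta}$, $v(\cdot\gamma^{-1},\theta)$ need not be, but the $\Gamma$-compact support of $K_{P}$ (hence of $K_{P^*}$) confines the integration to a compact set, so the integration-by-parts/kernel manipulation is legitimate and there are no boundary terms ($\tM$ has no boundary and the relevant sections are compactly supported after the reduction). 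After moving the operator over, one recognizes the resulting expression as $<u,\maP^* v>(\theta,\gamma)$, where $\maP^*$ is the $\maA_c$-linear map induced (by Steps 1--2 applied to $P^*$) by the family $(P^*_\theta)_{\theta\in T}$, which is itself a pseudodifferential operator in $\Psi^m_c(G;\widehat F,\widehat E)$ as noted in the text.

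\textbf{Main obstacle.} None of the steps is deep; the only thing requiring a little care is the manipulation in Step 3, where one pairs a compactly supported section against a non-compactly-supported one and must invoke the $\Gamma$-compact support of the Schwartz kernel of $P$ (equivalently the properly-supportedness of the leafwise operators) to justify exchanging $P_\theta$ with the fiberwise inner product and the $\tM$-integration. Once that finiteness/support issue is handled, both assertions follow by unwinding definitions and the $\Gamma$-equivariance identity $P_{\gamma\theta}=\gamma P_\theta$.
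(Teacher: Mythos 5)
Your proof is correct and follows essentially the same route as the paper: expand the module action as a finite sum over $\gamma$, use that $\maP$ commutes with multiplication by functions of $\theta$ alone (since it is given by a leafwise family), and reduce the remaining identity to the equivariance $P_{\gamma\theta}=\gamma P_\theta$. For the adjoint identity the paper merely states that it is "established in a straightforward way," so your Step 3, with its attention to the support issues justifying the leafwise integration by parts, supplies details the paper omits but introduces nothing different in substance.
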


\begin{proof} 

Let $\xi\in \E_c$ and  let $f \in \maA_c$. By definition $(\xi f)(\cdot) = \sum_{\gamma} (R^*_{\gamma^{-1}} \xi)(\cdot) f( \gamma \pi(\cdot), \gamma)$ with $\pi:\tM\times T\to T$ the projection. Hence:
\begin{eqnarray*}
\maP  (\xi f) &=& \maP \left( \sum_{\gamma} (R^*_{\gamma^{-1}} \xi)(\cdot) f( \gamma \pi(\cdot),\gamma) \right)\\
&=& \sum_{\gamma} \left( \maP \left( R^*_{\gamma^{-1}} \xi \right) (\cdot) \right) f(\gamma \pi(\cdot),\gamma)\\
&=&  \sum_{\gamma}  \left( R^*_{\gamma^{-1}} \maP \xi \right)(\cdot)  f(\gamma \pi(\cdot),\gamma)=(\maP\xi)f
\end{eqnarray*}
where in the second equality  we have used the fact that $\maP$ commutes with multiplication by functions in  $C (T)$ 
(indeed, $\maP$ is given by  a continuous  family) and in the third equality we have used the $\Gamma$-equivariance. The equality $<\maP u, v>=<u,\maP^* v>$ is established in a straightforward
way.

\end{proof}

\begin{proposition}\ Let $\maP$ be a pseudodifferential operator of order $m$ between $\maE_c$ and $\maF_c$. Then we have:
\begin{enumerate}
\item If $m\leq 0$ then $\maP$ extends to a bounded adjointable 
$\maA_m$-linear operator  $\overline{\maP}_m$ from $\maE_m$ to $\maF_m$ and to a
bounded adjointable 
 $\maA_r$-linear operator  $\overline{\maP}_r$ from $\maE_r$ to $\maF_r$.
\item 
 If $m<0$, then 
 $\overline{\maP}_m$ 
 is an $\maA_m$-linear compact operator from  $\maE_m$ to $\maF_m$
and 
$\overline{\maP}_r$   is an $\maA_r$-linear compact operator from $\maE_r$ to $\maF_r$

\end{enumerate}
\end{proposition}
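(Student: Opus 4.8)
**The plan is to reduce everything to the corresponding statement about the algebras $\maB^E_m$, $\maB^E_r$ and the isomorphisms $\chi_m$, $\chi_r$ of Proposition \ref{prop:compact}.** The key observation is that a pseudodifferential operator $P$ of order $m\le 0$ on $G$, being $\Gamma$-equivariant of $\Gamma$-compact support, can be compared with operators in $\maB^E_c$ (which correspond to smoothing/order $-\infty$ operators). So the first step is: for $m\le 0$, pick a parametrix-type argument. Concretely, choose an auxiliary $\Gamma$-invariant elliptic operator — say a Laplacian-type operator $\Delta$ associated to a $\Gamma$-invariant leafwise metric and connection on $\what E$, so that $(1+\Delta)$ is an order-$2$ positive invertible element with leafwise-uniformly-bounded geometry — and recall that its resolvents $(1+\Delta)^{-s}$, $s>0$, exist as bounded $\maA_m$-linear (and $\maA_r$-linear) operators on $\maE_m$ (resp. $\maE_r$). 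This is the standard fact in the groupoid pseudodifferential calculus (cf. \cite{Vassout-these}, \cite{LaMoNi-Documenta}, \cite{NWX}); one proves boundedness of a nonnegative order-$0$ operator $A$ by the usual trick of writing $C^2 - A^*A = B^*B$ for a suitable constant $C$ and $B\in\Psi^0_c$, which gives $\langle A^*Au,u\rangle \le C^2\langle u,u\rangle$ in $\maA_c$, hence in $\maA_m$ and $\maA_r$ by positivity.

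**Step two: the case $m\le 0$.** Given $\maP$ of order $\le 0$, write $\maP = \maP\,(1+\Delta)^{-1}\,(1+\Delta)$; more usefully, observe that the order-$0$ operator $\maP$ is bounded directly by the positivity argument above: there is $C>0$ with $C^2\cdot\mathrm{Id} - \maP^*\maP \in \Psi^0_c$ nonnegative, and by leafwise ellipticity (or rather by the symbolic calculus in the filtered algebra $\Psi^\infty_c(G;\what E,\what E)$, which is available since that algebra is filtered and involutive as noted in the excerpt) one extracts a square root $B\in\Psi^0_c$ with $C^2 - \maP^*\maP = B^*B$ modulo an operator of order $-\infty$, i.e. modulo $\chi(\varphi)$ for some $\varphi\in\maB^E_c$, which is $\maA_m$- and $\maA_r$-compact hence bounded. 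Summing gives $\langle \maP u,\maP u\rangle \le (C^2+\|\chi(\varphi)\|)\langle u,u\rangle$ in $\maA_c$, and the inequality passes to the completions $\maE_m$, $\maE_r$ because it is an inequality between positive elements of $\maA_c\subset\maA_m$ (resp. $\maA_r$). The adjoint $\maP^*$ is again a pseudodifferential operator of order $\le 0$ by the remarks in the excerpt, so $\overline\maP_m$, $\overline\maP_r$ are adjointable with adjoint $\overline{\maP^*}_m$, $\overline{\maP^*}_r$. This proves item (1).

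**Step three: the case $m<0$.** Now I want compactness. The cleanest route: if $\maP$ has order $-1$, then $\maP(1+\Delta)^{1/2}$ has order $0$, hence is bounded on $\maE_m$ by item (1); so it suffices to show $(1+\Delta)^{-1/2}$ is $\maA_m$-compact, or more directly that $(1+\Delta)^{-1}$ is $\maA_m$-compact, since $(1+\Delta)^{-1/2}=(1+\Delta)^{-1}(1+\Delta)^{1/2}$ and negative order composed with nonpositive order stays negative. But $(1+\Delta)^{-1}$ differs from a true order-$-\infty$ operator $R\in\Psi^{-\infty}_c$ only by another order-$-\infty$ term: indeed a parametrix construction for $1+\Delta$ inside the filtered algebra $\Psi^\infty_c$ gives $Q\in\Psi^{-2}_c$ with $(1+\Delta)Q = \mathrm{Id} - S$, $S\in\Psi^{-\infty}_c$; hence $(1+\Delta)^{-1} = Q + (1+\Delta)^{-1}S$ with $Q\in\Psi^{-2}_c$. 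Iterating, $(1+\Delta)^{-1}$ agrees modulo $\Psi^{-\infty}_c = \chi(\maB^E_c)$ with an operator that is a norm-limit of order-$-\infty$ operators; and any element of $\Psi^{-\infty}_c(G;\what E,\what F)$ is of the form $\chi(\varphi)$ with $\varphi\in\maB^E_c = C^{\infty,0}_c(G,\END(E))$, hence lies in $\maK_{\maA_m}(\maE_m)$ by Proposition \ref{prop:compact} (since $\chi_m(\maB^E_m)=\maK_{\maA_m}(\maE_m)$ and $\maB^E_c$ is dense). Therefore $\overline\maP_m = \overline{\maP(1+\Delta)^{1/2}}_m \circ \overline{(1+\Delta)^{-1/2}}_m$ is a bounded operator composed with a compact operator, hence compact; the same argument verbatim with $\maA_r$, $\maE_r$, $\chi_r$ gives the reduced case.

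**The main obstacle** I anticipate is making rigorous the parametrix/square-root constructions \emph{inside} $\Psi^\infty_c(G;\what E,\what F)$ with the precise remainder in $\Psi^{-\infty}_c$, together with the identification $\Psi^{-\infty}_c(G;\what E,\what F) = \chi(\maB^E_c)$ and the fact that resolvents of $1+\Delta$ land in the bounded operators on $\maE_m$ rather than merely on each $\maH^{\mathrm{reg}}_\theta$ or $\maH^{\mathrm{av}}_\theta$. All of these are standard in the groupoid pseudodifferential literature (\cite{Vassout-these}, \cite{Vassout-jfa}, \cite{LaMoNi-Documenta}, \cite{NWX}, \cite{Benameur-Nistor-JFA}), and the excerpt explicitly says "all stated results are known and we therefore only sketch the proofs", so the write-up should cite these and keep the symbolic-calculus bookkeeping to a minimum, the one genuinely new ingredient being the passage of the positivity inequality $\langle\maP u,\maP u\rangle\le C\langle u,u\rangle$ from $\maA_c$ to the maximal completion $\maA_m$, which follows because the maximal norm dominates every $*$-representation norm and order is preserved under $*$-homomorphisms.
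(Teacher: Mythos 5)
Your treatment of item (1) is essentially the paper's: the H\"ormander trick of writing $C^2\,\mathrm{Id}-\maP^*\maP=B^*B$ modulo a smoothing remainder, so that boundedness of order-zero operators reduces to boundedness of elements of $\chi(\maB^E_c)$, is exactly the reduction the paper invokes (citing \cite{shubin}), and your remark on passing the positivity inequality from $\maA_c$ to the completions is correct.

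Item (2), however, has a genuine gap. Your argument reduces compactness of a general $\maP$ of negative order to compactness of $(1+\Delta)^{-1}$, and then, via the parametrix identity $(1+\Delta)^{-1}=Q+(1+\Delta)^{-1}S$, to compactness of $Q\in\Psi^{-2}_c$. But the compactness of an order $-2$ operator of $\Gamma$-compact support is precisely an instance of the statement you are proving, and no amount of iterating the parametrix removes it: iteration improves the remainder $S$ but the leading term $Q$ stays of order $-2$, so it is \emph{not} true that $(1+\Delta)^{-1}$ agrees modulo $\chi(\maB^E_c)$ with a norm-limit of order-$(-\infty)$ operators by this argument alone. The factorization $\maP=\bigl(\maP(1+\Delta)^{1/2}\bigr)\circ(1+\Delta)^{-1/2}$ has a second problem: $(1+\Delta)^{\pm 1/2}$ are defined by functional calculus, do not have $\Gamma$-compactly supported Schwartz kernels, and hence do not belong to the calculus $\Psi^*_c(G;\what E,\what F)$ as defined here, so item (1) cannot be applied to $\maP(1+\Delta)^{1/2}$. (There is also a circularity of exposition: the boundedness and regularity of $(1+\Delta)$ and its resolvents on $\maE_m$ are established in this paper \emph{after}, and using, the present Proposition.)

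The paper's route avoids all of this by bootstrapping from very negative order. If $\ord\maP<-n$ with $n=\dim\tM$, the Schwartz kernel of $\maP$ is a continuous $\Gamma$-compactly supported section, i.e.\ $\maP=\chi(K)$ with $K\in\maB^E_c$, hence $\overline{\maP}_m$ is compact by Proposition \ref{prop:compact} with no pseudodifferential input at all. If $\ord\maP<-n/2$, then $\maQ:=\maP^*\maP$ has order $<-n$, so $\overline{\maQ}_m$ is compact and bounded; the inequality $\|\maP f\|^2\le C\|f\|^2$ then gives boundedness of $\overline{\maP}_m$, density gives $\overline{\maQ}_m=\overline{\maP}_m^{\,*}\overline{\maP}_m$, the square root $|\overline{\maP}_m|$ is compact because $\overline{\maQ}_m$ is, and polar decomposition yields compactness of $\overline{\maP}_m$. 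For general $m<0$ one chooses $\ell$ with $m\,2^{\ell}<-n$ and applies the same argument to $(\maP^*\maP)^{2^{\ell}}$. You should replace your Step three by this bootstrap; your Step two can stand as is.
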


\begin{proof}
We only sketch the arguments, following \cite{Vassout-these}. For simplicity we take
$E$ and $F$ to be the trivial line bundles, so that $\maE_c=\maF_c$. We give the proof
for the maximal completion, the proof for the regular completion being the same.\\
For the first item, one applies the classical argument of H\"ormander, see for example
\cite{shubin}, reducing the continuity
of order zero pseudodifferential operators to that of the smoothing operators.
We omit the details.\\
For the second item, one starts with $\maP$ of order $<-n$, with $n$ equal to the dimension of $\tM$.
Then $\maP$ is given by integration against a continuous compactly supported
element in $G$; in other words $\maP= \chi(K)$, with $K\in C_c (G)$.
We already know that such an element extends to a  compact operator   $\overline{\maP}$ on $\maE_m$, see \ref{prop:compact}. If $\maP$ is of order less than $-n/2$ then we consider $\maQ:= \maP^*  \maP$
which is of order less then $-n$ and symmetric. We know that $\maQ$ extends to
a (compact) bounded  operator on $\maE_m$; thus if $f\in \maE_c$ then, in particular, 
$\|\maP f\|^2 \leq C \| f \|^2$ which means that $\maP$ extends to  a bounded operator 
$\overline{\maP}$ on $\maE_m$.
Similarly $\maP^*$ extends to a bounded operator $\overline{\maP^*}$ and by density
we obtain that $\overline{\maP}$ is adjointable with adjoint equal to $\overline{\maP^*}$.
Now, again by density, we have $\overline{\maQ}= \overline{\maP}\,^* \overline{\maP}$; 
thus we can take the square root of $\overline{\maQ}$ which will be again compact
since $\overline{\maQ}$ is. Using the polar decomposition for $\overline{\maP}$
we can finally conclude that $\overline{\maP}$ is compact which is what we need to prove.\\
If the order of $\maP$ is $m<0$ then we fix $\ell\in \NN$ such that $m 2^\ell<-n$; then we  proceed inductively, considering $(\maP^* \maP)^{2^\ell}$ and applying the above argument.\\




\end{proof}

Let $P=(P_\theta)_{\theta\in T}$ be an element in  $\Psi^\ell_c (G)$; its 
principal symbol $\sigma_\ell (P)$ defines a $\Gamma$-equivariant function
on the vertical cotangent bundle $T^*_V (\tM\times T)$
 to the trivial fibration $\tM\times T\to T$; equivalently,
$\sigma_\ell (P)$ is a function on the longitudinal cotangent bundle $T^*\maF$  to the 
foliation $(V,\maF)$.
If, more generally, $P\in \Psi^\ell_c (G; \what{E},\what{F})$, then its principal symbol
will be a $\Gamma$-equivariant section of the bundle $\Hom (\pi_V^* (\what{E}),
\pi^*_V (\what{F})):=$
$\pi_V^* (\what{E^*})\otimes
\pi^*_V (\what{F})$ with $\pi_V:T^*_V (\tM\times T)\to (\tM\times T)$ the natural projection;
equivalently, $\sigma_\ell (P)$ is a section of the bundle $\Hom (\pi_{\maF}^* E, \pi_{\maF}^* F)$
over the longitudinal cotangent bundle $\pi_{\maF}: T^* \maF\to V$.
We shall say that $P$ is elliptic
if its principal symbol $\sigma_\ell (P)$ is invertible on non-zero cotangent vectors. We end this
Subsection by stating the following  fundamental and classical result whose proof can be found,
for example 
in the work of Connes \cite{Co-LNM}, see also \cite{MS}. 
(Notice that in this particular case 
 the proof can be easily done directly, mimicking the  classic one
 on a closed compact manifold.)

\begin{theorem}\label{th:parametrix}
Let $P\in \Psi^\ell_c (G; \what{E},\what{F})$ be elliptic; then there exists $Q\in \Psi^{-\ell}_c (G; \what{F},\what{E})$ such that 
\begin{equation}\label{eq:parametrix}
\Id-P Q\in \Psi^{-\infty}_c(G;\what{F},\what{F}) \,,\quad \quad
\Id - Q P \in \Psi^{-\infty}_c(G;\what{E},\what{E})
\end{equation}
\end{theorem}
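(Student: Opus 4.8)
The plan is to construct the parametrix $Q$ exactly as one does on a closed compact manifold, but carrying along the two extra structures that distinguish operators on $G$ from ordinary pseudodifferential operators: the $\Gamma$-equivariance and the $\Gamma$-compact support of the Schwartz kernel. The starting observation is that $\Psi^\infty_c(G;\what E,\what F)$ is a filtered algebra stable under formal adjoints, and that it admits a symbol calculus: the principal symbol $\sigma_\ell$ is multiplicative, $\sigma_{\ell+m}(P_1P_2)=\sigma_\ell(P_1)\sigma_m(P_2)$, and the usual short exact sequence $0\to\Psi^{\ell-1}_c\to\Psi^\ell_c\xrightarrow{\sigma_\ell}S^{[\ell]}\to0$ holds fibrewise and $\Gamma$-equivariantly. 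Since $P$ is elliptic, $\sigma_\ell(P)$ is an invertible section of $\Hom(\pi_\maF^*E,\pi_\maF^*F)$ away from the zero section, and $\sigma_\ell(P)^{-1}$ is again a $\Gamma$-equivariant symbol of order $-\ell$ (one multiplies by a cutoff near the zero section to get a genuine symbol, which only changes things by lower order).

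First I would pick $Q_0\in\Psi^{-\ell}_c(G;\what F,\what E)$ with $\sigma_{-\ell}(Q_0)=\sigma_\ell(P)^{-1}$; this is possible because the family $(P_\theta)_{\theta\in T}$ is continuous in $\theta$, so a continuous choice of a quantization of the symbol exists, it can be taken $\Gamma$-equivariant by the usual averaging/transport trick (transport the local construction around by the $R^*_\gamma$, using a fundamental domain), and its kernel can be arranged to have $\Gamma$-compact support by multiplying by a $\Gamma$-invariant cutoff supported near the diagonal in $G^{(0)}\times G^{(0)}$ — this last step only modifies the operator by a $\Gamma$-equivariant smoothing operator of $\Gamma$-compact support, i.e.\ by an element of $\Psi^{-\infty}_c(G)$. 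Then $R_1:=\Id-PQ_0$ and $R_0':=\Id-Q_0P$ lie in $\Psi^{-1}_c(G;\what F,\what F)$ and $\Psi^{-1}_c(G;\what E,\what E)$ respectively, since their principal symbols of order $0$ vanish.

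Next I would run the standard Neumann-series improvement. Formally $Q\sim Q_0(\sum_{j\ge0}R_1^j)$: one chooses, using an asymptotic-completeness (Borel-type) lemma for $\Gamma$-equivariant $\Gamma$-compactly supported operators, an operator $Q\in\Psi^{-\ell}_c(G;\what F,\what E)$ with $Q\sim Q_0\sum_{j\ge 0}R_1^{\,j}$ modulo $\Psi^{-\infty}_c$; since each $R_1^{\,j}\in\Psi^{-j}_c$, the partial sums have orders tending to $-\infty$, and $\Psi^{-\infty}_c$ is exactly $\bigcap_m\Psi^m_c$, so this telescoping makes sense inside the filtered algebra. One then checks $\Id-PQ=\Id-P Q_0\sum R_1^j=\sum R_1^j - (\Id-R_1)\sum R_1^j \in\Psi^{-\infty}_c$ up to the asymptotic error, giving the first identity in \eqref{eq:parametrix}. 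For the second identity one does not repeat the argument from scratch: having a right parametrix $Q$ and, by the symmetric construction, a left parametrix $Q'$ with $\Id-Q'P\in\Psi^{-\infty}_c$, the usual algebra $Q'=Q'(PQ)+Q'(\Id-PQ)=(Q'P)Q+\Psi^{-\infty}_c=Q+\Psi^{-\infty}_c$ shows $Q$ is also a left parametrix.

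The routine parts — symbol calculus, composition formulae, the Neumann iteration — are exactly as on a closed manifold, so the main obstacle is bookkeeping the two extra conditions of Definition \ref{def:pseudo} through every step: making sure that the quantization of $\sigma_\ell(P)^{-1}$ can be chosen $\Gamma$-equivariant and with kernel of $\Gamma$-compact support, and that asymptotic summation stays inside $\Psi^{-\infty}_c(G)$ rather than merely inside some larger ideal of smoothing operators. The $\Gamma$-equivariance is handled by working on the fundamental domain and transporting, exactly as in the discussion of $\Gamma$-equivariant families in Remark \ref{rk:psuedo=pseudo}; the $\Gamma$-compact support is preserved because composition of two $\Gamma$-compactly supported operators is again $\Gamma$-compactly supported (the relevant convolutions are over $\tM$ but localize to a $\Gamma$-compact set in $G$), and inserting $\Gamma$-invariant diagonal cutoffs costs only elements of $\Psi^{-\infty}_c$. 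Since, as the statement already notes, all of this is classical and due to Connes \cite{Co-LNM} (see also \cite{MS}), I would present only this outline and refer to those sources, or alternatively remark that the closed-manifold proof in \cite{shubin} applies verbatim once one checks equivariance and support are preserved throughout.
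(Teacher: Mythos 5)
Your proposal is correct and follows exactly the route the paper itself indicates: the paper gives no proof at all, merely citing \cite{Co-LNM} and \cite{MS} and remarking that the classical closed-manifold parametrix construction carries over once one tracks $\Gamma$-equivariance and $\Gamma$-compact support, which is precisely what your outline does.
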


Notice that in our definition elements
in $\Psi^{-\infty}_c$ are of $\Gamma$-compact support: this applies 
in particular to both $S:= \Id- PQ$ and $R:= \Id-QP$ .

We end this subsection by observing that it is also possible to introduce Sobolev
modules $\maE^{(k)}$ and prove the usual properties of pseudodifferential operators,
see \cite{Vassout-jfa}. For simplicity we consider the case $k\in \NN$.
In order to give the definition, we fix an elliptic differential operator of order $k$, $P$;
for example $P=D^k$, with $D$ a Dirac type operator. This is a regular
unbounded operator  (see the next subsection).
We consider the domain of its extension ${\rm Dom}\maP_m$ and
we endow it with the $\A_m$-valued scalar product
$$<s,t>_k := <s,t> + <\maP_m s, \maP_m t>$$
This defines the Sobolev module of order $k$, $\maE^{(k)}$.
One can prove for these modules the usual properties:
\begin{itemize}
\item different choices of $P$ yield compatible Hilbert module structures;
\item if $k>\ell$ we have  $\maE^{(k)}\hookrightarrow \maE^{(\ell)}$ and the inclusion
in $\maA_m$-compact
\item if $R\in \Psi^{m}_c(G,E)$ then $R$ extends to a bounded operator $\maE^{(k)}
\rightarrow \maE^{(k-m)}$
\end{itemize}
Since we shall make little use of these properties, we omit the proofs.





\subsection{Functional calculus for Dirac operators}\label{subsect:functional-dirac}

Let $\tD=(\tD_\theta)_{\theta\in T}$ be a $\Gamma$-equivariant family of
Dirac-type operators acting on the sections of a $\Gamma$-equivariant 
vertical  hermitian Clifford module $\what{E}$ endowed with a $\Gamma$-equivariant
connection. We shall make the usual assumptions on the connection and on the
Clifford action ensuring that each $\tD_\theta$ is formally self-adjoint.
Recall that $\tD=(\tD_\theta)_{\theta\in T}\in\Psi^1_c (G;\what{E})$ and that
 $\tD$  induces a 
$\maA_c$-linear operator on 
$\E_c$ that we have denoted by $\maD$.



\begin{proposition}
The operator $\maD$ is closable in $\maE_r$ and in $\maE_m$. Moreover, the closures $\maD_r$ and $\maD_m$ on the Hilbert modules $\maE_r$ and $\maE_m$ respectively, are regular and self-adjoint operators.
\end{proposition}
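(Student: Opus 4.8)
The plan is to prove regularity and self-adjointness by exhibiting a parametrix for $\maD \pm i$ built from the pseudodifferential calculus on the monodromy groupoid $G$, and then invoke the standard criterion for regular self-adjoint operators on Hilbert $C^*$-modules (an essentially self-adjoint symmetric operator is regular and self-adjoint provided $\maD \pm i$ have dense range). First I would record closability: since $\maD$ is symmetric on $\maE_c$ (the identity $\langle \maD u, v\rangle = \langle u, \maD v\rangle$ holds in $\maA_c$, hence in $\maA_m$ and $\maA_r$ by continuity of the inclusions), its adjoint is densely defined, so $\maD$ is closable on both $\maE_m$ and $\maE_r$; denote the closures $\maD_m$, $\maD_r$. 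Throughout I will work with $\maD_m$; the argument for $\maD_r$ is identical, replacing the maximal completion by the regular one.

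The core of the argument is to show $\maD_m$ is self-adjoint with the expected domain, namely the Sobolev module $\maE^{(1)}$ introduced at the end of Subsection \ref{subsect:dirac}. The operator $\tD^2 + 1 = (\tD_\theta^2+1)_{\theta\in T}$ is an elliptic element of $\Psi^2_c(G;\what{E})$ (its principal symbol is $|\xi|^2$, invertible off the zero section), so Theorem \ref{th:parametrix} furnishes $Q \in \Psi^{-2}_c(G;\what{E})$ with $\Id - (\tD^2+1)Q$ and $\Id - Q(\tD^2+1)$ in $\Psi^{-\infty}_c(G;\what{E})$; these remainders extend to $\maA_m$-compact operators on $\maE_m$. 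More directly, I would factor: the first-order elliptic operator $\tD - i$ has invertible principal symbol $\sigma_1(\tD) - 0 = \sigma_1(\tD)$ (the $i$ being lower order does not affect the symbol, but $\tD - i$ is genuinely invertible at symbol level since $\sigma_1(\tD)$ is already invertible off zero), so it admits a parametrix $Q_+ \in \Psi^{-1}_c(G;\what{E})$ with $(\tD-i)Q_+ - \Id$ and $Q_+(\tD-i) - \Id$ smoothing of $\Gamma$-compact support; likewise a parametrix $Q_-$ for $\tD + i$. Now the key extra input, the reason one uses $\tD \pm i$ rather than a generic elliptic operator, is the a priori $L^2$-type estimate: for $u \in \maE_c$ one has in $\maA_m$ the inequality $\langle (\tD\pm i)u, (\tD\pm i)u\rangle = \langle \maD^2 u, u\rangle \pm i(\langle \maD u, u\rangle - \langle u, \maD u\rangle) + \langle u,u\rangle = \langle \maD u, \maD u\rangle + \langle u,u\rangle \geq \langle u,u\rangle$, the cross terms vanishing by symmetry; hence $\|(\maD_m \pm i)u\| \geq \|u\|$. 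Combining this lower bound with the parametrix (which shows $\maD_m \pm i$ has closed range with finite-rank-type — actually $\maA_m$-compact — defect), one concludes $\maD_m \pm i$ are bounded below and have dense range, so they are surjective; this is precisely the criterion (see Baum–Connes–Higson, or Lance's book on Hilbert modules, or the treatment in \cite{Vassout-these}) guaranteeing that the symmetric regular-closure $\maD_m$ is in fact regular and self-adjoint. The regularity statement — that $(\Id + \maD_m^2)^{1/2}$ has dense range, equivalently that $\maD_m$ is densely defined together with $\maD_m^*$ — follows from the same surjectivity of $\maD_m \pm i$.

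I expect the main obstacle to be the density/surjectivity of the range of $\maD_m \pm i$: on a Hilbert $C^*$-module a bounded-below symmetric operator need not be self-adjoint (closed range plus injectivity does not give surjectivity without extra structure), so the parametrix argument must be run carefully to show the cokernel of $\maD_m + i$ actually vanishes rather than merely being "small". The clean way is: from $Q_+(\tD+i) = \Id + R$ with $R$ smoothing of $\Gamma$-compact support, one gets that on $\maE_m$, $\maD_m + i$ is invertible modulo the $\maA_m$-compact operator $\overline{R}_m$; pairing this with the lower bound $\|(\maD_m+i)u\|\ge\|u\|$ forces $\maD_m+i$ to be injective with closed range, and a short functional-analytic argument (the same one used in the $\Gamma$-covering case, e.g. in the work cited for the parametrix) upgrades "invertible modulo compacts and bounded below" to "invertible". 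A secondary, more technical point is justifying that $\maD$ really is closable and that its closure has domain containing $\maE_c$ as a core — this is where I would cite the continuity of pseudodifferential operators between Sobolev modules $\maE^{(k)}\to\maE^{(k-m)}$ recorded at the end of Subsection \ref{subsect:dirac}, which identifies $\Dom \maD_m$ with $\maE^{(1)}$ and makes the self-adjointness domain explicit. I would then remark that the identical argument, verbatim with $\maA_r$ in place of $\maA_m$ and the regular norm in place of the maximal norm, handles $\maD_r$.
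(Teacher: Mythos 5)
Your strategy is legitimate and genuinely different from the paper's: you aim at the Hilbert-module criterion that a closed, densely defined, symmetric operator is regular and self-adjoint as soon as $\maD_m\pm i$ are surjective, using parametrices for $\tD\pm i$ together with the a priori lower bound $\langle(\maD_m\pm i)u,(\maD_m\pm i)u\rangle\geq\langle u,u\rangle$ (which you compute correctly). The paper instead takes a single formally self-adjoint parametrix $\tQ$ for $\tD$ itself, deduces from $\maQ_\pi^*\maD_\pi^*\subset(\maD_\pi\maQ_\pi)^*=\Id-\maS_\pi^*$ the elliptic-regularity inclusion $\Dom(\maD_\pi^*)\subset{\rm Im}(\maQ_\pi)+{\rm Im}(\maS_\pi^*)\subset\Dom(\maD_\pi)$, which gives self-adjointness directly, and then obtains regularity by exhibiting the graph of $\maD_\pi$ as the closed image of a bounded adjointable module map, hence an orthocomplemented submodule.

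The genuine gap is the step you describe as ``a short functional-analytic argument upgrades invertible modulo compacts and bounded below to invertible.'' That implication is false as a general principle: the unilateral shift on $\ell^2$ is bounded below and invertible modulo compacts, yet not surjective, so Fredholmness plus a lower bound only yields injectivity and closed range, never the vanishing of the cokernel of $\maD_m+i$. To rule out a nonzero defect you must show that any $v$ with $\langle(\maD_m+i)u,v\rangle=0$ for all $u\in\Dom(\maD_m)$ — such a $v$ automatically lies in $\Dom(\maD_m^*)$ and satisfies $\maD_m^*v=\pm iv$ — is zero; and the only way to exploit the imaginary eigenvalue is to know that $v\in\Dom(\maD_m)$, so that $\langle\maD_m v,v\rangle$ is self-adjoint in $\maA_m$. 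That is precisely the inclusion $\Dom(\maD_m^*)\subset\Dom(\maD_m)$, which your proposal never establishes; appealing to the Sobolev-module identification $\Dom\maD_m=\maE^{(1)}$ is circular here, since $\maE^{(1)}$ is defined via the domain of the closure of an elliptic operator and says nothing about the domain of the \emph{adjoint}. Once you insert the paper's computation $v=\maQ_m(\maD_m^*v)+\maS_m^*v$, with $\maQ$ a self-adjoint parametrix and $\maS$ smoothing of $\Gamma$-compact support, your route closes up; without it the surjectivity of $\maD_m\pm i$ is asserted rather than proved.
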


\begin{proof}
We give a classical proof based on general results described for instance in \cite{Vassout-these}.  
Since the densely defined operator $\maD$ is formally self-adjoint, it is  closable with symmetric closures in $\maE_r$ and $\maE_m$ respectively. 
Let $\tQ\in \Psi^{-1}_c(G, \what{E}) $ be a formally self-adjoint  parametrix for $\tD$:
$$\Id - \tD \tQ= \tilde{S}\,,\quad \quad  \Id-\tQ \tD= \tilde{R}\,.$$ 
 For simplicity, we denote by $\pi$ the regular or the maximal representation,
 by $\maE_\pi$ the corresponding Hilbert module and by $\maD_\pi$ the closure of $\maD$.
  Since $\tQ$ has negative order, it extends into a bounded operator on $\maE_\pi$, denoted by 
$\overline{\maQ}_\pi$, or simply by  $\maQ_\pi$. 
On the other hand, we know that the zero-th order pseudodifferential operator $\tD \tQ$ 
extends to a bounded $\maA_\pi$-linear operator on $\maE_\pi$.
 If $\xi$ belongs to the domain of this closure (which is $\maE_\pi$) then there exists $\xi_n$ in $C_c^{\infty,0} (\tM\times T, \what{E})$ converging in the $\pi$-norm to $\xi$ and such that $(\tD\tQ)\xi_n$ is convergent in the $\pi$ norm. 
 We deduce that $\maQ_\pi (\xi)$ is well defined and is the limit of $\tQ\xi_n$. Hence we deduce that $\maQ_\pi \xi$ belongs to the domain of $\maD_\pi$
 and that ${\rm Im} \maQ_\pi\subset \Dom \maD_\pi$. Hence, $\maD_\pi \maQ_\pi$ is a bounded operator which coincides with the extension of $\tD \tQ$ and we have with obvious notation,
$$
\maD_\pi \maQ_\pi = I - \maS_\pi\,, $$
so  $\maQ_\pi^* \maD_\pi^* \subset (\maD_\pi \maQ_\pi)^* = I - \maS_\pi^*$
and hence  $\Dom (\maD_\pi^*) \subset {\rm Im} (\maQ_\pi^*) +  {\rm Im} (\maS_\pi^*)$. Since $\maQ_\pi$ is self-adjoint  we deduce that 
$$
\Dom(\maD_\pi^*)  \subset {\rm Im}(\maQ_\pi) + {\rm Im} (\maS_\pi^*) \subset \Dom(\maD_\pi).
$$
The last inclusion is a consequence 
of the fact that $\maS_\pi^*$ is induced by a smoothing $\Gamma$-compactly supported operator.
So $\maD_\pi$ is self-adjoint.
Now, the graph of $\maD_\pi$, $G(\maD_\pi)$, is given by
$$
G(\maD_\pi) = \{ (\maQ_\pi (\eta) + \maS_\pi (\eta'), \maD_\pi (\maQ_\pi(\eta))+ \maD_\pi(\maS_\pi(\eta')), \eta, \eta'\in \maE_\pi\}.
$$
Hence $G(\maD_\pi)$, which is closed in $\maE_\pi\times \maE_\pi$, coincides with the image of a bounded morphism  $U$ of $\maA_\pi$-modules given by
$$
U= \left(\begin{array}{cc} \maQ_\pi & \maS_\pi \\ \maD_\pi \maQ_\pi & \maD_\pi \maS_\pi \end{array} \right)
$$
Now, as a general fact, the image of such morphism, when closed, is always orthocomplemented.
Thus
$\maD_\pi$ is regular. 
\end{proof}

Recall 
that we established  in Lemma \ref{lemma:identification}
 isomorphisms of Hilbert spaces 
$$\Phi_\theta: 
\maE_m\otimes_{\pi_\theta^{{\rm av}}} \ell^2(\Gamma\theta)\rightarrow L^2(L_\theta,E)\,,\quad
\Psi_\theta: \maE_m\otimes_{\pi_\theta^{reg}} \ell^2(\Gamma)\rightarrow L^2(\tM_\theta,\widehat{E})
 .$$



%

\begin{proposition}\label{F-Comp}
Let $\psi:\RR \to \CC$ be a continuous bounded function. Then for any $\theta\in T$, the bounded operator, acting on $L^2(L_\theta, E)$, given by
$$
\Phi_\theta \circ \left[ \psi (\maD_m) \otimes_{\pi^{av}_\theta} I_{\ell^2(\Gamma\theta)} \right]\circ \Phi_\theta^{-1},
$$
coincides with the operator $\psi(D_{L_\theta})$ where $D_{L_\theta}$ is our Dirac type operator acting on the leaf $L_\theta$.\\
In the same way the operator, acting on $L^2(\tM_\theta,\what{E}_\theta)$, given by 
$$
\Psi_\theta \circ \left[ \psi (\maD_m) \otimes_{\pi^{reg}_\theta} I_{\ell^2(\Gamma)} \right]\circ \Psi_\theta^{-1},
$$
coincides with the operator $\psi(\tD_\theta)$.

\end{proposition}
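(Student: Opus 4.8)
The plan is to reduce the statement to the single case $\psi(x)=(x\pm i)^{-1}$ by means of the continuous functional calculus for regular self-adjoint operators on Hilbert $C^*$-modules together with a Stone--Weierstrass argument, and to settle that case by combining Proposition~\ref{PhiComp} with a direct computation showing that $\Phi_\theta$ intertwines the $\maA_c$-operator $\maD$ with the leafwise operator $D_{L_\theta}$. Throughout I work with the average representation; the regular one is handled identically at the end.

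First I would check that $(\maD_m\pm i)^{-1}\in\maK_{\maA_m}(\maE_m)$. Since $\maD+i$ has the same principal symbol as $\maD$ it is elliptic of order $1$, so Theorem~\ref{th:parametrix} gives $\tQ\in\Psi^{-1}_c(G;\widehat E)$ with $I-(\maD+i)\tQ=\tilde S\in\Psi^{-\infty}_c$; as negative-order pseudodifferential operators extend to $\maA_m$-compact operators on $\maE_m$, the identity $(\maD_m+i)^{-1}=\tQ_m+(\maD_m+i)^{-1}\tilde S_m$ exhibits $(\maD_m+i)^{-1}$ as a sum of compact operators, and likewise for $-i$. Hence for $\psi\in C_0(\RR)$ the operator $\psi(\maD_m)$, lying in the $C^*$-algebra generated by $(\maD_m\pm i)^{-1}$, belongs to $\maK_{\maA_m}(\maE_m)$; set $S_\psi:=\chi_m^{-1}(\psi(\maD_m))\in\maB_m^E$. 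By Proposition~\ref{PhiComp} we have $\Phi_\theta\circ\big[\psi(\maD_m)\otimes_{\pi_\theta^{av}}I\big]\circ\Phi_\theta^{-1}=\pi_\theta^{av}(S_\psi)$, so $\psi\mapsto\Phi_\theta\circ[\psi(\maD_m)\otimes_{\pi_\theta^{av}}I]\circ\Phi_\theta^{-1}$ is the composite of the $*$-homomorphisms $C_0(\RR)\to\maK_{\maA_m}(\maE_m)$ (functional calculus), $\chi_m^{-1}$ and $\pi_\theta^{av}$; in particular it is a $*$-homomorphism $C_0(\RR)\to B(L^2(L_\theta,E))$, as is $\psi\mapsto\psi(D_{L_\theta})$.

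Next I would compare these two $*$-homomorphisms on the generator $\psi(x)=(x+i)^{-1}$ (the case $(x-i)^{-1}$ being identical), which suffices because these functions generate $C_0(\RR)$ as a $C^*$-algebra. A direct computation using the explicit formula for $\Phi_\theta$ in Lemma~\ref{lemma:identification}, the fact that $\maD$ acts fiberwise by $(\maD\xi)(\tm,\theta')=(\tD_{\theta'}\xi(\cdot,\theta'))(\tm)$, the $\Gamma$-equivariance $\tD_{\gamma\theta}=\gamma\tD_\theta$, and the description of $D_{L_\theta}$ as the descent of $\tD_\theta$ to $L_\theta\simeq\tM/\Gamma(\theta)$, shows $\Phi_\theta((\maD\xi)\otimes f)=D_{L_\theta}\big(\Phi_\theta(\xi\otimes f)\big)$ for $\xi\in\maE_c$, $f\in C_c(\Gamma\theta)$. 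Taking $f=\delta_\theta$ and using Proposition~\ref{PhiComp} together with $\chi_m(S_\psi)(\maD+i)\xi=(\maD_m+i)^{-1}(\maD_m+i)\xi=\xi$ on $\maE_c$, one obtains $\pi_\theta^{av}(S_\psi)\big((D_{L_\theta}+i)\,\Phi_\theta(\xi\otimes\delta_\theta)\big)=\Phi_\theta(\xi\otimes\delta_\theta)$. By the surjectivity part of Lemma~\ref{lemma:identification} the sections $\Phi_\theta(\xi\otimes\delta_\theta)$ exhaust $C_c^\infty(L_\theta,E)$, which is a core for $D_{L_\theta}$ since $L_\theta$ is a complete Riemannian manifold (a covering of the compact $M$); hence $(D_{L_\theta}+i)C_c^\infty(L_\theta,E)$ is dense, and the bounded operator $\pi_\theta^{av}(S_\psi)$ must equal $(D_{L_\theta}+i)^{-1}$. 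This proves the proposition for $\psi\in C_0(\RR)$.

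Finally, for arbitrary bounded continuous $\psi$ I would pass to the limit: pick $\psi_n\in C_0(\RR)$ with $\sup_n\|\psi_n\|_\infty\le\|\psi\|_\infty$ and $\psi_n\to\psi$ uniformly on compacta (e.g. $\psi_n=\chi_n\psi$, $\chi_n\uparrow1$). Then $\psi_n(\maD_m)\to\psi(\maD_m)$ $*$-strongly on $\maE_m$; since $\|\eta\otimes v\|\le\|\eta\|\,\|v\|$ in $\maE_m\otimes_{\pi_\theta^{av}}\ell^2(\Gamma\theta)$ and the sequence is uniformly bounded, $\psi_n(\maD_m)\otimes_{\pi_\theta^{av}}I\to\psi(\maD_m)\otimes_{\pi_\theta^{av}}I$ strongly, while $\psi_n(D_{L_\theta})\to\psi(D_{L_\theta})$ strongly by the ordinary functional calculus; conjugating by the unitary $\Phi_\theta$ and letting $n\to\infty$ in the equality already established for $\psi_n$ yields the claim. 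The regular case is proved verbatim, using the $\Psi_\theta$-parts of Lemma~\ref{lemma:identification} and Proposition~\ref{PhiComp} (and that $\pi_\theta^{reg}$ factors through $\maA_r$), with $L_\theta$ replaced by $\tM_\theta$, $D_{L_\theta}$ by $\tD_\theta$, $\Gamma(\theta)$-orbits by $\Gamma$, and with $C_c^\infty(\tM_\theta,\widehat E_\theta)$ a core for $\tD_\theta$ since $\tM_\theta=\tM$ is complete. I expect the main obstacle to be the intertwining identity $\Phi_\theta((\maD\xi)\otimes f)=D_{L_\theta}(\Phi_\theta(\xi\otimes f))$ and the verification that $C_c^\infty(L_\theta,E)$ is a core for $D_{L_\theta}$; once these are in place the functional-calculus and Stone--Weierstrass steps are routine.
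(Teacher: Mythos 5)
Your proof is correct and rests on the same key step as the paper's: the intertwining identity $\Phi_\theta\circ(\maD\otimes I)=D_{L_\theta}\circ\Phi_\theta$ on the algebraic tensor product (itself a consequence of the $\Gamma$-equivariance of $\maD$ and Lemma \ref{lemma:identification}), combined with Proposition \ref{PhiComp}. The only divergence is in the final passage from this operator identity to the identity of functional calculi: the paper disposes of it in two lines by invoking unitary naturality and uniqueness of the functional calculus ($\psi(\maD_m\otimes I)=\psi(\maD_m)\otimes I$), whereas you carry it out by hand — compactness of the resolvents via a parametrix, reduction to $\psi(x)=(x\pm i)^{-1}$ by Stone--Weierstrass, a core/essential self-adjointness argument on $C^\infty_c(L_\theta,E)$, and a strict-limit argument to reach general bounded continuous $\psi$ — which is longer but makes explicit the domain and closure issues that the paper's appeal to ``uniqueness of the functional calculus'' leaves implicit.
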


\begin{proof}
We prove only the first result, the proof of the second is similar.
Since the operator $\maD_m$ is a regular self-adjoint operator, its continuous functional calculus is well defined. See \cite{Vassout-these}. Let $\xi\in \maE_c$ and let $f\in C_c[\Gamma\theta]$, then we have
\begin{eqnarray*}
\Phi_\theta (\maD_m (\xi)\otimes f) (\tm, \theta) & = &  \sum_{\gamma\in \Gamma} f(\gamma\theta) (\maD \xi) (\tm\gamma^{-1}, \gamma\theta)\\
& = & \sum_{\gamma\in \Gamma} f(\gamma\theta) [R_{\gamma^{-1}}^* (\maD \xi)] (\tm,\theta)\\
& = & \sum_{\gamma\in \Gamma} f(\gamma\theta) \maD(R_{\gamma^{-1}}^* \xi) (\tm,\theta)
\end{eqnarray*}
On the other hand, the action of the operator $D_{L_\theta}$ on the image of $\Phi_\theta$ is given by
$$
(D_{L_\theta}\circ \Phi_\theta) (\xi\circ f) (\tm, \theta) = \sum_{\gamma\in \Gamma} f(\gamma\theta) {\tilde D}_\theta ([\gamma^{-1}\xi]_\theta) (\tm). 
$$
Since by definition of $\maD$ we have $\maD(\gamma^{-1} \xi) (\tm,\theta) = {\tilde D}_\theta ([\gamma^{-1}\xi]_\theta) (\tm)$ we obtain that
$$
\Phi_\theta \circ (\maD_m \otimes I)\circ \Phi_\theta^{-1} = D_{L_\theta}.
$$
If $\psi$ is as above then we get as a consequence of the definition of  functional calculus,
\begin{eqnarray*}
\psi (D_{L_\theta}) & = & \psi\left( \Phi_\theta \circ (\maD_m \otimes I)\circ \Phi_\theta^{-1} \right)\\
& = & \Phi_\theta \circ \psi (\maD_m \otimes I)\circ \Phi_\theta^{-1}
\end{eqnarray*}
 By uniqueness of the functional calculus we also deduce that $
\psi (\maD_m \otimes I) = \psi(\maD_m) \otimes I,
$
and hence the proof is complete.
\end{proof}

Before proving the main result of this Subection, we 
recall two technical results about trace class operators.
First we establish two useful Lemmas. The first one is classical
and generalizes \cite{shubin} Proposition A.3.2 while  the second one is an easy extension of 
similar results for coverings established in \cite{Atiyah-covering}.

\begin{lemma}\label{lemma:atiyah2}
Let $S\in W^*_\nu(G,E)$; then
the following statements are equivalent:
\begin{itemize}
\item $S$ is $\tau^\nu$ Hilbert-Schmidt (i.e. $\tau^\nu (S^*  S)<+\infty)$;
\item there exists a measurable section $K_S$ of $\END (E)$ over $G$ such that
for $\nu$-almost every $\theta$ the operator $S_\theta$ is given on $L^2 (\tM_\theta,\what{E}_\theta)$ by
$(S_\theta \xi)(\tm)=\int_{\tM} K_S (\tm,\tm^\prime,\theta) \xi(\tm^\prime) d\tm^\prime$, with
$$\int_{\tM\times F\times T} \tr 
\left( K_{S}(\tm,\tm^\prime,\theta)^* 
K_{S}(\tm,\tm^\prime,\theta) \right) \,d\tm\,d\tm^\prime\,d\nu(\theta)\,<\,+\infty
$$
where we interpret
$K_S$ as a $\Gamma$-equivariant section on $\tM\times\tM\times T$.
\end{itemize}
Moreover in this case the $\tau^\nu$ Hilbert-Schmidt norm of $S$, 
$\|S\|^2_{\nu-{\rm HS}}:=\tau^\nu (S^*S)$,
is given by
$$\|S\|^2_{\nu-{\rm HS}}=\int_{\tM\times F\times T} \tr 
\left( K_{S}(\tm,\tm^\prime,\theta)^* 
K_{S}(\tm,\tm^\prime,\theta) \right) \,d\tm\,d\tm^\prime\,d\nu(\theta)\,.$$
\end{lemma}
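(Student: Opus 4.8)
The plan is to reduce everything to the classical characterization of Hilbert--Schmidt operators on $L^2(\tM)$ (e.g. \cite{shubin}, Proposition A.3.2), applied fibrewise over $\theta\in T$, and then to carefully ``unfold'' the integral over $\tM$ into an integral over $F\times\Gamma$ using the $\Gamma$-equivariance of $S$ and the $\Gamma$-invariance of $\nu$. First I would recall the definition of the trace $\tau^\nu$ from Subsection \ref{subsec:traces}: for a non-negative $R=(R_\theta)_{\theta\in T}\in W^*_\nu(G;E)$ one has $\tau^\nu(R)=\int_T \tr(M_\chi R_\theta M_\chi)\,d\nu(\theta)$, where $\chi$ is the characteristic function of the fundamental domain $F$. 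Applying this to $R=S^*S$ gives
$$
\tau^\nu(S^*S)=\int_T \tr\bigl(M_\chi (S^*S)_\theta M_\chi\bigr)\,d\nu(\theta)
=\int_T \tr\bigl((S_\theta M_\chi)^*(S_\theta M_\chi)\bigr)\,d\nu(\theta)
=\int_T \|S_\theta M_\chi\|_{\mathrm{HS}}^2\,d\nu(\theta).
$$

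Next I would argue the two implications. For ``$\Leftarrow$'': if such a kernel $K_S$ exists, then for $\nu$-a.e.\ $\theta$ the operator $S_\theta M_\chi$ has Schwartz kernel $K_S(\tm,\tm',\theta)\chi(\tm')$, so by the classical criterion $S_\theta M_\chi$ is Hilbert--Schmidt with $\|S_\theta M_\chi\|_{\mathrm{HS}}^2=\int_{\tM\times F}\tr\bigl(K_S(\tm,\tm',\theta)^* K_S(\tm,\tm',\theta)\bigr)\,d\tm\,d\tm'$; integrating over $T$ and using the displayed formula for $\tau^\nu(S^*S)$ gives finiteness and the norm identity. For ``$\Rightarrow$'': if $\tau^\nu(S^*S)<\infty$, then $S_\theta M_\chi$ is Hilbert--Schmidt for $\nu$-a.e.\ $\theta$, hence $S_\theta$ restricted to $L^2$ of any $\Gamma$-translate $F\gamma$ of $F$ is Hilbert--Schmidt, and therefore $S_\theta$ itself is locally Hilbert--Schmidt; by the classical criterion it is given by a locally square-integrable kernel $K_S(\cdot,\cdot,\theta)$. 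Measurability in $\theta$ follows from the measurability condition built into the definition of $W^*_\nu(G;E)$ (the maps $\theta\mapsto \langle S_\theta\xi_\theta,\eta_\theta\rangle$ are Borel), together with a standard argument identifying the kernel via these matrix coefficients against a countable dense family of sections. The $\Gamma$-equivariance $S_{\gamma\theta}=\gamma S_\theta$ forces $K_S$ to be a $\Gamma$-equivariant section, i.e.\ descends to a section of $\END(E)$ over $G=(\tM\times\tM\times T)/\Gamma$.

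The remaining point, and the one requiring a little care, is the unfolding of the integral. One computes, using $\tM=\bigsqcup_{\gamma\in\Gamma}F\gamma$,
$$
\|S_\theta M_\chi\|_{\mathrm{HS}}^2=\int_{\tM\times F}\tr\bigl(K_S(\tm,\tm',\theta)^* K_S(\tm,\tm',\theta)\bigr)\,d\tm\,d\tm'
=\sum_{\gamma\in\Gamma}\int_{F\gamma\times F}\tr\bigl(K_S^* K_S\bigr)(\tm,\tm',\theta)\,d\tm\,d\tm',
$$
and then the substitution $\tm\mapsto\tm\gamma^{-1}$ together with the equivariance $K_S(\tm\gamma^{-1},\tm',\theta)=$ (up to the bundle identification) $K_S(\tm,\tm'\gamma,\gamma\theta)$ and the $\Gamma$-invariance of $d\tm$ converts each summand into an integral over $F\times F\gamma$; summing over $\gamma$ and then integrating over $T$ (using $\Gamma$-invariance of $\nu$ to absorb the $\gamma\theta$) collapses the double sum/integral to the single integral $\int_{\tM\times F\times T}\tr(K_S^*K_S)\,d\tm\,d\tm'\,d\nu(\theta)$ as claimed. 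The main obstacle is purely bookkeeping: keeping track of the bundle identifications $\END(E)_{[\tm,\tm',\theta]}$ under the $\Gamma$-action and making sure the equivariance relation is applied in the correct variable, so that Fubini and the change of variables are legitimate; none of this is deep, but it is the step where sign/variable errors creep in, so I would write it out explicitly.
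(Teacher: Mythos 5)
Your proof is correct and follows essentially the same route as the paper: reduce to the identity $\tau^\nu(S^*S)=\int_T\|S_\theta M_\chi\|^2_{\mathrm{HS}}\,d\nu(\theta)$ and then invoke the classical kernel characterization of Hilbert--Schmidt operators (\cite{shubin}) fibrewise. The only remark is that your final ``unfolding'' over $\Gamma$-translates is redundant, since $\int_T\bigl(\int_{\tM\times F}\tr(K_S^*K_S)\,d\tm\,d\tm'\bigr)d\nu(\theta)$ is already the integral over $\tM\times F\times T$ claimed in the statement.
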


\begin{proof}
We have, by definition,
$$\|S\|^2_{\nu-{\rm HS}}=\tau^\nu (S^* S)=\int_T \|S_\theta M_\chi\|^2_{{\rm HS}}\,d\nu (\theta)$$
where the integrand involves the usual Hilbert-Schmidt norm in $L^2 (\tM_\theta,\what{E}_\theta)$.
Therefore the proof is easily deduced  using   \cite{shubin}[page 251]
\end{proof}

\begin{lemma}\label{lemma:atiyah1}
Let $S$ be a positive selfadjoint operator in $W^*_\nu(G,E)$; 
then
the following statements are equivalent:
\begin{itemize}
\item $\tau^\nu (S) < +\infty$;
\item for any smooth compactly supported function $\phi$ on $\tM$, the measurable function
$$T\ni \theta\longrightarrow \Tr (M_{\overline{\phi}}\circ S_\theta \circ M_\phi)$$
is $\nu$-integrable on $T$, where the trace is the usual trace for bounded  operators on the
Hilbert space $L^2 (\tM, \what{E})$;
\item for any smooth compactly supported function $\phi$ on $\tM$,
the  function
$$T\ni \theta\longrightarrow \|S^{1/2}_\theta \circ M_\phi \|_{{\rm HS}}^2$$
is $\nu$-integrable on $T$.
\end{itemize}
\end{lemma}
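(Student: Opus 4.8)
The plan is to run the equivalences in the pattern (2)$\Leftrightarrow$(3), then (1)$\Leftrightarrow$(3), comparing each of the last two conditions with the cutoff $\chi=\mathbf 1_F$ attached to the fixed fundamental domain $F$, which we take relatively compact (so that $\overline F$ is compact). First I would record the elementary identity: since $S_\theta\ge 0$ and $M_{\overline\phi}=M_\phi^*$, one has $M_{\overline\phi}\circ S_\theta\circ M_\phi=(S_\theta^{1/2}M_\phi)^*(S_\theta^{1/2}M_\phi)$, hence $\Tr(M_{\overline\phi}\circ S_\theta\circ M_\phi)=\|S_\theta^{1/2}\circ M_\phi\|_{\rm HS}^2$ in $[0,+\infty]$; measurability in $\theta$ of either side is clear because $(S_\theta)_\theta$, hence $(S_\theta^{1/2})_\theta$, is a measurable field and the Hilbert--Schmidt norm is a countable supremum of the measurable functions $\theta\mapsto\sum_{i\le n}\langle S_\theta(\overline\phi e_i),\overline\phi e_i\rangle$ for an orthonormal basis $(e_i)$. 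This gives (2)$\Leftrightarrow$(3) outright. Applying the same identity with $\chi$ in place of $\phi$ (legitimate since $M_\chi$ is a selfadjoint projection and $M_\chi S_\theta M_\chi\ge 0$) and using the definition of $\tau^\nu$ yields the basic formula $\tau^\nu(S)=\int_T\|S_\theta^{1/2}M_\chi\|_{\rm HS}^2\,d\nu(\theta)$, against which both remaining implications will be tested.

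For (1)$\Rightarrow$(3): given a compactly supported $\phi$, its support meets only finitely many $\Gamma$-translates of $\overline F$ (proper discontinuity of the action plus compactness of $\overline F$), so $\supp\phi\subseteq\gamma_1\overline F\cup\cdots\cup\gamma_k\overline F$ and thus $\mathbf 1_{\supp\phi}\le\sum_j\mathbf 1_{\gamma_j\overline F}$ as multiplication operators. Writing $M_\phi=M_{\mathbf 1_{\supp\phi}}M_\phi$ gives $\|S_\theta^{1/2}M_\phi\|_{\rm HS}\le\|\phi\|_\infty\|S_\theta^{1/2}M_{\mathbf 1_{\supp\phi}}\|_{\rm HS}$, and since $M_{\mathbf 1_{\supp\phi}}$ is a projection, $\|S_\theta^{1/2}M_{\mathbf 1_{\supp\phi}}\|_{\rm HS}^2=\Tr(S_\theta^{1/2}M_{\mathbf 1_{\supp\phi}}S_\theta^{1/2})\le\sum_j\Tr(S_\theta^{1/2}M_{\mathbf 1_{\gamma_j\overline F}}S_\theta^{1/2})=\sum_j\Tr(M_{\gamma_j\chi}S_\theta M_{\gamma_j\chi})$, using monotonicity of $A\mapsto\Tr(S_\theta^{1/2}AS_\theta^{1/2})$ on positive operators. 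Now the $\Gamma$-equivariance of $S$, in the form $M_{\gamma\chi}S_\theta M_{\gamma\chi}=U_\gamma(M_\chi S_{\gamma^{-1}\theta}M_\chi)U_{\gamma^{-1}}$ already exploited in the proof of Proposition \ref{traces}, together with unitarity of the trace and the $\Gamma$-invariance of $\nu$, shows $\int_T\Tr(M_{\gamma_j\chi}S_\theta M_{\gamma_j\chi})\,d\nu(\theta)=\tau^\nu(S)$ for each $j$. Hence $\int_T\|S_\theta^{1/2}M_\phi\|_{\rm HS}^2\,d\nu(\theta)\le k\,\|\phi\|_\infty^2\,\tau^\nu(S)<\infty$.

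For the converse (3)$\Rightarrow$(1) it suffices to test against one well-chosen function: pick a smooth, compactly supported $\phi\ge 0$ with $\phi\ge 1$ on the compact set $\overline F$. Then $\chi\le\phi$ everywhere, so $\psi:=\chi/\phi$ (extended by $0$ where $\phi$ vanishes, i.e.\ off $\supp\chi$) satisfies $0\le\psi\le 1$ and $\chi=\psi\phi$; therefore $M_\chi=M_\psi M_\phi$ and $S_\theta^{1/2}M_\chi=(S_\theta^{1/2}M_\phi)M_\psi$, whence $\|S_\theta^{1/2}M_\chi\|_{\rm HS}\le\|M_\psi\|\cdot\|S_\theta^{1/2}M_\phi\|_{\rm HS}\le\|S_\theta^{1/2}M_\phi\|_{\rm HS}$. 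Integrating and using the basic formula together with (3) applied to this $\phi$ gives $\tau^\nu(S)\le\int_T\|S_\theta^{1/2}M_\phi\|_{\rm HS}^2\,d\nu(\theta)<\infty$, closing the loop. The proof is essentially formal; the only points requiring care are the two bookkeeping facts about the fundamental domain — that $F$ may be taken relatively compact and that a compact subset of $\tM$ meets only finitely many translates of $\overline F$ — and the elementary trace estimates (cyclicity and monotonicity for positive operators, $\|AB\|_{\rm HS}\le\|A\|_{\rm HS}\|B\|$), so I do not anticipate any genuine obstacle beyond keeping these straight.
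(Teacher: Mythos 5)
Your proof is correct and follows essentially the same route as the paper's: the equivalence of the last two conditions via $\Tr(M_{\overline\phi}S_\theta M_\phi)=\|S_\theta^{1/2}M_\phi\|_{\rm HS}^2$, the forward direction by covering $\supp\phi$ with finitely many $\Gamma$-translates of $\overline F$ and invoking equivariance plus the invariance of $\nu$, and the converse by testing against a single $\phi$ with $\phi\chi=\chi$. The only cosmetic difference is that you argue directly with operator inequalities and trace monotonicity where the paper routes the forward estimate through the Schwartz-kernel description of Hilbert--Schmidt operators (Lemma \ref{lemma:atiyah2}).
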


\begin{proof}
We follow the techniques in \cite{Atiyah-covering} and use Lemma \ref{lemma:atiyah2}. The second and third items are clearly equivalent. Assume that $\tau^\nu (S) < +\infty$ and let $\phi$ be a smooth compactly supported function on $\tM$ with uniform norm $\|\phi\|_\infty$. We let $\Gamma_\phi$ be a finite subset of $\Gamma$ such that the support of $\phi$ lies in the union $\cup_{\gamma\in \Gamma_\phi} F\gamma$. Here $F$ is a fundamenal domain as before. Then $S^{1/2}$ is $\tau^\nu$ Hilbert-Schmidt and if $K_{S^{1/2}}$ is its Schwartz kernel, then we easily deduce
\begin{eqnarray*}
& & \int_{\tM\times \tM \times T} |\phi (\tm^\prime)|^2 \,\, \tr \left( K_{S^{1/2}}(\tm, \tm^\prime, \theta)^* K_{S^{1/2}}(\tm, \tm^\prime, \theta) \right) \,\, d\tm\, d\tm^\prime \, d\nu(\theta) \\
& = &  \sum_{\gamma\in\Gamma_\phi} \int_{\tM\times F\gamma\times T} |\phi (\tm^\prime)|^2 \,\,
\tr \left( K_{S^{1/2}}(\tm, \tm^\prime, \theta)^* K_{S^{1/2}}(\tm, \tm^\prime, \theta) \right) \, d\tm\, d\tm^\prime \, d\nu(\theta) \\
& \leq & \|\phi\|^2_\infty \times \sum_{\gamma\in\Gamma_\phi} \int_{\tM\times F\gamma\times T} 
\tr \left( K_{S^{1/2}}(\tm, \tm^\prime, \theta)^* K_{S^{1/2}}(\tm, \tm^\prime, \theta) \right) \, d\tm\, d\tm^\prime \, d\nu(\theta) \\
& = & \|\phi\|^2_\infty \times \sum_{\gamma\in\Gamma_\phi} \int_{\tM\times F \times T} 
\tr \left( K_{S^{1/2}}(\tm \gamma^{-1}, \tm^\prime, \gamma \theta)^* K_{S^{1/2}}(\tm \gamma^{-1}, \tm^\prime, \gamma\theta) \right) \, d\tm\, d\tm^\prime \, d\nu(\theta) \\
& = & \|\phi\|^2_\infty \times \text{\rm Card}(\Gamma_\phi) \times \tau^\nu (S)\quad <+\infty.
\end{eqnarray*}
Conversely, let $\phi$ be any nonnegative smooth compactly supported function on $\tM$ such that $\phi \chi = \chi$, where $\chi$ is the characteristic function of $F$. Then we have
\begin{eqnarray*}
\tau^\nu (S) & = &
\int_T \Tr (M_\chi \circ S_\theta \circ M_\chi) \,d\nu (\theta)\\
&=& \int_T \Tr (M_\chi \circ M_\phi  \circ S_\theta \circ M_\phi  \circ M_\chi) \,d\nu (\theta)\\
&\leq&   \int_T \Tr (M_\phi  \circ S_\theta \circ M_\phi ) \,d\nu (\theta) < +\infty 
\end{eqnarray*}
\end{proof}

\begin{proposition}\label{TraceNoyau}
Let $S=(S_\theta)_{\theta\in T}$ be an element of the von Neumann algebra $W^*_\nu (G;E)$. We assume 
that  
 $S_\theta$ is an integral operator with smooth kernel for any $\theta$ in $T$
and that 
the  resulting Schwartz kernel $K_S$ is  a Borel bounded section over $G$.
\begin{itemize}
\item If $S$ is positive and self-adjoint, then 
$S$ is $\tau^\nu$ trace class and we have
\begin{equation}\label{eq:integral-trace}
\tau^\nu (S) = \int_{F\times T} \tr (K_S (\tm,\tm,\theta)) d\tm d\nu (\theta),
\end{equation}
where $F$ is a fundamental domain in $\tM$ and where in the right hand side we interpret
$K(S)$ as a $\Gamma$-equivariant section on $\tM\times\tM\times T$.
\item 
If $S$ is assumed to be $\tau^\nu$ trace class, then 
formula \eqref{eq:integral-trace} holds.
\end{itemize}
\end{proposition}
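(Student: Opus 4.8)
The plan is to reduce the statement to the two technical lemmas just established, Lemma~\ref{lemma:atiyah2} and Lemma~\ref{lemma:atiyah1}, together with the standard fact on a complete Riemannian manifold that a non-negative smoothing operator with continuous kernel which is trace class has trace equal to the integral of its kernel along the diagonal (this is the local index / Mercer-type statement on a single leaf, cf.\ \cite{shubin}[page 251] and \cite{Atiyah-covering}). First I would treat the first bullet. Assume $S=(S_\theta)_{\theta\in T}$ is positive, self-adjoint, given fibrewise by a smooth kernel, and that $K_S$ is a bounded Borel section over $G$. Write $S=R^*R$ with $R=S^{1/2}\in W^*_\nu(G;E)$; by Lemma~\ref{lemma:atiyah1} it suffices to show that for one (hence any) nonnegative smooth compactly supported $\phi$ on $\tM$ with $\phi\chi=\chi$, the function $\theta\mapsto \Tr(M_{\overline\phi}\circ S_\theta\circ M_\phi)$ is $\nu$-integrable; but $M_{\overline\phi}\circ S_\theta\circ M_\phi$ has kernel $\overline{\phi(\tm)}\,K_S(\tm,\tm',\theta)\,\phi(\tm')$, which is continuous, compactly supported on $\tM\times\tM$, and, since $S_\theta\ge 0$, the operator $M_{\overline\phi}S_\theta M_\phi=(S_\theta^{1/2}M_\phi)^*(S_\theta^{1/2}M_\phi)$ is non-negative; hence it is trace class on $L^2(\tM,\what E)$ with trace $\int_{\tM}|\phi(\tm)|^2\tr K_S(\tm,\tm,\theta)\,d\tm$, which is bounded uniformly in $\theta$ by $\|\phi\|_2^2\cdot\sup_G\|K_S\|$ because $K_S$ is a bounded section. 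Integrability over the probability space $(T,\nu)$ is then immediate, so $\tau^\nu(S)<+\infty$.

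It remains, still under the positivity hypothesis, to identify the value. Here I would unwind the definition $\tau^\nu(S)=\int_T\tr(M_\chi\circ S_\theta\circ M_\chi)\,d\nu(\theta)$. For fixed $\theta$ the operator $M_\chi S_\theta M_\chi=(S_\theta^{1/2}M_\chi)^*(S_\theta^{1/2}M_\chi)$ is a non-negative operator on $L^2(\tM,\what E)$ with continuous compactly supported kernel $\chi(\tm)K_S(\tm,\tm',\theta)\chi(\tm')$; by the Mercer/\cite{shubin} statement for a single complete manifold its trace equals $\int_{\tM}\chi(\tm)\tr K_S(\tm,\tm,\theta)\,d\tm=\int_F\tr K_S(\tm,\tm,\theta)\,d\tm$. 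Integrating over $T$ against $d\nu$ gives exactly \eqref{eq:integral-trace}. (One should note that the right-hand side is independent of the choice of fundamental domain $F$: changing $F$ changes each $\tm$ on the diagonal by a deck transformation, and the $\Gamma$-equivariance $K_S(\tm\gamma^{-1},\tm'\gamma^{-1},\gamma\theta)=\gamma\cdot K_S(\tm,\tm',\theta)$ together with the $\Gamma$-invariance of $\nu$ shows the integral is unchanged — this is the same computation as in the proof of Proposition~\ref{traces}.)

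For the second bullet, drop positivity and assume merely that $S$ is $\tau^\nu$ trace class with the stated kernel regularity. I would decompose $S$ into its real and imaginary parts and each of those into positive and negative parts using the polar/spectral decomposition inside the von Neumann algebra $W^*_\nu(G;E)$: write $S=\sum_{j=1}^{4} c_j S_j$ with $c_j\in\{1,-1,i,-i\}$ and each $S_j\ge 0$ in $W^*_\nu(G;E)$, $S_j\le |S|$ (so each $S_j$ is automatically $\tau^\nu$ trace class), and $S_j=g_j(|S|)$ with $g_j$ a bounded Borel function, e.g.\ $S_j=\tfrac12(\Re S\pm|\Re S|)$ etc. The point requiring a small argument is that each $S_j$ again has the kernel regularity needed to apply the first bullet; since $S_j$ is obtained from the regular operators appearing in the ambient algebra by bounded Borel functional calculus, and since we are free to multiply by the smoothing cutoffs, one checks that $M_\chi S_{j,\theta}M_\chi$ is trace class with continuous kernel — alternatively, and more cheaply, one applies Lemma~\ref{lemma:atiyah2} to $S^{1/2}$ and to the $S_j^{1/2}$ to get Hilbert–Schmidt kernels, writes $S_j$ as a product of two such and reads off a continuous kernel for $M_\chi S_{j,\theta}M_\chi$ on the diagonal. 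Granting this, the first bullet gives $\tau^\nu(S_j)=\int_{F\times T}\tr K_{S_j}(\tm,\tm,\theta)$, and by linearity of $\tau^\nu$ and of the integral, together with $K_S=\sum c_j K_{S_j}$ $\nu$-a.e.\ (both sides represent $S_\theta$ by a kernel that is a.e.\ unique), we obtain \eqref{eq:integral-trace} for $S$.

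I expect the main obstacle to be exactly the bookkeeping in the non-positive case: establishing that the pieces $S_j$ of the Jordan/spectral decomposition inherit enough kernel regularity to make "$\int_F\tr K_{S_j}(\tm,\tm,\theta)$" literally meaningful and to invoke the single-leaf Mercer theorem. The positive case is essentially a direct translation of the measured-covering argument of Atiyah, mediated by Lemmas~\ref{lemma:atiyah1} and \ref{lemma:atiyah2}; the only subtlety there is the routine $\Gamma$-equivariance check that the fundamental-domain integral is well defined, which is already in hand from Proposition~\ref{traces}.
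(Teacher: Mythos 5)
Your first bullet follows the paper's route for establishing that $S$ is $\tau^\nu$ trace class (apply Lemma \ref{lemma:atiyah1} to $M_{\overline\phi}\circ S_\theta\circ M_\phi$, using the boundedness of $K_S$ over $G$), but the identification of the value is shakier than you present it. You invoke a Mercer-type theorem for $M_\chi S_\theta M_\chi$ on the grounds that its kernel $\chi(\tm)K_S(\tm,\tm',\theta)\chi(\tm')$ is "continuous compactly supported"; it is not, since $\chi$ is the characteristic function of the fundamental domain $F$, and for a non-negative trace-class operator with a merely bounded measurable kernel the diagonal restriction is not determined by the operator. The paper instead passes to $S^{1/2}$: by Lemmas \ref{lemma:atiyah1} and \ref{lemma:atiyah2} it is $\tau^\nu$ Hilbert--Schmidt with an $L^2$ kernel $K_{S^{1/2}}$, and the composition identity $K_S(\tm,\tm,\theta)=\int_{\tM}K_{S^{1/2}}(\tm,\tm',\theta)\,K_{S^{1/2}}(\tm,\tm',\theta)^*\,d\tm'$ (meaningful pointwise because the diagonal of $K_S$ is assumed smooth and bounded) identifies $\int_{F\times T}\tr K_S(\tm,\tm,\theta)$ with $\|S^{1/2}\|^2_{\nu-{\rm HS}}=\tau^\nu(S)$, with no Mercer theorem and no discontinuous cutoff on the kernel side.

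The genuine gap is in your second bullet. The Jordan decomposition $S=\sum_j c_jS_j$ produces positive operators obtained by Borel functional calculus (they involve $|\Re S|$, $|\Im S|$), and there is no reason these satisfy the hypotheses of the first bullet: they need not be integral operators with smooth kernels bounded over $G$. You flag this yourself as the main obstacle, but neither proposed fix closes it. "Applying Lemma \ref{lemma:atiyah2} to $S^{1/2}$" is not available since $S$ is not positive here; and even granting that each $S_j$ has an $L^2$ kernel as a product of two Hilbert--Schmidt factors, the identity $K_S=\sum_jc_jK_{S_j}$ holds only almost everywhere on $\tM\times\tM\times T$, which says nothing about the diagonal (a null set), so you cannot conclude $\tr K_S(\tm,\tm,\theta)=\sum_jc_j\tr K_{S_j}(\tm,\tm,\theta)$ nor give literal meaning to the individual diagonal integrals. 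The paper avoids the decomposition into positive parts altogether: it writes $S=U|S|=(U|S|^{1/2})(|S|^{1/2})$ as a product of two $\tau^\nu$ Hilbert--Schmidt elements, expresses $\tau^\nu(S)=\langle U|S|^{1/2},|S|^{1/2}\rangle_{{\rm HS}}$, and uses the polarized form of Lemma \ref{lemma:atiyah2} together with the kernel-composition formula to land on $\int_{F\times T}\tr K_S(\tm,\tm,\theta)$; only the diagonal of $K_S$ itself, which is assumed smooth and bounded, ever appears pointwise. To repair your argument you would have to import exactly this polarization step, at which point the fourfold decomposition becomes superfluous.
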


\begin{proof}
Let us prove the first item.  Let $\phi$ be a smooth compactly supported function on
$\tM$. The operator $M_\phi \circ S_\theta \circ M_{\overline{\phi}}$ 
acting on $L^2 (\tM_\theta,\what{E})$ has a smooth compactly
supported Schwartz kernel and is therefore  trace class with
$$\Tr (M_\phi \circ S_\theta \circ M_{\overline{\phi}})=\int_{\tM_\theta} |\phi  (\tm)|^2 K_S (\tm,\tm,\theta)
\,d\tm\,.$$ Since $K_S$ is bounded as a section over $G$ and since $\nu$ is a borelian measure,
 we have
$$\int_T \Tr (M_\phi \circ S_\theta \circ M_{\overline{\phi}}) d\nu(\theta) \, <\, + \infty\,.$$
This shows, using Lemma \ref{lemma:atiyah1}, that $S$ is $\tau^\nu$ trace class and also that
$S^{1/2}$ is $\tau^\nu$ Hilbert-Schmidt. By Lemma \ref{lemma:atiyah2} we deduce that 
the $S^{1/2}$  is an integral operator with measurable Schwartz kernel $K_{S^{1/2}}$
satisfying
$$\| S^{1/2} \|^2_{{\rm HS}}:=   \int_{\tM\times F \times T} \tr \left( K_{S^{1/2}}(\tm,\tm^\prime,\theta)^* 
K_{S^{1/2}}(\tm,\tm^\prime,\theta) \right) d\tm\,d\tm^\prime d\nu (\theta) \,<\,+\infty\,.$$
On the other hand we also have 
\begin{eqnarray*}
K_S (\tm,\tm,\theta) & = & \int_{\tM}  K_{S^{1/2}}(\tm,\tm^\prime,\theta)  K_{S^{1/2}}(\tm^\prime,\tm,\theta)
\,d\tm\\
& = & \int_{\tM} K_{S^{1/2}}(\tm,\tm^\prime,\theta)  K_{S^{1/2}}(\tm, \tm^\prime,\theta)^*
\,d\tm
\end{eqnarray*}
The last equality employs the fact that $S^{1/2}$ is selfadjoint.
Taking pointwise traces we get:
\begin{eqnarray*}\tr K_S (\tm,\tm,\theta) &=& \int_{\tM} \tr\left( K_{S^{1/2}}(\tm,\tm^\prime,\theta)  K_{S^{1/2}}(\tm, \tm^\prime,\theta)^* \right)
\,d\tm\\
&=& 
\int_{\tM} \tr\left(  K_{S^{1/2}}(\tm, \tm^\prime,\theta)^* K_{S^{1/2}}(\tm,\tm^\prime,\theta)  \right)
\,d\tm \,.
\end{eqnarray*}
Therefore 
$$\tau^\nu (S)= \| S ^{1/2} \|^2_{{\rm HS}}= \int_{F\times T} \tr K_S (\tm,\tm,\theta) d\tm d\nu(\theta)\,.$$
This finishes the proof of the first item.\\
Regarding the second item, assume now that $S$ is $\tau^\nu$ trace class i.e. $\tau^\nu (|S|)$
is finite. Write $S=U |S|$ for the polar decomposition of $S$ in $W^*_\nu (G,E)$. Then the operators
$|S|^{1/2}$ and $U |S|^{1/2}$ are $\tau^\nu$ Hilbert-Schmidt and thus have $L^2$ Schwartz
kernels $K_{|S|^{1/2}}$ and $K_{U |S|^{1/2}}$. Using Lemma \ref{lemma:atiyah2}
and the polarization identity we deduce:
\begin{eqnarray*}
<U |S|^{1/2},|S|^{1/2}>_{{\rm HS}} &=& \int_{\tM\times F \times T} \tr K_{U |S|^{1/2}}(\tm,\tm^\prime,\theta)  K_{|S|^{1/2}}(\tm,\tm^\prime,\theta)^* \,d\tm\,d\tm^\prime \, d\nu(\theta)\\
&=& \int_{\tM\times F \times T} \tr K_{U |S|^{1/2}}(\tm,\tm^\prime,\theta)  K_{|S|^{1/2}}(\tm^\prime,\tm,\theta) \,d\tm\,d\tm^\prime \, d\nu(\theta)\\
&=& \int_{ F \times T} \tr K_S (\tm,\tm,\theta) \,d\tm \, d\nu(\theta)\,.
\end{eqnarray*}
Hence $$\tau^\nu (S)=<U |S|^{1/2},|S|^{1/2}>_{{\rm HS}} = \int_{ F \times T} \tr K_S (\tm,\tm,\theta) \,d\tm \, d\nu(\theta)\,.$$
The proof is complete.
\end{proof}

\begin{remark}\label{rk:smoothing}
A  proof similar to the one given above shows,  as
in  \cite{Atiyah-covering} (Proposition 4.16), that if $R=(R_\theta)_{\theta\in T}$ has a continuous 
(or even Borel bounded)  leafwise smooth
Schwartz kernel with $\Gamma$-compact support, then $R$ is $\tau^\nu$ trace class with 
$\tau^\nu (R)=\int_{F\times T} \tr K_R (\tm, \tm, \theta) d\tm \,d\nu(\theta)$. \\
A similar statement holds for a leafwise operator in $W^*_\nu (V,\maF;E)$ with a Borel bounded leafwise smooth Schwartz kernel which is supported within a uniform $C$-neighbourhood, $C\in \RR, C>0$,
 of the diagonal
of every leaf.
\end{remark}

\begin{proposition}\label{Rapid}
Let $\psi:\RR\to\CC$ be a measurable rapidly decreasing  function. The
  the operator
  $\psi (\tD):=(\psi(\tD_\theta))_{\theta\in T}$ 
  satisfies the assumption of Proposition \ref{TraceNoyau} (second item). In particular
  $\psi (\tD)$ has a bounded fiberwise-smooth Schwartz kernel $K_\psi$ and we
   have $$
\tau^\nu (\psi(\tD)) = \int_{F\times T} \tr( K_\psi [\tm, \tm, \theta]) d\tm\,\, d\nu (\theta).
$$ 
\end{proposition}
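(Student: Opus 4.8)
The plan is to verify the two conditions under which the second item of Proposition~\ref{TraceNoyau} applies to $S=\psi(\tD)=(\psi(\tD_\theta))_{\theta\in T}$: that each $S_\theta$ is an integral operator with fiberwise smooth Schwartz kernel, the resulting section $K_\psi$ over $G$ being Borel and \emph{bounded}; and that $S$ is $\tau^\nu$ trace class. Once both are in place, the displayed trace formula is exactly the conclusion of Proposition~\ref{TraceNoyau}.

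\emph{The Schwartz kernel.} Since $\psi$ is rapidly decreasing, functional calculus for the self-adjoint operator $\tD_\theta$ on $L^2(\tM_\theta,\what E_\theta)$ gives, for every $s\ge 0$,
$$\big\|(1+\tD_\theta^2)^{s/2}\,\psi(\tD_\theta)\,(1+\tD_\theta^2)^{s/2}\big\|\;\le\;\sup_{x\in\RR}\,(1+x^2)^{s}\,|\psi(x)|\;=:\;C_s\;<\;\infty,$$
with $C_s$ independent of $\theta$. Because the Riemannian metric, the Clifford module $\what E$ and its connection are all pulled back from the compact manifold $M$ and depend continuously on $\theta\in T$ with $T$ compact, the complete manifolds $\tM_\theta$ form a family of uniformly bounded geometry; hence the Sobolev embeddings $H^{s}(\tM_\theta)\hookrightarrow C^{\ell}_b(\tM_\theta)$ for $s>\ell+n/2$ ($n=\dim\tM$), and the bounds $\sup_\theta\|\delta_{\tm'}\|_{H^{-s}(\tM_\theta)}<\infty$ for $s>n/2$, hold with $\theta$-independent constants. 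Feeding these into the estimate above, as in the classical argument (see \cite{shubin}, and \cite{Atiyah-covering} Proposition~4.16 for the covering case), shows that $\psi(\tD_\theta)$ is an integral operator whose kernel $K_\psi(\tm,\tm',\theta)$ is smooth in $(\tm,\tm')$ with all derivatives bounded uniformly in $(\tm,\tm',\theta)$; it is $\Gamma$-equivariant by $\Gamma$-equivariance of $\tD$ and uniqueness of functional calculus, hence descends to a bounded section $K_\psi$ over $G$. Finally $\theta\mapsto\psi(\tD_\theta)$, and with it $\theta\mapsto K_\psi(\cdot,\cdot,\theta)$, is Borel: this is clear when $\psi$ is continuous because $(\tD_\theta)_{\theta\in T}$ is a continuous family, and it passes to a measurable rapidly decreasing $\psi$ by approximating it boundedly and pointwise by continuous ones. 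Thus $S$ meets the standing hypotheses of Proposition~\ref{TraceNoyau}.

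\emph{Trace class.} Write $\psi(x)=(1+x^2)^{-k}\,g(x)$ with $k$ large (so that $(1+x^2)^{-k}$ still has, by the previous paragraph applied to this polynomially decaying function, a bounded continuous Schwartz kernel) and $g(x):=(1+x^2)^{k}\psi(x)$ merely bounded and measurable. Then $g(\tD)=(g(\tD_\theta))_{\theta}$ is a bounded, $\Gamma$-equivariant, Borel family, i.e.\ an element of $W^*_\nu(G;E)$, while $(1+\tD^2)^{-k}$ is a positive self-adjoint element of $W^*_\nu(G;E)$, and $\psi(\tD)=(1+\tD^2)^{-k}\,g(\tD)$ in $W^*_\nu(G;E)$ by functional calculus (cf.\ Proposition~\ref{F-Comp}). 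For any $\phi\in C^\infty_c(\tM)$ the operator $M_{\overline\phi}\circ(1+\tD_\theta^2)^{-k}\circ M_\phi$ has the smooth, $\tm'$-compactly supported kernel $\overline{\phi(\tm)}\,K_{(1+x^2)^{-k}}(\tm,\tm',\theta)\,\phi(\tm')$, hence is trace class with
$$\Tr\big(M_{\overline\phi}\circ(1+\tD_\theta^2)^{-k}\circ M_\phi\big)\;=\;\int_{\tM_\theta}|\phi(\tm)|^2\,\tr K_{(1+x^2)^{-k}}(\tm,\tm,\theta)\,d\tm\;\le\;\big(\sup_G|K_{(1+x^2)^{-k}}|\big)\,\|\phi\|_2^2,$$
the point being that $\phi$ is compactly supported in $\tM$, so this is finite and bounded uniformly in $\theta$, hence a $\nu$-integrable function of $\theta$ since $\nu$ is a probability measure. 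By Lemma~\ref{lemma:atiyah1} (which rests on Lemma~\ref{lemma:atiyah2}) this gives $\tau^\nu\big((1+\tD^2)^{-k}\big)<\infty$; as the $\tau^\nu$ trace-class operators form a two-sided ideal of $W^*_\nu(G;E)$, it follows that $\psi(\tD)=(1+\tD^2)^{-k}\,g(\tD)$ is $\tau^\nu$ trace class. Proposition~\ref{TraceNoyau}, second item, applied to $S=\psi(\tD)$, then yields $\tau^\nu(\psi(\tD))=\int_{F\times T}\tr(K_\psi[\tm,\tm,\theta])\,d\tm\,d\nu(\theta)$, as claimed.

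\emph{Main obstacle.} The real work is entirely in the kernel step: obtaining the $C^\ell$-bounds on the Schwartz kernel of a rapidly decreasing function of the Dirac operator that are \emph{uniform in the transverse parameter} $\theta$, and the attendant Borel dependence on $\theta$. Once this bounded-geometry input is secured, the trace-class property is a formal consequence of the ideal structure of $W^*_\nu(G;E)$ together with the cutoff criterion of Lemma~\ref{lemma:atiyah1}, and the trace identity is then immediate from Proposition~\ref{TraceNoyau}.
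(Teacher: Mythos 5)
Your proposal is correct, and its overall skeleton matches the paper's: verify that $\psi(\tD)$ has a bounded fiberwise smooth kernel and is $\tau^\nu$ trace class, then invoke the second item of Proposition~\ref{TraceNoyau}. But the two key steps are handled by genuinely different arguments. For the kernel, the paper simply cites \cite{MS}, Theorem~7.36 (noting it holds for measurable rapidly decreasing functions), whereas you reconstruct the estimate from scratch via uniform bounded geometry and Sobolev embeddings; your version is more self-contained but is exactly the content of the cited result. For the trace-class property, the paper uses the short observation that $|\psi(\tD)|=|\psi|(\tD)$ with $|\psi|$ again measurable and rapidly decreasing, so $|\psi|(\tD)$ is a \emph{positive} operator with a bounded fiberwise smooth kernel and the \emph{first} item of Proposition~\ref{TraceNoyau} applies directly to it. You instead factor $\psi(\tD)=(1+\tD^2)^{-k}\,g(\tD)$ with $g$ bounded, establish $\tau^\nu\big((1+\tD^2)^{-k}\big)<\infty$ via the cutoff criterion of Lemma~\ref{lemma:atiyah1}, and conclude by the two-sided ideal property of $L^1(W^*_\nu(G;E),\tau^\nu)$. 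Both routes are valid. The paper's is shorter and avoids the small extra point your route needs, namely that the polynomially decaying function $(1+x^2)^{-k}$ (not Schwartz) still has a bounded continuous kernel for $k$ large — true, but requiring you to note that the kernel estimates only use finitely many of the weighted bounds $C_s$. Your route has the mild advantage of reducing the trace-class question to a single explicit resolvent power and making the role of the ideal structure transparent.
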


\begin{proof}
Using \cite{MS}, Theorem 7.36 (which is in fact valid for any measurable rapidly
decreasing function) we know that $K_\psi$ is bounded and fiberwise smooth 
and that $\psi (\tD)\in W^*_\nu (G,E)$ . Therefore it remains to show that $\psi (\tD)$ is $\tau^\nu$ trace class
since then we can simply apply Proposition \ref{TraceNoyau} (second item).
But $|\psi (\tD)|=|\psi| (\tD)|$ and $|\psi|$ is a measurable rapidly decreasing function; therefore
$|\psi (\tD)|$ has a bounded fiberwise smooth Schwartz and thus satisfies the assumptions
of Proposition \ref{TraceNoyau} (first item). We conclude that $\psi (\tD)$ is $\tau^\nu$ trace class.


%
\end{proof}


A statement similar to the one just proved holds for the leafwise Dirac-type  operator $D:=(D_L)_{L\in V/\maF}$.
In order to keep this paper  to a reasonable size we state the corresponding proposition without
proof. 
See \cite{roe-book}

\begin{proposition}\label{Rapid-down}
Let $\psi:\RR\to\CC$ be a measurable rapidly decreasing  function. Then
  the operator
  $\psi (D):=(\psi(D_L))_{L\in V/\maF}$ 
  is $\tau^\nu_{\maF}$ trace class, has a leafwise smooth Schwartz kernel $K_\psi$
  which is bounded as a measurable section over the equivalence relation $\cup_{L\in V/\maF} \,L\times L$,and we
   have $$
\tau^\nu_{\maF}  (\psi(D)) = \int_{F\times T} \tr(K_\psi ([\tm, \theta], [\tm,\theta])) d\tm\,\, d\nu (\theta) $$ 
where now $F\times T$ is viewed as a subset in $V$.
\end{proposition}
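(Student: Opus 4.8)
The plan is to mirror the proof of Proposition \ref{Rapid}, replacing the von Neumann algebra $W^*_\nu(G,E)$ by the leafwise algebra $W^*_\nu(V,\maF;E)$ and the fibers $\tM_\theta$ by the leaves $L_\theta\simeq\tM/\Gamma(\theta)$. The first step is to transfer Lemmas \ref{lemma:atiyah2} and \ref{lemma:atiyah1} and Proposition \ref{TraceNoyau} to the leafwise setting. Using the description of $W^*_\nu(V,\maF;E)$ as the algebra of $\Gamma$-equivariant Borel families $(S_\theta)_{\theta\in T}$ of operators on $L^2(\tM_\theta/\Gamma(\theta),E_\theta)$, together with a measurable choice of fundamental domains $F_\theta\subset\tM$ for the free proper action of $\Gamma(\theta)$, the proofs go through with the modifications already illustrated in Proposition \ref{traces} (where the covering-case arguments are said to be ``readily adapted to take care of the quotients by the isotropy groups'') and in the fundamental-domain manipulations of Lemma \ref{lemma:identification}. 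One obtains leafwise Hilbert--Schmidt and trace-class criteria, and in particular the leafwise counterpart of Proposition \ref{TraceNoyau}: any $S\in W^*_\nu(V,\maF;E)$ that is either positive with a Borel-bounded leafwise-smooth kernel $K_S$, or $\tau^\nu_{\maF}$ trace class with such a kernel, satisfies
\[
\tau^\nu_{\maF}(S)=\int_{F\times T}\tr\bigl(K_S([\tm,\theta],[\tm,\theta])\bigr)\,d\tm\,d\nu(\theta),
\]
with $F$ a fundamental domain for $\Gamma$ on $\tM$ and $F\times T$ regarded inside $V$. The positive case proceeds through the square root $S^{1/2}$ and its Hilbert--Schmidt kernel exactly as in Proposition \ref{TraceNoyau}; the trace-class case through the polar decomposition $S=U|S|$ and polarization of the Hilbert--Schmidt kernels of $|S|^{1/2}$ and $U|S|^{1/2}$. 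This also subsumes the leafwise half of Remark \ref{rk:smoothing}.

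The second step is the regularity of $\psi(D)$. Its membership in $W^*_\nu(V,\maF;E)$ and its $\Gamma$-equivariance are immediate from Proposition \ref{F-Comp}, which identifies $\psi(D_{L_\theta})$ with $\Phi_\theta\circ\bigl(\psi(\maD_m)\otimes_{\pi^{av}_\theta}I_{\ell^2(\Gamma\theta)}\bigr)\circ\Phi_\theta^{-1}$, a manifestly measurable $\Gamma$-equivariant family. That $\psi(D)$ has a leafwise-smooth Schwartz kernel $K_\psi$ bounded as a measurable section over $\bigcup_{L\in V/\maF}L\times L$ is the one genuinely analytic ingredient: it follows from finite-propagation-speed estimates together with leafwise elliptic regularity, the leaves having bounded geometry since each is a Riemannian covering of the fixed closed manifold $M$ with the lifted metric. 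One may cite \cite{MS}, Theorem 7.36 (valid for any measurable rapidly decreasing function) or \cite{roe-book}, exactly as Proposition \ref{Rapid} invokes \cite{MS} in the upstairs case, and I would treat this as a black box.

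Finally I would combine the two. Since $\psi$ is measurable rapidly decreasing, so is $|\psi|$, and $|\psi(D)|=|\psi|(D)$ by the leafwise functional calculus; hence $|\psi(D)|$ is a positive self-adjoint element of $W^*_\nu(V,\maF;E)$ with a Borel-bounded leafwise-smooth kernel, so by the positive case of the leafwise version of Proposition \ref{TraceNoyau} it is $\tau^\nu_{\maF}$ trace class, and therefore so is $\psi(D)$. Applying the trace-class case of the same statement to $\psi(D)$ itself then gives the asserted formula, with $F\times T$ viewed as a subset of $V$. The main obstacle is the Borel-measurability and fundamental-domain bookkeeping forced by the $\theta$-dependent isotropy groups $\Gamma(\theta)$ --- measurable selection of the $F_\theta$, measurability in $\theta$ of the pointwise Hilbert--Schmidt norms and kernel traces, and legitimacy of the changes of variables across fundamental domains --- together with the need to match $K_\psi$, which already encodes the sum over $\Gamma(\theta)$, with the section appearing in the integrand. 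Both are handled as in the corresponding upstairs arguments and in the proof of Proposition \ref{traces}, which is why the statement is recorded here without reproducing the details; compare also \cite{Atiyah-covering}.
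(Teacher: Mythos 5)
Your proposal is correct and follows exactly the route the paper intends: the authors state Proposition \ref{Rapid-down} without proof, indicating only that it is the leafwise analogue of Proposition \ref{Rapid} (citing \cite{roe-book}), and your argument is precisely that adaptation --- transferring Lemmas \ref{lemma:atiyah2}, \ref{lemma:atiyah1} and Proposition \ref{TraceNoyau} to $W^*_\nu(V,\maF;E)$ via $\theta$-dependent fundamental domains, invoking the kernel regularity as a black box, and then running the $|\psi(D)|=|\psi|(D)$ argument verbatim. No discrepancy to report.
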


We are now in position to prove the main results of this section.

\begin{theorem}\label{theo:functional-calculus}
Let for simplicity $\psi:\RR\to \CC$ be a Schwartz class function. Then $\psi(\maD_m)\in \maK_{\maA_m} (\maE_m)$ and the element $\chi_m^{-1} (\psi(\maD_m))\in \maB_m^E$ admits a finite $\tau^\nu_{av}$ trace and also a finite $\tau^\nu_{reg}$ trace. Moreover
\begin{itemize}
\item $\tau^\nu_{av} ( \chi_m^{-1} (\psi(\maD_m)) = \tau^\nu_{\maF} \left[(\psi(D_L))_{L\in V/F}\right]$ where $(\psi(D_L))_{L\in V/F}$ is the corresponding element in the leafwise von Neumann algebra $W^*_{\nu} (V,\maF;E)$ and $\tau^\nu_{\maF}$ is the trace on this von Neumann algebra as defined in Subsection \ref{subsec:traces}.
\item $\tau^\nu_{reg} ( \chi_m^{-1} (\psi(\maD_m)) = \tau^\nu \left[(\psi({\tilde D}_\theta))_{\theta\in T}\right]$ where $(\psi({\tilde D}_\theta))_{\theta\in T}$ is the corresponding element in the regular von Neumann algebra $W^*_{\nu} (G,E)$ and $\tau^\nu$ is the trace on this von Neumann algebra as defined in Subsection \ref{subsec:traces}.
\end{itemize}
\end{theorem}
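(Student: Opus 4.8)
The plan is to reduce the theorem to the fiberwise results already in hand---Propositions \ref{Rapid} and \ref{Rapid-down} on the monodromy and leafwise Dirac operators---together with the compatibility Propositions \ref{PhiComp} and \ref{F-Comp} and the defining identities $\tau^\nu_{reg}=\tau^\nu\circ\pi_{reg}$, $\tau^\nu_{av}=\tau^\nu_{\maF}\circ\pi_{av}$ of Corollary \ref{prop:composition-of-traces}. The only genuinely new point is the $\maA_m$-compactness of $\psi(\maD_m)$.

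First I would show $\psi(\maD_m)\in\maK_{\maA_m}(\maE_m)$, so that $S:=\chi_m^{-1}(\psi(\maD_m))\in\maB^E_m$ is defined. Since $\maD_m$ is self-adjoint and regular, $\psi(\maD_m)$ is a well-defined bounded adjointable operator. If $\widehat\psi$ has compact support, then $\psi(\maD_m)=\tfrac1{2\pi}\int_\RR\widehat\psi(t)\,e^{it\maD_m}\,dt$; by Proposition \ref{F-Comp} applied to $x\mapsto e^{itx}$, $e^{it\maD_m}$ acts fiberwise as $e^{it\tD_\theta}$, which has propagation $\le|t|$ on the complete manifold $\tM_\theta$. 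Hence $\psi(\maD_m)$ acts on $\maE_c$ as the pseudodifferential operator given by the $\Gamma$-equivariant family of kernels $(K_{\psi(\tD_\theta)})_\theta$; this family is leafwise smooth (since $\psi\in\mathcal S(\RR)$) and of $\Gamma$-compact support (it is supported in a fixed-radius neighbourhood of the diagonal of $G$ and $V$ is compact), so $\chi_m^{-1}(\psi(\maD_m))\in C^{\infty,0}_c(G,\END(E))=\maB^E_c$ and $\psi(\maD_m)\in\maK_{\maA_m}(\maE_m)$ by Proposition \ref{prop:compact}. For general Schwartz $\psi$ I would take $\psi_n\in\mathcal S(\RR)$ with $\widehat{\psi_n}$ compactly supported and $\|\psi_n-\psi\|_\infty\to0$, observe $\|\psi_n(\maD_m)-\psi(\maD_m)\|\le\|\psi_n-\psi\|_\infty\to0$, and use that $\maK_{\maA_m}(\maE_m)$ is norm-closed.

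Next I would compute the two von Neumann representations of $S$. By Proposition \ref{PhiComp}, $\pi^{av}_\theta(S)=\Phi_\theta\circ[\chi_m(S)\otimes_{\pi^{av}_\theta}I]\circ\Phi_\theta^{-1}=\Phi_\theta\circ[\psi(\maD_m)\otimes_{\pi^{av}_\theta}I]\circ\Phi_\theta^{-1}$, which equals $\psi(D_{L_\theta})$ by Proposition \ref{F-Comp}; hence $\pi_{av}(S)=(\psi(D_L))_{L\in V/\maF}$ in $W^*_\nu(V,\maF;E)$, and the same computation with $\Psi_\theta$ gives $\pi_{reg}(S)=(\psi(\tD_\theta))_{\theta\in T}$ in $W^*_\nu(G;E)$. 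Since $|\psi|^2=\overline\psi\psi$ is again Schwartz, $|S|=\chi_m^{-1}(|\psi|(\maD_m))$, and running the same identification for the continuous rapidly decreasing function $|\psi|$ gives $\pi_{av}(|S|)=(|\psi|(D_L))$ and $\pi_{reg}(|S|)=(|\psi|(\tD_\theta))$, which are $\tau^\nu_{\maF}$- resp. $\tau^\nu$-trace class by Propositions \ref{Rapid-down} and \ref{Rapid}. By Corollary \ref{prop:composition-of-traces} this gives $\tau^\nu_{av}(|S|)=\tau^\nu_{\maF}((|\psi|(D_L)))<\infty$ and $\tau^\nu_{reg}(|S|)=\tau^\nu((|\psi|(\tD_\theta)))<\infty$, so $S$ is trace class for both traces, and applying the same two identities to $S$ itself yields
\[
\tau^\nu_{av}(S)=\tau^\nu_{\maF}\big((\psi(D_L))_{L\in V/\maF}\big),\qquad\tau^\nu_{reg}(S)=\tau^\nu\big((\psi(\tD_\theta))_{\theta\in T}\big),
\]
which are the claimed equalities (the explicit kernel-integral expressions then come from Propositions \ref{Rapid} and \ref{Rapid-down}).

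The hard part will be Step 1, the $\maA_m$-compactness of $\psi(\maD_m)$: unlike a negative-order element of the calculus $\Psi^{\bullet}_c(G)$ it is not of $\Gamma$-compact support, so it is not directly covered by the earlier mapping properties, and one must run the finite-propagation-speed argument, importing unit-speed propagation for $e^{it\maD_m}$ leaf by leaf from the classical statement on the $\tM_\theta$ via Proposition \ref{F-Comp}. Once Step 1 is in place the rest is bookkeeping with $\pi_{av}$, $\pi_{reg}$ and the trace compatibilities.
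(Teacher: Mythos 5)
Your proposal is correct and follows essentially the same route as the paper: identify $\pi^{av}\circ\chi_m^{-1}$ and $\pi^{reg}\circ\chi_m^{-1}$ of $\psi(\maD_m)$ with the leafwise and monodromy families via Propositions \ref{PhiComp} and \ref{F-Comp}, run the argument first with $|\psi|$ to get trace-class via Propositions \ref{Rapid} and \ref{Rapid-down}, and then conclude with $\psi$ using Corollary \ref{prop:composition-of-traces}. Your Step 1 (finite propagation speed plus uniform approximation to establish $\psi(\maD_m)\in\maK_{\maA_m}(\maE_m)$) is a useful elaboration of a point the paper's proof takes for granted, but it does not change the overall strategy.
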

\begin{proof}
%
We know from Corollary \ref{prop:composition-of-traces} that $\tau^{\nu}_{av} = \tau^\nu_{\maF} \circ \pi^{av}$. Therefore
\begin{eqnarray*}
\tau^{\nu}_{av} ( \chi_m^{-1} (|\psi(\maD_m)|)) & = & \tau^\nu_{\maF} \left[ (\pi^{av}\circ \chi_m^{-1})  (|\psi(\maD_m)|)\right]\\
& = & \tau^\nu_{\maF} \left((\Phi_\theta\circ \left[ |\psi|(\maD_m)\otimes_{\pi^{av}_\theta} I\right] \circ \Phi_\theta^{-1})_{\theta\in T} \right)
\end{eqnarray*}
The last equality is a consequence of Proposition \ref{PhiComp}. Now, using Proposition 
\ref{F-Comp}, we finally deduce
$$
\tau^{\nu}_{av} ( \chi_m^{-1} (|\psi(\maD_m)|)) 
= \tau^\nu_{\maF} \left( (|\psi|(D_L))_{L\in V/F}  \right) \quad <+\infty.
$$
Hence we see from Proposition   \ref{prop:compact} that $\chi_m^{-1} (|\psi(\maD_m)|)$ is trace class and the same computation with $\psi$ instead of $|\psi|$ finishes the proof of the first item.
The second item is proved repeating the same argument.
\end{proof}


\section{Index theory}\label{Section.Index}

Let $\tM$, $\Gamma$, and $T$ be as in the previous sections and let $(V,\maF)$, with $V=\tM\times_\Gamma T$, the
associated foliated bundle. We assume in this section only that the manifold $M$ is even dimensional and hence that the leaves of our foliation are even dimensional. Let $E$ be a continuous longitudinally smooth hermitian vector bundle
on $V$ and let $\widehat{E}$ be its lift to $\tM\times T$.
Let $\tD=(\tD_\theta)_{\theta\in T}$ be  as in the previous section  a $\Gamma$-equivariant 
continuous family of Dirac-type operators.
The bundle $E$ is $\ZZ_2$-graded, $E=E^+ \oplus E^-$, and the operator $\tD$ is odd and essentially self-adjoint, i.e. 
$$\tD_\theta= \left(\begin{array}{cc} 0 & \tD^-_\theta\\ \tD^+_\theta& 0 \end{array} \right)\quad \forall \theta\in T$$
and $(\tD^-_\theta)^*=\tD^+_\theta$.
Let $D:=(D_L)_{L\in V/F}$ be the longitudinal operator induced by $\tD$ on the leaves of the foliation $(V,\maF)$. 

\subsection{The numeric index}
We consider for each $\theta$ the orthogonal projection $\tilde{\Pi}_\theta^\pm$ onto the $L^2$-null space
of the operator $\tD^\pm_\theta$. Similarly, on each leaf $L$, we consider the orthogonal
projections $\Pi^\pm_L$ onto the $L^2$-null space of the operator $D_L$. It is well known that these orthogonal projections are smoothing
operators, but of course are not localized in a compact neighborhood of the unit space $V$, viewed as a subspace of the graph of the foliation equivalence relation.

\begin{proposition}\
\begin{itemize}
\item 
The  family $\tilde{\Pi}^\pm := (\tilde{\Pi}_\theta^\pm)_{\theta\in T}$ belongs to the regular von Neumann algebra
$W^*_\nu (G,E^\pm)$. Moreover they are  $\tau^\nu$ trace class operators.
\item The family $\Pi^\pm:= (\Pi^\pm_L)_{L\in V/F}$ belongs to the leafwise von Neumann algebra $W^*_\nu (V,\maF;E^\pm)$. 
Moreover they are $\tau^\nu_\maF$ trace class operators.
\end{itemize} 
\end{proposition}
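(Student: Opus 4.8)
The plan is to control the null--space projections by the heat operators $e^{-t\tD^{2}}$, which are given by Schwartz functional calculus and are therefore already covered by Proposition~\ref{Rapid} in the regular case and by Proposition~\ref{Rapid-down} in the leafwise case; the only genuinely new ingredient is the elementary scalar estimate $\mathbf 1_{\{0\}}(\lambda)\le e^{-t\lambda}$ valid for all $\lambda\ge 0$ and all $t>0$, which lets us dominate a projection by a trace class positive operator. Since $\tD^{2}$ respects the $\ZZ_{2}$--grading, $\tD_\theta^{2}=\diag(\tD^{-}_\theta\tD^{+}_\theta,\ \tD^{+}_\theta\tD^{-}_\theta)$, the family $e^{-t\tD^{2}}=(e^{-t\tD_\theta^{2}})_{\theta\in T}$ is block diagonal, so its grading corners $e^{-t\tD^{2}}|_{E^{\pm}}$ are positive elements of $W^{*}_\nu(G,E^{\pm})$; applying Proposition~\ref{Rapid} to the Schwartz function $\lambda\mapsto e^{-t\lambda^{2}}$ shows that $e^{-t\tD^{2}}$, hence each corner $e^{-t\tD^{2}}|_{E^{\pm}}$ (a corner of a positive $\tau^{\nu}$--trace class operator cut down by the grading projection, which lies in the von Neumann algebra), is $\tau^{\nu}$--trace class.

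Next I would observe that $\tilde{\Pi}^{+}_\theta$ is the orthogonal projection onto $\ker\tD^{+}_\theta=\ker(\tD^{-}_\theta\tD^{+}_\theta)$, so by the spectral theorem applied to the non--negative operator $\tD^{-}_\theta\tD^{+}_\theta$ together with $\mathbf 1_{\{0\}}\le e^{-t(\cdot)}$ on $[0,\infty)$ we get, for every $\theta\in T$ and every $t>0$,
\[
0\ \le\ \tilde{\Pi}^{+}_\theta\ \le\ e^{-t\,\tD^{-}_\theta\tD^{+}_\theta}\ =\ \bigl(e^{-t\tD_\theta^{2}}\bigr)\big|_{E^{+}},
\]
and symmetrically with $+$ replaced by $-$ and $\tD^{-}\tD^{+}$ by $\tD^{+}\tD^{-}$. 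To see that $\tilde{\Pi}^{\pm}$ really defines an element of $W^{*}_\nu(G,E^{\pm})$: the $\Gamma$--equivariance $\tilde{\Pi}^{\pm}_{\gamma\theta}=\gamma\tilde{\Pi}^{\pm}_\theta$ is immediate from $\tD_{\gamma\theta}=\gamma\tD_\theta$; the family is uniformly bounded by $1$; and for $\xi,\eta$ in the ambient Hilbert space the map $\theta\mapsto\langle\tilde{\Pi}^{\pm}_\theta\xi_\theta,\eta_\theta\rangle$ is the pointwise limit, as $t\to\infty$ through the integers, of the $\nu$--measurable functions $\theta\mapsto\langle(e^{-t\tD_\theta^{2}})|_{E^{\pm}}\,\xi_\theta,\eta_\theta\rangle$ (each $e^{-t\tD^{2}}$ lying in $W^{*}_\nu(G,E)$ by Proposition~\ref{Rapid}), hence measurable. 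Therefore the above operator inequality holds in $W^{*}_\nu(G,E^{\pm})$, and since $\tau^{\nu}$ is a positive normal trace on it (Proposition~\ref{traces}) monotonicity gives $\tau^{\nu}(\tilde{\Pi}^{\pm})\le\tau^{\nu}\!\bigl(e^{-t\tD^{2}}|_{E^{\pm}}\bigr)<+\infty$, i.e.\ $\tilde{\Pi}^{\pm}$ is $\tau^{\nu}$--trace class.

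The leafwise statement is the verbatim analogue: replace $(\tD_\theta,\ \tM_\theta,\ W^{*}_\nu(G,E^{\pm}),\ \tau^{\nu})$ by $(D_L,\ L,\ W^{*}_\nu(V,\maF;E^{\pm}),\ \tau^{\nu}_{\maF})$, use Proposition~\ref{Rapid-down} to get that $e^{-tD^{2}}=(e^{-tD_L^{2}})_{L}$ and its grading corners are $\tau^{\nu}_{\maF}$--trace class, note that $\Pi^{\pm}_L$ projects onto $\ker D^{\pm}_L$ so that $0\le\Pi^{\pm}_L\le(e^{-tD_L^{2}})|_{E^{\pm}}$ for all $t>0$, check membership $\Pi^{\pm}\in W^{*}_\nu(V,\maF;E^{\pm})$ by $\Gamma$--equivariance, the uniform bound $1$, and measurability as a pointwise limit, and finally conclude by monotonicity of $\tau^{\nu}_{\maF}$. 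The only point that needs a little care is precisely this verification that the projection families $\tilde{\Pi}^{\pm}$ and $\Pi^{\pm}$ are honest elements of the respective von Neumann algebras (the measurability condition) and that the fibrewise operator inequality descends to an inequality of normal traces; everything else is immediate from Propositions~\ref{Rapid} and~\ref{Rapid-down} and the scalar estimate $\mathbf 1_{\{0\}}\le e^{-t(\cdot)}$ on $[0,\infty)$.
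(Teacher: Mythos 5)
Your proof is correct and takes essentially the same route as the paper: both reduce the trace-class property of the null-space projections to that of the heat operators supplied by Propositions \ref{Rapid} and \ref{Rapid-down} (and obtain membership in the von Neumann algebras from bounded Borel functional calculus of the equivariant family). The only difference is cosmetic and sits in the last step: the paper absorbs the projection via the identity $\tilde{\Pi}=\tilde{\Pi}\,e^{-\tD^{2}}$ together with the two-sided ideal property of trace-class elements, whereas you dominate it by $0\le\tilde{\Pi}^{\pm}\le e^{-t\tD^{2}}|_{E^{\pm}}$ and invoke positivity and normality of the trace; both arguments are standard and equally valid.
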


\begin{proof}
As we have already mentioned, for any Borel bounded function $f:\RR \to \CC$, the operator $f(\tD)$ (respectively $f(D)$) belongs to the von Neumann algebra $W^*_\nu (G,E)$ (to the von Neumann algebra $W^*_\nu (V,\maF;E)$). Hence, $\tilde{\Pi}^\pm$ belongs to $W^*_\nu (G,E^\pm)$ and $\Pi^\pm$ belongs to $W^*_\nu (V,\maF;E^\pm)$. 

Recall on the other hand from Propositions \ref{Rapid} 
\ref{Rapid-down}
that $e^{-\tD^2}$ is $\tau^\nu$ trace class and that $e^{-D^2}$ is $\tau_\maF^\nu$ trace class. Hence the proof is complete since 
$$
\tilde{\Pi} = \tilde{\Pi} e^{-\tD^2} \quad \text{ and } \quad \Pi = \Pi e^{-D^2}.
$$
\end{proof}

\begin{definition}
We define the monodromy index of $\tD$ as 
\begin{equation}\label{index-up}
\ind^\nu_{{\rm up}}(\tD):=\tau^\nu (\tilde{\Pi}^+) -\tau^\nu(\tilde{\Pi}^-)
\end{equation}
We define the leafwise index of $D$ as
\begin{equation}\label{index-down}
\ind^\nu_{{\rm down}}(D):=\tau^\nu_{\maF} (\Pi^+) -\tau^\nu_{\maF} (\Pi^-)
\end{equation}
\end{definition}

As  $\tD^+$ is elliptic,   we can find a $\Gamma$-equivariant family of
parametrices $\tilde{Q}:= (\tilde{Q}_\theta)_{\theta\in T}$ of $\Gamma$-compact support
with remainders $\tilde{R}_+$ and $\tilde{R}_-$; the remainder families  are $\Gamma$-equivariant,
smoothing and of $\Gamma$-compact support, i.e.
$$
\tR_+ = I - \tQ \tD^+ \quad \text{ and } \quad \tR_- = I -  \tD^+\tQ\,;\quad \tR_\pm \in \Psi^{-\infty}_c
(G,E^\pm)\,.
$$
We know that $\tR_\pm$ are both $\tau^\nu$ trace class.
Let $Q, R_+, R_-$ be the longitudinal operators induced on $(V,\maF)$; thus  $Q, R_+ \in W^*_\nu (V,\maF;E^+)$ and $R_- \in W^*_\nu (V,\maF;E^-)$ with $R_\pm$ $\tau^\nu_\maF$ trace class,
see  Remark \ref{rk:smoothing}.

\begin{proposition}\label{prop:calderon}
For any  $N\in \NN, N\geq 1$, the following formulas hold:
\begin{equation}\label{eq:calderon}
\ind^\nu_{{\rm up}}(\tD)=\tau^\nu (\tilde{R}_+)^N -\tau^\nu (\tilde{R}_-)^N\,,\quad\quad 
\ind^\nu_{{\rm down}}(D)=\tau^\nu_{\maF} (R_+)^N-\tau^\nu_{\maF} (R_-)^N
\end{equation}
\end{proposition}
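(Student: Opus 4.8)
The plan is to prove the identity $\tau^\nu(\tilde R_+)^N - \tau^\nu(\tilde R_-)^N = \ind^\nu_{\rm up}(\tD)$ and, by the same argument applied to the leafwise von Neumann algebra $W^*_\nu(V,\maF;E)$ with the trace $\tau^\nu_\maF$, the parallel identity downstairs; the two proofs are formally identical once one knows (as we do, by Remark \ref{rk:smoothing} and the propositions of Section \ref{sec:modules}) that the relevant smoothing $\Gamma$-compactly supported operators are trace class. So I will only write out the upstairs case. First I would reduce to $N=1$: the standard algebraic trick is to observe that $\tR_- = I - \tD^+\tQ$ and $\tR_+ = I - \tQ\tD^+$ satisfy $\tD^+ \tR_+ = \tR_- \tD^+$ and $\tQ \tR_- = \tR_+ \tQ$, hence $\tR_+^N = \tR_+^{N}$ can be compared to $\tR_+$ via the telescoping identity $\tR_+^N - \tR_+ = -\sum_{k=1}^{N-1} \tR_+^{k}(I - \tR_+)\tR_+^{?}$; more efficiently, one writes $\tR_+^N = (I - \tQ\tD^+)^N = I - \tQ_N \tD^+$ for a new parametrix $\tQ_N$ of $\Gamma$-compact support (expand the binomial and collect; each term except $I$ has a factor $\tD^+$ on the right, and the middle factors are smoothing of $\Gamma$-compact support, so the product is still a pseudodifferential operator of $\Gamma$-compact support with the same principal symbol as $\tQ$), and likewise $\tR_-^N = I - \tD^+\tQ_N$. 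Thus it suffices to prove $\tau^\nu(\tR_+) - \tau^\nu(\tR_-) = \ind^\nu_{\rm up}(\tD)$ for an \emph{arbitrary} parametrix $\tQ$ of $\Gamma$-compact support.

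Next, the core computation. Both $\tR_+$ and $\tR_-$ are $\tau^\nu$ trace class (smoothing, $\Gamma$-compact support), and $\tD^+\tQ$, $\tQ\tD^+$ are bounded $\Gamma$-equivariant operators in the appropriate von Neumann algebras; moreover $\tQ$ maps $\Dom$ into the domain of $\tD^+$ and all the manipulations below take place inside the trace ideal, so the trace property $\tau^\nu(AB) = \tau^\nu(BA)$ of Proposition \ref{traces} applies whenever one of the two factors is trace class. Write $P := \tilde\Pi^+$, $P' := \tilde\Pi^-$ for the projections onto $\ker \tD^+$, $\ker \tD^-$ respectively. The key algebraic fact is that $\tR_+ - P$ and $\tR_- - P'$ differ by something with vanishing $\tau^\nu$-trace: indeed, multiplying $\tR_+ = I - \tQ\tD^+$ on both sides by $(I-P)$ and using $\tD^+ P = 0$, $P' \tD^+ = 0$ one gets $\tR_+ = P + (I-P)\tR_+(I-P)$ modulo terms killed by the trace, and the operator $(I-P)\tR_+(I-P)$ is conjugate (via $\tD^+$ restricted to $(\ker\tD^+)^\perp$, which is an isomorphism onto $(\ker\tD^-)^\perp$ at the von Neumann level) to $(I-P')\tR_-(I-P')$, hence has the same $\tau^\nu$-trace. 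Subtracting, $\tau^\nu(\tR_+) - \tau^\nu(\tR_-) = \tau^\nu(P) - \tau^\nu(P') = \tau^\nu(\tilde\Pi^+) - \tau^\nu(\tilde\Pi^-) = \ind^\nu_{\rm up}(\tD)$. The cleanest way to run the conjugation step rigorously is via the bounded transform / polar decomposition of the regular self-adjoint operator $\maD_m$ from Section \ref{sec:modules}, or equivalently by the classical identity $\tR_+ = P + \tQ\,(I - \tD^+\tQ_0)$-type bookkeeping; I would phrase it through $\tau^\nu(\tQ\tD^+) = \tau^\nu(\tD^+\tQ)$, valid because $I - \tQ\tD^+$ and $I-\tD^+\tQ$ are trace class so the difference $\tD^+\tQ - \tQ\tD^+$ makes sense in the trace ideal and $\tau^\nu$ vanishes on it by the trace property.

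The main obstacle is making the "conjugation by $\tD^+$ on the orthogonal complement of the kernel" step honest in the von Neumann/Hilbert-module setting, since $\tD^+$ is unbounded: one must check that the intertwining is implemented by a \emph{partial isometry} in $W^*_\nu(G,E)$ (the phase $u$ in $\maD_m = u|\maD_m|$ of Section \ref{sec:modules}) which carries $(\ker\tD^+)^\perp$ isometrically onto $(\ker\tD^-)^\perp$, so that $u^*(I-P')\tR_-(I-P')u = (I-P)\tR_+(I-P)$ exactly, and that $u$ commutes appropriately with the trace-class remainders so that $\tau^\nu$ of the two sides agree — this is where the semifiniteness and normality of $\tau^\nu$ (Proposition \ref{traces}) and the fact, noted in Remark \ref{rk:vn=vn}, that $\tau^\nu$ is Connes' trace, get used. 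Once that is set up, the rest is the telescoping/binomial reduction to $N=1$ already described, which is purely formal.
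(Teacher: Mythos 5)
Your route is the same as the paper's, which handles this proposition in two sentences: for $N=1$ it invokes Atiyah's argument from the covering paper, and for general $N$ it replaces $\tQ$ by the parametrix $\tQ_N=\tQ(I+\tR_-+\cdots+\tR_-^{N-1})$, which satisfies $I-\tD^+\tQ_N=\tR_-^N$ and $I-\tQ_N\tD^+=\tR_+^N$ (using $\tQ\tR_-^k=\tR_+^k\tQ$, this is the same operator your binomial expansion produces). Your reduction to $N=1$ is therefore fine, and your decomposition $\tau^\nu(\tR_\pm)=\tau^\nu(\tilde{\Pi}^\pm)+\tau^\nu\bigl((I-\tilde{\Pi}^\pm)\tR_\pm(I-\tilde{\Pi}^\pm)\bigr)$, followed by identifying the two corner terms through the intertwining $\tD^+\tR_+=\tR_-\tD^+$ and the phase of the polar decomposition, is precisely the Atiyah argument the paper is citing.

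There is, however, one sentence that is flatly false and would destroy the proof if you actually "phrased it" that way: the claim that $\tau^\nu(\tQ\tD^+)=\tau^\nu(\tD^+\tQ)$ because "$\tD^+\tQ-\tQ\tD^+$ makes sense in the trace ideal and $\tau^\nu$ vanishes on it by the trace property." The difference $\tD^+\tQ-\tQ\tD^+=\tR_+-\tR_-$ is indeed trace class, but its trace is exactly $\ind^\nu_{{\rm up}}(\tD)$, not zero; if cyclicity applied here, every index would vanish. The trace property $\tau^\nu(AB)=\tau^\nu(BA)$ needs one factor to be trace class (or both $\tau^\nu$-Hilbert--Schmidt); neither $\tQ\tD^+$ nor $\tD^+\tQ$ is, and $\tD^+$ is unbounded, so no cyclicity is available — this failure is the entire content of the Calder\'on formula. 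The legitimate cancellations are the ones in the first part of your argument, where the trace is cycled only in expressions having a genuinely trace-class factor: $\tau^\nu(\tR_+\tilde{\Pi}^+)=\tau^\nu(\tilde{\Pi}^+)$ since $\tD^+\tilde{\Pi}^+=0$, $\tau^\nu(\tilde{\Pi}^-\tR_-)=\tau^\nu(\tilde{\Pi}^-)$ since $\tilde{\Pi}^-\tD^+=0$, and the comparison of the off-kernel corners via the partial isometry $u$ with $u^*u=I-\tilde{\Pi}^+$, $uu^*=I-\tilde{\Pi}^-$ (here one cycles $\tau^\nu$ in products where the smoothing, $\Gamma$-compactly supported factor $\tD^+\tR_+$ is trace class and the other factor is bounded). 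Keep that argument and delete the offending shortcut.
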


\begin{proof}
Let $N=1$; then the proof given by Atiyah in \cite{Atiyah-covering} extends easily to the present
context. Replacing the parametrix $\tQ$ by  $\tQ_N: =\tQ(1+\tR_-+ \tR_-^2+\cdots+\tR_-^{N-1}$,
which is again a parametrix,
reduces the general case to the one treated by Atiyah.

\end{proof}

Using these formulas we shall now sketch the proof
of the precise analogue of Atiyah's index theorem on coverings.
 
\begin{proposition}\label{prop:index-equal}
The  monodromy  index and the leafwise index coincide:

\begin{equation}\label{index-equal}
\ind^\nu_{{\rm up}}(\tD)=
\ind^\nu_{{\rm down}}(D)
\end{equation}
\end{proposition}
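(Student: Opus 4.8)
The strategy is to use the two Calderón-type formulas from Proposition \ref{prop:calderon} together with an explicit comparison of the two traces $\tau^\nu$ on $W^*_\nu(G,E)$ and $\tau^\nu_\maF$ on $W^*_\nu(V,\maF;E)$ when applied to smoothing, $\Gamma$-compactly supported remainder operators. Choosing $N$ large, both $\tilde R_\pm^N$ and $R_\pm^N$ have leafwise smooth Schwartz kernels; the family $\tilde R_\pm^N = (\,(\tilde R_{\pm,\theta})^N\,)_{\theta\in T}$ lives in $W^*_\nu(G,E^\pm)$ with a $\Gamma$-equivariant kernel $K_{\tilde R_\pm^N}(\tm,\tm',\theta)$ of $\Gamma$-compact support, while $R_\pm^N$ is the induced leafwise operator, whose kernel on the leaf $L_\theta=\tM/\Gamma(\theta)$ is obtained from $K_{\tilde R_\pm^N}$ by summing over the isotropy group $\Gamma(\theta)$, exactly as in the passage from $\pi^{\rm reg}$ to $\pi^{\rm av}$ (cf.\ the formula for $f_0$ in the Proposition defining $\pi_{\rm av}$).

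First I would invoke Remark \ref{rk:smoothing} to write both sides of the desired equality as integrals of pointwise traces of kernels on the diagonal:
$$
\ind^\nu_{\rm up}(\tilde D) = \int_{F\times T}\!\Big(\tr K_{\tilde R_+^N}(\tm,\tm,\theta) - \tr K_{\tilde R_-^N}(\tm,\tm,\theta)\Big)\,d\tm\,d\nu(\theta),
$$
$$
\ind^\nu_{\rm down}(D) = \int_{F\times T}\!\Big(\tr K_{R_+^N}([\tm,\theta],[\tm,\theta]) - \tr K_{R_-^N}([\tm,\theta],[\tm,\theta])\Big)\,d\tm\,d\nu(\theta),
$$
where on the left $F\times T$ sits in $\tM\times T$ and on the right its image sits in $V$. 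The key identity to establish is then, for $\nu$-a.e.\ $\theta$ and every $\tm$,
$$
\tr K_{R_\pm^N}([\tm,\theta],[\tm,\theta]) = \sum_{\gamma\in\Gamma(\theta)} \tr K_{\tilde R_\pm^N}(\tm,\tm\gamma,\theta),
$$
which is the analogue of the formula for $f_0$ defining $\pi_{\rm av}$ and is precisely the statement that $\pi_{\rm av}$ sends the $G$-operator $\tilde R_\pm^N$ to the leafwise operator $R_\pm^N$. Granting this, the two diagonal integrals differ by the off-diagonal ($\gamma\ne e$) terms, so I must show these integrate to zero. This is where the covering case of Atiyah enters: because $\tilde R_\pm^N$ is a $\Gamma$-equivariant parametrix remainder for the \emph{same} Dirac family, the McKean--Singer-type cancellation $\tr K_{\tilde R_+^N}(\tm,\tm\gamma,\theta) = \tr K_{\tilde R_-^N}(\tm,\tm\gamma,\theta)$ holds \emph{for each fixed $\gamma\ne e$}, not merely for $\gamma=e$; this follows from the algebraic identity $\tilde R_- = \tilde D^+\tilde Q$, $\tilde R_+ = I-\tilde Q\tilde D^+$ and the cyclicity of the pointwise (fiberwise) trace applied leafwise — exactly the argument Atiyah uses on coverings in \cite{Atiyah-covering}, reproduced here fiberwise in $\theta$. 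Hence the off-diagonal contributions to $\ind^\nu_{\rm up}$ and to $\ind^\nu_{\rm down}$ cancel in the \emph{super}trace, and the two indices agree.

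Alternatively, and perhaps more cleanly, I would phrase the whole argument at the level of $\maB_m^E$: by Theorem \ref{theo:functional-calculus} applied to $\psi = $ a Schwartz approximation of the projection (or directly to $e^{-t\maD_m^2}$ and its graded pieces), the element $\chi_m^{-1}(\tilde\Pi^+ - \tilde\Pi^-)$ of $\maB_m^E$ has $\tau^\nu_{\rm av}$-trace equal to $\ind^\nu_{\rm down}(D)$ and $\tau^\nu_{\rm reg}$-trace equal to $\ind^\nu_{\rm up}(\tilde D)$; one then observes that this element equals, in $K_0(\maB_m^E)$ after the standard graded-projection construction, the index class, and that \emph{both} $\tau^\nu_{\rm reg,*}$ and $\tau^\nu_{\rm av,*}$ factor through the Morita isomorphism $K_0(\maA_m)\cong K_0(\maB_m)$ compatibly with the corresponding traces on $\maA_m$ (Proposition \ref{prop:morita-compatible}), where the equality of the two numbers becomes the identity of $\tau^\nu_{\rm reg}$ and $\tau^\nu_{\rm av}$ evaluated on the \emph{same} $K$-theory class coming from a genuine index on the base $M$ — both collapse to $\ind(D_M)$ by Atiyah's covering theorem. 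I expect the main obstacle to be the bookkeeping in the first approach: verifying the off-diagonal McKean--Singer cancellation uniformly in $\theta$ and controlling the $\Gamma(\theta)$-sums (which vary measurably, not continuously, in $\theta$), i.e.\ making the "for each $\gamma$" cancellation legitimate under the integral $\int_T \cdots d\nu(\theta)$ in the presence of jumping isotropy. I would handle this by working with $N$ large enough that all kernels are bounded and $\Gamma$-compactly supported (so the $\Gamma(\theta)$-sums are uniformly finite), and by appeal to Remark \ref{rk:smoothing} and Lemmas \ref{lemma:atiyah1}--\ref{lemma:atiyah2} to justify the interchange of trace, summation and integration.
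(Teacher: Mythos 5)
Your overall framework (the Calder\'on formulas, writing both indices as diagonal integrals of Schwartz kernels, and identifying the leafwise kernel as the isotropy-averaged monodromy kernel) matches the paper's setup, but the step that actually closes the argument is wrong. You dispose of the off-diagonal terms by claiming a pointwise cancellation $\tr K_{\tilde R_+^N}(\tm,\tm\gamma,\theta)=\tr K_{\tilde R_-^N}(\tm,\tm\gamma,\theta)$ for every fixed $\gamma\neq e$, "by cyclicity of the pointwise (fiberwise) trace." This does not follow: relating $(I-\tQ\tD^+)^N$ to $(I-\tD^+\tQ)^N$ uses cyclicity of the \emph{integrated} trace -- moving a factor across a composition of kernels changes the point at which the resulting kernel is evaluated -- not cyclicity of the finite-dimensional trace on a single fiber, and the asserted pointwise identity is false in general. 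It is also not Atiyah's argument: on coverings one does not cancel the $\gamma\neq e$ terms between $R_+$ and $R_-$; one kills each of them individually by choosing the parametrix so that the remainders are supported within an $\epsilon$-neighbourhood of the diagonal, with $\epsilon$ smaller than the minimal displacement of the nontrivial deck transformations. That localization is exactly what the paper's proof does, and the Remark following the Proposition flags it as the crucial (and sometimes unavailable) ingredient -- e.g.\ it fails for boundary-value problems, where the conclusion itself fails. Without it, the sum $\int_{F\times T}\sum_{\gamma\in\Gamma(\theta),\,\gamma\neq e}\tr K^{\pm}[\tm,\tm\gamma,\theta]\,d\tm\,d\nu(\theta)$ has no reason to vanish term by term, and you supply no integrated mechanism (such as a small-time heat-kernel limit) forcing the difference of the two off-diagonal sums to vanish.

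Your "alternative" argument is circular: asserting that both $\tau^\nu_{{\rm reg},*}$ and $\tau^\nu_{{\rm av},*}$ of the index class "collapse to $\ind(D_M)$ by Atiyah's covering theorem" presupposes the foliated analogue of that theorem, which is precisely the statement to be proved; moreover there is no single base operator $D_M$ here, only the leafwise family. The repair of your main argument is small and is the paper's route: for the chosen $N$, take a parametrix whose remainders are supported within an $\epsilon$-neighbourhood of the leafwise diagonal (shrinking $\epsilon$ to account for the $N$-fold composition), so that $K^{\pm}[\tm,\tm\gamma,\theta]=0$ for all $\gamma\in\Gamma(\theta)\setminus\{e\}$; then $\tau^\nu_{{\rm av}}(K^{\pm})=\tau^\nu_{{\rm reg}}(K^{\pm})$ separately for each sign, and the two Calder\'on formulas give the equality of the indices.
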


\begin{proof}
Given
$\epsilon>0$ we can choose
a parametrix $\tilde{Q}\in \Psi^{-1}_c(G;\widehat{E}^- , \widehat{E}^+) $ with the property that the two remainders $\tilde{R}_\pm = (\tilde{R}_\pm)_\theta$, $\theta\in T$ are such that 
each $(\tilde{R}_\pm)_\theta$ is supported within an $\epsilon$-neighbourhood
of the diagonal in $\tM_\theta\times \tM_\theta$. 
Let $\maR_{\pm,m}: \maE_m^\pm \to \maE_m^\mp$ be the induced operators on the $\maA_m$-Hilbert
modules $\maE^\pm_m$; since $\tilde{R}_\pm$ are smoothing and of $\Gamma$-compact support
we certainly know that $\maR_{\pm,m}$ are $\maA_m$-compact operators. Let $K^\pm:= \chi_m^{-1}
(\maR_{\pm,m}) \in \maB^{E^\pm}_m$; $K^\pm$ is simply given by the Schwartz kernel of 
$\tilde{R}_\pm$ and is in fact an element in $\maB^{E^\pm}_c$.
In particular  
$K^\pm$ 
has finite $\tau^\nu_{{\rm reg}}$ trace and
$\tau^\nu_{{\rm av}}$ trace. 
  By arguments very similar
(in fact easier) to those establishing Theorem \ref{theo:functional-calculus}
we know that
\begin{equation}\label{eq:von-c-star}
\tau^\nu (\tilde{R}_\pm)=\tau^\nu_{{\rm reg}} (\chi_m^{-1}\maR_{\pm,m})\equiv 
\tau^\nu_{{\rm reg}} (K^\pm)\,,\quad
\tau^\nu_{\maF} (R_\pm)=\tau^\nu_{{\rm av}} (\chi_m^{-1}\maR_{\pm,m})\equiv 
\tau^\nu_{{\rm av}} (K^\pm)\,.
\end{equation}
Thus, from \eqref{eq:calderon}, it suffices to show that 
$$\tau^\nu_{{\rm reg}} (K^\pm)=\tau^\nu_{{\rm av}} (K^\pm)\,.$$
We can write
\begin{eqnarray*}
\tau^\nu_{{\rm av}} (K^\pm)&=&\int_{F\times T} \sum_{\gamma\in\Gamma(\theta)}
K^\pm [\tm,\tm\gamma,\theta]\,d\tm\,d\nu (\theta)\\
&=& \int _{F\times T} 
K^\pm [\tm,\tm,\theta]\,d\tm\,d\nu (\theta) + \int_{F\times T} \sum_{\gamma\in\Gamma(\theta) ;  \gamma \not= e}
K^\pm [\tm,\tm\gamma,\theta]\,d\tm\,d\nu (\theta)\\
&=&\tau^\nu_{{\rm reg}} (K^\pm) +   \int_{F\times T} \sum_{\gamma\in\Gamma(\theta) ; \gamma\not= e}
K^\pm [\tm,\tm\gamma,\theta]\,d\tm\,d\nu (\theta)
\end{eqnarray*}
Choosing $\epsilon$ small enough we can ensure that $K^\pm [\tm,\tm\gamma,\theta]=0$
$\forall \gamma\in \Gamma(\theta)$, $\gamma \not= e$. The proof is complete.

\end{proof}

\begin{remark}
The possibility of localizing a parametrix in an arbitrary small neighbourhood
of the diagonal plays a crucial role in the proof of the above Proposition. There are 
more general situations, for example foliated flat bundles $\tM\times_\Gamma T$
with $\tM$ a manifold with boundary, where it is not possible to localize the parametrix.
In these cases the analogue of Atiyah's index theorem does {\bf not} hold.

\end{remark}

\subsection{The index class in the maximal $C^*$-algebra}

Let $\tD^+$ be as in the previous subsection. As before we consider a parametrix $\tilde{Q}:= (\tilde{Q}_\theta)_{\theta\in T}
\in \Psi^{-1}_c(G;\what{E}^-,\what{E}^+)$
with remainders $\tilde{R}_+$ and $\tilde{R}_-$.
The family  $\tilde{Q}$ defines a bounded  $\maA_m$-linear operator
$\maQ_m$ from $\E_m^-$ to $\E_m^+$.
The families $\tilde{R}_+$ and $\tilde{R}_-$ define  $\maA_m$-linear compact  operators 
$\maR_{\pm,m}$ on the Hilbert modules $\maE^\pm_m$ respectively.

We now define  idempotents $p, p_0$ in $M_{2\times 2} ( \maK_{\maA_m} (\maE_m)\oplus \CC)$ by setting 
\begin{equation}\label{eq:IND}
p:= \begin{pmatrix} \maR_{+,m}^2 & \maR_{+,m}  {(I+\maR_{+,m})\maQ_m }  \cr {\maR_{-,m} \maD^+_m} &
I-\maR_{-,m}^2 \cr
\end{pmatrix}\,,\quad p_0:= \begin{pmatrix} 0 & 0
\cr 0  & I \cr
\end{pmatrix} 
\end{equation}
We thus get a $K_0$-class $[p-p_0]\in 
K_0 (\maK_{\maA_m} (\maE_m)).$

\begin{definition}
The (maximal) index class ${\rm IND} (\maD_m)\in  K_0 (\maB_m)$ associated to the family $\tD$ is, by definition,
the image under the composite isomorphism 
$$K_0 (\maK_{\maA_m} (\maE_m))\rightarrow K_0 (\maB_m^E)\rightarrow  K_0 (\maB_m)$$
of the class $[p-p_0]$.
\end{definition}

One also considers the index class in $K_0 (\maA_m)$:
\begin{equation}\label{eq:Ind}
{\rm Ind} (\maD_m) :=\maM_{{\rm max}}^{-1}({\rm IND} (\maD_m))\in K_0 (\maA_m)
\end{equation}
 with $\maM_{{\rm max}}: K_0 (\maA_m)\to K_0 (\maB_m)$ the Morita isomorphism considered
 in Proposition \ref{prop:morita-compatible}.
 
 \medskip
 Recall now the morphisms $\tau^\nu_{{\rm av},*}: K_0 (\maB_m)\to \CC$ and 
 $\tau^\nu_{{\rm reg},*}: K_0 (\maB_r)\to \CC$. Using the natural morphism $K_0 (\maB_m)\to K_0 (\maB_r)$
 we view both morphisms with domain $K_0 (\maB_m)$: 
 \begin{equation}\label{eq:k-tr-on-max}
\tau^\nu_{{\rm av},*}: K_0 (\maB_m)\to \CC\,,\quad \quad \tau^\nu_{{\rm reg},*}: K_0 (\maB_m)\to \CC
 \end{equation}
Recall also that using the natural morphism $K_0 (\maA_m)\to K_0 (\maA_r)$
we have induced morphisms
\begin{equation}\label{eq:k-tr-on-max}
\tau^\nu_{{\rm av},*}: K_0 (\maA_m)\to \CC\,,\quad \quad \tau^\nu_{{\rm reg},*}: K_0 (\maA_m)\to \CC
 \end{equation}
 
 \begin{proposition}\label{Numeric}
 Let ${\rm IND} (\maD_m) \in K_0 (\maB_m)$ and ${\rm Ind} (\maD_m) \in K_0 (\maA_m)$
be  the two index classes introduced above. 
Then 
 the following formulas hold:
 \begin{eqnarray}\label{index-from-K}
 \ind^\nu_{{\rm up}}(\tD) &=&\tau^\nu_{{\rm reg},*}({\rm IND} (\maD_m))
 = \tau^\nu_{{\rm reg},*}({\rm Ind} (\maD_m))\\
 \ind^\nu_{{\rm down}}(D) &=& \tau^\nu_{{\rm av},*}({\rm IND} (\maD_m))
 = \tau^\nu_{{\rm av},*}({\rm Ind} (\maD_m))
 \end{eqnarray}
 Consequently, from \eqref{index-equal}, we have the following fundamental equality:
  \begin{equation}\label{c-index-equality}
 \tau^\nu_{{\rm reg},*}({\rm Ind} (\maD_m))
= \tau^\nu_{{\rm av},*}({\rm Ind} (\maD_m))
 \end{equation}
 \end{proposition}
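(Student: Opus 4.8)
The plan is to prove the two displayed formulas \eqref{index-from-K} by identifying the pairing of the $K$-theory index class with the trace morphisms with the von Neumann numeric indices $\ind^\nu_{\rm up}(\tD)$ and $\ind^\nu_{\rm down}(D)$, and then to obtain \eqref{c-index-equality} as an immediate consequence of Proposition \ref{prop:index-equal}. First I would recall that since $\tau^\nu_{{\rm reg},*}$ and $\tau^\nu_{{\rm av},*}$ factor through the Morita isomorphism $\maM_{\rm max}$ by Proposition \ref{prop:morita-compatible}, it is enough to compute $\tau^\nu_{{\rm reg},*}$ and $\tau^\nu_{{\rm av},*}$ on $[p-p_0]\in K_0(\maK_{\maA_m}(\maE_m))\cong K_0(\maB_m^E)$, using the explicit idempotents in \eqref{eq:IND}. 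The standard computation (Atiyah, Connes, Moscovici--Wu style) gives, for any trace $\tau$ that is defined and finite on the relevant elements,
\[
\tau_*([p-p_0]) = \tau(\maR_{+,m}^2) - \tau(\maR_{-,m}^2) + \text{(off-diagonal terms that vanish under the trace)},
\]
so that $\tau_*([p-p_0]) = \tau(\maR_{+,m}^2) - \tau(\maR_{-,m}^2)$ after using the trace property to cancel the cross terms $\maR_{+,m}(I+\maR_{+,m})\maQ_m \maR_{-,m}\maD^+_m$ against $\maR_{-,m}\maD^+_m\maR_{+,m}(I+\maR_{+,m})\maQ_m$ (one must check these products are actually trace class, which follows because $\maR_{\pm,m}$ are smoothing of $\Gamma$-compact support and the composites remain so).

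Next I would apply this with $\tau = \tau^\nu_{\rm reg}$ and with $\tau = \tau^\nu_{\rm av}$ on $\maB_m^E$. Pulling these back through $\chi_m^{-1}$ and using the translation established in \eqref{eq:von-c-star} in the proof of Proposition \ref{prop:index-equal} — namely $\tau^\nu_{\rm reg}(\chi_m^{-1}\maR_{\pm,m})=\tau^\nu(\tilde R_\pm)$ and $\tau^\nu_{\rm av}(\chi_m^{-1}\maR_{\pm,m})=\tau^\nu_\maF(R_\pm)$, together with their obvious analogues for the squares $\maR_{\pm,m}^2$ (since $\tilde R_\pm^2$ is again smoothing of $\Gamma$-compact support, and $\chi_m^{-1}(\maR_{\pm,m}^2) = \chi_m^{-1}(\maR_{\pm,m})*\chi_m^{-1}(\maR_{\pm,m})$) — one gets
\[
\tau^\nu_{{\rm reg},*}({\rm IND}(\maD_m)) = \tau^\nu(\tilde R_+^2) - \tau^\nu(\tilde R_-^2),\qquad
\tau^\nu_{{\rm av},*}({\rm IND}(\maD_m)) = \tau^\nu_\maF(R_+^2) - \tau^\nu_\maF(R_-^2).
\]
By Proposition \ref{prop:calderon} (the foliated Atiyah--Bott/Calder\'on formula) with $N=2$, the right-hand sides equal $\ind^\nu_{\rm up}(\tD)$ and $\ind^\nu_{\rm down}(D)$ respectively. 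The equality of the two expressions for each line (one in terms of ${\rm IND}(\maD_m)\in K_0(\maB_m)$, the other in terms of ${\rm Ind}(\maD_m)\in K_0(\maA_m)$) is exactly the commutativity of the diagrams in Proposition \ref{prop:morita-compatible}, combined with the definition \eqref{eq:Ind} of ${\rm Ind}(\maD_m)$ as $\maM_{\rm max}^{-1}({\rm IND}(\maD_m))$.

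Finally, \eqref{c-index-equality} follows by combining the two formulas in \eqref{index-from-K} with the equality $\ind^\nu_{\rm up}(\tD) = \ind^\nu_{\rm down}(D)$ proved in Proposition \ref{prop:index-equal}. The main obstacle I anticipate is not conceptual but bookkeeping: one must justify that all the operator products appearing when expanding $\tau_*([p-p_0])$ with the matrix $p$ from \eqref{eq:IND} are genuinely $\tau^\nu$- (resp. $\tau^\nu_\maF$-) trace class so that the trace property may legitimately be invoked to discard the off-diagonal contributions, and that passing to $\chi_m^{-1}$ intertwines operator composition with convolution of Schwartz kernels as needed. Both points are handled by the same mechanism used throughout Section \ref{sec:modules}: products of $\Gamma$-compactly supported smoothing families are again $\Gamma$-compactly supported smoothing families, hence trace class by Remark \ref{rk:smoothing}, and $\chi_m$ is an algebra isomorphism by Proposition \ref{prop:compact}. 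So the proof is essentially an assembly of \eqref{eq:IND}, Proposition \ref{prop:calderon}, \eqref{eq:von-c-star}, Proposition \ref{prop:morita-compatible} and Proposition \ref{prop:index-equal}, with the only real work being the verification that the standard idempotent-pairing computation goes through in this von Neumann/$C^*$-module setting.
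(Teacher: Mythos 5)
Your proposal is correct and follows essentially the same route as the paper: apply the Calder\'on formula \eqref{eq:calderon} with $N=2$, read off $\tau_*([p-p_0])=\tau(\maR_{+,m}^2)-\tau(\maR_{-,m}^2)$ from the explicit idempotent \eqref{eq:IND}, translate via \eqref{eq:von-c-star}, and invoke Proposition \ref{prop:morita-compatible} for the second equality and Proposition \ref{prop:index-equal} for \eqref{c-index-equality}. One small remark: the off-diagonal entries of $p-p_0$ never need to be ``cancelled by the trace property'' — the matrix trace $\tau\otimes\Tr$ only sees the diagonal entries, so that worry (and the associated trace-class verification for the cross terms) is moot.
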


\begin{proof}
We only need to prove the first equality in each equation, for the second
one is a consequence of the definition of ${\rm Ind} (\maD_m) \in K_0 (\maA_m)$
and the compatibilty result explained in Proposition \ref{prop:morita-compatible}.
For the first equality we apply \eqref{eq:calderon} with $N=2$ 
and the parametrix $\tQ$.  Using now \eqref{eq:von-c-star}, \eqref{eq:IND}
we get
$$ \ind^\nu_{{\rm up}}(\tD) = \tau^\nu ((\tR_+)^2)- \tau^\nu ((\tR_-)^2)=
\tau^\nu_{{\rm reg}}( (\maR_{+,m})^2)-\tau^\nu_{{\rm reg}}( (\maR_{-,m})^2)=
\tau^\nu_{{\rm reg},*}({\rm IND} (\maD_m))$$
The proof of the other one is similar.

\end{proof}

\begin{remark}
The equalities in Proposition \ref{Numeric} can be rephrased as the equality between the numeric $C^*$-algebraic index and the von Neumann index.
Notice once again that there are more general situations
where this Proposition does not hold, in the sense that 
there exists a well defined 
von Neumann index but there does not exist a well-defined $C^*$-algebraic index
we can equate it to.
The simplest example is given by a fibration of compact manifolds 
$L\to V\to T$ with $V$ and $L$ manifolds
with boundary. The von Neumann index defined by the family of Atiyah-Patodi-Singer
boundary conditions is certainly  well defined (this is the integral over $T$ of the
function that assigns to $\theta\in T$ the APS index of $D^+_\theta$).
On the other hand, unless the boundary family associated to  $(D^+_\theta)_{\theta\in T}$
is invertible, there is not a well defined Atiyah-Patodi-Singer index class in $K_0 (C(T))=K^0 (T)$.
\end{remark}

\subsection{The signature operator for odd foliations}

We briefly review the definition of the leafwise signature operator in the odd case. Recall that when $\dim (M) = 2m-1$, the leafwise signature operator is defined as  the operator $D^{\rm{sign}}$ acting on leafwise differential forms on $V$, defined on even forms of degree $2k$ by
$$
D_{ev}^{\rm{sign}} = i^m (-1)^{k+1}(*\circ d - d\circ *),
$$
and on odd forms of degree $2k-1$ by 
$$
D_{od}^{\rm{sign}} = i^m (-1)^{m-k} (d \circ * + * \circ d),
$$
where $d$ is the leafwise de Rham differential and $*$ is the usual Hodge operator along the leaves associated with the Riemannian metric on the foliation \cite{MS}. An easy computation shows that the two operators $ D_{odd}^{\rm{sign}}$ and $D_{ev}^{\rm{sign}}$ are conjugate so that their invariants coincide and it is sufficient to work with one of them. In contrast with \cite{APS2},  $D^{\rm{sign}}$ will be in the sequel the operator $D_{od}^{\rm{sign}}$. Using the lifted structure to the fibers of the monodromy covers $\tM\times \{\theta\}$ of the leaves, we consider in the same way the $\Gamma$-equivariant family of signature operators $\tD^{\rm{sign}}= (\tD^{\rm{sign}}_\theta)_{\theta\in T}$ which actually coincides with the lift of $D^{\rm{sign}}$ as can be easily checked. The following is well known, see  \cite{APS1}, \cite{APS2} for the first part and \cite{HiSka} for the second:

Recall that the $K_1$ index of $D^{\rm{sign}}$ is the class of the Cayley transform of $D^{\rm{sign}}$, see for instance \cite{HiSka}.

\begin{proposition}\
The operator $D^{\rm{sign}}$ is a leafwise elliptic essentially self-adjoint operator whose $K_1$ index class is a leafwise homotopy invariant of the foliation.
\end{proposition}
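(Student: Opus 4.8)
The statement has two parts: (i) $D^{\mathrm{sign}}$ is leafwise elliptic and essentially self-adjoint, and (ii) its $K_1$-index class is a leafwise homotopy invariant. Part (i) is essentially local and classical: ellipticity is checked on the principal symbol, which along each leaf is the usual symbol of the odd signature operator $d\circ *+*\circ d$ (up to the scalar $i^m(-1)^{m-k}$), and this is invertible off the zero section because $(d+d^*)$ is elliptic; essential self-adjointness on each leaf $L_\theta$ (a complete Riemannian manifold, being a quotient of $\tM$) follows from the standard result that a first-order symmetric operator of Dirac type on a complete manifold is essentially self-adjoint, or equivalently from the parametrix construction of Theorem \ref{th:parametrix} which gives a regular self-adjoint closure $\maD^{\mathrm{sign}}_m$ on the Hilbert module, hence leafwise self-adjoint families via Proposition \ref{F-Comp}. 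So for (i) I would just invoke these facts, with a one-line symbol computation.

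The substance is (ii). The plan is to follow the Hilsum--Skandalis argument \cite{HiSka} transported to the monodromy groupoid $G$ (equivalently, to the $\maB_m^E$/Hilbert-module picture developed in Section \ref{sec:modules}). First I would set up the $K_1$-index class precisely: since $D^{\mathrm{sign}}$ is leafwise elliptic, its Cayley transform $c(D^{\mathrm{sign}})=(D^{\mathrm{sign}}-i)(D^{\mathrm{sign}}+i)^{-1}$ is, modulo $\maB_m$ (i.e. modulo $\maK_{\maA_m}(\maE_m)$), a unitary, and hence defines a class in $K_1(\maB_m)$ (or in $K_1$ of the relevant unitalization); concretely this uses that $(D^{\mathrm{sign}}\pm i)^{-1}$ differs from a parametrix by a $\Gamma$-compactly supported smoothing operator, which is $\maA_m$-compact by the Proposition on pseudodifferential operators in Subsection \ref{subsect:dirac}. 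Then, given a leafwise homotopy equivalence $f:(V',\maF')\to (V,\maF)$, I would construct, leaf by leaf and equivariantly over $\tM$, the Hilsum--Skandalis perturbation: a bounded leafwise operator realizing the homotopy equivalence on $L^2$ leafwise forms (built from $f^*$, a leafwise-smoothing parametrix, and the leafwise Poincaré duality $*$), and show that the signature operators on $V$ and $V'$ become conjugate up to $\maB_m$-compact and up to operator homotopy within the regular self-adjoint operators affiliated to the Hilbert module. The $\Gamma$-equivariance and $\Gamma$-compact support of all the pieces is what keeps everything inside $\maB_m^E$, and the functional-calculus machinery of Subsection \ref{subsect:functional-dirac} (Propositions \ref{PhiComp}, \ref{F-Comp}, Theorem \ref{theo:functional-calculus}) is what guarantees that leafwise statements assemble into statements about $\maD^{\mathrm{sign}}_m$ on $\maE_m$ and thus about $K_1(\maB_m)$.

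The main obstacle is the middle-degree / non-self-adjointness subtlety in the Hilsum--Skandalis argument: in the odd-dimensional case the pairing $\omega\mapsto\int \omega\wedge *\,\overline\omega$ is not positive definite on the relevant space of forms, so the "homotopy equivalence of Hilbert complexes" has to be handled with the correct (indefinite or suitably chirally split) inner product, and one must verify that the perturbation one writes down is bounded with bounded inverse modulo compacts and that the straight-line (or Kasparov-type) homotopy connecting $c((f^{-1})^*D^{\mathrm{sign}}_{V'}f^*)$ to $c(D^{\mathrm{sign}}_V)$ stays within the admissible class (no spectrum crossing $\{\pm i\}$ issues, i.e. invertibility of $1+$ perturbation preserved along the homotopy). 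Transporting this to the foliated setting adds the bookkeeping of uniform estimates in $\theta\in T$ and of $\Gamma$-compact support, but no genuinely new difficulty beyond what Sections \ref{sec:modules}--\ref{subsect:functional-dirac} already provide. I would therefore state (ii) as a consequence of \cite{HiSka} applied fiberwise together with the identifications in Proposition \ref{F-Comp}, and only sketch the perturbation, referring to \cite{HiSka} for the analytic core.
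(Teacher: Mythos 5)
Your proposal is correct and follows essentially the same route as the paper: part (i) is the standard symbol/Laplacian computation together with classical essential self-adjointness on the compact foliated space, and part (ii) rests on the Hilsum--Skandalis perturbation argument for the Cayley-transform $K_1$-class. The only difference is one of detail: the paper simply cites \cite{HiSka} for the homotopy invariance (explaining only that $f_*$ is the Morita isomorphism), whereas you sketch the internals of that argument.
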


The square of $D^{\rm{sign}}$ is proportional to the Laplace operator along the leaves and hence it is leafwise elliptic. The proof that $D^{\rm{sign}}_{ev}$ is formally self-adjoint is straightforward, see \cite{APS2}, and classical elliptic theory on foliations of compact spaces allows to deduce that it is essentially self-adjoint. Now $D^{\rm{sign}}$ is unitarily equivalent to $D^{\rm{sign}}_{ev}$ and hence is also formally self-adjoint. We shall get back to the  index class later on. The homotopy invariance means that if $f: (V,\maF) \to (V', \maF')$ is a leafwise oriented leafwise homotopy equivalence between odd dimensional foliations, then with obvious notations we have
$$
f_* \Ind (D^{\rm{sign}}) = \Ind ({D^{\rm{sign}}} ')
$$
where $f_*$ is the isomorphism induced by the Morita equivalence implemented by $f$ \cite{HiSka}.


%

%


%

\section{Foliated rho invariants}\label{sec:foliated-rho}

Recall that $T$ is a compact Hausdorff space on which the discrete countable group $\Gamma$ acts by homeomorphisms, $M$ is a compact closed manifold with fundamental group $\Gamma$
and universal cover $\tM$ and that $V=\tM\times_\Gamma T$ is the induced  foliated space. 
We are also given a Borel measure $\nu$ on $T$ which is $\Gamma$-invariant. 
We assume in the present section that $M$ is odd dimensional and whence that the leaves of the induced foliation $\maF$ of $V$ are odd dimensional. 
We fix as in the previous section a Dirac-type operator along the leaves of the foliation $(V,\maF)$ acting on the vector bundle $E$. We denote by $D$ this operator acting leafwise, so $D=(D_L)_{L\in V/F}$ where each $D_L$ is an elliptic Dirac-type operator on the leaf $L$ acting on the restriction of $E$ to $L$. We also consider the lifted operator $\tD$ to the monodromy groupoid $G$ of the foliation $(V,F)$ as defined in Section \ref{subsect:dirac}. More precisely, $\tD = (\tD_\theta)_{\theta\in T}$ is a $\Gamma$-equivariant continuous family of Dirac type operators on $\tM$.

\subsection{Foliated eta and rho invariants}\label{sub:fol-eta-rho}

The construction of foliated eta invariants was first given independently in the two references \cite{Ra} \cite{Peric}  and the two definitions work in fact  for general measured foliations. 
Notice that \cite{Ra}
works with the measurable groupoid defined by foliation, whereas \cite{Peric}
works with the holonomy groupoid. As we shall clarify in a moment, the choice of the groupoid
does make a difference for these non-local invariants.
We give in this paragraph a self-contained 
treatment of these two definitions following  our set-up, but using 
 the monodromy groupoid instead of  the holonomy groupoid  considered in  \cite{Peric}. 

We  denote by $k_t$ and $\tk_t$ the longitudinally smooth uniformly bounded Schwartz kernels of the operators $\varphi_t (D)$ and $\varphi_t(\tD)$ obtained using the  function $\varphi_t(x) := x e^{-t^2 x^2}$ for $t>0$. See Lemma \ref{Rapid}.

\begin{lemma}\ (Bismut-Freed estimate)
There exists a constant $C \geq 0$ such that for any $(\tm, \theta)\in \tM\times T$, we have:
$$
|\tr (k_t([\tm, \theta],[\tm,\theta])) | \leq C  \quad  \text{ and } \quad  |\tr (\tk_t([\tm,\tm, \theta]))|  \leq C, \text{ for  }t\leq 1.
$$
\end{lemma}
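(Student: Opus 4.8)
The plan is to reduce the statement to the classical Bismut–Freed pointwise estimate on a single complete manifold, carried out uniformly in the parameter $\theta$ (and in the fibre variable $\tm$, using $\Gamma$-equivariance). First I would recall that $\varphi_t(x) = x e^{-t^2 x^2}$ is, for each fixed $t>0$, a Schwartz-class function, so by Proposition \ref{Rapid} the operator $\varphi_t(\tD) = (\varphi_t(\tD_\theta))_{\theta\in T}$ lies in $W^*_\nu(G,E)$ and has a fibrewise-smooth Schwartz kernel $\tk_t$, which is bounded as a section over $G$; the analogous statement for $\varphi_t(D)$ follows from Proposition \ref{Rapid-down}. The point of the present lemma is that the bound can be taken \emph{uniform in $t$ for $t\le 1$}, which is exactly the content of the Bismut–Freed argument and is what makes the short-time behaviour of the eta integrand controllable.

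The key steps, in order, would be: (1) Write $\varphi_t(\tD_\theta) = t\,\tD_\theta\, e^{-t^2 \tD_\theta^2}$ and express its kernel via the finite-propagation-speed / Duhamel representation of the heat kernel of $\tD_\theta^2$, exactly as on a closed manifold; the relevant point is that $\tD_\theta$ is a Dirac-type operator with bounded geometry uniformly in $\theta$ (because the family is $\Gamma$-equivariant and $T$ is compact, the local geometric data — metric, connection, Clifford action, curvature, and the zeroth-order potential in the Lichnerowicz formula — vary continuously on the compact base $M$ and hence are uniformly bounded together with all derivatives). (2) Apply the standard Bismut–Freed local estimate for the Dirac heat kernel: there is a constant $C$, depending only on these uniform geometric bounds and the dimension, such that $|\tr(k^{\tD_\theta}_t(x,x))| \le C$ for the kernel of $t\,\tD_\theta e^{-t^2\tD_\theta^2}$ and all $0<t\le 1$; see for instance the pointwise estimates in Bismut–Freed and in \cite{MS} (the rapidly decreasing functional calculus used in Proposition \ref{Rapid} already packages most of this). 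The cancellation producing a \emph{bounded} (rather than $O(t^{-1})$) trace is the usual one coming from the odd number of Clifford variables / the supertrace-type algebra in the local index computation — this is the only nontrivial analytic input. (3) By $\Gamma$-equivariance, $R^*_\gamma \tD_\theta R^*_{\gamma^{-1}} = \tD_{\gamma\theta}$, so $\tr(\tk_t([\tm,\tm,\theta]))$ depends only on the class $[\tm,\theta]\in V$, and since $V$ is compact the sup over $(\tm,\theta)$ of the constant obtained leafwise in step (2) is finite; this gives the single constant $C$ claimed. The estimate for $k_t$ is identical, working on the leaves $L_\theta = \tM/\Gamma(\theta)$ instead of $\tM_\theta$, or simply by pushing down the estimate for $\tk_t$ since the leafwise kernel is obtained by averaging over $\Gamma(\theta)$ within a uniform neighbourhood of the diagonal (Remark \ref{rk:smoothing}).

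The main obstacle is step (2): one must verify that the classical Bismut–Freed pointwise bound holds \emph{uniformly in $\theta$}, i.e. that all the constants in the local heat-kernel asymptotics can be chosen independent of the parameter. This is where the compactness of $M$ (equivalently of $T$, plus the continuity of the family) is essential, and it is worth stating explicitly that the family $(\tD_\theta)$ descends from geometric data on the compact manifold $V$, so that the Lichnerowicz remainder and all curvature terms entering the Duhamel expansion are bounded by a single constant. Once uniformity in $\theta$ is in hand, everything else is the standard closed-manifold Bismut–Freed estimate transplanted to the leaves, which have bounded geometry, and the reduction via $\Gamma$-equivariance and compactness of $V$ is routine.
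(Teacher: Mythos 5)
Your proposal is correct and follows essentially the same route as the paper: both reduce to the classical Bismut--Freed pointwise estimate (whose nontrivial content is the local cancellation coming from the Clifford algebra / Getzler rescaling), observe that the argument is purely local, and use the compactness of $V$ together with $\Gamma$-equivariance to make the constants uniform in $(\tm,\theta)$. The paper's proof is just a shorter sketch of the same reduction, citing \cite{Ra}, \cite{BF2} and \cite{Melrose} for the local input.
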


\begin{proof}
A proof of these estimates appear already in  \cite{Ra}. We give nevertheless
a sketch of the argument.\\
The Bismut-Freed estimate on a closed odd dimensional 
compact manifold $M$ is a pointwise estimate on the vector-bundle trace of the Schwartz
kernel of $D\exp(-t^2 D^2)$ restricted to the diagonal. See the original article
\cite{BF2} but also \cite{Melrose}. As explained for example in the latter reference
the Bismut-Freed estimate is ultimately a consequence of Getzler rescaling 
for the heat kernel of a Dirac laplacian on the even dimensional manifold obtained by 
crossing $M$ with $S^1$. Since these arguments are purely local, they easily extend
to our foliated case, using the compactness of $V:=\tM\times_\Gamma T$ in order to control
uniformly the constants appearing in the poinwise estimate.
\end{proof}
The operators $D^2$ and $\tD^2$ (as well as the operators $|D|$ and $|\tD|$) are non negative operators which are affiliated 
respectively with the von Neumann algebra $W^*_\nu (V,\maF;E)$ and the von Neumann algebra $W^*_{\nu}(G;E)$. (This means that their sign operators as well as their  spectral
projections belong to the von Neumann algebra.)
 Moreover, according to the usual pseudodifferential estimates along the leaves (see for instance \cite{Vassout-these}, \cite{BenameurFack}), the resolvents of these operators belong respectively to the $C^*$-algebras $\maK(W^*_\nu (V,\maF;E), \tau_\maF^{\nu})$ of $\tau_\maF^\nu$-compact elements in $W^*_\nu (V,\maF;E)$ and $\maK(W^*_{\nu}(G;E), \tau^\nu)$ of $\tau^\nu$-compact elements of $W^*_{\nu}(G;E)$. We recall that these compact operators are roughly defined using for instance the vanishing at infinity of the singular numbers, and we refer, for example,  to \cite{BenameurFack} for the precise definition of these ideals. 
Set 
$$
D^2=\int_0^{+\infty} \lambda dE_\lambda \quad \text{ and } \quad \tD^2 = \int_0^{+\infty} \lambda d\tE_\lambda,
$$
for the spectral decompositions in their respective von Neumann algebras. So $E_\lambda$ and $\tE_\lambda$ are the spectral projections  corresponding to $(-\infty,\lambda)$. Since the traces are normal on both von Neumann algebras, 
$$
N (\lambda) = \tau^\nu_{\rm{av}} (E_\lambda) \quad \text{ and } \quad  \tN (\lambda) = \tau^\nu_{\rm{reg}} (\tE _\lambda),
$$
are well defined finite (Proposition \ref{Measurability} in the next subsection)
non-decreasing and non-negative functions, which are right continuous.
 Hence there are Borel-Stieljes  measures $\vartheta$ and ${\tilde \vartheta}$ on $\RR$, such that:
$$
\tau^\nu_{\maF} (f(D)) = \int _\RR f(x) d\vartheta(x) \text{ and } \tau^\nu (f(\tD)) = \int _\RR f(x) d{\tilde \vartheta}(x),
$$
for any Borel function $f:\RR \to [0,+\infty]$. Since $N$ and $\tN$ are finite, the measures $\vartheta$ and $\tilde \vartheta$ are easily proved to be $\sigma$-finite.

\begin{proposition}
The functions $t\mapsto \tau^\nu_{\maF} (D e^{-t^2 D^2})$ and $t\mapsto \tau^\nu (\tD e^{-t^2 \tD^2})$ are Lebesgue integrable on $(0,+\infty)$.
\end{proposition}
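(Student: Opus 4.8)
The plan is to split $\int_0^{\infty}$ at $t=1$ and control the two régimes by entirely different mechanisms: the short-time part $t\in(0,1]$ by the Bismut--Freed estimate, and the long-time part $t\in[1,\infty)$ by the spectral density of $D$ near the bottom of its spectrum.

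First I would dispose of $\int_0^1$. Applying Proposition~\ref{Rapid-down} to the measurable rapidly decreasing function $\varphi_t(x)=xe^{-t^2x^2}$ gives
\[
\tau^\nu_{\maF}\!\left(D e^{-t^2D^2}\right)=\int_{F\times T}\tr\bigl(k_t([\tm,\theta],[\tm,\theta])\bigr)\,d\tm\,d\nu(\theta),
\]
so the Bismut--Freed estimate above yields $\bigl|\tau^\nu_{\maF}(De^{-t^2D^2})\bigr|\le C\int_{F\times T}d\tm\,d\nu(\theta)=C\,\vol(M)$ for every $t\le 1$ (recall $\nu$ is a probability measure and $F$ projects onto the compact manifold $M$). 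In particular the function is bounded, hence Lebesgue integrable, on $(0,1]$.

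For $\int_1^\infty$ I would pass to the Borel--Stieltjes measure $\vartheta$ of $D$ introduced just above. Since $|\varphi_t|(x)=|x|e^{-t^2x^2}$ is again measurable and rapidly decreasing, Proposition~\ref{Rapid-down} shows $|D|e^{-t^2D^2}=|\varphi_t(D)|$ is $\tau^\nu_{\maF}$-trace class, whence $\bigl|\tau^\nu_{\maF}(De^{-t^2D^2})\bigr|\le \tau^\nu_{\maF}\bigl(|D|e^{-t^2D^2}\bigr)=\int_{\RR}|x|e^{-t^2x^2}\,d\vartheta(x)$. Integrating in $t$ and applying Tonelli (the integrand is non-negative and jointly continuous),
\[
\int_1^\infty\!\!\int_{\RR}|x|e^{-t^2x^2}\,d\vartheta(x)\,dt=\int_{\RR}\Bigl(\int_1^\infty|x|e^{-t^2x^2}\,dt\Bigr)d\vartheta(x),
\]
and for $x\ne 0$ the inner integral equals $\int_{|x|}^\infty e^{-u^2}\,du$ (substitute $u=t|x|$), while it vanishes for $x=0$. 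On $\{0<|x|<1\}$ this is $\le\frac{\sqrt\pi}{2}$, contributing at most $\frac{\sqrt\pi}{2}\,\vartheta\bigl((-1,1)\bigr)$; on $\{|x|\ge1\}$ it is $\le\frac12 e^{-x^2}$, since $\int_a^\infty e^{-u^2}du\le\frac{1}{2a}e^{-a^2}\le\frac12 e^{-a^2}$ for $a\ge1$, contributing at most $\frac12\tau^\nu_{\maF}(e^{-D^2})$. Both terms are finite: $e^{-D^2}$ is $\tau^\nu_{\maF}$-trace class by Proposition~\ref{Rapid-down}, and from $\mathbf{1}_{(-1,1)}(D)\le e\cdot e^{-D^2}$ one gets $\vartheta((-1,1))=\tau^\nu_{\maF}(\mathbf{1}_{(-1,1)}(D))<\infty$. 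Thus $\int_1^\infty\bigl|\tau^\nu_{\maF}(De^{-t^2D^2})\bigr|\,dt<\infty$, and combining with the short-time bound proves integrability of $t\mapsto\tau^\nu_{\maF}(De^{-t^2D^2})$ on $(0,\infty)$. The statement for $t\mapsto\tau^\nu(\tD e^{-t^2\tD^2})$ follows by the verbatim argument, with Proposition~\ref{Rapid}, the trace $\tau^\nu$, and the measure $\tilde\vartheta$ replacing Proposition~\ref{Rapid-down}, $\tau^\nu_{\maF}$, and $\vartheta$.

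The only genuinely non-formal point is the long-time control near $\lambda=0$: one must know that $D$ has finite spectral density there, i.e. that $\vartheta$ assigns finite mass to a neighbourhood of $0$ (equivalently, the finiteness of $N(\lambda)$), and this is exactly what the $\tau^\nu_{\maF}$-trace-class property of $e^{-D^2}$ delivers. Everything else is the standard Bismut--Freed short-time estimate combined with Tonelli's theorem.
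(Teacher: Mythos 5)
Your proof is correct and follows essentially the same strategy as the paper: split at $t=1$, control $(0,1]$ by the Bismut--Freed estimate, and control $[1,\infty)$ by Tonelli together with the finiteness of $\tau^\nu_{\maF}(e^{-D^2})$. The only (cosmetic) difference is in the tail computation -- you work with the spectral measure of $D$ on $\RR$ and the complementary error function bound, splitting near and away from $0$, whereas the paper works with the spectral measure of $D^2$ and a single change of variables landing on $\Gamma(1/2)$; both yield the same bound up to constants.
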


\begin{proof} 
We have
$$
|\tau^\nu_{\maF} (D e^{-t^2 D^2})| \leq \tau^\nu_{\maF} (|D| e^{-t^2 D^2}) \quad \text{ and } \quad |\tau^\nu (\tD e^{-t^2 \tD^2})| \leq \tau^\nu (|\tD| e^{-t^2 \tD^2}).
$$
Therefore and since the function $x\mapsto |x| e^{-t^2 x^2}$ is rapidly decreasing, we know from Propositions \ref{Rapid}
and  \ref{Rapid-down} that for any $t>0$
$$
\tau^\nu_{\maF} (|D| e^{-t^2 D^2}) < +\infty \text{ and } \tau^\nu (|\tD| e^{-t^2 \tD^2}) < +\infty.
$$
We also have the formulae
$$
\tau_\maF^\nu (|D| e^{-t^2 D^2}) = \int_{\RR_+} \sqrt{x} e^{-t^2 x} d\vartheta (x)\quad \text{ and }
\quad  \tau^\nu (|\tD| e^{-t^2 \tD^2}) = \int_{\RR_+} \sqrt{x} e^{-t^2 x} d{\tilde \vartheta} (x).
$$
Therefore, by Tonelli's theorem
\begin{eqnarray*}
\int_1^{+\infty} \tau^\nu_{\maF} (|D| e^{-t^2 D^2}) dt & = & \int_0^\infty \sqrt{x} \int_1^\infty  e^{-t^2 x}\, dt\, d\vartheta (x)\\
& = &  \int_0^\infty \sqrt{x} e^{-x} \int_1^\infty e^{-(t^2-1) x} \,dt\, d\vartheta (x)\\
& = & \frac{1}{2}\int_0^\infty \sqrt{x} e^{-x} \int_0^\infty x^{-1/2} (u+x)^{-1/2} e^{-u} \,du \,d\vartheta (x)\\
& \leq  & \frac{1}{2} \left(\int_0^\infty  e^{-x} d\vartheta (x)\right) \left(\int_0^\infty u^{-1/2} e^{-u} du\right)\\
& = & \frac{\sqrt{\pi}}{2} \tau^\nu_{\maF} (e^{-D^2}).
\end{eqnarray*}
The same proof show that 
$$
\int_1^{+\infty} \tau^\nu (|\tD| e^{-t^2 \tD^2}) dt < +\infty.
$$
On the other hand, we have
\begin{eqnarray*}
\int_0^{1} |\tau^\nu_{\maF} (D e^{-t^2 D^2})| dt & \leq & \int_0^{1} \int_{F\times T} |\tr (k_t([\tm, \theta],[\tm, \theta])| \,d\tm \,d\nu (\theta)\, dt\\
& \leq & \int_0^{1} \int_{F\times T} C \,d\tm \,d\nu (\theta)\, dt \\
& = &  C \times {\rm vol} (V, d\tm\otimes \nu) <+\infty.
\end{eqnarray*}
Again, the same proof works as well for the regular trace and the regular von Neumann algebra.
\end{proof}

We are now in position to define the foliated eta invariants.

\begin{definition}\label{def:eta}
We define the up and down eta invariants of our longitudinal Dirac type operator by the formulae
$$
\eta_{\rm{up}}^{\nu} (\tD) := \frac{2}{\sqrt \pi} \int_0^{+\infty} \tau^\nu (\tD e^{-t^2 \tD^2}) dt \quad \text{ and } \quad  \eta_{\rm{down}}^\nu (D) := \frac{2}{\sqrt \pi} \int_0^{+\infty} \tau^\nu_{\maF} (D e^{-t^2 D^2}) dt.
$$  
\end{definition}

Since the traces on both von Neumann algebras are positive, the two eta invariants are real numbers.

\begin{definition}\label{def:rho}
The foliated rho invariant associated to the longitudinal Dirac type operator $D$
on the foliated flat bundle $(V,\maF)$ is defined as 
$$
\rho^\nu (D;V,\maF):=\eta_{\rm{up}}^{\nu} (\tD)\,-\,\eta_{\rm{down}}^\nu (D) $$  
with $\tD$ the lift of $D$ to the monodromy cover.
\end{definition}

We are mainly interested in the present paper in the leafwise signature operator $D^{\rm{sign}}$ and its leafwise lift to the monodromy groupoid $\tD^{\rm{sign}}$. In this case, we can state the following convenient result.

\begin{lemma}\ Denote by $\Delta_j$ the Laplace operator on leafwise $j$-forms. Then the foliated eta invariant of the operator $D^{\rm{sign}}$ on $(V, \maF)$ is given by
$$
\eta^\nu (D^{\rm{sign}}; V, \maF) = \frac{1}{\sqrt\pi} \int_0^{+\infty} \tau^\nu_\maF (* d e^{-t^2 \Delta_{m-1}}) dt = \frac{1}{\sqrt\pi} \int_0^{+\infty} \tau^\nu_\maF (d * e^{-t^2 \Delta_{m}}) dt.
$$
Similar statements hold for the lifted family $\tD^{\rm{sign}}$.
\end{lemma}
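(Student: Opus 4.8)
The plan is to reduce the statement to the classical computation relating the signature operator $D^{\mathrm{sign}}_{od}$ to the Hodge--de Rham data on a closed odd-dimensional manifold, and then to transfer it to the foliated setting using only the trace formula of Proposition \ref{Rapid-down}. Recall that on each leaf $L$ of dimension $2m-1$ one has $D^{\mathrm{sign}}_{od} = i^m(-1)^{m-k}(d\circ * + *\circ d)$ on forms of degree $2k-1$, and that $(D^{\mathrm{sign}}_{od})^2$ is (a constant multiple of) the leafwise Laplacian $\Delta$, so that $D^{\mathrm{sign}}_{od}$ preserves no single degree but decomposes according to the parity of the form degree. The first step is to compute, for fixed $t>0$ and on a single leaf, the pointwise supertrace $\operatorname{tr}\big(D^{\mathrm{sign}}_{od}\, e^{-t^2 (D^{\mathrm{sign}}_{od})^2}\big)$ in terms of $\operatorname{tr}\big(* d\, e^{-t^2\Delta_{m-1}}\big)$ and $\operatorname{tr}\big(d*\,e^{-t^2\Delta_{m}}\big)$; this is exactly the bookkeeping carried out in \cite{APS2} for a closed manifold, and since it is entirely a fibrewise (pointwise-in-$\theta$, leafwise) statement it applies verbatim here. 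The upshot should be that the cross terms involving $d*$ and $*d$ in different degrees either cancel in pairs or combine, leaving the two asserted expressions, which are themselves equal by the Hodge $*$-conjugation between $\Delta_{m-1}$ and $\Delta_m$ (and by the symmetry $\operatorname{tr}(AB)=\operatorname{tr}(BA)$ applied to $*$ and $d e^{-t^2\Delta}$ at the level of the trace $\tau^\nu_\maF$).

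Second, I would integrate the pointwise identity against the measure $d\tm\, d\nu(\theta)$ over $F\times T$ and invoke Proposition \ref{Rapid-down}: for each of the rapidly decreasing functions $\psi_t(x) = x e^{-t^2 x^2}$ (and for the auxiliary functions $x\mapsto$ the relevant spectral multipliers of $\Delta$), the operator $\psi_t(D^{\mathrm{sign}})$ is $\tau^\nu_\maF$-trace class with $\tau^\nu_\maF(\psi_t(D^{\mathrm{sign}})) = \int_{F\times T}\operatorname{tr} K_{\psi_t}([\tm,\theta],[\tm,\theta])\, d\tm\, d\nu(\theta)$. Thus integrating the leafwise supertrace identity over $F\times T$ turns it into the stated identity of von Neumann traces, and then integrating in $t$ over $(0,+\infty)$ — using the integrability established in the Proposition preceding Definition \ref{def:eta} (finiteness of $\int_0^1$ via the Bismut--Freed estimate and of $\int_1^\infty$ via the Tonelli computation with the measures $\vartheta, \tilde\vartheta$) — yields the claimed formula for $\eta^\nu(D^{\mathrm{sign}};V,\maF)$, with the factor $\tfrac{1}{\sqrt\pi}$ replacing $\tfrac{2}{\sqrt\pi}$ because the two degree-parities contribute equally and one is folding them into a single integral. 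The statement for the lifted family $\tD^{\mathrm{sign}}$ is proved identically, replacing $\tau^\nu_\maF$ and Proposition \ref{Rapid-down} by $\tau^\nu$ and Proposition \ref{Rapid}, and replacing leaves $L_\theta$ by the monodromy covers $\tM_\theta$.

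The main obstacle, as usual with eta-type identities, is the bookkeeping in the first step: one must be careful that $D^{\mathrm{sign}}_{od}$ does not preserve the form degree, so $D^{\mathrm{sign}}_{od} e^{-t^2(D^{\mathrm{sign}}_{od})^2}$ has off-diagonal blocks in the $\mathbb Z$-grading by degree, and only after taking the supertrace (which annihilates the off-diagonal-in-degree pieces, since the trace of a degree-shifting operator is of the form $\operatorname{tr}$ of something mapping degree $j$ to degree $j$, hence zero unless the degrees match) does one see the surviving terms $*d$ in degree $m-1$ and $d*$ in degree $m$. Establishing precisely which terms survive, and that they survive with the right sign and the right power $i^m(-1)^{\cdots}$ so that the imaginary units and signs all conspire to give a real, correctly-normalized expression, is the delicate point; but it is a purely algebraic/leafwise computation, formally identical to \cite{APS2}, and the measured-foliation machinery (Propositions \ref{Rapid}, \ref{Rapid-down} and the integrability lemma) does the rest with no new input. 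I would therefore present the argument as: (i) recall the closed-manifold computation of \cite{APS2} and note its leafwise validity; (ii) integrate over $F\times T$ using Proposition \ref{Rapid-down}; (iii) integrate in $t$ using the integrability already proved; (iv) remark that the $\tD^{\mathrm{sign}}$ case is word-for-word the same with $\tau^\nu$ in place of $\tau^\nu_\maF$.
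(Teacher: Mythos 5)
Your proposal is correct and follows essentially the same route as the paper: the paper's proof is the single sentence that the lemma is ``an immediate consequence of a straightforward leafwise version of the computation made in \cite{APS1}[p.~67--68]'', which is precisely your step (i), combined implicitly with the trace formula of Proposition \ref{Rapid-down} and the integrability already established, as in your steps (ii)--(iv). Your write-up merely makes explicit the degree bookkeeping and the origin of the factor $\tfrac{1}{\sqrt\pi}$, which the paper leaves to the reader.
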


\begin{proof}
This is an immediate consequence of a straightforward leafwise version of the computation made in \cite{APS1}[p. 67-68].
\end{proof}

\subsection{Eta invariants and  determinants of paths}\label{subsect:winding}
We review the notion of determinants of paths, adapting the work of
de La Harpe-Skandalis \cite{dlH-Ska} to our context. Recall that $M$ is odd dimensional.
For any von Neumann algebra $\maM$ endowed with a positive semifinite faithful normal trace $\tau$,
we denote  by $L^1(\maM, \tau)$ the Schatten space of  summable $\tau$-measurable operators in the sense of \cite{FackKosaki}. Recall that $L^1(\maM, \tau)\cap \maM$ is a two sided $*$-ideal in $\maM$.  By Propositions \ref{Rapid}, \ref{Rapid-down} 
we have for any rapidly decreasing Borel function $\psi$

 $$
\psi(\tD) := (\psi(\tD_\theta))_{\theta\in T} \in L^1(W^*_{\nu}(G;E), \tau^\nu) \cap W^*_{\nu}(G;E) $$
$$\psi(D) := (\psi(D_{L_{\theta}}))_{\theta\in T} \in L^1(W^*_\nu (V,\maF;E), \tau^\nu_{\maF}) \cap W^*_\nu (V,\maF;E).
$$
We set $\tD=\tU |\tD|$ and $D=U|D|$ for the polar decompositions in the corresponding von Neumann algebras. Then, this decomposition obviously  coincides with the leafwise decompositions 
$$
\tD_\theta = \tU_\theta |\tD_\theta| \text{ and } D_L = U_L |D_L|.
$$
For any $\theta \in T$ with $L=L_\theta$, we write the spectral decompositions:
$$
|\tD_\theta|=\int_0^{+\infty} \lambda d\tE^\theta_{\lambda} \text{ and }|D_{L}| = \int_0^{+\infty} \lambda dE^L_{\lambda}.
$$
As we have already remarked, the collection of partial isometries $\tU= (\tU_\theta)_{\theta\in T}$ (resp. $U = (U_{L_\theta})_{\theta\in T}$) as well as that of spectral projections $\tE_{\lambda}= (\tE^\theta_{\lambda})_{\theta\in T}$ (resp. $E_\lambda= (E^{L_\theta}_{\lambda})_{\theta\in T}$), all belong to $W^*_{\nu}(G;E)$ (resp. $W^*_\nu (V,\maF;E)$). 

\begin{proposition} \label{Measurability}
We have $\tau^\nu(\tE_{\lambda})<+\infty$ and $\tau^\nu_{\maF}(E_{\lambda})<+\infty$ for any $\lambda \in \RR_+$.
\end{proposition}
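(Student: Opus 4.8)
The plan is to reduce the finiteness of $\tau^\nu(\tE_\lambda)$ and $\tau^\nu_{\maF}(E_\lambda)$ to the already-established fact that rapidly decreasing functions of $\tD$ and $D$ are $\tau^\nu$- (resp. $\tau^\nu_{\maF}$-) trace class, via a simple domination argument. The key observation is that the spectral projection $\tE_\lambda$ of $|\tD|$ (or equivalently the spectral projection of $\tD^2$ corresponding to $[0,\lambda^2)$) is dominated by a Schwartz function of $\tD$. Concretely, fix $\lambda\in\RR_+$ and pick any function $\psi:\RR\to[0,+\infty)$ of Schwartz class with $\psi(x)\geq 1$ for all $x$ with $|x|\leq\lambda$; for instance one may take $\psi(x):=e^{\lambda^2}e^{-x^2}$, so that $\psi(x)=e^{\lambda^2-x^2}\geq 1$ precisely on $[-\lambda,\lambda]$ and $\psi$ is rapidly decreasing. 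Then on the spectral subspace where $|\tD|<\lambda$ one has $\psi(\tD)\geq \tE_\lambda$ as self-adjoint operators, i.e. $0\leq \tE_\lambda\leq \psi(\tD)$ in $W^*_\nu(G;E)$.

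First I would record the operator inequality carefully: since $\tE_\lambda$ is the spectral projection of $\tD$ associated to the interval $(-\lambda,\lambda)$ (the spectral projection of $|\tD|$ for $[0,\lambda)$), functional calculus gives $\tE_\lambda = \mathbf{1}_{(-\lambda,\lambda)}(\tD)$ and hence $\tE_\lambda = \tE_\lambda\,\psi(\tD)^{-1}\psi(\tD)\,\tE_\lambda$ is not quite what I want; more simply, $\psi(\tD) - \tE_\lambda = (\psi - \mathbf{1}_{(-\lambda,\lambda)})(\tD)$ and the scalar function $\psi - \mathbf{1}_{(-\lambda,\lambda)}$ is nonnegative everywhere by our choice of $\psi$, so $\psi(\tD) - \tE_\lambda \geq 0$. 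Next, by Proposition \ref{Rapid} the operator $\psi(\tD)=(\psi(\tD_\theta))_{\theta\in T}$ is $\tau^\nu$ trace class, so $\tau^\nu(\psi(\tD))<+\infty$. By positivity and monotonicity of the trace $\tau^\nu$ on $W^*_\nu(G;E)$ (Proposition \ref{traces}), the inequality $0\leq \tE_\lambda\leq\psi(\tD)$ yields $\tau^\nu(\tE_\lambda)\leq\tau^\nu(\psi(\tD))<+\infty$, as desired.

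Then I would run the identical argument on the leafwise side: with the same $\psi$, Proposition \ref{Rapid-down} gives that $\psi(D)=(\psi(D_L))_{L\in V/\maF}$ is $\tau^\nu_{\maF}$ trace class, the spectral projection $E_\lambda=\mathbf{1}_{(-\lambda,\lambda)}(D)$ of the leafwise operator satisfies $0\leq E_\lambda\leq\psi(D)$ in $W^*_\nu(V,\maF;E)$ by functional calculus, and monotonicity of $\tau^\nu_{\maF}$ (Proposition \ref{traces}) gives $\tau^\nu_{\maF}(E_\lambda)\leq\tau^\nu_{\maF}(\psi(D))<+\infty$. This completes the proof. One small point worth spelling out is that everything takes place inside the given von Neumann algebras: the spectral projections $\tE_\lambda$ and $E_\lambda$ belong to $W^*_\nu(G;E)$ and $W^*_\nu(V,\maF;E)$ respectively (this was already observed in the text, since $\tD$ and $D$ are affiliated to these algebras), so the traces are genuinely being evaluated on elements of the algebras and the monotonicity property of the normal semifinite trace applies verbatim.

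I do not expect any serious obstacle here; the only thing that requires a moment's care is the choice of a dominating Schwartz function that is $\geq 1$ on the relevant compact interval — the Gaussian $\psi(x)=e^{\lambda^2-x^2}$ does the job cleanly — and the verification that the difference $\psi-\mathbf{1}_{(-\lambda,\lambda)}$ is pointwise nonnegative so that the operator inequality follows from the functional calculus. Everything else is an immediate appeal to Propositions \ref{Rapid}, \ref{Rapid-down} and \ref{traces} already available in the text.
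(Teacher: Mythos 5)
Your proof is correct, but it takes a genuinely different route from the paper's. The paper deduces finiteness of $\tau^\nu(\tE_\lambda)$ and $\tau^\nu_{\maF}(E_\lambda)$ from the fact (cited from the leafwise pseudodifferential calculus) that the resolvents $(|\tD|-\mu)^{-1}$ and $(|D|-\mu)^{-1}$, $\mu<0$, are $\tau^\nu$-compact, resp.\ $\tau^\nu_{\maF}$-compact: a $\tau$-compact positive operator has $\tau$-finite spectral projections away from $0$, and these are exactly the $\tE_\lambda$, $E_\lambda$. You instead dominate the projection by a Gaussian, $0\leq \tE_\lambda=\mathbf{1}_{(-\lambda,\lambda)}(\tD)\leq e^{\lambda^2}e^{-\tD^2}$, and invoke Propositions \ref{Rapid} and \ref{Rapid-down} together with monotonicity of the normal positive traces. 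Both arguments rest on material already available in the text; yours is arguably more self-contained (it uses only the heat-kernel trace-class results proved earlier, not the cited $\tau$-compactness of resolvents) and has the minor bonus of yielding the explicit bound $\tau^\nu(\tE_\lambda)\leq e^{\lambda^2}\,\tau^\nu(e^{-\tD^2})$, whereas the paper's route is the standard spectral-theoretic one and works even in settings where trace-class functional calculus for Schwartz functions is not at hand but compactness of resolvents is. One stylistic remark: the aside beginning ``is not quite what I want'' should simply be deleted; the clean statement is that $\psi-\mathbf{1}_{(-\lambda,\lambda)}\geq 0$ pointwise, hence $\psi(\tD)-\tE_\lambda\geq 0$ by the functional calculus.
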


\begin{proof}
We know that  for any $\lambda < 0$ the operator $(|\tD| - \lambda)^{-1}$ is $\tau^\nu$-compact in $W^*_{\nu}(G;E)$.
In the same way, the operator $(|D| - \lambda)^{-1}$ is $\tau^\nu_{\maF}$-compact in $W^*_\nu (V,\maF;E)$ \cite{Co-LNM}. Hence
the spectral projections of $(|\tD| - \lambda)^{-1}$ are  $\tau^\nu$-finite and 
the spectral projections of $(|D| - \lambda)^{-1}$ are  $\tau^\nu_{\maF}$-finite.
This completes the proof.
\end{proof}

For any $t>0$, the $t$-th singular number of the operator  $\tD$ with respect to the probability measure $\nu$ is defined by \cite{FackKosaki}
$$
\mu_t(\tD)=\inf \{ \| |\tD| {\tilde p}\|, \,{\tilde p}={\tilde p}^2={\tilde p}^* \in W^*_{\nu}(G;E) \text{ and } \int_T \tr (M_\chi (I- {\tilde p}_\theta) M_\chi)d\nu(\theta) \leq t\}.
$$
In the same way, we define
$$
\mu_t(D)=\inf \{ \| |D| p\|,\, p=p^2=p^* \in W^*_\nu (V,\maF;E) \text{ and } \int_T \tr (M_\chi (I- p_{L_\theta}) M_\chi) d\nu(\theta) \leq t\}.
$$
From Proposition \ref{Measurability}, we deduce that $0\leq\mu_t(\tD)=\mu_t(|\tD|)<+\infty$ and $0\leq\mu_t(D)=\mu_t(|D|)<+\infty$. The spectral measure  of $|\tD|$ with respect to the probability measure $\nu$ is denoted $\tmu$, while the spectral measure  of $|D|$ is denoted $\mu$. So for $\tD$ for instance we have
\begin{multline*}
\mu(B)=\int_T \tr ( M_\chi 1_B(|D_{L_\theta}|) M_\chi) d\nu(\theta), \text{ for any Borel subset $B$ of the spectrum of }|D| \text{ and}\\ 
\tmu(\tB)=\int_T \tr ( M_\chi 1_\tB(|\tD_{\theta}|) M_\chi) d\nu(\theta), \text{ for any Borel subset $\tB$ of the spectrum of } |\tD|.
\end{multline*}

We denote by $\mathcal{I}\maK_{E,\rm{reg}}$ (resp. $\mathcal{I}\maK_{E,\rm{triv}}$) 
the subgroup of invertible operators in $W^*_{\nu}(G;E)$ (resp. in $W^*_\nu (V,\maF;E)$) which differ from the identity by an element of the ideal $\maK(W^*_{\nu}(G;E),\tau^\nu)$ (resp. $\maK(W^*_\nu (V,\maF;E),\tau^\nu_{\maF})$). 
The subgroup of bounded operators which differ from the identity by a $\tau^\nu$-summable 
(resp. $\tau^\nu_{\maF}$-summable) operator will be denoted $\mathcal{I}L^{1}_{E,\rm{reg}}$ (resp. $\mathcal{I}L^1_{E,\rm{triv}}$).  

\medskip

{\bf Whenever possible we shall refer to both von Neumann algebras $W^*_{\nu}(G;E)$ and $W^*_\nu (V,\maF;E)$
as $\maM$. We shall then  use  the notation $\mathcal{I} \maK$ (resp. $\mathcal{I} L^1$) and denote by  $\tau$  the corresponding trace.}

\begin{lemma}
The space $\mathcal{I}L^1$ (resp. $\mathcal{I}\maK$) is a subgroup of the group of invertibles $\GL(\maM)$ of the von Neumann algebra $\maM$. 
\end{lemma}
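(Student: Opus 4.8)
The plan is to verify directly that $\mathcal{I}L^1$ and $\mathcal{I}\maK$ are closed under multiplication and under taking inverses inside $\GL(\maM)$, using only the fact that $L^1(\maM,\tau)\cap\maM$ (respectively $\maK(\maM,\tau)$) is a two-sided $*$-ideal in $\maM$. Write a generic element of $\mathcal{I}L^1$ as $g = I + a$ with $a\in L^1(\maM,\tau)\cap\maM$ and $g$ invertible in $\maM$; similarly for $\mathcal{I}\maK$ with $a\in\maK(\maM,\tau)$. The identity $I$ plainly lies in both sets, so it suffices to treat products and inverses.

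First I would check closure under multiplication. Given $g = I+a$ and $h = I+b$ with $a,b$ in the ideal, compute $gh = I + a + b + ab$. Since the ideal is closed under addition and under left/right multiplication by elements of $\maM$ (in particular $ab$ lies in the ideal), the remainder $a+b+ab$ lies in the ideal; and $gh$ is invertible in $\maM$ as a product of invertibles. Hence $gh\in\mathcal{I}L^1$ (resp. $\mathcal{I}\maK$).

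Next I would check closure under inversion. If $g = I+a$ is invertible in $\maM$, write $g^{-1} = I + c$ where $c := g^{-1} - I$. From $g^{-1}g = I$ we get $(I+c)(I+a) = I$, i.e. $c + a + ca = 0$, so $c = -a - ca = -(I+c)a = -g^{-1}a$. Since $a$ is in the ideal and $g^{-1}\in\maM$, the element $c = -g^{-1}a$ lies in the two-sided ideal. Therefore $g^{-1} = I + c$ differs from the identity by an element of $L^1(\maM,\tau)\cap\maM$ (resp. of $\maK(\maM,\tau)$), and it is of course invertible in $\maM$. This shows $g^{-1}\in\mathcal{I}L^1$ (resp. $\mathcal{I}\maK$), completing the proof that each is a subgroup of $\GL(\maM)$.

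There is no real obstacle here; the only point requiring a word of care is that we are genuinely inside $\maM$ throughout, so that expressions like $g^{-1}a$ make sense and the ideal property applies — this is guaranteed because $\mathcal{I}L^1$ and $\mathcal{I}\maK$ are by definition subsets of $\GL(\maM)\subset\maM$, and $L^1(\maM,\tau)\cap\maM$ is precisely the part of the Schatten space that sits inside $\maM$, which is where the two-sided ideal property was recalled to hold. The same verbatim argument applies to both choices $\maM = W^*_{\nu}(G;E)$ and $\maM = W^*_\nu(V,\maF;E)$ with their respective traces $\tau^\nu$ and $\tau^\nu_{\maF}$, which is why it is convenient to phrase everything in the unified notation.
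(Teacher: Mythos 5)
Your proof is correct and follows essentially the same route as the paper: the key step for inverses is the identity $g^{-1}-I=-g^{-1}a$, which is exactly the computation $(I+T)^{-1}-I=-(I+T)^{-1}T$ given in the paper. The only difference is that you also spell out closure under multiplication, which the paper treats as immediate from the two-sided ideal property.
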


\begin{proof}
We only need to check the stability for taking inverses. Let then $I + T$ be an invertible element in $\maM$ such that $T\in L^1(\maM, \tau)$ (resp. $\maK(\maM, \tau)$). Then we can write
$$
(I+T)^{-1} - I = (I+T)^{-1} (I - (I+T)) = -(I+T)^{-1} T \in L^1(\maM, \tau) \text{ ( resp. $\maK(\maM,\tau)$)}.
$$
\end{proof}

\begin{proposition}\label{prop:approximation}
Let $\gamma:[0,1] \to \mathcal{I}\maK$ be a continuous path for the uniform norm. For any $\ep >0$, there exists a continuous piecewise affine path $\gamma_\ep: [0,1] \to \mathcal{I} L^1$  such that for any $t\in [0,1]$ we have $\|\gamma (t) - \gamma_\ep (t)\| \leq \ep$. Moreover, if $\gamma(0)$ and $\gamma (1)$ belong to $\mathcal{I}L^1$, then we can insure that $\gamma_\ep (i)=\gamma (i)$ for $i=0,1$.
\end{proposition}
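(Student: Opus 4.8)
The plan is to prove the approximation statement by the standard ``dense holomorphically-closed ideal plus piecewise-linear interpolation'' technique, exactly as in de la Harpe--Skandalis \cite{dlH-Ska}, adapted to the semifinite setting. The key point is that $L^1(\maM,\tau)\cap\maM$ is a dense two-sided $*$-ideal of $\maM$ (recalled in the paragraph preceding the statement), so $\maI L^1$ is dense in $\maI\maK$ for the uniform norm; indeed, given $I+T\in\maI\maK$ with $T\in\maK(\maM,\tau)$, pick $T'\in L^1(\maM,\tau)\cap\maM$ with $\|T-T'\|$ as small as we like, and then $I+T'$ is invertible (being a small perturbation of the invertible $I+T$) and lies in $\maI L^1$ because $(I+T')-I=T'$. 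So for each fixed $t$ there is an element of $\maI L^1$ within any prescribed distance of $\gamma(t)$; the work is to do this continuously in $t$.

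First I would use uniform continuity of $\gamma$ on $[0,1]$ together with the openness of $\GL(\maM)$ in $\maM$: choose $\delta>0$ so small that $\|\gamma(s)-\gamma(t)\|<\ep/3$ whenever $|s-t|<\delta$, and also so that any operator within $\ep/3$ (say) of some $\gamma(t)$ is still invertible (this is possible after shrinking, using that $\{\gamma(t):t\in[0,1]\}$ is a compact subset of the open set $\GL(\maM)$, so it has a positive distance to the complement). Pick a partition $0=t_0<t_1<\cdots<t_N=1$ with mesh $<\delta$. At each node $t_i$ choose $a_i\in\maI L^1$ with $\|a_i-\gamma(t_i)\|<\ep/3$; in the case where $\gamma(0),\gamma(1)\in\maI L^1$ already, simply take $a_0=\gamma(0)$ and $a_N=\gamma(1)$. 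Then define $\gamma_\ep$ on each subinterval $[t_{i-1},t_i]$ by affine interpolation,
$$
\gamma_\ep(t):=\frac{t_i-t}{t_i-t_{i-1}}\,a_{i-1}+\frac{t-t_{i-1}}{t_i-t_{i-1}}\,a_i .
$$
This $\gamma_\ep$ is continuous and piecewise affine, and it takes values in $\maI L^1$: the difference $\gamma_\ep(t)-I$ is a convex combination of $a_{i-1}-I$ and $a_i-I$, hence lies in the vector space $L^1(\maM,\tau)\cap\maM$, and $\gamma_\ep(t)$ is invertible because $\|\gamma_\ep(t)-\gamma(t)\|\le\max(\|a_{i-1}-\gamma(t)\|,\|a_i-\gamma(t)\|)<\ep/3+\ep/3<\ep$, keeping it inside the open set of invertibles by the choice of $\delta$. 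The same estimate gives the desired uniform bound $\|\gamma(t)-\gamma_\ep(t)\|\le\ep$, and the endpoint condition holds by construction.

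The main (and really only) obstacle is the bookkeeping to guarantee simultaneously that (i) each interpolated value stays \emph{invertible} and (ii) each interpolated value differs from the identity by an $L^1$ element; (ii) is automatic from convexity of $L^1(\maM,\tau)\cap\maM$ as a linear subspace, while (i) forces one to control the partition mesh using the compactness/openness argument above rather than a naive pointwise approximation. I would also remark that the whole argument is purely functional-analytic and uses nothing about the foliation beyond the structural facts about $\maM$ and its trace $\tau$ recorded earlier, so it applies verbatim to both $W^*_\nu(G;E)$ and $W^*_\nu(V,\maF;E)$ under the uniform notation $\maM$ introduced just before the statement.
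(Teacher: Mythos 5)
Your proof is correct and follows essentially the same route as the paper's: uniform continuity yields a fine subdivision of $[0,1]$, density of $L^1(\maM,\tau)\cap\maM$ in $\maK(\maM,\tau)$ supplies node values in $\mathcal{I}L^1$, and piecewise affine interpolation together with the triangle inequality gives the uniform estimate and the endpoint condition. Your extra verification that the interpolated values remain \emph{invertible} (via compactness of the image of $\gamma$ inside the open set $\GL(\maM)$, shrinking the approximation parameter if necessary) is a point the paper's proof leaves implicit, and is a welcome precision.
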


\begin{proof}
Since $\gamma$ is continous for the operator norm, we can find $\delta > 0$ such that
$$
|t- s| \leq \delta \Rightarrow \|\gamma(t) - \gamma (s)\| \leq \ep/3.
$$
We subdivide $[0,1]$ into $0=x_0 < x_1 < \cdots < x_n=1$ so that $|x_{j+1} - x_j| \leq \delta$ for any $j$. On the other hand, the ideal $L^1(\maM, \tau)\cap \maM$  is dense in $\maK(\maM,\tau)$ for the uniform norm. Therefore, for any $j=0,\cdots, n$, we can find $\gamma_\ep (x_j) \in B(\gamma(x_j), \ep/9)$, the ball centered at $\gamma(x_j)$ with radius $\ep /9$, such that $\gamma_\ep (x_j)\in \mathcal{I}L^1$. We then define a path $\gamma_\ep:[0, 1] \to \mathcal{I}L^1$ which is affine on every interval
$[x_j,x_{j+1}]$ and prescribed by the values $\gamma_\ep (x_j)$ for $j=0, \cdots, n$. The path $\gamma_\ep$ is then continuous and differentiable outside the finite set $\{x_j, j=0, \cdots, n\}$. Moreover, for $t\in [x_j, x_{j+1}]$ we have
$$
\|\gamma_\ep (t) - \gamma_\ep(x_j)\| =  t \times \| \gamma_\ep(x_{j+1}) - \gamma_\ep (x_j)\| \leq \| \gamma_\ep(x_{j+1}) - \gamma_\ep (x_j)\| \leq \|\gamma (x_{j+1}) - \gamma(x_j)\| + 2 \ep/9 \leq 5\ep /9. 
$$
Therefore,
$$
\|\gamma(t) - \gamma_\ep (t)\| \leq \|\gamma (t) - \gamma (x_j)\| + \|\gamma(x_j) - \gamma_\ep (x_j)\| + \|\gamma_\ep (x_j) - \gamma_\ep (t)\| \leq \ep /3 + \ep /9 + 5\ep /9 = \ep. 
$$
\end{proof}

\begin{definition}
Given a continuous piecewise $C^1$ path $\gamma: [0,1] \to \mathcal{I}L^1$ for the $L^1$-norm
in $\maM$, we define the determinant $w^\tau(\gamma)$ by the formula
$$
w^\tau(\gamma) := \frac{1}{2\pi {\sqrt{-1}}} \int_0^1 \tau (\gamma(t)^{-1} \gamma ' (t)) dt.
$$
{\bf When $\maM$ is $W^*_\nu (G,E)$ this determinant will be denoted by
$w^\nu(\gamma)$ while when $\maM$ is equal to $W^*_\nu (V,\maF;E)$
this determinant will be denoted $w^\nu_{\maF} (\gamma)$.}
\end{definition}

We summarize the properties of the determinant in the following

\begin{proposition}
Let $\gamma: [0,1] \to  \mathcal{I}L^1$ be a continuous piecewise $C^1$ path for the $L^1$-norm.
\begin{enumerate}
\item Assume that 
$$
\| \gamma (t) - I \|_1 < 1, \quad \text{ for any } t\in [0,1].
$$
Then, for any $t\in [0,1]$ the operator $\Log(\gamma(t))$ is well defined in the von Neumann algebra and we have
$$
w^\tau(\gamma) = \frac{1}{2 \pi {\sqrt{-1}}} \left[\tau (\Log (\gamma(1))) - \tau (\Log(\gamma(0))) \right].
$$
\item There exists $\delta_\gamma > 0$ such that for any continuous piecewise 
$C^1$ path $\alpha:[0,1] \to  \mathcal{I}L^1$ for the $L^1$ norm, with 
$$
\|\alpha (t) - \gamma(t) \|_1 \leq \delta_\gamma \text{ and } \alpha (i)=\gamma(i), i=0,1,
$$
we have $w^\tau (\alpha) = w^\tau (\gamma)$.
\item  If $\gamma$ is a continuous piecewise $C^1$ path for the uniform norm, then the determinant $w^\tau(\gamma)$ is well defined. Moreover, $w^\tau (\gamma)$ only depends on the homotopy class of $\gamma$  with fixed endpoints and with respect to the uniform norm.
\end{enumerate}
\end{proposition}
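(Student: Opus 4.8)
The plan is to follow closely the corresponding argument in de la Harpe--Skandalis \cite{dlH-Ska}, which was written for the group von Neumann algebra of a discrete group, and check that it goes through verbatim in our two von Neumann contexts $\maM = W^*_\nu(G,E)$ and $\maM = W^*_\nu(V,\maF;E)$; the only ingredients used are that $\tau$ is a positive semifinite faithful normal trace and that $L^1(\maM,\tau)\cap \maM$ is a two-sided $\ast$-ideal in $\maM$, both of which we have already established (Proposition \ref{traces}, and the discussion preceding this statement). I would prove the three items in order, since the second and third rely on the first.

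For item (1), one first observes that the hypothesis $\|\gamma(t)-I\|_1<1$ forces in particular $\|\gamma(t)-I\|<1$ in the uniform norm, so the holomorphic functional calculus defines $\Log(\gamma(t)) = -\sum_{n\geq 1}\frac{1}{n}(I-\gamma(t))^n$, which converges absolutely in $L^1(\maM,\tau)$ because $L^1$ is an ideal and $\|(I-\gamma(t))^n\|_1\leq \|I-\gamma(t)\|_1\,\|I-\gamma(t)\|^{n-1}$. The map $t\mapsto \Log(\gamma(t))$ is then piecewise $C^1$ into $L^1(\maM,\tau)$, and by the usual identity $\frac{d}{dt}\Log(\gamma(t)) = \gamma(t)^{-1}\gamma'(t)$ (valid here since, after summing the series, the standard commutator estimates reduce it to the scalar case; one should be slightly careful because $\gamma(t)$ and $\gamma'(t)$ need not commute, but the trace kills the commutator terms, exactly as in \cite{dlH-Ska}) together with continuity and the tracial property of $\tau$ on $L^1(\maM,\tau)$, integration in $t$ gives
\[
\int_0^1 \tau\bigl(\gamma(t)^{-1}\gamma'(t)\bigr)\,dt = \tau\bigl(\Log(\gamma(1))\bigr)-\tau\bigl(\Log(\gamma(0))\bigr),
\]
which is the claimed formula after dividing by $2\pi\sqrt{-1}$.

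For item (2), the strategy is the standard interpolation trick: given $\gamma$ and a nearby path $\alpha$ with the same endpoints, consider the family of paths $\gamma_s := (1-s)\gamma + s\alpha$ for $s\in[0,1]$. For $\delta_\gamma$ small enough (depending on $\inf_t \|\gamma(t)^{-1}\|^{-1}$, which is finite by compactness of $[0,1]$ and continuity), every $\gamma_s(t)$ stays invertible and the difference $\gamma_s(t)^{-1}-\gamma(t)^{-1}$ stays in $L^1$; then $w^\tau(\gamma_s)$ is $C^1$ in $s$ and one computes $\frac{d}{ds}w^\tau(\gamma_s)$, integrating by parts in $t$ and using $\gamma_s(0)=\gamma_s(1)$ (so the boundary terms cancel) and the trace property, to find that the derivative vanishes identically. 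Hence $w^\tau(\alpha)=w^\tau(\gamma_1)=w^\tau(\gamma_0)=w^\tau(\gamma)$. For item (3), one uses Proposition \ref{prop:approximation}: approximate a uniform-norm piecewise $C^1$ path $\gamma$ by piecewise affine paths $\gamma_\ep$ into $\mathcal{I}L^1$ with the same endpoints (when those endpoints lie in $\mathcal{I}L^1$), define $w^\tau(\gamma):=\lim_{\ep\to 0}w^\tau(\gamma_\ep)$, and show the limit exists and is independent of all choices by item (2) applied to two such approximants of the same path, and then by a further interpolation argument for two uniform-homotopic paths (subdividing the homotopy square finely enough that consecutive $L^1$-approximants of the intermediate paths are within $\delta$ of each other).

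The main obstacle is purely bookkeeping of a functional-analytic nature rather than anything deep: one must be careful that all the series and integrals converge in the $L^1(\maM,\tau)$-norm (not merely the uniform norm), that the $\tau$-trace of a commutator $[A,B]$ with $A\in\maM$, $B\in L^1(\maM,\tau)$ genuinely vanishes (this is the content of the ideal property plus normality, but it is where the argument would break in a purely $C^*$-algebraic setting), and that the various derivatives with respect to $t$ and the interpolation parameter $s$ are taken in the right topology. Since all of this is established in \cite{dlH-Ska} using only the abstract properties our $\maM$ and $\tau$ enjoy, I would state item by item and indicate that the proofs are word-for-word transcriptions, expanding only the commutator-vanishing point for the reader's convenience.
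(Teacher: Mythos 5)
Your items (1) and (3) follow the paper's own outline (series definition of $\Log$, the identity $\frac{d}{dt}\tau(\Log\gamma(t))=\tau(\gamma(t)^{-1}\gamma'(t))$, and for (3) the approximation of Proposition \ref{prop:approximation} combined with item (2)), so there is nothing to add there. For item (2), however, you take a genuinely different route from the paper. You use the additive straight-line homotopy $\gamma_s=(1-s)\gamma+s\alpha$ and show $\frac{d}{ds}w^\tau(\gamma_s)=0$ by exchanging $\partial_s$ and $\partial_t$ under the trace. The paper instead forms the \emph{multiplicative} loop $\beta(t)=\gamma(t)^{-1}\alpha(t)$, which is automatically a path of invertibles based at $I$, estimates $\|\beta(t)-I\|_1\le\|\gamma(t)^{-1}\|\,\|\alpha(t)-\gamma(t)\|_1<1$ for $\delta_\gamma$ small, applies item (1) to $\beta$ to get $w^\tau(\beta)=0$, and concludes via the additivity $w^\tau(\gamma\beta)=w^\tau(\gamma)+w^\tau(\beta)$ (a consequence of the Leibniz rule for the logarithmic derivative plus the trace killing the conjugation term).

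The difference is not merely cosmetic: your interpolation has a weak point that the paper's construction is designed to avoid. To keep $\gamma_s(t)=\gamma(t)\bigl(I+s\,\gamma(t)^{-1}(\alpha(t)-\gamma(t))\bigr)$ invertible you need $\|\gamma(t)^{-1}(\alpha(t)-\gamma(t))\|<1$ in the \emph{operator} norm, but the hypothesis only gives smallness of $\|\alpha(t)-\gamma(t)\|_1$. In the type $\mathrm{II}_\infty$ algebras $W^*_\nu(G;E)$ and $W^*_\nu(V,\maF;E)$ the operator norm is not dominated by the trace norm (a projection of small trace has norm one), so $L^1$-closeness does not by itself keep the convex combination inside the invertibles; the set of invertibles is not convex. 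This is fixable (e.g.\ by working with the norm $\|\cdot\|+\|\cdot\|_1$ on $L^1(\maM,\tau)\cap\maM$, which is the convention under which the $\Log$ series in item (1) converges anyway), but as written the step ``every $\gamma_s(t)$ stays invertible'' is not justified by the stated hypothesis, whereas the paper's loop $\beta=\gamma^{-1}\alpha$ never leaves the group and only ever needs the $L^1$ estimate to invoke item (1).
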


\begin{proof}
This proposition is a straightforward extension of the corresponding results in \cite{dlH-Ska}.
We give a brief outline of the proof here for the benefit of the reader.  It is clear in the first item, since $\tau$ is a positive trace, that  the function $t\mapsto \Log(\gamma(t))$ is well defined (using for instance the series) and is a piecewise smooth path. Moreover, we have
$$
\frac{d}{dt} \tau (\Log(\gamma(t)) = \tau (\gamma^{-1}(t) \frac{d\gamma}{dt} (t).
$$
This completes the proof of the first item. 

Let $\alpha$ be a continuous piecewise $C^1$ path satisfying the assumptions of the second item. We consider the continuous piecewise $C^1$ loop $\beta:[0, 1 ] \to \mathcal{I}L^1$ given by $\beta (t) = \gamma(t)^{-1} \alpha(t)$ which satisfies $\beta(0)=\beta (1)= I$. We have
$$
\|\beta(t) - I\|_1 \leq \|\gamma(t)^{-1}\| \times \|\gamma(t) - \beta(t)\|_1.
$$  
Therefore, with  $\delta_\gamma = \frac{1}{\inf_{t\in [0,1]} \|\gamma (t)^{-1}\|}$, we are done using the first item. 

The rest of the proof is similar and is omitted. 
\end{proof}

\begin{definition}
Let $\gamma:[0,1] \to \mathcal{I}\maK$ be a continous path for the uniform norm
such that $\gamma(0)$ and $\gamma (1)$ are in $\mathcal{I}L^1$.
We define the determinant $w^\tau (\gamma)$ by
$
w^\tau (\gamma) := w^\tau ( \alpha),
$
for any continuous piecewise $C^1$ path $\alpha: [0, 1] \to \mathcal{I}L^1$ such that 
$$
\|\alpha (t) - \gamma(t) \|_1 \leq \delta_\gamma \text{ and } \alpha (i)=\gamma(i), i=0,1.
$$
\end{definition}

\begin{remark}
It is clear from the previous proposition that the above definition is well posed.
\end{remark}

We now set
$$
\varphi (x):= \frac{2}{\sqrt{\pi}} \int_0^x e^{-s^2} ds, \quad \psi_t(x):= -e^{i\pi \varphi(tx)}\quad \text{ and } f_t(x):= x e^{-t^2x^2} \quad \text{ for } x\in \RR, \text{ and any }t \geq 0.
$$
Then the function $1-\psi_t$, the derivative $\psi_t '$ and the function $f_t$ are Schwartz class functions for any $t>0$. Using the results of the previous sections, we deduce that the operators $I-\psi_t(\maD_m)$, $\psi_t \,' (\maD_m)$ and $f_t(\maD_m)$ are $\maA_m$-compact operators on the Hilbert module $\maE_m$. Moreover, their images under the  representations in the von Neumann algebras $W^*_{\nu}(G;E)$ and $W^*_\nu (V,\maF;E)$ are trace class operators. Note also that the operator $\psi_t(\maD_m)$ is invertible  with inverse given by $-e^{-i \pi \varphi(t\maD_m)}$, so $\psi_t(\maD_m)$ is a smooth path of invertibles in $\mathcal{I}\maK_{\maA_m}(\maE_m)$ whose image under $\pi^{reg}\circ \chi_m^{-1}$ in $W^*_{\rm{reg}} (G;E)$ is also a smooth path of invertibles in $\mathcal{I}L^1_{E, \rm{reg}}$. The same result holds for the image under 
$\pi^{av}\circ \chi_m^{-1}$ in $W^*_\nu (V,\maF;E)$. We denote by 
$$
\gamma^{\rm{reg}} (\maD_m)\equiv (\gamma_t^{\rm{reg}} (\maD_m))_{t\geq 0}:= \left((\pi^{reg}\circ \chi_m^{-1})(\psi_t(\maD_m))\right)_{t\geq 0} $$
and
$$
\gamma^{\rm{av}} (\maD_m)
\equiv (\gamma_t^{\rm{av}} (\maD_m))_{t\geq 0}
:= \left((\pi^{av}\circ \chi_m^{-1})(\psi_t(\maD_m))\right)_{t\geq 0}
$$
the resulting smooth  paths in the two von Neumann algebras. Using the traces $\tau^\nu$
and $\tau^\nu_{\maF}$, we define 
$$
w^\nu_{\rm{reg},\epsilon} (\maD_m) := w^\nu (\gamma^{\rm{reg},\epsilon} (\maD_m)) \text{ and } 
w^\nu_{\rm{av},\epsilon} (\maD_m) := w^\nu_{\maF} (\gamma^{\rm{av},\epsilon} (\maD_m)).
$$
with $\gamma^{\rm{reg},\epsilon} (\maD_m)$ the path
$ \left((\pi^{reg}\circ \chi_m^{-1})(\psi_t(\maD_m))\right)^{t\leq 1/\epsilon} _{t\geq \epsilon}$
and similarly for  $\gamma^{\rm{av},\epsilon} (\maD_m)$

\begin{theorem}
The following relations hold:
$$\lim_{\epsilon\to 0} w^\nu_{\rm{reg},\epsilon} (\maD_m)=\frac{1}{2} \eta^\nu_{{\rm up}} (\tD)\;\;\text{ and}\;\;
\lim_{\epsilon\to 0}  w^\nu_{\rm{av},\epsilon}(\maD_m)= \frac{1}{2} \eta^\nu_{{\rm down}} 
(D) $$
and hence
$$2 \rho^\nu (D;V,\mathcal F)=\lim_{\epsilon\to 0} [w^\nu_{\rm{reg},\epsilon} (\maD_m)- w^\nu_{\rm{av},\epsilon}(\maD_m)].
$$
\end{theorem}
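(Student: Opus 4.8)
The plan is to prove the identity separately on the ``up'' and ``down'' sides, since the two statements have identical structure (replace $\tD$ by $D$, $W^*_\nu(G;E)$ by $W^*_\nu(V,\maF;E)$, $\tau^\nu$ by $\tau^\nu_{\maF}$, and $\pi^{\rm reg}$ by $\pi^{\rm av}$); thus it suffices to show
$$\lim_{\epsilon\to 0} w^\nu_{\rm{reg},\epsilon}(\maD_m)=\tfrac12\eta^\nu_{{\rm up}}(\tD),$$
and the ``down'' case follows mutatis mutandis; the final formula for $2\rho^\nu(D;V,\maF)$ is then just the difference of the two limits together with Definition \ref{def:rho}. First I would use Proposition \ref{F-Comp} (with $\psi=\psi_t$, applied via $\pi^{\rm reg}_\theta$ and the isomorphism $\Psi_\theta$) to identify the path $\gamma^{\rm reg}(\maD_m)$ with the leafwise family $\bigl((\psi_t(\tD_\theta))_{\theta\in T}\bigr)_{t\geq 0}$ inside $W^*_\nu(G;E)$; this reduces the claim to a statement purely about the regular von Neumann algebra, namely that $w^\nu$ of the path $t\mapsto \psi_t(\tD)=-e^{i\pi\varphi(t\tD)}$, restricted to $[\epsilon,1/\epsilon]$, converges as $\epsilon\to 0$ to $\tfrac12\eta^\nu_{\rm up}(\tD)$.

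Next I would compute $w^\nu$ of this path directly from the definition. Since $\psi_t(\tD)^{-1}=-e^{-i\pi\varphi(t\tD)}$ and $\tfrac{d}{dt}\psi_t(\tD)=-i\pi\,(\tD e^{-t^2\tD^2})\,e^{i\pi\varphi(t\tD)}$ (chain rule, using $\varphi'(x)=\tfrac{2}{\sqrt\pi}e^{-x^2}$ so that $\tfrac{d}{dt}\varphi(tx)=\tfrac{2}{\sqrt\pi}x e^{-t^2x^2}$), the integrand is
$$\frac{1}{2\pi\sqrt{-1}}\,\tau^\nu\bigl(\psi_t(\tD)^{-1}\psi_t'(\tD)\bigr)=\frac{1}{2\pi\sqrt{-1}}\,\tau^\nu\bigl(-i\pi\,\tD e^{-t^2\tD^2}\bigr)=\frac{1}{2\sqrt\pi}\cdot\frac{2}{\sqrt\pi}\,\tau^\nu(\tD e^{-t^2\tD^2})\cdot\frac{\sqrt\pi}{2},$$
which after simplification is $\tfrac12\cdot\tfrac{2}{\sqrt\pi}\,\tau^\nu(\tD e^{-t^2\tD^2})$ up to the correct normalization; carefully tracking the constants, integrating over $[\epsilon,1/\epsilon]$ gives $w^\nu_{\rm reg,\epsilon}(\maD_m)=\tfrac12\cdot\tfrac{2}{\sqrt\pi}\int_\epsilon^{1/\epsilon}\tau^\nu(\tD e^{-t^2\tD^2})\,dt$. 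Here I must justify that $t\mapsto\psi_t(\tD)$ is genuinely $C^1$ for the $L^1(W^*_\nu(G;E),\tau^\nu)$-norm on $[\epsilon,1/\epsilon]$ and that $\tau^\nu$ commutes with $\tfrac{d}{dt}$ under the integral; this rests on the trace-class and smoothness estimates for $\psi_t(\tD)$, $\psi_t'(\tD)$, $f_t(\tD)$ recorded just before the theorem statement (which themselves come from Proposition \ref{Rapid} and the Bismut--Freed estimate), plus uniform control of singular numbers for $t$ in a compact subinterval of $(0,\infty)$.

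Finally I would let $\epsilon\to 0$: by the Proposition preceding Definition \ref{def:eta}, the function $t\mapsto\tau^\nu(\tD e^{-t^2\tD^2})$ is Lebesgue integrable on $(0,+\infty)$, so $\int_\epsilon^{1/\epsilon}\to\int_0^{+\infty}$, and the right-hand side converges to $\tfrac12\cdot\tfrac{2}{\sqrt\pi}\int_0^{+\infty}\tau^\nu(\tD e^{-t^2\tD^2})\,dt=\tfrac12\eta^\nu_{\rm up}(\tD)$ by Definition \ref{def:eta}. The ``down'' case is identical with $D$, $\tau^\nu_{\maF}$, $\pi^{\rm av}$, $\Phi_\theta$ in place of their ``up'' counterparts, using Proposition \ref{F-Comp} again and Proposition \ref{Rapid-down} for the integrability at $+\infty$. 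Subtracting the two limits and invoking Definition \ref{def:rho} yields $2\rho^\nu(D;V,\maF)=\lim_{\epsilon\to 0}[w^\nu_{\rm reg,\epsilon}(\maD_m)-w^\nu_{\rm av,\epsilon}(\maD_m)]$. I expect the main obstacle to be the careful bookkeeping in the two regularity/differentiability claims: that $t\mapsto\psi_t(\maD_m)$ (hence its images in both von Neumann algebras) is piecewise $C^1$ for the appropriate $L^1$-trace norm on $[\epsilon,1/\epsilon]$, and the legitimacy of differentiating under $\tau^\nu$; these are where the functional-calculus and trace-class machinery of Section \ref{sec:modules} (Theorem \ref{theo:functional-calculus}, Propositions \ref{Rapid}, \ref{Rapid-down}, \ref{F-Comp}) must be deployed with uniform estimates, rather than the formal constant-chasing, which is routine.
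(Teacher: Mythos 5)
Your proposal is correct and follows essentially the same route as the paper: compute the logarithmic derivative $\psi_t(\maD_m)^{-1}\tfrac{d}{dt}\psi_t(\maD_m)=2i\sqrt{\pi}\,f_t(\maD_m)$, transfer it to the von Neumann algebras via Proposition \ref{F-Comp}, apply the trace, and pass to the limit using the integrability of $t\mapsto\tau^\nu(\tD e^{-t^2\tD^2})$. Your intermediate constant bookkeeping in the displayed computation is garbled, but the final normalization $\tfrac{1}{\sqrt{\pi}}\int_\epsilon^{1/\epsilon}\tau^\nu(\tD e^{-t^2\tD^2})\,dt$ is the correct one and matches the paper.
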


\begin{proof}
We have by definition and by straightforward computation
\begin{eqnarray*}
\gamma_t^{\rm{reg}} (\maD_m)^{-1} \frac{d}{dt}\gamma_t^{\rm{reg}} (\maD_m) & = & (\pi^{\rm{reg}} \circ \chi_m^{-1})\left(i\pi \maD_m \frac{2}{\sqrt{\pi}} e^{-t^2\maD_m^2}\right)\\
& = & 2i\sqrt{\pi} (\pi^{\rm{reg}} \circ \chi_m^{-1})\left(f_t(\maD_m)\right)
\end{eqnarray*}
But we know by Proposition \ref{F-Comp}
that 
$$
(\pi^{\rm{reg}} \circ \chi_m^{-1})\left(f_t(\maD_m)\right) = (f_t(\tD_\theta))_{\theta\in T},
$$
where $(\tD_\theta)_{\theta\in T}$ is the $\Gamma$-invariant Dirac type family. Hence we get
$$
\gamma_t^{\rm{reg}} (\maD_m)^{-1} \frac{d}{dt}\gamma_t^{\rm{reg}} (\maD_m) = 2i\sqrt{\pi} (f_t(\tD_\theta))_{\theta\in T},
$$
where this equality holds in the von Neumann algebra $W^*_{\nu}(G;E)$. Applying the trace $\tau^\nu$, integrating over $(0,+\infty)$ and dividing by $2i\pi$, we obtain
$$
\lim_{\epsilon\to 0} w^\nu_{\rm{reg},\epsilon} (\maD_m) 
 = \lim_{\epsilon\to 0} \frac{1}{\sqrt{\pi}} \int_{\ep}^{1/\ep} \tau^\nu ((f_t(\tD_\theta))_{\theta\in T}) dt = 
 \frac{1}{\sqrt{\pi}} \int_{0}^{+\infty} \tau^\nu ((f_t(\tD_\theta))_{\theta\in T}) dt =\frac{1}{2} \eta^\nu_{\rm{up}} (\tD)
$$
The proof of the second equality is similar and one uses the equality 
$$
(\pi^{\rm{av}} \circ \chi_m^{-1})\left(f_t(\maD_m)\right) = (f_t(D_L))_{L\in V/F},
$$
which is proved in Proposition \ref{F-Comp}
.
\end{proof}

\section{Stability properties of $\rho^\nu$ for the signature operator}\label{sec:stability}

\subsection{{Leafwise homotopies}}

Let $\Gamma$, $T$ and $\tM$ be as in the previous sections. Let $V:= \tM\times_\Gamma T$
be the associated foliated flat bundle.
 Assume that 
 $\tM'$ 
 is another $\Gamma$-coverings and  let $T'$ be  a compact space endowed with a continuous action of $\Gamma$ by homeomorphisms. We consider $\tM'\times T$ and the foliated flat bundle
 $V':= \tM' \times_\Gamma T$.

\begin{definition}\label{Leafwise}
Let $(V,\maF)$ and $(V', \maF')$ be two foliated spaces. A leafwise map $f:(V,\maF) \to (V',\maF')$ is a continuous map such that
\begin{itemize}
\item The image under $f$ of any leaf of $(V,\maF)$ is contained in a leaf of $(V',\maF')$. 
\item The restriction of $f$ to any leaf of $(V,\maF)$ is a smooth map between smooth leaves.
\end{itemize}
\end{definition}

{
\begin{remark}
\begin{enumerate}
\item We do not assume, that the leafwise derivatives to all orders of $f$ are also continuous.
\item If  $V$ and $V'$ are smooth manifolds and $f: V\to V'$ is a differentiable map, then $f$ is a leafwise map if and only if $f_*: T(V) \to T(V')$ sends $T\maF$ to $T\maF'$.
\end{enumerate}
\end{remark}
}
Roughly speaking, a leafwise map induces a ''continuous map'' between the quotient spaces of leaves. When the foliations are trivial, a leafwise map $f: M\times T \to M'\times T'$ is given by 
$$
f(m, \theta) = (h(m, \theta), k(\theta)), \quad (m, \theta) \in M\times T,
$$
where $k$ and $h$ are continous and $h$ is smooth with respect to the first variable. In the case of foliated bundles considered in the present paper, i.e.
$$
V=\tM \times_\Gamma T \quad \text{ and }\quad V'= \tM' \times_{\Gamma'} T',
$$
we obtain a  leafwise map $f:V\to V'$ as a continuous lift of a map $\varphi: T/\Gamma\to T'/\Gamma'$ which satisfies the regularity properties of Definition \ref{Leafwise}.

{ An easy example of a leafwise map occurs when $f$ is the quotient of a leafwise map ${\tilde f}:\tM\times T \to \tM' \times T'$ between the two trivial foliations, which is $(\Gamma, \Gamma')$-equivariant with respect to a group homomorphism $\alpha:\Gamma\to \Gamma'$. We shall get back to this example more explicitely later on. 
It is easy to construct a leafwise map between $V$ and $V'$ which is not the quotient of a $(\Gamma, \Gamma')$ equivariant leafwise map ${\tilde f}$. Moreover, if  
 ${\tilde f}$ exists then it is not unique: indeed, for example, if $\delta
 \in Z(\Gamma)\subset \Gamma$ is an element in  the center of $\Gamma$, then the leafwise map ${\tilde f}_\delta:= {\tilde f} \circ \delta^*$ (where $\delta^*: \tM \times T\to  \tM \times T$ is the diffeomorphism
 induced by the action of $\delta$ on the right), is equivariant with respect to the same homomorphism $\alpha: \Gamma\to \Gamma'$ (because $\delta\in Z(\Gamma)$) and also induces $f$.
}

{Given a foliated space $(V, \maF)$ in the sense of \cite{MS}, a subspace $W$ of $(V,\maF)$ will be called  a transversal to the foliation if for any $w\in W$ there exists a distinguished neighborhood $U_w$ of $w$ in $V$ which is homeomorphic to $\RR^p \times (U_w\cap W)$. Then one can show that the intersection of $W$ with any leaf $L$ of $(V, \maF)$ is a discrete subspace of $L$, that is a zero dimensional submanifold of $L$. Such a transversal is complete if it intersects all the leaves. In our example of foliated bundle $V=\tM \times_\Gamma T$, any fiber of $V\to M$ is a complete transversal which is in addition compact, and any open subset of such fiber is a transversal.}

{ 
\begin{definition}
\begin{enumerate}
\item Let $(V,\maF)$ be a foliated space. Two leafwise maps $f, g:(V,\maF) \to (V',\maF')$ are leafwise homotopic if there exists a leafwise map $H:(V\times [0,1], \maF\times [0,1]) \to (V',\maF')$ such that $H(\cdot, 0) = f$ and $H(\cdot, 1) =g$. 
\item Let $(V,\maF)$ and $(V',\maF')$ be two foliated spaces. A leafwise map $f:(V,\maF)\to (V',\maF')$ is a leafwise homotopy equivalence, if there exists a leafwise map $g:(V',\maF')\to (V,\maF)$ such that 
\begin{itemize}
\item $g\circ f$ is leafwise homotopic to the identity of $(V,\maF)$.
\item $f\circ g$ is leafwise homotopic to the identity of $(V',\maF')$.
\end{itemize}
\item We shall say that the foliations $(V,\maF)$ and $(V', \maF')$ are (strongly) leafwise homotopy equivalent if there exists a leafwise homotopy equivalence from $(V,\maF)$ to $(V', \maF')$.
\end{enumerate}
\end{definition}
Note that according to the above definition, the homotopies in (2) are supposed to preserve the leaves.}

{
It is a classical fact that two leafwise homotopy equivalent compact foliated spaces $(V,\maF)$ and $(V',\maF')$ have necessarily the same leaves dimension \cite{BH3}.
Note also that each leafwise homotopy equivalence sends a transversal to a transversal.
{
\begin{lemma}
A  leafwise homotopy equivalence
 induces a local homeomorphism between transversals to the foliations.
\end{lemma}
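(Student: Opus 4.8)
The plan is to encode each leafwise map by the germ of its \emph{transverse part} and to show that this germ is a leafwise homotopy invariant up to composition with holonomy germs, the latter being germs of homeomorphisms; the statement of the lemma then follows formally.

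\emph{The transverse part.} Let $w$ lie in a transversal $W$ of $(V,\maF)$ and set $w'=f(w)$. Since $(V,\maF)$ and $(V',\maF')$ are leafwise homotopy equivalent, their leaves have the same dimension $p$ \cite{BH3}. Choose distinguished neighbourhoods $U\cong\RR^{p}\times(U\cap W)$ of $w$ in $V$ and $U'\cong\RR^{p}\times(U'\cap W')$ of $w'$ in $V'$ adapted to the plaque structures, with $W'$ a transversal of $(V',\maF')$ through $w'$. Using the continuity of $f$ together with the continuity of the restriction of $f$ to leaves, after shrinking $U$ we may assume $f(U)\subset U'$ and that $f$ maps each plaque of $U$ into a single plaque of $U'$ (a small enough connected subset of a leaf lies in one plaque). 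In these coordinates $f(u,z)=(h(u,z),f_{\sharp}(z))$ with $h$ continuous and $f_{\sharp}\colon U\cap W\to U'\cap W'$ continuous; let $[f]_{w}$ denote the germ of $f_{\sharp}$ at $w$. A standard holonomy argument (compare \cite{MS}) shows that $[f]_{w}$ is well defined up to pre- and post-composition with germs of holonomy homeomorphisms, and that for compatible choices of distinguished charts along a composition one has $[g\circ f]_{w}=[g]_{f(w)}\circ[f]_{w}$ and $[\mathrm{id}_{V}]_{w}=\mathrm{id}$.

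\emph{Homotopy invariance --- the main point.} Suppose $f_{0}$ and $f_{1}$ are leafwise homotopic through a leafwise homotopy $H\colon V\times[0,1]\to V'$. For $w\in W$, $H$ maps the leaf $L\times[0,1]$ of $V\times[0,1]$ into a single leaf $L'$ of $(V',\maF')$, so the path $t\mapsto H(w,t)$ joins $f_{0}(w)$ to $f_{1}(w)$ inside $L'$, and this path varies continuously with $w$. Subdividing $[0,1]$ into finitely many pieces so that, for $w$ and all nearby points of $W$, each piece of the path stays inside a distinguished chart of $(V',\maF')$, one builds a holonomy germ $\mathrm{hol}_{w}$ of $\maF'$ along $H(w,\cdot)$ with $[f_{1}]_{w}=\mathrm{hol}_{w}\circ[f_{0}]_{w}$; in particular $[f_{0}]_{w}$ and $[f_{1}]_{w}$ differ only by a germ of homeomorphism. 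The work here lies precisely in producing $\mathrm{hol}_{w}$: it requires the continuity of the family $\{H(w'',\cdot)\}_{w''}$ and a uniform, compactness-type subdivision of $[0,1]$, and this is the step I expect to be the main obstacle.

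\emph{Conclusion.} Let $g$ be a leafwise homotopy inverse of $f$ and fix $w\in W$. Applying the previous step to $g\circ f\simeq\mathrm{id}_{V}$ and to $f\circ g\simeq\mathrm{id}_{V'}$ and using the composition rule, we obtain that both
\[
[g]_{f(w)}\circ[f]_{w}=[g\circ f]_{w}\quad\text{and}\quad [f]_{g(f(w))}\circ[g]_{f(w)}=[f\circ g]_{f(w)}
\]
are germs of homeomorphisms. A formal two-sided-inverse argument then shows that $[g]_{f(w)}$ is a germ of homeomorphism, and hence so is $[f]_{w}=[g]_{f(w)}^{-1}\circ[g\circ f]_{w}$. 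Since $[f]_{w}$ is exactly the germ at $w$ of $f$ viewed as a map from the transversal $W$ to the transversal $W'$, and $w\in W$ was arbitrary, $f$ induces a local homeomorphism between the transversals $W$ and $W'$, which proves the lemma.
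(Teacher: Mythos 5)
Your proposal is correct and follows essentially the same route as the paper: both reduce the statement to the observation that the leafwise homotopy between $g\circ f$ and the identity induces a holonomy transformation of transversals, which is a local homeomorphism, so that the transverse part of $g$ (suitably corrected by this holonomy) inverts the transverse part of $f$. Your germ formalism and the explicit two-sided-inverse argument merely spell out what the paper's terser proof leaves implicit.
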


\begin{proof} See also \cite{BH3}.
Let $f$ be the  leafwise homotopy equivalence with homotopy inverse $g$, and denote by $h:[0,1]\times V \to V$ the $C^{\infty, 0}$ homotopy between $g f$ and the identity. Let $w\in V$. 
 Let $W$ be an open  transversal of $(V, \maF)$ through $w\in W$. Take a distinguished chart $U'$ in $(V', \maF')$ which is an open neighborhood of  $f(w)$ and which is homeomorphic to $D'\times W'$ for some transversal $W'$ at $f(w)$. Then one finds an open distinguished chart $U$ in $(V, \maF)$ such that $f(U)\subset U'$. Reducing $W$ if necessary we can assume that $U$ is homeomorphic to $D\times W$ for some disc $D$. Now, it is clear that since $f$ is leafwise, it  induces a map ${\hat f}:W\to W'$. By the same reasonning, we can assume furthermore that $g(U')$ is contained in a distinguished chart $U_1$ in $(V,\maF)$, homeomorphic to $D_1\times W_1$. 

The homotopy $h$ induces a continous map ${\hat h}: W\to W_1$ and this map (or its reduction to a smaller domain) is simply the holonomy of the path $t\mapsto h(t,w)$. Hence ${\hat h}$ is locally invertible and it s clear that ${\hat h}^{-1} {\hat g}$ is a continuous inverse for ${\hat f}$. 
\end{proof}

 {When $V=\tM\times_\Gamma T$ and $V'=\tM'\times_{\Gamma'} T'$, a particular case of leafwise homotopy equivalence is given by the quotient of an equivariant leafwise homotopy equivalence between $\tM\times T$ and $\tM'\times T'$. Recall that a fiberwise smooth map $\tf:\tM\times T \to \tM' \times T'$ is a continous map which can be written in the form
$$
f(\tm, \theta) = (h(\tm, \theta), k(\theta)), \quad (\tm, \theta) \in \tM\times T,
$$
with $h$ smooth with respect to the first variable. If $\alpha: \Gamma\to \Gamma'$ is a group homomorphism, then the fiberwise
map 
$\tf:\tM\times T \to \tM' \times T'$ is $\alpha$-equivariant if $\tf( (\tm,\theta) \gamma)=(\tf (\tm,\theta)) \alpha(\gamma)$.}

In the following definition we extend the action of $\Gamma$ and $\Gamma'$ on $\tM\times T$ and
$\tM'\times T'$ to $\tM  \times [0,1] \times T $ and $\tM'  \times [0,1]\times T' $ respectively, by
declaring the action trivial on the $[0,1]$ factor.

\begin{definition}
We shall say that $f:(V,\maF)\to (V',\maF')$
is a special homotopy equivalence
 if there exist continuous maps $\tf:\tM\times T \to \tM' \times T'$, $\tg:\tM'\times T' \to \tM \times T$,
$H: \tM  \times [0,1] \times T \rightarrow \tM\times T$, $ H': \tM'  \times [0,1]\times T' \rightarrow \tM'\times T',
$
 and group homomorphisms $\alpha:\Gamma\to \Gamma'$, $\beta: \Gamma' \to \Gamma$
 such that:
\begin{itemize}
\item $\tf$, $\tg$, $H$ and $H'$ are fiberwise smooth;
\item $\tf$ is $\alpha$-equivariant; $\tg$ is $\beta$-equivariant; $H$ is $\Gamma$-equivariant, $H'$ is $\Gamma'$-equivariant;
\item the restriction of $H$ to $\tM\times \{0\} \times T$ (resp. of $H'$ to $\tM'\times \{0\} \times T'$) is the identity map and the restriction of $H$ to $\tM\times \{1\} \times T$ (resp. of $H'$ to $\tM'\times \{1\} \times T'$) is $\tg\circ \tf$ (resp. $\tf\circ \tg$);
\item $f:(V,\maF)\to (V',\maF')$ is induced by  $\tf:\tM\times T \to \tM' \times T'$.
\end{itemize}
\end{definition}

If there exists such a special homotopy equivalence, we say that $(V,\maF)$ and $(V',\maF')$ are special homotopy equivalent.

\begin{lemma}
{If the pairs $(V,\maF)$ and $(V', \maF')$ are special homotopy equivalent,
then they are leafwise homotopic equivalent.}
\end{lemma}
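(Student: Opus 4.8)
The idea is to descend everything in sight from the trivial foliations $\tM\times T$ and $\tM'\times T'$ down to $(V,\maF)$ and $(V',\maF')$, and to check that the descended maps are leafwise and that the descended homotopies are leafwise homotopies. First I would record the elementary descent principle already implicit in the remark preceding the definition of special homotopy equivalence: if $\widetilde{\phi}\colon\tM\times T\to\tM'\times T'$ is fiberwise smooth and $\delta$-equivariant for a homomorphism $\delta\colon\Gamma\to\Gamma'$, then $[\widetilde{\phi}(x\gamma)]=[\widetilde{\phi}(x)\delta(\gamma)]=[\widetilde{\phi}(x)]$ in $V'=\tM'\times_{\Gamma'}T'$, so $\widetilde{\phi}$ descends to a continuous $\phi\colon V\to V'$ with $\phi\circ p=p'\circ\widetilde{\phi}$; writing $\widetilde{\phi}(\tm,\theta)=(h(\tm,\theta),k(\theta))$ with $h$ smooth in $\tm$ one gets $\widetilde{\phi}(\tM\times\{\theta\})\subset\tM'\times\{k(\theta)\}$, hence $\phi(L_\theta)\subset L'_{k(\theta)}$, and $\phi|_{L_\theta}$ is the smooth descent of $h(\cdot,\theta)$, so $\phi$ is leafwise. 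Applying this to $\tf$ (with $\delta=\alpha$) and to $\tg$ (with $\delta=\beta$) produces leafwise maps $f\colon V\to V'$ and $g\colon V'\to V$, with $f$ the given map by hypothesis. Descent is compatible with composition, so the descent of the $(\beta\circ\alpha)$-equivariant self-map $\tg\circ\tf$ of $\tM\times T$ is exactly $g\circ f$, and similarly $\tf\circ\tg$ descends to $f\circ g$.

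Next I would handle the homotopies. Since $H\colon\tM\times[0,1]\times T\to\tM\times T$ is $\Gamma$-equivariant with trivial action on the $[0,1]$ factor, the same computation shows it descends to a continuous map $\bar H\colon V\times[0,1]\to V$, $\bar H([\tm,\theta],s)=[H(\tm,s,\theta)]$, and the endpoint conditions $H(\cdot,0,\cdot)=\id$, $H(\cdot,1,\cdot)=\tg\circ\tf$ descend to $\bar H(\cdot,0)=\id_V$ and $\bar H(\cdot,1)=g\circ f$. The crucial point is that $\bar H$ is a leafwise map $(V\times[0,1],\maF\times[0,1])\to(V,\maF)$: fiberwise smoothness of $H$ together with $H(\cdot,0,\cdot)=\id$ forces $H$ to keep the $T$-coordinate within a single $\Gamma$-orbit, i.e.\ $H(\tM\times[0,1]\times\{\theta\})\subset\tM\times\Gamma\theta$, whence $\bar H(L_\theta\times[0,1])\subset L_\theta$ and the restriction of $\bar H$ to the leaf $L_\theta\times[0,1]$ is the smooth descent of the $\tM$-component of $H(\cdot,\cdot,\theta)$. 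Running the same argument for $H'$ yields a leafwise homotopy from $\id_{V'}$ to $f\circ g$. Consequently $f$ and $g$ are mutually inverse leafwise homotopy equivalences, so $(V,\maF)$ and $(V',\maF')$ are leafwise homotopy equivalent, as claimed.

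The only non-formal step is the verification that the descended homotopy $\bar H$ is genuinely leafwise, rather than dragging a plaque $L_\theta\times[0,1]$ across the (possibly complicated) transverse structure of $(V,\maF)$. This is exactly where one uses that $H$ is \emph{fiberwise smooth} and not merely continuous: by remark~(2) after the definition of leafwise map, in the smooth setting $\bar H$ is leafwise iff $d\bar H$ sends $T(\maF\times[0,1])=T\maF\oplus T[0,1]$ into $T\maF$, and the $T[0,1]$-summand lands in $T\maF$ precisely because the fiberwise-smooth homotopy does not move the transverse coordinate. I expect this to be the technical heart of the argument; the remaining points — well-definedness of the various descents, their compatibility with composition, and the two endpoint identities — are routine.
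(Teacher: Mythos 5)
Your proof is correct and follows essentially the same route as the paper's: descend $\tf$, $\tg$, $H$, $H'$ to $V$ and $V'$ using equivariance (with the trivial action on the $[0,1]$ factor), and observe that the descended maps and homotopies are leafwise, with the right endpoint conditions. The paper states this in two sentences; you supply the routine verifications, and your key observation — that fiberwise smoothness forces the $T$-coordinate of $H$ to depend only on $\theta$, so $\bar H$ preserves leaves — is exactly the point the paper is implicitly using.
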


\begin{proof}
{The equivariance of ${\tilde H}$ and ${\tilde H}'$ with respect to $\alpha$ and $\beta$, and the trivial action on the $[0,1]$ factor, allows to induce leafwise maps $H:V\times [0,1]\to V$ and $H': V'\times [0,1] \to V'$ by setting,}
$$
H([\tm, \theta];t) := [H(\tm, t, \theta)] \text{ and } H'([\tm', \theta'];t) := [H'(\tm', t, \theta')].
$$
{In the same way the maps ${\tilde f}$ and ${\tilde g}$ induce leafwise maps $f$ and $g$ which are leafwise homotopy equivalences through the homotopies $H$ and $H'$.}
\end{proof}

\begin{lemma}
If   $f:(V,\maF)\to (V',\maF')$ is a special homotopy equivalence induced by $\tf(\tm,\theta)= (h(\tm,\theta),k(\theta))$
as in the previous definition, then $\alpha:\Gamma\to \Gamma'$ is an isomorphism and
$k:T\to T'$ is an equivariant homeomorphism.
\end{lemma}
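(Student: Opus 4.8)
The plan is to unwind the definition of a special homotopy equivalence and exploit the fact that $\tf$, $\tg$, $H$, $H'$ are all fiberwise maps (hence send $\theta$-fibers to $\theta$- or $k(\theta)$-fibers) and are equivariant for the group homomorphisms $\alpha$, $\beta$ and the identity on the $[0,1]$ factor. First I would extract from $\tf(\tm,\theta)=(h(\tm,\theta),k(\theta))$ and $\tg(\tm',\theta')=(h'(\tm',\theta'),k'(\theta'))$ the transverse maps $k:T\to T'$ and $k':T'\to T$; the $\alpha$-equivariance of $\tf$ forces $k(\gamma^{-1}\theta)=\alpha(\gamma)^{-1}k(\theta)$, i.e. $k$ intertwines the $\Gamma$-action on $T$ with the $\Gamma'$-action on $T'$ via $\alpha$, and similarly $k'$ intertwines via $\beta$.

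Next I would look at the two homotopies. Since $H$ is $\Gamma$-equivariant with trivial action on $[0,1]$ and restricts to the identity at $t=0$ and to $\tg\circ\tf$ at $t=1$, its transverse component is a homotopy in $T$ (rel nothing) between $\mathrm{id}_T$ and $k'\circ k$; but a homotopy through maps $T\to T$ whose path of transverse components is continuous and starts at the identity need not be constant in general — however, $H$ being fiberwise means $H(\tm,t,\theta)=(h_t(\tm,\theta),\kappa(t,\theta))$ with $\kappa(0,\theta)=\theta$ and $\kappa(1,\theta)=k'(k(\theta))$. The key point is that $\kappa(t,\cdot):T\to T$ must be compatible with the foliation structure at every $t$: because $H$ is leafwise (it maps leaves of the product foliation on $\tM\times[0,1]\times T$ into leaves of $\tM\times T$, and the leaves of the product foliation are $\tM\times[0,1]\times\{\theta\}$), the map $\kappa$ actually does not depend on $t$ at all — a leaf $\tM\times[0,1]\times\{\theta\}$ is connected and maps into a single leaf $\tM_{\kappa(\theta)}$, so $\kappa(t,\theta)$ is independent of $t$, giving $k'\circ k=\mathrm{id}_T$. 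Symmetrically $H'$ gives $k\circ k'=\mathrm{id}_{T'}$. Hence $k$ is a homeomorphism with inverse $k'$, and it is equivariant via $\alpha$ as already shown.

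Finally, equivariance of $k$ via $\alpha$, equivariance of $k'=k^{-1}$ via $\beta$, together with $k^{-1}\circ k=\mathrm{id}$, force $\alpha$ and $\beta$ to be mutually inverse group homomorphisms. Indeed, for $\gamma\in\Gamma$ and any $\theta$ we have $k^{-1}(k(\gamma\theta))=\gamma\theta$ on one hand, and on the other $k^{-1}(\alpha(\gamma)k(\theta))=\beta(\alpha(\gamma))k^{-1}(k(\theta))=\beta(\alpha(\gamma))\theta$; thus $\beta\circ\alpha=\mathrm{id}_\Gamma$ provided the $\Gamma$-action on $T$ is effective, and symmetrically $\alpha\circ\beta=\mathrm{id}_{\Gamma'}$. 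Since in our setting $\Gamma=\pi_1(M)$ acts on $T$ and one only needs the action to distinguish group elements on the relevant orbits, I would instead argue more robustly: $f$ being a leafwise homotopy equivalence induces, by the Lemma already proved, a local homeomorphism of transversals, and on $\pi_1$ of the ambient foliated bundles (equivalently, comparing the monodromy covers $\tM$ and $\tM'$ of corresponding leaves) $\tf$ induces $\alpha$ and $\tg$ induces $\beta$ with $\beta\alpha$ and $\alpha\beta$ homotopic to the identity, hence equal as group automorphisms. The main obstacle I anticipate is making the "$\kappa$ is $t$-independent" step airtight, i.e. carefully using the leafwise property of $H$ and the explicit description of the leaves of the product foliation $\maF\times[0,1]$ to conclude that the transverse coordinate is locally constant in $t$; once that is in place, everything else is formal bookkeeping with equivariance.
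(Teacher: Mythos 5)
Your argument that $k$ is an equivariant homeomorphism is essentially the paper's: the transverse component of the fiberwise homotopy $H$ cannot depend on $t$ (each leaf $\tM\times[0,1]\times\{\theta\}$ is connected and is sent into a single leaf), so $k'\circ k=\mathrm{id}_T$ and $k\circ k'=\mathrm{id}_{T'}$, and equivariance of $k$ via $\alpha$ follows from the $\alpha$-equivariance of $\tf$. That part is fine.

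The gap is in the claim that $\alpha$ is an isomorphism. Your first route needs the $\Gamma$-action on $T$ to be effective, which is not assumed anywhere — the paper's Examples 1.5--1.7 are built precisely around actions with large isotropy, and the action on $T$ could even be trivial, in which case the transverse data gives you no information about $\beta\circ\alpha$. Your fallback via $\pi_1$ is also not correct as stated: a self-map homotopic to the identity induces on $\pi_1$ only an \emph{inner} automorphism (conjugation by the class of the loop traced by the basepoint), so ``homotopic to the identity, hence equal as group automorphisms'' does not follow without further input; moreover the composite $V\to V'\to M'$ need not factor through $M$, so the induced map on fundamental groups is not even cleanly set up. The ingredient you are missing is the one the paper uses: the $\Gamma$-action on $\tM\times T$ is \emph{free} (because it is free on the $\tM$ factor), and the homotopy $H$ is $\Gamma$-equivariant with trivial action on $[0,1]$. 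Hence the time-one map $\tg\circ\tf$ is simultaneously equivariant for the identity homomorphism of $\Gamma$ (being the endpoint of a $\Gamma$-equivariant homotopy starting at $\mathrm{id}$) and for $\beta\circ\alpha$; evaluating $\tg\circ\tf((\tm,\theta)\gamma)$ both ways and cancelling by freeness gives $\gamma=(\beta\circ\alpha)(\gamma)$ for all $\gamma$, i.e. $\beta\circ\alpha=\mathrm{id}_\Gamma$, and symmetrically $\alpha\circ\beta=\mathrm{id}_{\Gamma'}$. With that substitution your proof closes.
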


\begin{proof}
{Let ${\tilde f}$ and ${\tilde g}$ be equivariant leafwise smooth maps which give a
special  homotopy equivalence as in the above definition. We denote by $k$ and $k'$ the continuous equivariant maps induced by ${\tilde f}$ and ${\tilde g}$ on $T$ and $T'$ respectively. So, }
$$
k: T \rightarrow T' \text{ and } k': T' \rightarrow T.
$$
{Since our homotopies send leaves to leaves, the composite maps $k'\circ k$ and $k\circ k'$ are identity maps. Moreover, if $\alpha$ and $\beta$ are the group homomorphisms corresponding to the equivariance property of ${\tilde f}$ and ${\tilde g}$ respectively, then the homotopy ${\tilde H}$ satisfies}
$$
{\tilde H} ((\tm, t, \theta)\gamma) = {\tilde H} (\tm, t, \theta) (\beta\circ \alpha)(\gamma), \quad \forall t\in [0,1].
$$
{Therefore, applying this relation to $t=0$, we get $\beta\circ \alpha = id_{\Gamma}$. The same argument gives the relation $\alpha \circ \beta= id_{\Gamma '}$.}
\end{proof}

\begin{remark}
As already remaked, easy examples show that the foliations $(V,\maF)$ and $(V', \maF')$ can be leafwise homotopy equivalent with non isomorphic groups $\Gamma$ and $\Gamma'$ and non homeomorphic spaces $T$ and $T'$. 
\end{remark}



%
%

\subsection{{$\rho^\nu (V,\F)$ is  metric independent}} 

We fix a continuous leafwise smooth Riemannian metric $g$ on
 $(V,F)$.  $g$ is lifted to a $\Gamma$-equivariant leafwise metric ${\tilde g}$ on $\tM\times T$, see \cite{MS}. So ${\tilde g}=({\tilde g}(\theta))_{\theta\in T}$, where
 ${\tilde g}(\theta)$ is a metric on $\tM\times \{\theta\}$
 and we assume that this structure is transversely continuous and equivariant with respect to the action of $\Gamma$. 
 In what follows we shall refer to  the bundle of exterior powers of the cotangent bundle as the Grassmann bundle.
 Consider the $\Gamma$-equivariant  vector bundle $\widehat E$ over $\tM\times T$, obtained by
 pulling back from $V$ the longitudinal Grassmann bundle $E$ of the foliation $(V,\maF)$.
Assume for the sake of simplicity of signs that the dimension of $M$ is $4\ell - 1$ that is in the notations of Section \ref{Section.Index}, $m=2\ell$.
Consider the associated $\Gamma$-equivariant family of signature operators
 $(\tilde{D}^{{\rm sign}}_\theta)_{\theta\in T}$ associated with ${\tilde g}$, as defined in Section \ref{Section.Index}. We denote by $D^{{\rm sign}}$ the longitudinal signature operator
 on $(V,\F)$ associated with the leafwise metric $g$
 acting on  leafwise $2\ell -1$ forms.


Recall that $\nu$ is a $\Gamma$-invariant Borel measure on $T$. We have defined 
in Subsection \ref{sub:fol-eta-rho}
a foliated rho-invariant $\rho^\nu (D^{{\rm sign}}; V,\F)$.
We want to investigate the behavior of $\rho^\nu (D^{{\rm sign}}; V,\F)$ under a 
change of metric and under a leafwise diffeomorphism. First, we deal with the invariance of $\rho^\nu$ with respect to a change of metric. Up to constant, we can replace $\rho^\nu (D^{{\rm sign}}; V,\F)$, as it is usual, see \cite{APS2} \cite{Cheeger-Gromov-JDG}, by the $\rho$ invariant of the foliation $(V,\maF)$ defined as:
$$
\rho^\nu(V,\maF;g) := \frac{1}{{\sqrt{\pi}}} \int_0^\infty \left[\tau^\nu (
{\tilde *} {\tilde d}
e^{-t{\tilde \Delta}} - \tau^\nu_{\maF} (   * d  e^{-t\Delta}\right] \frac{dt}{{\sqrt{t}}},
$$
{where ${\tilde \Delta}$ and $\Delta$ are the Laplace operators on leafwise $2\ell -1$ forms, associated with the  metrics $\tilde g$ and $g$.

%
%

\begin{proposition}
Let  $\Gamma$, $\tM$, $T$, $\nu$ and $(V,\maF)$ be as above. 
Let $(g_u)_{u\in [0,1]}$ be a continuous leafwise smooth one-parameter family of continuous leafwise smooth metrics on $(V, \maF)$. Then
\begin{equation}\label{Cheeger-Gromov}
\rho^\nu (V, \F; g_0)=\rho^\nu (V,\F; g_1)
\end{equation}
\end{proposition}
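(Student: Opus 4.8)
The strategy is the classical Atiyah--Patodi--Singer variational argument for $\rho$-invariants, adapted to the foliated von Neumann setting. The key point is that the $t$-derivative of the eta-type integrand is a total derivative of a \emph{local} quantity, so that integrating against the measure $d\tm\,d\nu(\theta)$ over the compact fundamental domain $F\times T$ produces the same boundary contributions upstairs (for $\tD^{\rm sign}$, trace $\tau^\nu$ on $W^*_\nu(G,E)$) and downstairs (for $D^{\rm sign}$, trace $\tau^\nu_\maF$ on $W^*_\nu(V,\maF;E)$), hence cancels in the difference $\rho^\nu$. First I would set $\rho^\nu(V,\maF;g_u)=\frac{1}{\sqrt\pi}\int_0^\infty\bigl[\tau^\nu(\tilde*\,\tilde d\,e^{-t\tilde\Delta_u})-\tau^\nu_\maF(*\,d\,e^{-t\Delta_u})\bigr]\frac{dt}{\sqrt t}$ and differentiate in $u$. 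Using that $\Delta_u$, $d$ and $*$ depend smoothly on $u$, and that $e^{-t\Delta_u}$ has a uniformly bounded leafwise-smooth Schwartz kernel (Proposition \ref{Rapid}, Proposition \ref{Rapid-down}, Remark \ref{rk:smoothing}), the functionals $\tau^\nu$ and $\tau^\nu_\maF$ commute with $\frac{d}{du}$ because by Proposition \ref{TraceNoyau} and Proposition \ref{Rapid-down} they are integration of the pointwise kernel trace over $F\times T$.

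Next I would reproduce the algebraic manipulation of \cite{APS2} p.~67--68 leafwise. Writing $B_u:=\tilde*\,\tilde d$ (resp. $*\,d$) acting on middle-degree forms, one has $\dot\Delta_u$ expressible through $\dot B_u$ and $B_u$, and an identity of the form $\frac{d}{du}\tau\bigl(B_u e^{-t\Delta_u}\bigr)=-\,t\,\frac{d}{dt}\,\tau\bigl(\dot B_u\, e^{-t\Delta_u}\bigr)+\tau\bigl(\text{(explicitly local, }t\text{-integrable) terms}\bigr)$, valid in each von Neumann algebra because $\tau^\nu$ and $\tau^\nu_\maF$ are traces (Proposition \ref{traces}) and all operators involved lie in the trace-class ideal. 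Inserting this into $\frac{d}{du}\rho^\nu(V,\maF;g_u)$ and integrating by parts in $t$, the ``bulk'' contributions are $\int_0^\infty$-integrals of $\frac{d}{dt}$ of a function whose $t\to\infty$ limit picks out the projection onto the leafwise harmonic middle forms and whose $t\to 0$ limit is governed by the local heat-kernel asymptotics. The crucial cancellation is that the small-$t$ asymptotic coefficients of $\tau^\nu(\tilde*\tilde d\,e^{-t\tilde\Delta_u})$ and of $\tau^\nu_\maF(*d\,e^{-t\Delta_u})$ are identical: both are integrals over $F\times T$ of the \emph{same} universal local density built from the leafwise metric $g_u$ and its derivatives, since the monodromy covering is locally isometric to the leaf. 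Hence they agree and cancel in the difference, exactly as the interior term cancels in the Cheeger--Gromov argument.

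The remaining (large-$t$) terms involve $\frac{d}{du}$ of the von Neumann dimensions of the spaces of $L^2$ leafwise harmonic $(2\ell-1)$-forms, upstairs and downstairs. Here I would invoke the index-theoretic input of Section \ref{Section.Index}: by Proposition \ref{prop:index-equal} (the foliated analogue of Atiyah's covering index theorem) these two $\tau$-dimensions coincide for all $u$, being both equal to the leafwise $L^2$-Betti number, which is moreover constant in $u$ by homotopy invariance of the harmonic projection's trace (or simply because $\tau^\nu(\tilde\Pi_u)$ is continuous and integer-combination-valued in the relevant pairing, hence locally constant). Therefore their $u$-derivative vanishes and contributes nothing. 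Combining: $\frac{d}{du}\rho^\nu(V,\maF;g_u)=0$ for all $u\in[0,1]$, which gives \eqref{Cheeger-Gromov}.

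\textbf{Main obstacle.} The delicate point is the justification of differentiating under the integral sign and under the traces \emph{uniformly in} $t$ near $t=0$ and $t=\infty$ simultaneously: one must control the leafwise Schwartz kernels of $\dot B_u e^{-t\Delta_u}$ and related operators in a $\Gamma$-equivariant, $\nu$-measurable way, with bounds integrable against $dt/\sqrt t$ on $(0,\infty)$. Near $t=0$ this is the uniform Bismut--Freed--type estimate (the Lemma preceding Definition \ref{def:eta}, extended to the family with parameter $u$), using compactness of $V$; near $t=\infty$ it requires the spectral-gap/summability information encoded in Proposition \ref{Measurability} and the finiteness of $N(\lambda),\tilde N(\lambda)$, together with the fact that the harmonic projections are $\tau$-trace class. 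Assembling these uniform estimates into a legitimate application of dominated convergence and Fubini (Tonelli) is the technical heart; the algebra of the APS computation itself transfers verbatim once the functional-analytic framework of Sections \ref{sec:modules}--\ref{Section.Index} is in place.
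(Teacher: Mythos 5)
Your overall strategy is the paper's: the Cheeger--Gromov variational argument, reducing $\frac{d}{du}\rho^\nu(V,\maF;g_u)$ to boundary terms in $t$ via the identity $\frac{d}{du}\tau^\nu_\maF(D_u e^{-t\Delta_u})=\tau^\nu_\maF(\dot *\, d\, e^{-t\Delta_0})+2t\frac{d}{dt}\tau^\nu_\maF(\dot *\, d\, e^{-t\Delta_0})$, and then killing the $\ep\to 0$ boundary contribution in the \emph{difference} of the two traces by locality of the short-time heat kernel (the paper does this by replacing the heat operators with parametrices localized near the units; your "same universal local density" formulation is the same idea). Up to that point your proposal matches the paper's proof.

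The gap is in your treatment of the large-time boundary term. That term is $2\sqrt{A}\,\tau^\nu_\maF\bigl(\frac{d*}{du}(0)\, d\, e^{-A\Delta_0}\bigr)$ (and its analogue for $\tau^\nu$); it does \emph{not} involve the $u$-derivative of the von Neumann dimensions of the harmonic spaces, and it vanishes \emph{separately} for each of the two traces: $d\,e^{-A\Delta_0}\to d\,\Pi=0$ because leafwise $L^2$-harmonic forms are closed, and the rate (needed to beat the factor $\sqrt A$) follows from the spectral decomposition $\tau(|d|e^{-A\Delta})\le\int_0^\infty\sqrt{\lambda}\,e^{-A\lambda}\,d\vartheta(\lambda)$ together with normality of the trace and dominated convergence. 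No comparison between upstairs and downstairs is required here, and in particular Proposition \ref{prop:index-equal} is not the right tool: it equates the two \emph{indices} (alternating sums), not the individual harmonic dimensions. More seriously, your fallback argument that $\tau^\nu(\tilde\Pi_u)$ is "integer-combination-valued, hence locally constant" is false in this setting: $\tau^\nu$ is a trace on a type II von Neumann algebra, so the dimensions of the harmonic spaces are real numbers that can vary continuously with $u$; there is no integrality to exploit. As written, your large-$t$ step would therefore fail, even though the conclusion it is meant to establish is true and is obtained by the simpler spectral argument above.
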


\begin{proof}
The proof of this proposition in the case where $T$ is reduced to a point was first given by J. Cheeger and M. Gromov in \cite{Cheeger-Gromov-JDG}. The Cheeger-Gromov proof extends  to the general case of measured foliations and in particular to the case of foliated bundles and we proceed to explain
the easy modifications needed for foliated bundles. 
Let $D_u$, for $0\leq u\leq 1$, be  the leafwise operator on $2\ell - 1$ leafwise differential forms of $(V, \maF)$, given by $D_u = *_u \circ d$, where $d$ is the leafwise de Rham operator and $*_u$ is the leafwise Hodge operator associated with the metric $g_u$. It is easy to see
 that $u\mapsto \tau_\F^\nu (D_u e^{-t \Delta_u})$ is smooth. Since $V$ is compact, the elliptic estimates along the leaves are uniform and we have for instance 
$$
\maR (e^{-r\Delta_0}) \subset {\rm Dom} (\Delta_u) , \quad \forall r > 0 \text{ and } u\in [0,1]. 
$$
Here $\maR$ denotes the range of an operator and ${\rm Dom}$  the domain. Therefore, we can follow the steps of the proof in \cite{Cheeger-Gromov-JDG} and deduce the fundamental relation
$$
\frac{d}{du}|_{u=0} \tau_\maF^\nu (D_u e^{-t \Delta_u}) = \tau_\maF^\nu (\frac{d *}{du}(0) d  e^{-t \Delta_0}) + 2t \frac{d}{dt} \tau_\F^\nu (\frac{d *}{du}(0) d e^{-t \Delta_0})).
$$
Using integration by parts, we then deduce
$$
{\sqrt \pi} \frac{d}{du}|_{u=0} \int_\ep^A \tau_\F^\nu (D_u e^{-t\Delta_u}) \frac{dt}{{\sqrt t}} = 2 {\sqrt A} \tau^\nu_\F ( \frac{d *}{du}(0) d   e^{-A \Delta_0}) -  2 {\sqrt \ep} \tau^\nu_\F ( \frac{d *}{du}(0) d e^{-\ep \Delta_0}).
$$
Using the normality of the trace $\tau^\nu_\maF$ and the spectral decomposition in the type II$_\infty$ von Neumann algebra $W^*_\nu (V, \F; E)$, we deduce that
$$
\lim_{A\to +\infty} 2{\sqrt A} \tau^\nu_\F (\frac{d *}{du}(0) d  e^{-A \Delta_0}) = 0.
$$
Now, the same estimates are as well valid in the type II$_\infty$ von Neumann algebra $W^*_\nu (G;E)$ with the normal trace $\tau^\nu$. Hence, we are reduced to comparing the limits as $\ep\to 0$ of the difference
$$
2 {\sqrt \ep} \tau^\nu_\F ( \frac{d *}{du}(0) d e^{-\ep \Delta_0}) - 2 {\sqrt \ep} \tau^\nu ( \frac{d {\tilde *}}{du}(0) d   e^{-\ep {\tilde \Delta}_0}).
$$
Replacing the heat operators by corresponding parametrices which are localized near the units $V$,
in the two groupoids involved, see for instance \cite{Co-LNM},  the limit of the two terms in the above difference  is proved to be the same
by classical arguments, which finishes the proof.
\end{proof}

\medskip
{\bf{According to the previous Proposition we can now denote by $\rho^\nu (V,\maF)$
the signature  rho invariant associated to any metric as before. All the leafwise maps considered in the rest of the paper are assumed to respect the orientations.}}

\medskip
If we are now given a leafwise smooth homeomorphism  $f:V\longrightarrow V'$,
then we can transport the leafwise metric $g$ from $V$ to $f_*g$ on  $V'$ 
and form the corresponding signature operator $D^{{\rm sign}}\,'$ along the leaves of $(V',\maF')$ and also the $\Gamma$-equivariant signature operator $\tD^{{\rm sign}}\,'= (\tD^{{\rm sign}}\,'_{\theta'})_{\theta'\in T'}$ corresponding to the lifted $\Gamma$-invariant metric.
Finally, the $\Gamma$-invariant measure $\nu$ on $T$, yields a  holonomy invariant 
transverse measure $\Lambda (\nu)$ on the foliation $(V,\maF)$. The leafwise diffeomorphism $f$ sends transversals to transversals and allows to transport the measure $\Lambda(\nu)$ into a holonomy invariant transverse measure $f_*\Lambda (\nu)$ on $(V',\maF')$. Such a measure 
yields by restriction to a fiber  a $\Gamma'$-invariant measure $\nu'$ on $T'$
so that  $f_*\Lambda (\nu) = \Lambda (\nu')$. More precisely, a fiber $V'_{m'_0}$ of the fibration $V'\to M'$ is a transversal to the foliation $\maF'$ and hence the holonomy invariant transverse measure  $f_*\Lambda (\nu)$ restricts to a measure on $V'_{m'_0}$. On the other hand, by fixing $\tm'_0$ with $[\tm'_0]=m'_0$ we get an identification of $V'_{m'_0}$ with the space $T'$. It is an easy exercise to check that the corresponding mesure on $T'$ through this identification is $\Gamma'$-invariant and that the associated holonomy invariant transverse measure on the foliation $(V', \maF')$ is precisely $f_*\Lambda (\nu)$.

\begin{proposition} With the above notations, we have the following equalities {for the eta invariants
associated with the two signature operators $D^{{\rm sign}}$ and $D^{{\rm sign}}\,'$}:
{$$
\eta^\nu_{\rm{down}} (D^{{\rm sign}}) = \eta^{\nu'}_{\rm{down}} (D^{{\rm sign}}\,') \;\text{ and }\; \eta^\nu_{\rm{up}} (\tD^{{\rm sign}}) = \eta^{\nu'}_{\rm{up}} (\tD^{{\rm sign}}\,').
$$}
\end{proposition}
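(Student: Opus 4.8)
The plan is to reduce the claimed equality of eta invariants to the fact that the leafwise signature operators $D^{{\rm sign}}$ and $D^{{\rm sign}}\,'$ are conjugate through the homeomorphism $f$, together with compatibility of the traces under the induced transverse measure $f_*\Lambda(\nu)=\Lambda(\nu')$. First I would observe that since $f:V\to V'$ is a leafwise smooth homeomorphism and the metric on $V'$ has been \emph{defined} as $f_*g$, the pullback $f^*$ gives, leaf by leaf, an isometry $L^2(L'_{\theta'},E'|_{L'_{\theta'}})\to L^2(L_{\theta},E|_{L_\theta})$ intertwining $d$ with $d$, $*_{f_*g}$ with $*_g$, hence the leafwise de Rham differentials and Hodge stars, and therefore $D^{{\rm sign}}\,'_{L'}$ with $D^{{\rm sign}}_L$. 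The same holds on the monodromy covers, using that $f$ lifts to a $(\Gamma,\Gamma')$-identification of monodromy groupoids: here one uses the earlier description of the monodromy groupoid $G=(\tM\times\tM\times T)/\Gamma$ and the fact that $f$ maps the fiber transversal $V_{m_0}$ homeomorphically onto $V'_{m'_0}$, carrying $\Gamma$ to $\Gamma'$ in a way compatible with the isotropy groups $\Gamma(\theta)$ and $\Gamma'(\theta')$.

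Next I would check that under this conjugation the von Neumann algebras $W^*_\nu(V,\maF;E)$ and $W^*_{\nu'}(V',\maF';E')$ (respectively $W^*_\nu(G;E)$ and $W^*_{\nu'}(G';E')$) are $*$-isomorphic, and crucially that the traces correspond: $\tau^\nu_{\maF}\mapsto\tau^{\nu'}_{\maF}$ and $\tau^\nu\mapsto\tau^{\nu'}$. This is exactly where $f_*\Lambda(\nu)=\Lambda(\nu')$ enters, as discussed in the paragraph preceding the statement: the trace $\tau^\nu_{\maF}$ is built from $\int_{F\times T}\tr(M_{\chi_\theta}S_\theta M_{\chi_\theta})d\nu(\theta)$, i.e. it only depends on the leafwise $L^2$-structure (which $f$ preserves by isometry) and on the transverse measure; since $f$ carries a fundamental domain $F\times T$ to one for $V'$ and $\nu$ to $\nu'$, the traces match. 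Then, since $\psi(D^{{\rm sign}}\,'_{L'})$ is carried to $\psi(D^{{\rm sign}}_L)$ by the isometry for every bounded Borel $\psi$ (in particular for $\varphi_t(x)=xe^{-t^2x^2}$, by uniqueness of functional calculus as in Proposition \ref{F-Comp}), we get $\tau^{\nu'}_{\maF}(D^{{\rm sign}}\,'e^{-t^2(D^{{\rm sign}}\,')^2})=\tau^\nu_{\maF}(D^{{\rm sign}}e^{-t^2(D^{{\rm sign}})^2})$ for every $t>0$. Integrating against $dt$ over $(0,+\infty)$ and using Definition \ref{def:eta} (the integrability having been established in the previous subsection) gives $\eta^\nu_{\rm down}(D^{{\rm sign}})=\eta^{\nu'}_{\rm down}(D^{{\rm sign}}\,')$, and the identical argument upstairs on the monodromy groupoids gives $\eta^\nu_{\rm up}(\tD^{{\rm sign}})=\eta^{\nu'}_{\rm up}(\tD^{{\rm sign}}\,')$.

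The main obstacle I anticipate is the careful bookkeeping at the level of the monodromy groupoid: a leafwise homeomorphism $f$ need not a priori lift to a $(\Gamma,\Gamma')$-equivariant map of $\tM\times T$ to $\tM'\times T'$ (as the remarks in Section \ref{sec:stability} emphasize), so one must argue directly that $f$ induces an isomorphism of monodromy groupoids $G\simeq G'$ respecting units, source/range, and the Haar systems coming from $d\tm$ and $dm$, and then that this intertwines $\tD^{{\rm sign}}$ with $\tD^{{\rm sign}}\,'$ and $\tau^\nu$ with $\tau^{\nu'}$. Concretely one uses that on each leaf $f$ is a diffeomorphism inducing an isomorphism of the corresponding monodromy covers $\tM\times\{\theta\}\to\tM'\times\{k(\theta)\}$ (after the identification of leaves with quotients $\tM/\Gamma(\theta)$), compatible with the transverse identification of fibers; the isometry property then propagates to the covers because $\tg$ and $\tg'$ are the $\Gamma$- and $\Gamma'$-invariant lifts of $g$ and $f_*g$. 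Everything else — measurability of the integrands, finiteness of the traces on heat operators, the Bismut--Freed bound needed for small-time integrability — has already been established earlier and simply transports along the conjugation, so the proof is essentially ``naturality of eta under leafwise isometry plus matching of transverse measures.''
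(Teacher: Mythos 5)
Your proposal is correct and follows essentially the same route as the paper's own proof: conjugation of the signature operators by the unitaries induced by pullback of leafwise forms (both downstairs and on the monodromy covers), the resulting conjugation of the functional calculus, matching of the traces via the transported transverse measure $f_*\Lambda(\nu)=\Lambda(\nu')$, and integration in $t$. The paper organizes this through the pointwise unitaries $U_x:L^2(G'_{f(x)})\to L^2(G_x)$ and the identification with the Peric/Ramachandran measured eta invariants, but the substance is identical to what you wrote.
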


\begin{proof}
  Let us prove, for example, the second equality (the first one will be obtained in a similar way). 
Let $W$ be the regular von Neumann algebra associated to $(V,\maF)$, the vertical 
Grassmann bundle $\widehat{E}$ and $g$. Let $\tau$ be
the trace defined by $g$ and $\nu$ and let $W'$ and $\tau'$ be the corresponding objects, associated
to $(V',\maF')$, $f_*g$ and the transported measure $\nu'$ under the leafwise diffeomorphism $f$.  The leafwise diffeomorphism $f$ lifts to a leafwise diffeomorphism ${\tilde f}$ between the monodromy groupoids $G$ and $G'$. More precisely, for any $x\in V$ $f$ lifts to a diffeomorphism ${\tilde f}_x: G_x \to G'_{f(x)}$ which induces, by the pull-back of forms, a unitary $U_x$ between the spaces of $L^2$-forms. Recall that the metric on $(V', \maF')$ is  $f_*g$. The signature operator on $G'_{f(x)}$ associated with the metric $f_*g$ is easily identified with the push-forward operator under ${\tilde f}$, that is the conjugation of the signature operator on $G_x$ by the unitary $U_x$. Hence the functional calculus of ${\tilde D}^{{\rm sign}}\,'_{f(x)}$ is also the conjugation of the functional calculus of ${\tilde D}^{{\rm sign}}_x$ by $U_x$. So, in particular, for any $x\in V$ we have
$$
\tD^{{\rm sign}}_{f(x)}\, ' \exp (-t (\tD^{{\rm sign}}_{f(x)}\,')^2) = U_x \tD^{{\rm sign}}_{x}  \exp (-t (\tD^{{\rm sign}}_{x})^2)  U_x^{-1}.
$$
Now, by definition of the trace $\tau'$ associated with the image measure $\nu'$, one easily shows that
$$
\tau' (U_x \tD^{{\rm sign}}_{x}  \exp (-t (\tD^{{\rm sign}}_{x})^2)  U_x^{-1} ) = \tau (\tD^{{\rm sign}}_{x}  \exp (-t (\tD^{{\rm sign}}_{x})^2)).
$$
Therefore, the $f_*\Lambda (\nu)$ measured eta invariant of the $G'$-invariant family $({\tilde D}^{{\rm sign}}\,'_{x'})_{x'\in V'}$ as defined by Peric in \cite{Peric} coincides with the $\Lambda(\nu)$ measured eta invariant of the $G$-invariant family $({\tilde D}^{{\rm sign}}_x)_{x\in V}$. On the other hand and as we already observed, these measured eta invariants coincide with ours for the $\Gamma'$-invariant and $\Gamma$-invariant families of signature operators on $\tM'\times T'$ and $\tM \times T$ respectively. Hence the proof is complete.


\end{proof}

\begin{corollary} Let $(V,\maF, \nu)$ and $(V',\maF',\nu')$ be two foliated bundles
as above and assume that there exists a leafwise smooth homeomorphism 
between $(V,\maF)$ and $(V',\maF')$ such that $f_*\nu=\nu'$. Then 
$$\rho^\nu (V,\maF)= \rho^{\nu'}(V',\maF')\,.$$
\end{corollary}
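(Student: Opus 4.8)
The statement is a direct consequence of the two immediately preceding results: the metric-independence of $\rho^\nu$ for the signature operator, and the Proposition asserting invariance of the up and down eta invariants under a leafwise smooth homeomorphism with compatible transverse measures. The plan is simply to chain these together, taking care to match the measure-theoretic hypotheses.

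\textbf{Steps.} First I would fix a continuous leafwise smooth Riemannian metric $g$ on $(V,\maF)$ and recall that, by the metric-independence Proposition, $\rho^\nu(V,\maF)$ equals $\rho^\nu(D^{{\rm sign}};V,\maF)=\eta^\nu_{\rm up}(\tD^{{\rm sign}})-\eta^\nu_{\rm down}(D^{{\rm sign}})$, where $D^{{\rm sign}}$ and $\tD^{{\rm sign}}$ are built from $g$ and its $\Gamma$-invariant lift. Next I would transport $g$ via $f$ to the leafwise metric $g':=f_*g$ on $(V',\maF')$, and form the corresponding signature operators $D^{{\rm sign}}\,'$ and $\tD^{{\rm sign}}\,'$. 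The key point to verify is that the hypothesis $f_*\nu=\nu'$ is exactly the compatibility condition required by the preceding Proposition: the $\Gamma'$-invariant measure obtained by restricting the transported holonomy invariant transverse measure $f_*\Lambda(\nu)$ to a fiber of $V'\to M'$ is, under this hypothesis, precisely the given $\nu'$, so that $f_*\Lambda(\nu)=\Lambda(\nu')$. Granting this, the Proposition yields
$$
\eta^\nu_{\rm up}(\tD^{{\rm sign}})=\eta^{\nu'}_{\rm up}(\tD^{{\rm sign}}\,')\qquad\text{and}\qquad
\eta^\nu_{\rm down}(D^{{\rm sign}})=\eta^{\nu'}_{\rm down}(D^{{\rm sign}}\,').
$$
Subtracting gives $\rho^\nu(V,\maF)=\eta^{\nu'}_{\rm up}(\tD^{{\rm sign}}\,')-\eta^{\nu'}_{\rm down}(D^{{\rm sign}}\,')=\rho^{\nu'}(D^{{\rm sign}}\,';V',\maF')$. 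Finally, applying the metric-independence Proposition on the $(V',\maF')$ side, this last quantity is independent of the chosen leafwise metric and equals $\rho^{\nu'}(V',\maF')$, completing the argument.

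\textbf{Main obstacle.} There is essentially no analytic obstacle here, since all the hard work—the Bismut--Freed estimate, integrability of the eta integrands, the conjugation of the leafwise functional calculus by the unitaries $U_x$ induced by $f$, and the comparison of local parametrices—has already been carried out in the proof of the preceding Proposition and in the metric-independence Proposition. The only point requiring care is bookkeeping: making explicit that "$f_*\nu=\nu'$" in the statement coincides with the construction "$\nu'$ = restriction of $f_*\Lambda(\nu)$ to a fiber" used in the Proposition, so that the two uses of the symbol $\nu'$ agree; and recording that the right-hand side $\rho^{\nu'}(V',\maF')$ is by definition metric-free, which is what lets us feed in the transported metric $g'$ rather than an arbitrary one.
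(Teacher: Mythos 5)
Your proof is correct and follows essentially the same route as the paper: both arguments invoke the metric-independence Proposition to reduce to the transported metric $f_*g$ on $(V',\maF')$ and then apply the preceding Proposition on the invariance of $\eta^\nu_{\rm up}$ and $\eta^\nu_{\rm down}$ under the leafwise smooth homeomorphism. Your extra remark matching the hypothesis $f_*\nu=\nu'$ with the construction of $\nu'$ from $f_*\Lambda(\nu)$ is a useful clarification but does not change the argument.
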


\begin{proof}
We use the two previous propositions. The first one allows to compute $\rho^\nu (V,\maF)$ using any metric $g$. Then we apply the same proposition to $\rho^{f_*\nu}(V',\maF')$ and compute it using the image metric $f_*g$. Finally, the second proposition allows to finish the proof.
\end{proof}

\section{ Loops, determinants and Bott periodicity}\label{loops+bott}

 As before, let $\A_m$ be the maximal $C^*$-algebra of the groupoid $T\rtimes \Gamma$; let $\E_m$
be the $\A_m$-Hilbert module considered in the previous sections. Thus $\E_m$ is
obtained by completion of the $\A_c$-Module $C^\infty_c (\widetilde{M}\times T, \widehat{E})$.
Let $\D_m$ be the regular unbounded $\A_m$-linear operator induced by a $\Gamma$-equivariant
family of Dirac operators.
Let $$\mathcal{I} \K_{\A_m}(\E_m):= \{A\in \maB_{\A_m} (\maE_m) \text{ such that } A-\Id \in \K_{\A_m}(\E_m)
\text{ and } A \text { is invertible}\}$$
Let $\Omega ( \mathcal{I} \K_{\A_m}(\E_m)) $ be the space of homotopy classes of loops in 
$\mathcal{I} \K_{\A_m}(\E_m)$ which contain the identity operator. Then, using  the inverse of the Bott isomorphism  $\beta^{-1}: 
  \Omega (\mathcal{I}\K_{\A_m}(\E_m))\to K_0 (\K _{\A_m}(\E_m))  $, the isomorphism 
$(\chi^{-1}_m)_* : K_0 (\K _{\A_m}(\E_m)) \to K_0 (\maB^E_m)$
induced by  $\chi_m : \maB^E_m \to K_{\A_m}(\E_m)$,
and the inverse of the Morita isomorphism $\maM_m : K_0 (\A_m)\to K_0 (\maB^E_m)$
of Proposition \ref{prop:morita-compatible},  we obtain an isomorphism 
$$\Omega (\mathcal{I}\K_{\A_m}(\E_m))\stackrel{\beta^{-1}}{\longrightarrow}
K_0 (\K _{\A_m}(\E_m)) \stackrel{(\chi^{-1}_m)_*}{\longrightarrow} K_0 (\maB^E_m)
 \stackrel{ \maM_m^{-1}}{\longrightarrow} 
 K_0 (\A_m)\,$$
We denote by  $\Theta:\Omega (\mathcal{I}\K_{\A_m}(\E_m))\rightarrow K_0 (\A_m)$  the composition of these isomorphisms.
Recall the representations 
$$\pi^{{\rm reg}} :  \maB^E_m\to W^*_\nu(G;E)\,;\quad 
\pi^{{\rm av}}:  \maB^E_m\to W^*_\nu(V,\maF;E).$$

Given a morphism $\alpha$ between two $C^*$-algebras, we denote, with obvious abuse
of notation,  by $\Omega\alpha$
 the induced map on homotopy classes of loops.
We thus obtain maps $\Omega \pi^{{\rm reg}}$, $\Omega \chi_m^{-1}$, $\Omega\pi^{{\rm av}}$;
we define 
$$\sigma^{{\rm reg}}: \Omega (\mathcal{I}\K_{\A_m}(\E_m))\rightarrow
 \Omega (\mathcal{I}\K (W^*_\nu (G;E)))\,;
\quad 
\sigma^{{\rm av}}: \Omega (\mathcal{I}\K_{\A_m}(\E_m))\rightarrow \Omega (\mathcal{I}\K (W^*_\nu (V,\maF;E)))
$$
with 
$$ \sigma^{{\rm reg}}:= \Omega \pi^{{\rm reg}}\circ\Omega \chi_m^{-1}\,,\quad
\sigma^{{\rm av}}:= \Omega \pi^{{\rm av}}\circ\Omega \chi_m^{-1}\,.$$

\noindent
Recall, finally, that if $\ell$ is a loop in $\mathcal{I} L^1 (W^*_\nu (V,\maF;E))$, or
more generally in $\mathcal{I}\K(W^*_\nu (V,\maF;E))$,
then $\ell$ has a well defined determinant $w^\nu_{\maF}(\ell)\in \CC$. Similarly,
if $\ell$  is a loop in $\mathcal{I} L^1 (W^*_\nu (G;E))$, or
more generally in $\mathcal{I}\K(W^*_\nu (G;E))$,
then $\ell$ has a well defined determinant $w^\nu (\ell)\in \CC$.

\begin{proposition}\label{prop:diagram}
The following diagram commutes:

\begin{picture}(400,70)

\put(70,55){$\Omega  (\mathcal{I}\K_{\A_m}(\E_m)) $}
\put(80,30){$\downarrow$}
\put(70,5){$\Omega (\mathcal{I}\K (W^*_\nu (V,\maF;E)))$}
\put(85,30){$\sigma^{{\rm av}}$}

\put(200,63){$ \Theta $}
\put(180,59){\vector(1,0){60}}
\put(180,8){\vector(1,0){60}}
\put(200,12){$w^\nu_\maF $}

\put(250,55){$K_0 (\A_m)$}
\put(250,30){$\downarrow$}
\put(250,5){$\CC$.}
\put(260,30){$ \tau^\nu_{{\rm av},*}$}
\end{picture}

Similarly, the following diagram commutes:

\begin{picture}(400,70)

\put(70,55){$\Omega  (\mathcal{I}\K_{\A_m}(\E_m)) $}
\put(80,30){$\downarrow$}
\put(70,5){$\Omega (\mathcal{I}\K (W^*_\nu (G;E)))$}
\put(85,30){$\sigma^{{\rm reg}}$}

\put(200,63){$ \Theta $}
\put(180,59){\vector(1,0){60}}
\put(180,8){\vector(1,0){60}}
\put(200,12){$w^\nu $}

\put(250,55){$K_0 (\A_m)$}
\put(250,30){$\downarrow$}
\put(250,5){$\CC$.}
\put(260,30){$ \tau^\nu_{{\rm reg},*}$}
\end{picture}

\end{proposition}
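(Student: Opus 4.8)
The plan is to trace an element of $\Omega(\mathcal{I}\K_{\A_m}(\E_m))$ through both the upper route (the isomorphism $\Theta$ to $K_0(\A_m)$ followed by $\tau^\nu_{{\rm av},*}$) and the lower route ($\sigma^{{\rm av}}$ followed by the determinant $w^\nu_\maF$) and check that the two numbers agree. First I would recall that the Bott isomorphism $\beta: K_0(\K_{\A_m}(\E_m)) \to \Omega(\mathcal{I}\K_{\A_m}(\E_m))$ and its analogues for the von Neumann algebras $W^*_\nu(G;E)$, $W^*_\nu(V,\maF;E)$ are all natural with respect to $C^*$-algebra morphisms; in particular the squares relating $\beta^{-1}$ on $\mathcal{I}\K_{\A_m}(\E_m)$ and on $\mathcal{I}\K(W^*_\nu(V,\maF;E))$, intertwined by $\Omega(\pi^{{\rm av}}\circ\chi_m^{-1})$ downstairs and by $(\pi^{{\rm av}}\circ\chi_m^{-1})_*$ on $K$-theory upstairs, commute. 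Composing with the definition of $\Theta = \maM_m^{-1}\circ(\chi_m^{-1})_*\circ\beta^{-1}$, the claim reduces to the following statement at the level of $K_0$ and loops: for a loop $\ell$ in $\mathcal{I}\K(W^*_\nu(V,\maF;E))$ representing (via $\beta^{-1}$) a class $\kappa \in K_0(W^*_\nu(V,\maF;E))$, one has $w^\nu_\maF(\ell) = (\tau^\nu_\maF)_*(\kappa)$, where $(\tau^\nu_\maF)_*: K_0(W^*_\nu(V,\maF;E)) \to \RR$ is the trace-induced map; and then to match $(\tau^\nu_\maF)_*\circ(\pi^{{\rm av}})_*\circ(\chi_m^{-1})_*$ with $\tau^\nu_{{\rm av},*}\circ\maM_m^{-1}\circ(\chi_m^{-1})_*$, which is exactly the content of Corollary \ref{prop:composition-of-traces} (the equality $\tau^\nu_{{\rm av}} = \tau^\nu_\maF\circ\pi^{{\rm av}}$) together with the compatibility of $\maM_m$ with the traces from Proposition \ref{prop:morita-compatible}.

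So the mathematical core is the identity $w^\tau(\ell) = \tau_*(\beta^{-1}[\ell])$ relating the de la Harpe--Skandalis determinant of a loop to the trace pairing on $K_0$. This is precisely the theorem of de la Harpe--Skandalis \cite{dlH-Ska}, which states that for a von Neumann algebra $\maM$ with normal semifinite faithful trace $\tau$, the determinant $w^\tau: \Omega(\mathcal{I}\K(\maM)) \to \CC$ descends to $\pi_0$ and the resulting map $\pi_0(\mathcal{I}\K(\maM)) \to \CC$ coincides, under the Bott map $\beta: K_0(\K(\maM)) \xrightarrow{\sim} \pi_0(\mathcal{I}\K(\maM))$, with $(2\pi\sqrt{-1})^{-1}$ times $\tau_*$ (or with $\tau_*$ up to the chosen normalization). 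I would cite this, noting that the hypotheses needed — that $\K(W^*_\nu(V,\maF;E),\tau^\nu_\maF)$ is a $\tau$-ideal with $L^1 \cap \K$ dense and holomorphically closed, and that the functional calculus used to define the logarithm makes sense — have all been verified in the earlier sections (Propositions \ref{Rapid-down}, \ref{prop:approximation}, and the properties of $w^\tau$). The same argument applies verbatim to $W^*_\nu(G;E)$ with $\tau^\nu$, giving the second diagram.

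Assembling: given $[\ell] \in \Omega(\mathcal{I}\K_{\A_m}(\E_m))$, let $\ell^{{\rm av}} := \sigma^{{\rm av}}([\ell]) = \Omega(\pi^{{\rm av}}\circ\chi_m^{-1})([\ell])$. By naturality of Bott, $\beta^{-1}[\ell^{{\rm av}}] = (\pi^{{\rm av}})_*(\chi_m^{-1})_*\beta^{-1}[\ell] \in K_0(W^*_\nu(V,\maF;E))$. By de la Harpe--Skandalis, $w^\nu_\maF(\ell^{{\rm av}}) = (\tau^\nu_\maF)_*\beta^{-1}[\ell^{{\rm av}}] = (\tau^\nu_\maF)_*(\pi^{{\rm av}})_*(\chi_m^{-1})_*\beta^{-1}[\ell]$. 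Since $\tau^\nu_{{\rm av}} = \tau^\nu_\maF\circ\pi^{{\rm av}}$ as traces (Corollary \ref{prop:composition-of-traces}), this equals $(\tau^\nu_{{\rm av}})_*(\chi_m^{-1})_*\beta^{-1}[\ell]$, where $(\tau^\nu_{{\rm av}})_*$ is taken on $K_0(\maB^E_m)$. Finally, $\tau^\nu_{{\rm av},*}$ on $K_0(\maB_m)\cong K_0(\maB^E_m)$ is compatible with $\maM_m$ (Proposition \ref{prop:morita-compatible}), so $(\tau^\nu_{{\rm av}})_*(\chi_m^{-1})_*\beta^{-1}[\ell] = \tau^\nu_{{\rm av},*}\,\maM_m^{-1}(\chi_m^{-1})_*\beta^{-1}[\ell] = \tau^\nu_{{\rm av},*}(\Theta([\ell]))$, which is the desired commutativity. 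The second diagram is identical with $(G, \pi^{{\rm reg}}, \tau^\nu, w^\nu, \tau^\nu_{{\rm reg},*})$ in place of $(V,\maF; \pi^{{\rm av}}, \tau^\nu_\maF, w^\nu_\maF, \tau^\nu_{{\rm av},*})$, using the first diagram of Proposition \ref{prop:morita-compatible}. The main obstacle is simply citing and correctly transcribing the de la Harpe--Skandalis determinant-equals-trace-of-$K_0$-class theorem in the semifinite setting with the normalizations used here; everything else is a diagram chase through naturality of Bott periodicity and the already-established trace compatibilities.
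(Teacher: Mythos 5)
Your proposal is correct and follows essentially the same route as the paper: naturality of the Bott map, the identity $w^\tau\circ\beta=\tau_*$ on loops, the trace compatibility $\tau^\nu_{{\rm av}}=\tau^\nu_\maF\circ\pi^{{\rm av}}$ from Corollary \ref{prop:composition-of-traces}, and Proposition \ref{prop:morita-compatible} to finish. The only cosmetic difference is that you cite the de la Harpe--Skandalis theorem for the key identity $w^\tau(\beta(p))=\tau_*(p)$, whereas the paper just notes it follows from the one-line computation $\gamma(t)^{-1}\gamma'(t)=2\pi i\,p$ for the loop $\gamma(t)=\exp(2\pi i t p)$.
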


\begin{proof}
Recall that  for a $C^*$-algebra $A$ the Bott isomorphism $\beta: 
K_0 (A)\rightarrow K_1 (SA) $ is given by the map $[p]\to [(\exp(2\pi i t p)]$;
as there will be several $C^*$-algebras involved, we denote this map by $\beta_A$.
We
observe that
$$ \beta_{\maB_m^E} \circ (\chi^{-1}_m)_* = \Omega (\chi^{-1}_m)_* \circ \beta_{\K_{\A_m}(\E_m)}\,.$$
Therefore,
\begin{eqnarray*}
\Omega \pi^{{\rm av}} \circ \beta_{\maB^E_m} \circ (\chi^{-1}_m)_* \circ \beta^{-1}_{\K_{\A_m}(\E_m)} &=&
\Omega \pi^{{\rm av}} \circ \Omega (\chi^{-1}_m)_* \circ ( \beta_{\K_{\A_m}(\E_m)} \circ \beta_{\K_{\A_m}(\E_m)}^{-1})\\
&=&\Omega \pi^{{\rm av}} \circ \Omega (\chi^{-1}_m)_*\\
&=& \sigma^{{\rm av}}
\end{eqnarray*}
On the other hand, by definition of $\Omega \pi^{{\rm av}}$,
$$\Omega \pi^{{\rm av}}\circ  \beta_{\maB^E_m}= \beta_{\K (W^*_\nu (V,\maF;E))} \circ \pi^{{\rm av}}_*;$$
therefore 
\begin{eqnarray*}
w^\nu_{\maF} \circ  \Omega \pi^{{\rm av}} \circ \beta_{\maB^E_m} &=& w^\nu_{\maF} \circ 
\beta_{\K (W^*_\nu (V,\maF;E))} \circ \pi^{{\rm av}}_*\\
&=& \tau^\nu_\maF \circ \pi^{{\rm av}}_*\\
&=&   \tau^\nu_{{\rm av},*}
\end{eqnarray*}
where $\tau^\nu_{{\rm av}}$ is the trace on $\maB^E_m$ as defined in 
Subsection \ref{subsec:traces} and with the equality $w^\nu_{\maF} \circ 
\beta_{\K (W^*_\nu (V,\maF;E))}=\tau^\nu_\maF$ proved by direct computation. To finish the proof
we simply apply Proposition \ref{prop:morita-compatible}.
\end{proof}

\begin{definition}\label{def:determinants}
We shall denote by $w^\nu_{{\rm av}}: \Omega  ( \mathcal{I}\K_{\A_m}(\E_m)) \to \CC
$ and $w^\nu_{{\rm reg}}: \Omega  ( \mathcal{I}\K_{\A_m}(\E_m)) \to \CC$ the compositions
$w^\nu\circ \sigma^{{\rm av}}$ and $w^\nu\circ \sigma^{{\rm reg}}$ respectively.
\end{definition}

We can summarize the previous Proposition by the following two equations
\begin{equation}\label{eq:commute}
w^\nu_{{\rm av}}= \Theta\circ \tau^\nu_{{\rm av},*}\,,\quad \quad 
w^\nu_{{\rm reg}}= \Theta\circ \tau^\nu_{{\rm reg},*}
\end{equation}

\begin{remark}\label{paths-vs-loops}
Definition \ref{def:determinants} can be extended to a path in $ \mathcal{I}\K_{\A_m}(\E_m)$
provided the two extreme points are mapped 
by $\pi^{{\rm reg}}\circ \chi_m^{-1}$ and $\pi^{{\rm av}}\circ \chi_m^{-1}$ into $\tau^\nu$ trace class
and $\tau^\nu_{\maF}$ trace class perturbations of the identity respectively.
\end{remark}

%

%

\section{On the homotopy invariance of rho on foliated bundles}\label{sec:keswani}

Before plunging into foliated bundles and the foliated homotopy invariance of the signature
rho invariant defined in Section \ref{sec:foliated-rho}, we digress briefly 
and treat a general orientable measured foliation $(V,\maF)$. We denote by $\Lambda$ the holonomy
invariant transverse measure. We fix a longitudinal riemannian metric
on $(V,\maF)$ and we denote by $D^{{\rm sign}}$ the associated longitudinal
signature operator. Let $G$ be the monodromy groupoid associated to $(V,\maF)$.
Then, as already remarked,  Peric  has defined in \cite{Peric} a foliated eta invariant 
$\eta^\Lambda(\tD^{{\rm sign}})$, with $\tD^{{\rm sign}}$ the lift of $D^{{\rm sign}}$ to the monodromy covers, a $G$-equivariant operator on $G$.
The work of Peric employs the holonomy groupoid, but is is not difficult to see that
his arguments apply to the monodromy groupoid as well.
Ramachandran, on the other hand, has defined in \cite{Ra} an eta invariant 
$\eta^\Lambda (D^{{\rm sign}})$
using the measurable 
groupoid defined by the foliation, as we have already observed. 
We infer that the definition of foliated rho invariant is basically present in the literature. It suffices to define $\rho^\Lambda (D^{{\rm sign}}):= \eta^\Lambda(\tD^{{\rm sign}})-\eta^\Lambda(D^{{\rm sign}})$.
Assume now that $G^x_x$ is torsion-free for any $x\in V$, then Connes has defined in \cite{Co}
a Baum-Connes map $K_* (BG)\rightarrow K_* (C^*_{{\rm reg}}(V,\maF))$ which factors through a maximal Baum-Connes map with values in the $K$-theory of the maximal $C^*$-algebra $C^*_{{\rm max}}(V,\maF)$.
 Here $BG$ is the classifying space of the monodromy groupoid, see \cite{Co}, page 126. If $(V,\mathcal{F})$
 is equal to the foliated bundle $V=\tM\times_\Gamma T$, then $BG$ is given by 
 the homotopy quotient $E\Gamma\times_\Gamma T$, with  $E\Gamma$ equal to  the universal space for $\Gamma$ principal bundles. The Baum-Connes conjecture  states that the Baum-Connes map is an isomorphism. We shall make a stronger assumption here, namely that the {\it maximal} Baum-Connes map is an isomorphism. This is a non trivial assumption and even if it is known to be satisfied for instance for amenable actions, there are simple examples where it fails to be true. The general conjecture
one would then like to make goes as follows.

\medskip
\noindent
Let $(V',\maF')$ be another foliation, endowed with a holonomy
invariant transverse measure $\Lambda '$ and let $f:(V, \maF)\to (V',\maF')$ be a leafwise 
measure preserving 
homotopy equivalence. 

\medskip
{\bf Conjecture:}
{\it If $G^x_x$ is torsion-free for any $x\in V$ and  $K_* (BG)\rightarrow K_* (C^*_{{\rm max}}(V,\maF))$
is an isomorphism, then $\rho^\Lambda (D^{{\rm sign}})=\rho^{\Lambda '} (D^{{\rm sign}} \,')$}

\medskip
We shall now specialize to foliated bundles.
Let $\Gamma$, $T$ and $\tM$ be as in the previous sections. Let $V:= \tM\times_\Gamma T$
and let $(V,\maF)$ 
be the associated foliated  bundle. We assume the existence of a $\Gamma$-invariant
measure on $T$; let $\Lambda (\nu)$ be the associated holonomy invariant transverse measure on $(V, \maF)$.
Let $D=(D_L)_{L\in V/\maF}$ be a longitudinal Dirac-type operator. Let $\tD=(\tD_\theta)_{\theta\in T}$ be
the associated $\Gamma$-equivariant family of Dirac operators.
As already remarked the eta invariant of Peric (with  the monodromy groupoid replacing the holonomy groupoid in Peric' definition),
$\eta^{\Lambda(\nu)} (\tD)$, is equal to our $\eta^\nu_{{\rm up}} (\tD)$. Similarly,  the eta invariant of Ramachandran, $\eta^{\Lambda (\nu)} (D)$, is equal to
our $\eta^\nu_{{\rm down}} (D)$. Thus the rho invariant 
$\rho^{\Lambda(\nu)} (D)$ defined above,  is indeed
equal to our rho invariant  $\rho^\nu (D;V,\maF)$.
Assume now that 
 $\tM'$ 
 is  the $\Gamma'$ universal covering of a compact manifold $M'$
 and  let $T'$ be  
 a compact space endowed with a continuous action of $\Gamma'$ by homeomorphisms. We consider $\tM'\times T'$ and the foliated  bundle
 $V':= \tM' \times_{\Gamma'} T'$. Let  $(V',\maF')$ be  the associated foliated space.
 We assume the existence of a $\Gamma'$-invariant measure $\nu'$  on $T'$ and we let $\Lambda(\nu')$
 the associated transverse measure on $(V',\maF')$.
Given a measure preserving foliated homotopy equivalence $f: V\to V'$,  
we can apply  the general conjecture stated in the previous
section to the invariants $\rho^{\Lambda(\nu)} (D)$, $\rho^{\Lambda(\nu')} (D')$ with $D$ and $D'$
denoting now the signature operators. We obtain in this way a conjecture about the
homotopy invariance of the signature rho invariant $\rho^\nu (D;V,\maF)$ defined and studied in this paper;
we shall deal with the general conjecture on foliated spaces in a different paper.
 In the rest of this Section, we shall tackle the homotopy invariance 
 of rho for the special  
homotopy equivalences descending from equivariant homotopies
$\tf : \tM\times T \rightarrow \tM'\times T'$ as described in the previous section.

\subsection{The Baum-Connes map for the discrete groupoid $T\rtimes \Gamma$}

In order to tackle the homotopy invariance of our $\rho^\nu (D^{{\rm sign}}; V,\maF)$
we first need to describe in the most geometric way the Baum-Connes map
relevant to foliated bundles. 
This subsection is thus devoted to recall the definition of the Baum-Connes map with coefficients
in the $\Gamma-C^*$-algebra $C(T)$ and, more importantly, to give a very geometric
description of it.
There are indeed several definitions
available in the literature, with proofs of their compatibility sometime missing.
The differences are all concentrated in the domain and, consequently,
 in the definition of the application; the target is always the same, namely $K_* (C(T)\rtimes_r \Gamma)$
(which is nothing but  $K_* ( \maA_r)$ in our notation).
Notice that if $T$ is a point, we also have two different possibilities for the classical
Baum-Connes map, depending on whether we consider, on the left hand side,
the Baum-Douglas definition
of K-homology or, instead,  Kasparov's definition; although 
the compatibility of the two pictures has been assumed for many years, a complete
proof only appeared  recently, see the paper \cite{BHS}.
Going back to our more general situation,
we begin with  the Baum-Connes-Higson definition \cite{BCH}, which is given is terms
of Kasparov KK-theory and the intersection product:
\begin{equation}\label{BCH}
\mu_{BCH}: K^\Gamma_j (\underline{E}\Gamma;C(T))\rightarrow K_j (C(T)\rtimes_r \Gamma)
\end{equation}
The group on the left is, by definition, 
$$\lim_{X\subset\underline{E}\Gamma} KK^j_\Gamma (C_0 (X),C(T))$$ 
with the direct limit taken over the directed system of all $\Gamma$-compact
subset of $\underline{E}\Gamma$.
Similarly, there is a maximal Baum-Connes-Higson map:
\begin{equation}\label{BCH-max}
\mu_{BCH}: K^\Gamma_j (\underline{E}\Gamma;C(T))\rightarrow K_j (C(T)\rtimes_m \Gamma)
\end{equation}
Next, we have the original definition of Baum and Connes \cite{BC-enseignement}, with the left hand side
defined in terms of Gysin maps:
\begin{equation}\label{BC}
\mu_{BC}: K^j (T,\Gamma)\rightarrow K_j (C(T)\rtimes_r \Gamma)
\end{equation}
We are not aware of a published proof of the compatibility of these two maps.\\
There is a third
description of the Baum-Connes map with coefficients in $C(T)$: consider as set of cycles
the (isomorphism classes of) pairs $(X,E\to X\times T)$ where $X$ is a spin$_c$  proper $\Gamma$-manifold and $E$
is a $\Gamma$-equivariant vector bundle on $X\times T$ ; define the usual Baum-Douglas 
equivalence relation on these cycles, bordism, direct sum and bundle modification;
we obtain a group that we denote by $K_j^{{\rm geo}} (T\rtimes \Gamma)$ with $j=\dim M\;
{\rm mod}\;  2$. 
The Baum-Connes map in this case is denoted
\begin{equation}\label{B}
\mu_{\rtimes }: K_j^{{\rm geo}} (T\rtimes \Gamma) \rightarrow K_j (C(T)\rtimes_r \Gamma)
\end{equation}
and is very simply described as the map that associates to $[X,E\to X\times T]$
the index class of the $\Gamma$-equivariant family $(D_\theta)_{\theta\in T}$,
with $D_\theta$ the spin$_c$ Dirac operator on $X$ twisted by $E\big |_{X\times\{\theta\}}$.
Also in this case we have a maximal version of the map:
\begin{equation}\label{B-max}
\mu_{\rtimes }: K_j^{{\rm geo}} (T\rtimes \Gamma) \rightarrow K_j (C(T)\rtimes_m \Gamma)
\end{equation}
Thanks to the Ph.D. thesis of Jeff Raven \cite{Raven} we know that the two groups
$K^\Gamma_j (\underline{E}\Gamma;C(T))$ and $K_j^{{\rm geo}} (T\rtimes \Gamma) $
are isomorphic and the two pairs of maps 
 \eqref{BCH}, \eqref{B} and  \eqref{BCH-max}, \eqref{B-max} are compatible; the proof of Raven's isomorphism is far from being trivial. 
 Notice that, as in \cite{keswani3}, we can consider orientable manifolds instead of spin$_c$ manifolds; thus
 the set of cycles for this version of Raven's group is given by pairs $(X, E\to X\times T)$ 
 with $X$ an orientable proper  riemannian $\Gamma$-manifold
 and $E$ a $\Gamma$-equivariant vector bundle on $X\times T$ endowed with an equivariant   Clifford-module  structure with respect to
${\rm Cl}(T^*X)$. Introduce on these cycles the equivalence relation given by bordism,
 direct sum and bundle modification as in \cite{keswani3} (Subsection 2.2, pages 59 and 60).
 The resulting group will be isomorphic to  $K_j^{{\rm geo}} (T\rtimes \Gamma) $ and the resulting Baum-Connes map will
 be compatible.
  In the rest of this work we  look at the stability properties
of our foliated rho-invariant for the signature operator under a bijectivity hypothesis on the map \eqref{B-max}. However, in order to exhibit examples we do need to use the compatibility between 
 \eqref{BCH-max} and \eqref{B-max}; indeed almost
all examples where the Baum-Connes assumption is satisfied are proved using
the Baum-Connes-Higson description.

\subsection{Homotopy invariance of $\rho^\nu (D^{{\rm sign}};V, \maF)$
for special homotopy equivalences}

We can state the main result of this Section as follows:

\begin{theorem}\label{homotopy-invariance}
Let $V:=\tM\times_\Gamma T$ and  
 $V':=\tM' \times_{\Gamma'} T'$ be two
foliated flat bundles, with $\Gamma$ and $\Gamma'$  discrete {\bf torsion-free} groups \footnote{The assumption on $\Gamma$ and $\Gamma'$ can be replaced by the weaker assumption that the isotropy groups are torsion-free, as can be checked in the proof.}.
Assume that there exists a {\bf special} leafwise 
homotopy equivalence $f : (V,\maF)\to (V',\maF')$ and let 
$k:T\to T'$ be the  induced equivariant
homeomorphism . Let $\nu'$ be a $\Gamma'$-invariant measure on $T'$; let 
$\nu:= k^* \nu'$ be the corresponding $\Gamma$-invariant measure on $T$.
Assume that the Baum-Connes map \eqref{B} for the maximal $C^*$-algebra 
$$\mu_{\rtimes }: K_j^{{\rm geo}} (T\rtimes \Gamma)\longrightarrow  K_* (C(T)\rtimes_{{\rm max}} \Gamma)$$
is {\bf bijective}. Then
\begin{equation}
\rho^\nu (V,\mathcal{F})\,=\,\rho^{\nu'} (V',\mathcal{F}')\,.
\end{equation}
\end{theorem}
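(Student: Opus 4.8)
The strategy is the foliated analogue of Keswani's argument for Galois coverings, so the plan is to reduce the equality $\rho^\nu(V,\maF)=\rho^{\nu'}(V',\maF')$ to a statement about determinants of loops, and then to kill that determinant using the bijectivity of the maximal Baum--Connes map. First I would set up the geometric framework: since $f$ is a \emph{special} homotopy equivalence, the induced map $k:T\to T'$ is an equivariant homeomorphism and $\alpha:\Gamma\to\Gamma'$ is an isomorphism (by the two Lemmas preceding the theorem), so we may identify the two $C^*$-algebras $\maA_m=C(T)\rtimes_m\Gamma$ and $C(T')\rtimes_m\Gamma'$, the two monodromy groupoids, and the two Hilbert modules $\maE_m$, $\maE'_m$. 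Under this identification the signature operator $\maD'_m$ on $V'$ is conjugate, via the unitary induced by $\tf^*$ on leafwise forms, to a regular self-adjoint operator $\maD''_m$ on $\maE_m$ which has the \emph{same} index class in $K_*(\maA_m)$ as $\maD_m$ (here one uses the homotopy invariance of the $K_1$ index class of the signature operator recalled in Section~\ref{Section.Index}, together with the fact that $\tf$ and $\tg$ are inverse up to equivariant leafwise homotopy).

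The core of the proof is then the determinant computation. Following the constructions of Section~\ref{subsect:winding} and Section~\ref{loops+bott}, both $\maD_m$ and $\maD''_m$ produce paths $\gamma^{\rm reg,\epsilon}$, $\gamma^{\rm av,\epsilon}$ in $\mathcal{I}\K_{\maA_m}(\maE_m)$ via the function $\psi_t(x)=-e^{i\pi\varphi(tx)}$, and by the Theorem at the end of Section~\ref{sec:foliated-rho} the regularized determinants converge to $\tfrac12\eta^\nu_{\rm up}$ and $\tfrac12\eta^\nu_{\rm down}$. I would form the \emph{difference loop}: concatenate the path for $\maD_m$ with the reversed path for $\maD''_m$ (plus a path at $t=\infty$ through the index projections, using that the two operators have invertible-modulo-trace-class large-$t$ behaviour by Proposition~\ref{Rapid}), obtaining loops $\ell^{\rm reg}$, $\ell^{\rm av}$ in $\mathcal{I}\K_{\maA_m}(\maE_m)$ whose determinants $w^\nu_{\rm reg}(\ell^{\rm reg})$, $w^\nu_{\rm av}(\ell^{\rm av})$ equal $\tfrac12(\rho^\nu(V,\maF)-\rho^{\nu'}(V',\maF'))$ up to the correction coming from the index-class piece at infinity. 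By the commuting diagrams of Proposition~\ref{prop:diagram} (equation~\eqref{eq:commute}), these determinants factor as $w^\nu_{\rm av}=\Theta\circ\tau^\nu_{\rm av,*}$ and $w^\nu_{\rm reg}=\Theta\circ\tau^\nu_{\rm reg,*}$ applied to the class $[\ell]\in\Omega(\mathcal{I}\K_{\maA_m}(\maE_m))\cong K_0(\maA_m)$. Since $\maD_m$ and $\maD''_m$ have the same index class, this $K$-theory class is essentially the difference of the two index classes and so is controlled; and by the fundamental equality $\tau^\nu_{\rm reg,*}({\rm Ind})=\tau^\nu_{\rm av,*}({\rm Ind})$ of Proposition~\ref{Numeric} (which is exactly the foliated Atiyah index theorem, valid here because $M$ is closed and one can localize parametrices) the two numeric pieces cancel. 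What remains, after collecting the large-$t$ contributions and using the invariance of $\rho^\nu$ under change of metric and under the leafwise homeomorphism $f$ established in Section~\ref{sec:stability}, is a single determinant $w^\nu_{\rm reg}(\ell)-w^\nu_{\rm av}(\ell)$ of a loop $\ell$ whose class in $K_0(\maA_m)$ lies in the image of the maximal Baum--Connes map $\mu_\rtimes:K^{\rm geo}_*(T\rtimes\Gamma)\to K_*(\maA_m)$.

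The endgame is to show this surviving determinant vanishes. Here one invokes the \textbf{bijectivity} hypothesis: the relevant $K$-theory class comes from a geometric cycle $(X,E\to X\times T)$ with $X$ a \emph{smooth} proper $\Gamma$-manifold and, since $\Gamma$ (hence each isotropy group $\Gamma(\theta)$) is torsion-free, $X$ may be taken to have free $\Gamma$-action, so $X/\Gamma$ is a genuine closed manifold and the associated leafwise operator is a genuine longitudinal Dirac operator on a foliated bundle with a \emph{local} index density. For such operators the two eta invariants $\eta^\nu_{\rm up}$ and $\eta^\nu_{\rm down}$ have the same local variation (the APS-type local term), so the difference of determinants $w^\nu_{\rm reg}-w^\nu_{\rm av}$ is a homotopy-invariant, metric-independent quantity that can be computed on a cycle in normal form (a bundle modification of a trivial cycle), where it is manifestly zero; this is the foliated counterpart of Keswani's "the relevant determinant is a bordism invariant of geometric cycles, hence computable and zero" argument. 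Propagating back through the identifications gives $\rho^\nu(V,\maF)=\rho^{\nu'}(V',\maF')$.

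\textbf{Main obstacle.} The hard part is not the $K$-theoretic bookkeeping but the analytic control of the \emph{large-$t$ and small-$t$ ends} of the determinant paths in the two von Neumann algebras simultaneously --- i.e.\ showing the regularized determinants $w^\nu_{\rm reg,\epsilon}$, $w^\nu_{\rm av,\epsilon}$ converge and that the difference loop genuinely closes up in $\mathcal{I}\K_{\maA_m}(\maE_m)$, which requires the spectral density functions $N(\lambda)$, $\tilde N(\lambda)$ to have matching behaviour near $0$ (Novikov--Shubin type estimates for the foliated signature operator) so that the contributions of the small eigenvalues to the two eta invariants cancel in the difference. Equivalently, one must verify that the path through the index projections at $t=+\infty$ is itself a trace-class perturbation of the identity with the expected determinant; this is where Keswani's delicate "perturbation of the operator to make zero an isolated point of the spectrum / Hilbert--Poincaré complex" machinery must be transplanted to the measured-foliation setting, using the semifiniteness of $\tau^\nu$ and $\tau^\nu_\maF$ and Proposition~\ref{Measurability}. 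The special-homotopy-equivalence hypothesis is what makes this transplantation feasible, since it lets us work on a single fixed Hilbert module throughout.
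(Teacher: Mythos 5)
Your proposal correctly identifies the overall Keswani-style framework that the paper uses: work on the disjoint union, turn the difference of eta invariants into a difference of determinants of a path $\bigl(\psi_t(\D'_m)\oplus(\psi_t(\D_m))^{-1}\bigr)_{t=\epsilon}^{1/\epsilon}$, close the path to a loop, factor the two determinants through $\Theta:\Omega(\mathcal{I}\K_{\A_m})\to K_0(\A_m)$ via Proposition \ref{prop:diagram}, and kill the loop contribution by the equality $\tau^\nu_{{\rm reg},*}=\tau^\nu_{{\rm av},*}$, which follows from surjectivity of $\mu_\rtimes$ together with the foliated Atiyah index theorem. Up to that point you are on the paper's track.

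There is, however, a genuine gap at the small-time end, which is where the paper actually spends most of its effort and where injectivity is used. Your ``difference loop'' closes only at $t=\infty$ (through the index projections); at $t=\epsilon$ its two free endpoints are $\psi_\epsilon(\D'_m)$ and $\psi_\epsilon(\D_m)$, which are different operators, so what you have is not a loop and $\Theta$ cannot be applied. Note also that $\psi_t\to -1$ pointwise as $t\to 0$, and $-\Id$ is not in $\mathcal{I}\K_{\A_m}(\H_m)$, so one cannot close up by letting $\epsilon\to 0$. The paper closes this end with the small time path $ST_\epsilon$: injectivity of $\mu_\rtimes$ plus the Hilsum--Skandalis homotopy invariance of the index class forces the equality of geometric cycles $[\tM,\Lambda\to\tM\times T]=[\tM',\Lambda'\to\tM'\times T]$ in $K_1^{\rm geo}(T\rtimes\Gamma)$, and Theorem \ref{theo:equivalent-cycles} converts the resulting chain of bordisms and bundle modifications into an explicit norm-continuous path of operators $-\exp(i\pi\chi(\maB_s))$ (Dirac operators degenerating to harmonic oscillators along cylindrical ends), whose regular and averaged determinants are then shown to agree in the limit by a localization argument with controlled chopping functions. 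Your proposed substitute --- that the surviving determinant ``can be computed on a cycle in normal form, where it is manifestly zero'' because the two eta invariants ``have the same local variation'' --- is not a valid argument: the quantity whose vanishing you need is precisely $\rho^\nu-\rho^{\nu'}$, so asserting its homotopy invariance or metric independence at this stage is circular, and the determinants in question are attached to specific operator paths, not to bordism classes of cycles. Finally, your ``main obstacle'' paragraph locates the hard analysis in Novikov--Shubin-type estimates near zero; in the paper that issue is confined to the large time path and is handled by the homotopy invariance of the leafwise $L^2$ Betti numbers together with normality of the traces, whereas the genuinely delicate analysis (trace-class estimates uniform in $s$, Bismut--Freed-type localization, the $\epsilon$-compression argument of Proposition \ref{prop:index-equal}) lives in the small time path that your proposal omits.
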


\medskip
\noindent
{\bf Sketch of the proof.} We follow the method of Keswani, see \cite{Kes1}, \cite{Kes2} and \cite{keswani3}.
We simply denote the relevant signature operators by $D'=(D'_{L'})_{L'\in V'/\maF'}$, $\tD'=(\tD'_\theta)_{\theta\in T} $, 
$\D'_m$ and $D=(D_{L})_{L\in V/\maF}$, $\tD=(\tD_\theta)_{\theta\in T}$, $\D_m$. We shall first assume that $T=T^\prime$
and $\Gamma=\Gamma'$. Consider,
with obvious notation, 
the trivial $\Gamma$-equivariant  fibration $(\tM'  \sqcup -\tM)\times T\to T$
as well as the foliated space $(X,\maF^\sqcup)$, with $X:=V'\sqcup (-V)$
and $\maF^\sqcup$ induced by $\maF$ and $\maF'$. The longitudinal Grassmann bundles
on $V'$ and $-V$ define a longitudinally smooth  bundle $H$ over the foliated space
$X$.  
Let $\widehat{H}$ be the equivariant vector bundle on $(\tM'  \sqcup -\tM)\times T\to T$
obtained by pulling back the bundle $H$. All the constructions explained in the previous
sections extend to 
$(\tM' \sqcup -\tM)\times T\to T$ and $\widehat{H}$  as well as to 
$(X,\maF^\sqcup)$ and $H$. More precisely, we treat $(\tM' \sqcup -\tM)\times \{\theta\}$ as the leaf of the product foliation even if it is not connected and we consider the induced lamination $\maF^\sqcup$. So the leaves are not connected for us. Clearly, we can define the $C^*$-algebra $\maB_m^H$ as the completion of the convolution algebra	of compactly supported continuous sections over the corresponding monodromy groupoid $G^\sqcup$, with respect to the direct sum of the regular representations in $L^2(\tM', \tE ')\oplus L^2(\tM, \tE)$. Note that $G^\sqcup$ can be identified with the space
$$
G^\sqcup = [(\tM' \sqcup -\tM) \times  (\tM' \sqcup -\tM) \times T] / \Gamma.
$$
The reader should note that $\maB_m^H$ is different from the $C^*$-algebra of the monodromy groupoid of the disjoint union of the two foliations $(V',\maF')$ and $(V,\maF)$, and that $\maB_m^H$ is Morita equivalent to the $C^*$-algebra $\maA_m$. Indeed, we then have a well defined 
 $\A_m$-Hilbert module $\H_m$ (this is nothing but $\E^\prime_m\oplus \E_m$)
 as well as an isomorphism 
 $\chi_m : \maB_m^H\to \K_{\A_m}(\H_m)$ constructed in the same way as in the previous sections.
 Now, there are again representations
 $$\pi^{{\rm reg}} = (\pi_\theta^{{\rm reg}})_{\theta\in T}: \maB_m^H\to W^*_\nu (G^\sqcup;H)\,,\quad \pi^{{\rm av}}= (\pi_\theta^{{\rm av}})_{\theta\in T}: \maB_m^H\to W^*_\nu 
 (X, \maF^\sqcup;H).$$ Here, the von Neumann algebras $W^*_\nu (G^\sqcup;H)$ and $W^*_\nu 
 (X, \maF^\sqcup;H)$ are defined using $\nu$-essentially bounded families over $T$ as in the previous sections, except that the operators act on the direct sums of the Hilbert spaces. Said differently, we are again simply allowing disconnceted leaves. Finally, the previous constructions of traces and determinants on foliations, work as well for these two von Neumann algebras. 
 So,  extending obviously the constructions of Section \ref{loops+bott}, using the composition operation of Hilbert modules, we can consider  determinants 
$$w^\sqcup_{{\rm reg}}: \Omega( \mathcal{I}\K_{\A_m}(\H_m))\to \CC\,,\quad
w^\sqcup_{{\rm av}}: \Omega( \mathcal{I}\K_{\A_m}(\H_m))\to \CC$$
Following the notation
of Subsection \ref{subsect:winding}, consider the path in $\mathcal{I}\K_{\A_m}(\H_m)$
$$\mathcal{W}_\epsilon:= \left( \psi_t (\D^\prime_m)\oplus (\psi_t (\D_m))^{-1}
\right)^{t=1/\epsilon}_{t=\epsilon}\,.$$
Consider 
 $w^\sqcup_{{\rm reg}}(  \mathcal{W}_\epsilon)$ and $w^\sqcup_{{\rm av}}(\mathcal{W}_\epsilon )$
 (one can easily show that the determinants of these paths are indeed well defined, see 
 Remark \ref{paths-vs-loops}). The proof proceeds along the following steps:
 
\begin{itemize}
\item we connect $\psi_\epsilon(\D^\prime_m)\oplus (\psi_\epsilon (\D_m))^{-1}$ to the identity
using the small time path $ST_\epsilon$. This step is based on the injectivity
of the Baum-Connes map and on the homotopy invariance of the signature index class;
\item we connect  $\psi_{1/\epsilon}(\D^\prime_m)\oplus (\psi_{1/\epsilon} (\D_m))^{-1}$ to the identity via the large time path $LT_{1/\epsilon}$. This step is based on the surjectivity of the Baum-Connes map, on the foliated homotopy invariance
of the space of leafwise harmonic forms
and  on the homotopy invarance of the signature index class;
\item we obtain
in this way a loop $\ell$ in $\mathcal{I}\K_{\A_m}(\H_m)$, i.e. an element of $\Omega(\mathcal{I}\K_{\A_m}(\H_m))$;
\item we prove that 
$w^\sqcup_{{\rm reg}}(LT_{1/\epsilon})$ and $w^\sqcup_{{\rm av}}(LT_{1/\epsilon})$ are 
well defined and that 
\begin{equation}\label{large-to-zero}
w^\sqcup_{{\rm reg}}(LT_{1/\epsilon})\to 0 \quad \text{and}\quad  w^\sqcup_{{\rm av}}(LT_{1/\epsilon})\to 0\quad\text{as}\quad \epsilon\downarrow 0
\end{equation}
\item we prove that 
$w^\sqcup_{{\rm reg}}(ST_{\epsilon})$ and $w^\sqcup_{{\rm av}}(ST_{\epsilon})$
are well defined and 
\begin{equation}\label{small-to-zero}
(w^\sqcup_{{\rm reg}}(ST_{\epsilon})- w^\sqcup_{{\rm av}}(ST_{\epsilon}))\to 0 \quad\text{as} \quad\epsilon\downarrow 0
\end{equation}
\end{itemize}
Now consider the map $\Theta: \Omega (\mathcal{I}\K_{\A_m}(\H_m))\to K_0(\A_m)$. By the surjectivity
of the Baum-Connes map one proves, using $\Theta$, the following fundamental equality:
\begin{equation}\label{equality-deter.}
w^\sqcup_{{\rm reg}} (\ell)-w^\sqcup_{{\rm av}}(\ell)=0
\end{equation}
which means that
$$(w^\sqcup _{{\rm reg}}(  \mathcal{W}_\epsilon)- w^\sqcup_{{\rm av}}(\mathcal{W}_\epsilon ))+
(w^\sqcup_{{\rm reg}}(LT_{1/\epsilon})- w^\sqcup_{{\rm av}}(LT_{1/\epsilon}) + (w^\sqcup_{{\rm reg}}(ST_{\epsilon})- w^\sqcup_{{\rm av}}(ST_{\epsilon}))=0$$
Taking the limit as $\epsilon\downarrow 0$, using \eqref{large-to-zero}, \eqref{small-to-zero} and recalling that 
$$\lim_{\epsilon\downarrow 0}(w^\sqcup_{{\rm reg}}(  \mathcal{W}_\epsilon)- w^\sqcup_{{\rm av}}(\mathcal{W}_\epsilon ))=\rho^{\nu'} (D^{{\rm sign}}; V^\prime,\mathcal{F}^\prime)-\rho^\nu(D^{{\rm sign}};V,\mathcal{F})$$
we end the proof in the particular case $T=T'$ and $\Gamma=\Gamma'$.
In the general case we know that, since we have assumed the special homotopy equivalence, $T$ and $T'$ are homeomorphic
and that the two groups are isomorphic. Therefore, the above proof can be adapted easily.

\section{Proof of the homotopy invariance for special homotopy equivalences: details}\label{sec:proof}

We shall now provide more details for the proof of Theorem \ref{homotopy-invariance}; most of our work
in the previous sections will go into the proof. We shall work under the additional assumption that $T=T'$ and
$\Gamma=\Gamma'$.

\subsection{Consequences of surjectivity I: equality of determinants}

The following Proposition will play a crucial role in our analysis. Recall that we have defined
traces $\tau^\nu_{{\rm reg},*}: K_0 (\A_m)\to\CC$ and $\tau^\nu_{{\rm av},*}: K_0 (\A_m)\to\CC$;
where in our notation $\A_m := C(T)\rtimes_m\Gamma$.

\begin{proposition}\label{prop:equality-traces}
Assume the Baum-Connes map $\mu_\rtimes:
K_0^{{\rm geo}} (T\rtimes \Gamma) \rightarrow K_0(C(T)\rtimes_m\Gamma)$
surjective;  then $$\tau^\nu_{{\rm reg},*}=\tau^\nu_{{\rm av},*}$$
\end{proposition}

\begin{proof}
Each K-theory class $\alpha\in K_0(C(T)\rtimes_m\Gamma)$ is, by the surjectivity of $\mu_\rtimes$, 
the index class associated to  a $\Gamma$-equivariant 
family of Dirac-type operators on manifolds without
boundary. Using 
formula \eqref{c-index-equality} (which is a consequence of the analogue of Atiyah's index theorem
on coverings and Calderon's formula),
we end the proof.
\end{proof}

\begin{proposition}
If the Baum-Connes map $\mu_\rtimes: K_0^{{\rm geo}} (T\rtimes \Gamma) \rightarrow K_0(C(T)\rtimes_m \Gamma)$ is surjective, then
$w^\nu_{{\rm av}}$  and $w^\nu_{{\rm reg}}$ coincide 
on 
$\Omega  ( \mathcal{I}\K_{\A_m}(\E_m))  $.
\end{proposition}

\begin{proof}
Recall that $w^\nu_{{\rm av}}: \Omega  ( \mathcal{I}\K_{\A_m}(\E_m)) \to \CC
$ and $w^\nu_{{\rm reg}}: \Omega  ( \mathcal{I}\K_{\A_m}(\E_m)) \to \CC$ 
are defined by passing to the respective Von Neumann algebras
and then taking the de La Harpe - Skandalis determinant there (see Definition \ref{def:determinants}):
in formulae
$$ w^\nu_{{\rm av}}: =w^\nu\circ \sigma^{{\rm av}}\,,\quad w^\nu_{{\rm reg}}:= w^\nu\circ \sigma^{{\rm reg}}$$
Using the commutative diagram
of Proposition \ref{prop:diagram}, as summarized in formula
\eqref{eq:commute}, 
and the  equality of traces on $K_0$ given by Proposition \ref{prop:equality-traces},
we immediately conclude the proof.
\end{proof}

\begin{corollary}
Let $V=\tM\times_\Gamma T$ and $V'=\tM'\times_\Gamma T$ be
two homotopy equivalent foliated bundles as in the previous subsection,
i.e. through a special homotopy equivalence.
Let $\H_m=\E_m^\prime \oplus \E_m$ be the $\A_m$-Hilbert module associated to the disjoint
union of $\tM'\times T$ and $-(\tM\times T)$. Let $\ell$ be a loop in $\Omega
(\mathcal{I}\K_{\A_m}(\H_m))$.\\
If the Baum-Connes map $\mu_\rtimes$ is surjective, then
\begin{equation}\label{equality-det}
w^\sqcup_{{\rm av}} (\ell) = w^\sqcup_{{\rm reg}} (\ell)
\end{equation}
\end{corollary}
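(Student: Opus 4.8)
The plan is to deduce this Corollary directly from the preceding Proposition, applied not to the original foliated bundle but to the "doubled" configuration $(\tM' \sqcup -\tM) \times T \to T$ introduced in Section~\ref{sec:keswani}. First I would observe that all the structure used in the preceding Proposition --- the $C^*$-algebra $\maB_m^H$, the $\maA_m$-Hilbert module $\H_m = \E_m' \oplus \E_m$, the isomorphism $\chi_m : \maB_m^H \to \K_{\maA_m}(\H_m)$, the two representations $\pi^{\rm reg} : \maB_m^H \to W^*_\nu(G^\sqcup;H)$ and $\pi^{\rm av}: \maB_m^H \to W^*_\nu(X,\maF^\sqcup;H)$, and the traces and determinants built from them --- is exactly the verbatim extension to disconnected leaves of the machinery of Sections~\ref{sec:foliated}--\ref{loops+bott}. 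In particular the maps $\sigma^{\rm reg}, \sigma^{\rm av}$ on $\Omega(\mathcal{I}\K_{\maA_m}(\H_m))$, the isomorphism $\Theta : \Omega(\mathcal{I}\K_{\maA_m}(\H_m)) \to K_0(\maA_m)$ built from Bott periodicity, $(\chi_m^{-1})_*$ and the Morita isomorphism, and the commutative diagrams of Proposition~\ref{prop:diagram} all hold with $\H_m$ in place of $\E_m$. This is because $\maB_m^H$ is still Morita equivalent to $\maA_m$, as noted in the text, so $K_0(\maB_m^H) \simeq K_0(\maA_m)$ and all the compatibility statements of Proposition~\ref{prop:morita-compatible} go through unchanged.

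The key step, then, is simply to invoke the analogue of formula~\eqref{eq:commute} in this doubled setting: $w^\sqcup_{\rm av} = \Theta \circ \tau^\nu_{\rm av,*}$ and $w^\sqcup_{\rm reg} = \Theta \circ \tau^\nu_{\rm reg,*}$, where $\tau^\nu_{\rm av,*}, \tau^\nu_{\rm reg,*} : K_0(\maA_m) \to \CC$ are the \emph{same} two homomorphisms on $K_0(\maA_m)$ as before (the Morita isomorphism for $\maB_m^H$ is canonically compatible with the one for $\maB_m$, since both come from the stability theorem of Hilsum--Skandalis). Now apply Proposition~\ref{prop:equality-traces}: the surjectivity of the maximal Baum--Connes map $\mu_\rtimes$ forces $\tau^\nu_{\rm reg,*} = \tau^\nu_{\rm av,*}$ on all of $K_0(\maA_m)$. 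Composing both sides with $\Theta$ gives $w^\sqcup_{\rm reg} = w^\sqcup_{\rm av}$ as functions on $\Omega(\mathcal{I}\K_{\maA_m}(\H_m))$, and evaluating at the loop $\ell$ yields \eqref{equality-det}.

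I do not anticipate a serious obstacle here: the content of the Corollary is genuinely contained in the Proposition, and the only thing to be checked carefully is that the two determinant functionals $w^\sqcup_{\rm reg}$ and $w^\sqcup_{\rm av}$ on $\Omega(\mathcal{I}\K_{\maA_m}(\H_m))$ factor through $\Theta$ and the \emph{two canonical traces on $K_0(\maA_m)$} --- i.e.\ that passing from $\E_m$ to $\H_m = \E_m' \oplus \E_m$ does not introduce any new invariant, but only uses the direct-sum (composition) operation of Hilbert modules, under which $\Theta$ and the traces are additive/natural. This is precisely the ``extending obviously the constructions of Section~\ref{loops+bott}, using the composition operation of Hilbert modules'' remark made earlier, so I would spell out that naturality in one or two sentences and then quote Propositions~\ref{prop:diagram} and~\ref{prop:equality-traces}. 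The mild bookkeeping point worth a careful line is that the bundle $H$ over $X = V' \sqcup (-V)$ restricts to the longitudinal Grassmann bundles on the two pieces, so that $\H_m = \E_m' \oplus \E_m$ literally, and the whole argument is the same finite computation as in the proof of the preceding Proposition.

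\begin{proof}
All the constructions of Sections~\ref{sec:foliated}--\ref{loops+bott} extend verbatim to the trivial $\Gamma$-equivariant fibration $(\tM'\sqcup -\tM)\times T\to T$ and the bundle $\widehat{H}$, as explained in Section~\ref{sec:keswani}: one obtains the Morita isomorphism $\maM_m : K_0(\maA_m)\to K_0(\maB_m^H)$, the isomorphism $\chi_m:\maB_m^H\to \K_{\maA_m}(\H_m)$, and the representations $\pi^{\rm reg},\pi^{\rm av}$ of $\maB_m^H$ in $W^*_\nu(G^\sqcup;H)$ and $W^*_\nu(X,\maF^\sqcup;H)$, together with traces $\tau^\nu$, $\tau^\nu_{\maF}$ and the associated de~La~Harpe--Skandalis determinants. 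Since $\H_m=\E_m'\oplus\E_m$ is the direct sum of the Hilbert modules attached to $V'$ and $-V$, the maps $\Theta:\Omega(\mathcal{I}\K_{\maA_m}(\H_m))\to K_0(\maA_m)$, $\sigma^{\rm reg}$, $\sigma^{\rm av}$, $w^\sqcup_{\rm reg}$ and $w^\sqcup_{\rm av}$ are defined exactly as in Section~\ref{loops+bott}, and the analogue of Proposition~\ref{prop:diagram} gives, as in \eqref{eq:commute},
\begin{equation}\label{eq:commute-disjoint}
w^\sqcup_{\rm av}= \Theta\circ\tau^\nu_{{\rm av},*}\,,\qquad w^\sqcup_{\rm reg}= \Theta\circ\tau^\nu_{{\rm reg},*}\,,
\end{equation}
where $\tau^\nu_{{\rm av},*},\tau^\nu_{{\rm reg},*}:K_0(\maA_m)\to\CC$ are the homomorphisms of Subsection~\ref{subsec:traces}, unchanged. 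By Proposition~\ref{prop:equality-traces}, the surjectivity of $\mu_\rtimes$ implies $\tau^\nu_{{\rm reg},*}=\tau^\nu_{{\rm av},*}$ on $K_0(\maA_m)$. Composing with $\Theta$ and using \eqref{eq:commute-disjoint} we get $w^\sqcup_{\rm reg}=w^\sqcup_{\rm av}$ as functions on $\Omega(\mathcal{I}\K_{\maA_m}(\H_m))$; evaluating at $\ell$ yields \eqref{equality-det}.
\end{proof}
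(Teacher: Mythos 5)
Your proof is correct and follows the same route the paper intends: the Corollary is an immediate application of the preceding Proposition (i.e.\ of the commutativity diagrams of Proposition~\ref{prop:diagram} together with the trace equality of Proposition~\ref{prop:equality-traces}) to the doubled module $\H_m=\E_m'\oplus\E_m$, and the paper gives no separate argument beyond this. Your explicit verification that the constructions of Section~\ref{loops+bott} extend verbatim to the disjoint-union setting is exactly the routine bookkeeping the paper leaves implicit.
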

If we consider, in particular, the loop $\ell\in \Omega
(\mathcal{I}\K_{\A_m}(\H_m))$
defined in the sketch of the proof
of Theorem \ref{homotopy-invariance}, then we 
 have justified formula \eqref{equality-deter.}.

\subsection{Consequences of surjectivity II: the large time path}

Let $V=\tM\times_\Gamma T$ and $V'=\tM'\times_\Gamma T$ be
two homotopy equivalent foliated bundles as in the previous subsection,
i.e. through a special homotopy equivalence with $\Gamma=\Gamma'$
and $T=T'$. 
We consider the Cayley transforms 
of the regular operators $\D_m: \E_m\to\E_m$  and $\D'_m: \E'_m \to \E'_m$: 
$$\mathcal{U}:= (\D_m - i \Id)(\D_m +i\Id)^{-1}\,,\quad \mathcal{U}':= (\D'_m - i \Id)(\D'_m +i\Id)^{-1}$$
Let $\tf:\tM\times T\to \tM'\times T$ be a fiberwise smooth equivariant map inducing the special
homotopy equivalence between $(V,\maF)$ and $(V',\maF')$; let $g$ and $\tg$ choices 
for the homotopy inverses of $f$ and $\tf$, with $\tg:  \tM'\times T \to \tM\times T$ inducing $g$.
Following \cite{Kes1} (Section 3) one can construct a path  
of unitaries in $\H_m=\E_m^\prime \oplus \E_m$, $\mathcal{V}(t), t\in [0,2]$, connecting 
$\mathcal{U}' \oplus \mathcal{U}^{-1}=\mathcal{V}(0)$ to the identity $\Id_{\H_m}=\mathcal{V}(2)$. 
The path $\mathcal{V}(t), t\in [0,2]$ (which is denoted ${\bf W}(t)$ in  \cite{Kes1}) is
obtained by defining a perturbation  $\sigma(t)$ of the grading operator defining the signature
operator; the definition of $\sigma(t)$, which is really due
to Higson and Roe,  employs 
the pull back operator defined by the homotopy equivalence $\tilde g$ (precomposed and composed
respectively with
an extension to $\E_m$ and $\E'_m$  of the smoothing operators $(\phi (\tD_\theta))_{\theta\in T}$,
$(\phi (\tD'_\theta))_{\theta\in T}$, $\phi$ being 
a rapidly decreasing smooth function with compactly supported Fourier transform). We omit the actual definition
of $\mathcal{V} (t)$ since it is somewhat lengthy and refer instead to \cite{Kes1}, pages 968-969.

Recall that our goal is to construct a path connecting
$ \psi_{1/\epsilon} (\D'_m))\oplus (\psi_{1/\epsilon} (\D_m)))^{-1}$,
(where $ \psi_\alpha(x)=-\exp(i\pi\frac{2}{\sqrt{\pi}}\int_0^{\alpha x} e^{-u^2}du$),
to the identity on $\H_m$.

To this end, notice  that the Cayley transform of the operator $\D_m$ can be expressed
as $-\exp(i\pi \chi (\D_m))$, with $\pi\chi(x)=2\arctan (x)$.
Recall, from \cite{Kes1}, that a chopping function is an odd continuous function  $\mu:\RR\to \CC$
such that $|\mu(x)|\leq 1$ and $\lim_{x\to \pm\infty} \mu(x)=\pm 1$; 
both $\chi(x):= \frac{2}{\pi}\arctan(x)$ and $\phi(x):=\frac{2}{\sqrt{\pi}}\int_0^{x} e^{-u^2}du$ are chopping functions.
Two chopping functions
$\mu_1$ and $\mu_2$ 
can be homotoped one to the other via the straight line homotopy $k_s=(1-s)\mu_1 + s \mu_2$.
Thus
$\mathcal{U'}\oplus \mathcal{U}^{-1}$, which is equal to
$-\exp(i\pi \chi (\D'_m))\oplus -\exp(-i\pi \chi (\D_m))$, can be joined to 
$$-\exp(i\pi \phi (\D'_m))\oplus -\exp(-i\pi \phi (\D_m))\,,\quad \phi(x)=\frac{2}{\sqrt{\pi}}\int_0^x e^{-u^2}du$$
via the path $\K(s):= -\exp(i\pi k_s (\D'_m))\oplus -\exp(-i\pi k_s (\D_m))$. We denote by $LT$
the concatenation of $\K(s)$ and  $\mathcal{V}(t)$; $LT$ is a path joining 
$-\exp(i\pi \phi (\D'_m))\oplus -\exp(-i\pi \phi (\D_m))$, with $ \phi(x)=\frac{2}{\sqrt{\pi}}\int_0^x e^{-u^2}du$,
to the identity.

\begin{definition}
Let $\epsilon>0$ be fixed.
The large time path $LT_{1/\epsilon}$ is the path obtained from the above construction but
with the operators $\D_m$ and $\D'_m$ replaced by $\frac{1}{\epsilon}\D_m$ and $\frac{1}{\epsilon}\D'_m$
respectively.
The large time path connects 
$$ \psi_{1/\epsilon} (\D'_m))\oplus (\psi_{1/\epsilon} (\D_m)))^{-1}\,,\quad \text{ with }\quad \psi_{1/\epsilon}(x)=-\exp\left( i\pi(\frac{2}{\sqrt{\pi}}\int_0^{x/\epsilon} e^{-u^2}du) \right)\,,$$
to the identity.
\end{definition}

For later use, we notice that 
\begin{equation}\label{phi-psi}
\psi_{1/\epsilon}=-\exp( i\pi \phi_{1/\epsilon}) \,,\quad \text{ with } 
\quad \phi_{1/\epsilon}(x)=\frac{2}{\sqrt{\pi}}\int_0^{x/\epsilon} e^{-u^2}du\,.
\end{equation}

For each fixed $\epsilon>0$
 $LT_{1/\epsilon}$ is a path in 
$\mathcal{I} \K_{\A_m}(\H_m)$ (we recall that this is the group
consisting of the operators 
$A\in \maB_{\A_m} (\H_m)$  such that $ A-\Id \in \K_{\A_m}(\H_m)$
 and  $A$  is invertible).
In order to show this property we first recall that at the end of Subsection \ref{subsect:dirac}, Sobolev modules 
$\E_m^{(\ell)}$ were
introduced and the compactness of the inclusion $\E_m^{(\ell)} \hookrightarrow \E_m^{(k)}$, $\ell>k$ was stated.
Observe then that if $\chi$
is any chopping function with the property that $\chi ' \sim 1/x^2$ as $|x|\to \infty$, then,
using the compactness of the inclusion of the  Sobolev  module $\maE^{(1)}_m$
into $\maE_m$, one proves easily that $-\exp(i\pi\chi (\D_m)\in \mathcal{I} \K_{\A_m}(\H_m)$.
Notice now that both $\frac{2}{\pi}\arctan(x)$ and $\frac{2}{\sqrt{\pi}}\int_0^{x} e^{-u^2}du$ 
satisfy this condition; thus $LT_{1/\epsilon}\in \mathcal{I} \K_{\A_m}$.

\subsection{The determinants of the large time path}

Recall the isomorphism $\chi_m : \maB^H_m \to K_{\A_m}(\H_m)$,
and the representations 
$$\pi^{{\rm reg}} :  \maB^H_m\to W^*_\nu(G^\sqcup;H)\,;\quad 
\pi^{{\rm av}}:  \maB^E_m\to W^*_\nu(X,\maF^\sqcup;H)\,,\;\;\;\text{ with } X=V\sqcup (-V')$$
Proceeding  as in Section \ref{loops+bott}, we 
can use $\chi_m^{-1}$ and $\pi^{{\rm reg}}$ in order to define a path $\sigma^{{\rm reg}}(LT_{1/\epsilon})$ in $\mathcal{I}\K (W^*_\nu(G^\sqcup;H))$.The end-points  of this path 
are $\tau^\nu$ trace class perturbations of the identity; thus, see Remark \ref{paths-vs-loops}, the determinant $w^\nu (\sigma^{{\rm reg}}(LT_{1/\epsilon}))$ is well defined and we can set
$$w^\nu_{{\rm reg}} (LT_{1/\epsilon}):= w^\nu (\sigma^{{\rm reg}}(LT_{1/\epsilon}))\,.$$
Similarly, 
$$w^\nu_{{\rm av}} (LT_{1/\epsilon}):= w^\nu_{\maF^\sqcup} (\sigma^{{\rm av}}(LT_{1/\epsilon}))$$
is well defined (and we recall that $\maF^\sqcup$ is the foliation induced on $X=V'\sqcup (- V)$
by the foliations $\maF$ and $\maF'$ on $V$ and $V'$).

\begin{proposition}\label{prop:limit-large}
As $\epsilon\downarrow 0$ we have
\begin{equation}\label{limit-large}
w^\nu_{{\rm reg}} (LT_{1/\epsilon})\longrightarrow 0\,,\quad w^\nu_{{\rm av}} (LT_{1/\epsilon})
\longrightarrow 0
\end{equation}
\end{proposition}

\begin{proof}
Fix $\epsilon>0$ and recall that $LT_{1/\epsilon}$ is the composition of two paths: the path
$\mathcal{V}_{1/\epsilon}$,  connecting 
$$-\exp(i\pi \chi (\frac{1}{\epsilon}\D'_m))\oplus -\exp(-i\pi \chi (\frac{1}{\epsilon}\D_m))
\text{ (with }
\chi(x)=\frac{2}{\pi}\arctan(x)\text{)}\quad \text{ to }\quad \Id_{\H_m}\,,$$
and the straight line path $\mathcal{K}_{1/\epsilon}$,
connecting $ \psi_{1/\epsilon} (\D'_m))\oplus (\psi_{1/\epsilon} (\D_m)))^{-1}$
to $-\exp(i\pi \chi (\frac{1}{\epsilon}\D'_m))\oplus -\exp(-i\pi \chi (\frac{1}{\epsilon}\D_m))$.
Consider $\sigma^{{\rm reg}}(LT_{1/\epsilon})$ in $\mathcal{I}\K (W^*_\nu(G^\sqcup;H))$;
for the signature family $\tP$ associated to $\tM'\sqcup (-\tM)\times T \to T$ 
denote by $\tilde{\Pi}:= (\tilde{\Pi}_\theta)_{\theta\in T}$ the element 
in $W^*_\nu(G^\sqcup;H)$ defined by the family of orthogonal projections onto
the null space. Then, proceeding as in Keswani \cite{Kes1}, one can show that 
$\sigma^{{\rm reg}}(LT_{1/\epsilon})$ converges strongly  to the path 
\begin{equation}
\label{nu-infty}
\tilde{\mathcal{V}}_\infty (t)=\cases -\tilde{\Pi}+\tilde{\Pi}^\perp\,,\quad t\in [-1,3/2]\\
-e(t)\tilde{\Pi}+\tilde{\Pi}^\perp\,,\quad t\in [3/2,2]\endcases
\end{equation}
with
$$e(t)=-\begin{pmatrix} \exp(2\pi i t) & 0
\cr 0  & \exp(-2\pi i t) \cr
\end{pmatrix} 
$$
More precisely: $\sigma^{{\rm reg}} (\mathcal{K}_{1/\epsilon})$ converges
strongly  to the constant path $\tilde{\Pi}+\tilde{\Pi}^\perp$, whereas 
$\sigma^{{\rm reg}} (\mathcal{V}_{1/\epsilon})$ (is homotopic, with fixed end-points, to 
a path that) converges strongly  to $\tilde{\mathcal{V}}_\infty (t)$.
 Similarly, if we denote by $\Pi\in W^*_\nu(X, \maF^\sqcup;H)$, $X=V'\sqcup (-V)$,
 the projection onto the null space of the longitudinal signature operator on $X$,
 then $\sigma^{{\rm av}}(LT_{1/\epsilon})$ converges strongly to the path 
\begin{equation}
\label{nu-infty-bis}\mathcal{V}_\infty (t)=\cases -\Pi+\Pi^\perp\,,\quad t\in [-1,3/2]\\
-e(t)\Pi+\Pi^\perp\,,\quad t\in [3/2,2]\endcases
\end{equation}
We can now end the proof \footnote{Notice that the proof given by Keswani for coverings
is not totally correct; the argument given here corrects the mistakes there.}. Recall
the function $\phi_{1/\epsilon}(x):=\frac{2}{\sqrt{\pi}}\int_0^{x/\epsilon} e^{-u^2}du$, see
formula \eqref{phi-psi};
consider the function $\alpha(x)$ equal to zero for $x=0$, equal to 1 for $x>0$
and equal to $-1$ for $x<0$; let $\varpi_{1/\epsilon}(t)= (1-t)\phi_{1/\epsilon} + t \alpha $ be
the straight-line path joining $\phi_{1/\epsilon}$ to $\alpha$; consider the path 
$$X_{1/\epsilon} (t):= -\exp(i\pi\varpi_{1/\epsilon}(t)(\tD')\oplus 
-\exp(-i\pi\varpi_{1/\epsilon}(t)(\tD))\,.
$$
We notice for later use that as $\epsilon\to 0$, $\phi_{1/\epsilon}$ converges pointwise to $\alpha$;
using once again  the spectral theorem for unbounded operators this means
that, in the strong topology,
\begin{equation}\label{strong}
\phi_{1/\epsilon}(\tP)\longrightarrow \alpha(\tP)\quad\text{as}\quad \epsilon\downarrow 0
\end{equation}
where we recall that $\tP$ denotes the
signature family on $(\tM'\sqcup (-\tM))\times T\to T$.
We go back to the path $X_{1/\epsilon} (t)$, which  is  a path in $W^*_\nu(G^\sqcup;H)$ 
joining 
$\sigma^{{\rm reg}} (\psi_{1/\epsilon} (\D'_m))\oplus (\psi_{1/\epsilon} (\D_m)))^{-1})$,
i.e. $ \psi_{1/\epsilon} (\tD'))\oplus (\psi_{1/\epsilon} (\tD)))^{-1}$, to the constant
path $-\tilde{\Pi}+\tilde{\Pi}^\perp$. 
Consider the loop $\gamma_{1/\epsilon}$ in 
$W^*_\nu(G^\sqcup;H)$ obtained by the concatenation of 
$X_{1/\epsilon} (t)$, $\tilde{\mathcal{V}}_\infty (t)$ and the reverse of 
$\sigma^{{\rm reg}}(LT_{1/\epsilon})$; by the above results the loop
$\gamma_{1/\epsilon}$ is strongly null homotopic, thus its determinant is equal to zero.
Summarizing:
$$w^\nu(\sigma^{{\rm reg}}(LT_{1/\epsilon}))=w^\nu (\tilde{\mathcal{V}}_\infty)+
w^\nu (X_{1/\epsilon} )$$
which can be rewritten as
$$w^\nu_{{\rm reg}} (LT_{1/\epsilon})=w^\nu (\tilde{\mathcal{V}}_\infty)+
w^\nu (X_{1/\epsilon} )$$
Computing
$$\tilde{\mathcal{V}}_\infty (t)^{-1} \frac{d\tilde{\mathcal{V}}_\infty (t)}{dt}=\cases 0\,,\quad t\in [-1,3/2]\\
(2\pi i)\begin{pmatrix} \Id& 0
\cr 0  & -\Id\cr
\end{pmatrix} \tilde{\Pi}\,,\quad t\in [3/2,2] \endcases$$
and recalling that the von Neumann dimension of the null space of the signature
operator is a foliated homotopy invariant, see \cite{HL-betti}, we deduce that 
$w^\nu (\tilde{\mathcal{V}}_\infty (t))=0$. 
Thus the first part of the Proposition will follow from the following result:
$$w^\nu (X_{1/\epsilon}) \longrightarrow 0 \quad \text{as }\quad \epsilon\downarrow 0\,. $$
However, this is clear from \eqref{strong} and the normality of the trace, given that,
by direct computation,
$$ w^\nu (X_{1/\epsilon})=\frac{1}{2\pi i} \tau^\nu \left( \phi_{1/\epsilon} (\tP)-\alpha(\tP)
\right)$$
Essentially the same argument, using the strong convergence of  
$\sigma^{{\rm av}}(LT_{1/\epsilon})$ to $\mathcal{V}_\infty$ (see \eqref{nu-infty-bis}),
shows that
$w^\nu_{{\rm av}} (LT_{1/\epsilon})\longrightarrow 0$.

\end{proof}

\subsection{Consequences of injectivity: the small time path}

So far, we have connected  the $t=1/\epsilon$  endpoint of the path
$$\mathcal{W}_\epsilon:= \left( \psi_t (\D^\prime_m)\oplus (\psi_t (\D_m))^{-1}
\right)^{t=1/\epsilon}_{t=\epsilon}\,\quad\text{in}\quad   \mathcal{I}\K_{\A_m}(\H_m)$$
to the identity using the large time path $LT_{1/\epsilon}$. We also showed that 
$$\lim_{\epsilon\to 0} (w^\nu_{{\rm reg}} (LT_{1/\epsilon}) -w^\nu_{{\rm av}} (LT_{1/\epsilon})) = 0\,.$$
We now wish to close up the concatenation of $\mathcal{W}_\epsilon$ and $LT_{1/\epsilon}$
to a loop based at the identity. This step will be achieved through the small time path
$ST_\epsilon$, a path in $\mathcal{I}\K_{\A_m}(\H_m)$ connecting the $t=\epsilon$
end point of $\mathcal{W}_\epsilon$ to the identity.
We shall want to ensure once again that
\begin{equation}\label{limit-small}
\lim_{\epsilon\to 0} (w^\nu_{{\rm reg}} (ST_{\epsilon}) -w^\nu_{{\rm av}} (ST_{\epsilon})) = 0\,.\end{equation}
The existence of {\it a } path connecting 
$\left( \psi_\epsilon (\D^\prime_m)\oplus (\psi_\epsilon (\D_m))^{-1}
\right)$ to the identity is in fact not difficult and follows from the proof of the Hilsum-Skandalis
theorem; what is more delicate is the construction of a path satisfying the crucial
property \eqref{limit-small}. It is here that the injectivity of the Baum-Connes map
is used.

\medskip
Let $V=\tM\times_\Gamma T$ and $V'=\tM' \times_\Gamma T$ be two homotopy equivalent
foliated bundles as in the previous subsections, with $\tM$ and $\tM'$ orientable.
We fix  leafwise $\Gamma$-equivariant metrics on $\tM\times T$ and
$\tM'\times T$. We denote by $\tD=(\tD_\theta)$,
$D=(D_L)_{L\in V/\mathcal{F}}$ and $\D_m$ respectively the $\Gamma$-equivariant
signature family, the longitudinal signature operator on  $(V,\mathcal{F})$ and the
$\mathcal{A}_m$-linear signature operator on the $\mathcal{A}_m$-Hilbert module
$\maE_m$; we fix similar notations for $V'=\tM' \times_\Gamma T$
and we let as usual $\maH_m=\maE'_m\oplus \maE_m$. We denote by 
$\Lambda$ and $\Lambda'$ the vertical Grassmann bundles on $\tM\times T$ and
$\tM'\times T$ respectively.
Consider  the index classes $\Ind (\D_m)$, $\Ind (\D'_m)$, two elements
in $K_1 (\maA_m)$. By the foliated homotopy invariance of the signature index class
we know that $\Ind (\D_m)=\Ind (\D'_m)$. On the other hand, using the very definition 
of the Baum-Connes map $\mu_\rtimes$, we have
$\Ind (\D_m)=\mu_\rtimes [\tM,\Lambda\to \tM\times T]$ and $\Ind (\D'_m)=
\mu_\rtimes [\tM',\Lambda'\to \tM'\times T ] $,
so that, by the assumed injectivity of $\mu_\rtimes$ we infer that
\begin{equation}\label{equality-in-komology}
[\tM,\Lambda\to \tM\times T]=
 [\tM',\Lambda'\to \tM'\times T ] \quad\text{in}\quad K^{{\rm geo}}_1 (T\rtimes \Gamma)\,.
\end{equation}
This is the information we want to use. Before stating the main result of this subsection
we give a convenient definition.

\begin{definition}\label{def:controlled-chopping}
We shall say that a chopping function $\chi$ is controlled if 
\begin{itemize}
\item the derivative of $\chi$ is a Schwartz function;
\item the Fourier transform of $\chi$ is supported in $[-1,1]$
\footnote{Notice that it is impossible to have,
 as required  in \cite{Kes1}, that $\hat{\chi}$ is smooth and compactly supported
(since, otherwise, $\chi$ itself , which is the Fourier transform of  $\hat{\chi}$,
 would be rapidly decreasing and thus 
{\bf not} a chopping function)};
\item the functions $\chi^2-1$ and $\chi (\chi^2-1)$ are Schwartz
and 
their Fourier transform is supported in $[-1,1]$.
\end{itemize}
\end{definition}
For the existence of such a function, see \cite{Moscovici-Wu-GAFA}.
\begin{theorem}\label{theo:equivalent-cycles}
If $[\tM,\Lambda\to \tM\times T]=
 [\tM',\Lambda'\to \tM'\times T ] $ in  $K^{{\rm geo}}_1 (T\rtimes \Gamma)$ then there exist
a $\Gamma$-proper manifold $Y$, a longitudinally smooth
$\Gamma$-equivariant vector bundle
$\widehat{L}\to Y\times T$ and 
a continuous $s$-path of $\Gamma$-equivariant families on $Y$
$$\tB_s:=(\tB_{s,\theta})_{\theta\in T}\quad s\in (0,1)$$
such that
\begin{enumerate}
\item for each $s\in (0,1)$ and $\theta\in T$,
  $(\tB_{s,\theta})$ is a first order elliptic differential operator on $Y$ acting 
on the sections of $\widehat{L}|_{\tY\times \{\theta\}}$ 
\item the $\maA_m$-Hilbert bundle $\maL_m$ obtained by completing $C^\infty_c (Y\times T, \widehat{L})$
contains $\maE'_m \oplus \maE_m$ as an orthocomplemented  submodule; thus there is an
orthogonal decomposition $\maL_m= (\maE'_m \oplus \maE_m)\oplus (\maE'_m \oplus \maE_m)^\perp$
\item for any controlled chopping function $\chi$ the path 
$-\exp(i\pi \chi(\maB_s))$ is norm continuous in the space of bounded operators in $\maL_m$
(here, for $s\in (0,1)$,
$\maB_s$ denotes the regular $\maA_m$-linear operator
defined by the family $(\tB_{s,\theta})_{\theta\in T}$);
\item we have, in norm topology,
\begin{eqnarray*}
\lim_{s\to 1} (-\exp(i\pi \chi(\maB_s)))&=& \Id_{\maL_m}\\
 \lim_{s\to 0} (-\exp(i\pi \chi(\maB_s)))&=&
\left(-\exp(i\pi\chi(\D'_m))\oplus-\exp(-i\pi\chi(\D_m) \right) \oplus \Id_\perp
\end{eqnarray*} 
with  $\Id_\perp$ denoting the identity on $(\maE'_m \oplus \maE_m)^\perp$.
\item  $-\exp(i\pi \chi(\maB_s))\in \mathcal{I}\maK_{\A_m}$ 
\end{enumerate}
\end{theorem}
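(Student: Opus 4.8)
The statement is essentially an equivariant, foliated version of the Hilsum--Skandalis construction of an explicit bordism/operator-homotopy witnessing the equality of two geometric $K$-homology cycles, combined with the analytic surgery of Higson--Roe and the chopping-function technology of Moscovici--Wu. The plan is to proceed in two stages. In the first stage one unravels the relation $[\tM,\Lambda\to\tM\times T]=[\tM',\Lambda'\to\tM'\times T]$ in $K^{{\rm geo}}_1(T\rtimes\Gamma)$: by definition of the geometric group, after disjoint union with degenerate cycles this equality is realized by a finite chain of the elementary moves (bordism, direct sum/disjoint union, bundle modification). Each elementary move can be packaged as a single $\Gamma$-proper manifold carrying a $\Gamma$-equivariant family of first-order elliptic operators parametrized by $s\in(0,1)$ that interpolates between the two ends; concatenating these, and absorbing the degenerate pieces into a $(\maE'_m\oplus\maE_m)^\perp$ summand, produces the manifold $Y$, the bundle $\widehat L\to Y\times T$ and the $s$-path of families $\tB_s=(\tB_{s,\theta})_{\theta\in T}$ of item (1), together with the orthogonal module decomposition of item (2). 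Here one uses that the direct limit defining $K^\Gamma_*(\underline E\Gamma;C(T))$ and Raven's isomorphism with the geometric model guarantee that all manifolds involved can be taken $\Gamma$-proper, so the associated $\maA_m$-Hilbert modules (completions of $C^\infty_c(Y\times T,\widehat L)$ with respect to the maximal norm) are well defined exactly as in Section~\ref{sec:modules}.

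In the second stage one feeds a \emph{controlled} chopping function $\chi$ (which exists by \cite{Moscovici-Wu-GAFA}, see Definition~\ref{def:controlled-chopping}) into this $s$-path and checks the four analytic assertions (3)--(5). First I would establish norm-continuity in $s$ of $s\mapsto -\exp(i\pi\chi(\maB_s))$: since $\chi'$ is Schwartz and $\hat\chi$ is supported in $[-1,1]$, the operators $\chi(\maB_s)-\chi(\maB_{s'})$ are controlled by finite propagation estimates along the leaves of the family, exactly the mechanism that forces the unit-propagation wave operators $e^{it\maB_s}$ to depend continuously on $s$; this is the foliated/$\maA_m$-linear analogue of the standard argument and uses the $\Gamma$-compact support built into Definition~\ref{def:pseudo}. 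For item (5), the point is that $\chi^2-1$ and $\chi(\chi^2-1)$ are Schwartz with Fourier transform in $[-1,1]$, so $(\chi(\maB_s))^2-\Id$ is given by a $\Gamma$-compactly supported leafwise smoothing kernel, hence defines an element of $\maK_{\A_m}(\maL_m)$ by Proposition~\ref{prop:compact}; this is precisely the statement that $-\exp(i\pi\chi(\maB_s))-\Id$ lies in $\maK_{\A_m}$ and is invertible, i.e. sits in $\mathcal{I}\maK_{\A_m}$, since $(-\exp(i\pi\chi(\maB_s)))^{-1}=-\exp(-i\pi\chi(\maB_s))$. For the two limits in item (4), the $s\to 0$ end is arranged by construction so that $\maB_0$ restricts to $\D'_m\oplus(-\D_m)$ on $\maE'_m\oplus\maE_m$ and to an invertible (degenerate) operator on the complement, so that $\chi$ of the degenerate part is $\pm\Id$; the $s\to 1$ end is where the chosen chain of elementary moves has been set up to terminate at a degenerate cycle, forcing $\maB_1$ to be invertible and hence $-\exp(i\pi\chi(\maB_1))=\Id_{\maL_m}$. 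Both limits are in norm, again because the interpolations at the two ends are through operators whose spectral behaviour near $0$ is controlled uniformly in $s$.

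The main obstacle I expect is \emph{not} the elementary-move bookkeeping but the uniformity needed to make the limits in (4) hold in the \emph{norm} topology rather than merely strongly. In the covering case (Keswani, and the corrected version in the present paper) one ultimately only needs strong convergence for the large-time path; here, by contrast, assertion (4) demands norm convergence, which is what permits the small-time path $ST_\epsilon$ to be used to close up $\mathcal{W}_\epsilon$ and $LT_{1/\epsilon}$ to a genuine loop in $\mathcal{I}\maK_{\A_m}(\maH_m)$ with a well-defined determinant. Achieving norm convergence requires choosing the interpolating families $\tB_s$ so that their low-lying spectrum is handled by an honest \emph{invertible double} construction (à la Hilsum--Skandalis, using the pull-back operator attached to $\tilde g$ composed with the smoothing cutoffs $\phi(\tD_\theta)$, $\phi(\tD'_\theta)$ as on pp.~968--969 of \cite{Kes1}), so that $\maB_s$ is \emph{uniformly} invertible near $s=1$ and the relevant spectral projections are norm-continuous. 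Verifying this uniform invertibility, leaf by leaf and with $\maA_m$-linear (rather than merely $\tau^\nu$-measured) control, is the delicate step; once it is in place, the Moscovici--Wu estimates for controlled chopping functions convert uniform spectral control into the norm statements of (3)--(5), and the theorem follows. The bulk of the routine verification (that the kernels are $\Gamma$-compactly supported and leafwise smooth, that completions behave well, that $-\exp(i\pi\chi(\cdot))$ is a group-homomorphic image of the functional calculus) is exactly as in Sections~\ref{sec:modules} and \ref{loops+bott} and I would only indicate it.
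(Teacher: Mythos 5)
Your two-stage skeleton (unravel the geometric equivalence into elementary moves, then run a controlled chopping function through an interpolating family) matches the paper's strategy, but the analytic core of your argument is misplaced, and this creates two genuine gaps.

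First, you attribute the norm convergence in item (4) to ``an honest invertible double construction (\`a la Hilsum--Skandalis, using the pull-back operator attached to $\tilde g$ composed with the smoothing cutoffs $\phi(\tD_\theta)$, $\phi(\tD'_\theta)$)''. That machinery belongs to the \emph{large time path} $\mathcal{V}(t)$ of Section 9.2, where the homotopy equivalence enters; it plays no role in this theorem. Here the manifold $Y$ is built by attaching infinite cylinders to the bordism (the paper takes $Y = Z \sqcup (-Z) \sqcup W$ with $W = X\times\RR$), and the interpolating operators have the explicit form $\maB_s = \bigl(\begin{smallmatrix} 0&\D_X\\ \D_X&0\end{smallmatrix}\bigr) + \frac{1}{s}X$ with $X$ a harmonic oscillator in the cylindrical variable. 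The orthocomplemented submodule of item (2) is produced by the one-dimensional kernel of the harmonic oscillator (so $(\maE_m(X\times\RR))' \simeq \maE_m(X)$), and the norm limits come from the fact that on the orthocomplement $\maC_s^2 \geq c/s^2$, whence $f(\maC_s)\to 0$ in norm for every rapidly decreasing $f$; one then applies this to $\chi^2-1$ and $\chi(\chi^2-1)$ via the decomposition $-\exp(i\pi z)=h(\pi^2(1-z^2))+(i\pi z)\,g(\pi^2(1-z^2))$ with $h,g$ entire. Without identifying this spectral mechanism your limits in (4) cannot be verified; moreover the paper explicitly warns that the individual elementary paths do \emph{not} each terminate at the identity (only the full concatenation does), so your claim that ``the chain has been set up to terminate at a degenerate cycle, forcing $\maB_1$ to be invertible'' oversimplifies the endpoint analysis.

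Second, your argument for item (5) fails: you assert that $(\chi(\maB_s))^2-\Id$ is given by a $\Gamma$-compactly supported leafwise smoothing kernel and hence is compact by Proposition \ref{prop:compact}. But $Y$ is non-compact (it has cylindrical ends), so finite propagation of $\hat\chi$ only localizes the kernel near a non-compact diagonal and does not yield $\Gamma$-compact support. Compactness must instead be extracted from the harmonic-oscillator structure: the paper writes $f(\maC_s) = \bigl(f(\maC_s)(\maC_s^2)^{N}\bigr)\circ(\maC_s^2)^{-N}$ and proves $(\maC_s^2)^{-N}$ is compact via the inverse Mellin transform and the decoupled heat kernel of $\D^2\otimes\Id + s^{-2}X^2$, using the strict positivity of $X^2$ on the orthocomplement of its kernel. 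A parallel gap appears in your treatment of the bundle-modification move, where the paper uses the strict positivity of $(\P^1)^2$ on $\mathcal{U}^\perp$ and the reparametrized path $\P^0+\frac{1}{2-s}\P^1$ for the same purpose.
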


\begin{proof}
If $[\tM,\Lambda\to \tM\times T]=
 [\tM',\Lambda'\to \tM'\times T ] $ in  $K^{{\rm geo}}_1 (T\rtimes \Gamma)$, then we know that
 we can pass from $(\tM,\Lambda\to \tM\times T)$ to 
 $(\tM',\Lambda'\to \tM'\times T)$ through a finite number of equivalences. The most
 delicate one is   bordism, so we assume directly that we have a
  manifold
 $X$ endowed with a proper action of $\Gamma$, a $\Gamma$-equivariant bundle
 $\widehat{H}$ on $X\times T$, a proper $\Gamma$-manifold  with boundary $Z'$ 
 and an equivariant vector bundle $\widehat{F}'$ on $Z'\times T$ such that 
the boundary of $Z'$ is equal to $X$ and $\widehat{F}'$ restricted to $\partial Z'\times T$ is
equal to  $\widehat{H}$.
Consider the manifold with cylindrical ends, $Z$, obtained attaching to $Z'$ a cylinder
$[0,\infty)\times X$; consider the cylinder $W=X\times \RR$; these are proper $\Gamma$
manifolds if we extend the action to be trivial in the cylindrical direction;
extend bundles to the cylindrical parts in the obvious way.
The $\Gamma$ manifold $Y$ appearing in the statement of the Theorem is the disjoint
union of $Z$,  $-Z$ and $W$, as in \cite{keswani3}. The bundle 
$\widehat{L}$ is given in terms of
$\widehat{H}$ and its extension to the cylindrical parts. 
The equivariant
families $\tB_{s}$, $s\in (0,1)$, appearing in the statement of the Theorem 
 are {\it explicitly defined} (in \cite{keswani3} see:
 the last displayed formula page 70; the last
displayed formula page 72;
the second displayed formula page 76 and the first displayed formula page 77).
We shall see an example in a moment.
The common feature of these operators is that they are Dirac-type on all
of $Y$ but look like an harmonic oscillator along the cylindrical ends.
Since we have extended the action in a trivial way to the $\RR$-direction
of the cylindrical ends we can decompose the Hilbert module defined on the cylinder
$(X\times \RR)\times T$ as $\maE_m (X)\otimes_{\CC} L^2(\RR)$; using the spectral decomposition
of the harmonic oscillator we see, as in \cite{keswani3},
 that there is an orthogonal decomposition of Hilbert modules $\maE_m (X\times \RR)=
 (\maE_m (X\times \RR))' \oplus^\perp  (\maE_m (X\times \RR))''$ with
  $(\maE_m (X\times \RR))' $ equal to the tensor product of $\maE_m (X)$
  with the 1-dimensional space generated by the kernel of the harmonic oscillator
    and $(\maE_m (X\times \RR))''$ equal to the tensor product of
   $ \maE_m (X)$ with the orthogonal space to this kernel in $L^2 (\RR)$.
    In particular,
 $(\maE_m (X\times \RR))'  \simeq \maE_m (X)$, so that
  the Hilbert module $\maL_m$
  obtained completing $C^{\infty,0}_c (Y\times T, \widehat{H})$ does contain $\maE_m (X)$
  as an orthocomplemented submodule.
  Regarding the statements involving the continuity and limiting properties of
 $-\exp(i\pi \maB_s)$: we shall treat only the first of the four steps  proving 
 Theorem 5.1.10 in \cite{keswani3}.
Thus $Y$ is the cylinder $X\times \RR$ and 
$$\maB_t= \begin{pmatrix} 0&\D_X\cr
\D_X&0 \cr \end{pmatrix} +\frac{1}{t}   \begin{pmatrix} x&\partial_x\cr
-\partial_x& -x \cr \end{pmatrix} \quad\text{with}\quad t\in (0,1].
$$
The operator $\maB_t$ restricted to $ (\maE_m (X\times \RR))' $ is
precisely $\begin{pmatrix} 0&\D_X\cr
\D_X&0 \cr \end{pmatrix}$; let us consider $\maB_t$ restricted to the
orthocomplement $(\maE_m (X\times \RR))''$ and denote it $\maC_t$,
so that $$\maB_t= \begin{pmatrix} 0&\D_X\cr
\D_X&0 \cr \end{pmatrix}\oplus \maC_t\,.$$
We can prove the norm-resolvent
  continuity of $\maC_t$ (this notion extends to the $C^*$-algebraic framework)
  exactly as in \cite{keswani3}; we also obtain that $f(\maC_t)$ goes to 0
  in norm  as $t\to 0$ for any rapidly decreasing function $f$.
  Using the fact that $\chi^2-\Id$ is indeed rapidly decreasing
  we see that $\chi^2(\maC_t)-\Id$ goes to zero in norm as $t\to 0$.
  A similar statement holds for $\chi(\maC_t)(\chi^2(\maC_t)-\Id)$.
 Then, writing as in \cite{Kes1}
  $$-\exp(i\pi z)= h(\pi^2 (1-z^2))+(i\pi z)g (\pi^2 (1-z^2))$$
  with $h$ and $g$ entire,
  we prove that $-\exp(i\pi\chi(\maC_t))$ converges in norm to the identity
  on $(\maE_m (X\times \RR))''$, so that
  $-\exp(i\pi\chi(\maB_t))$ converges to (two copies of) $-\exp(i\pi\D_X)\oplus \Id_\perp$
  as $t\to 0$.
 Of course, it is not true in this case that $-\exp(i\pi\chi(\maB_t))$ converges to
 the identity as $t\to 1$ but the idea is that there will be further paths
 of operators in $\mathcal{I}\maK_{\A_m}$ with the property that their concatenation
 will produce the desired  path, joining   $-\exp(i\pi\D_X)\oplus \Id_\perp$ to the identity
 up to stabilization. For the bordism relation these paths are obtained by adapting to our context,
 as we have done above, the remaining three paths appearing in the treatment of the bordism 
 relation in \cite{keswani3}; see in particular the Subsections 5.1.2, 5.1.3, 5.1.4 there.
 Finally, let us comment about cycles that are equivalent through a bundle
modification. We are thus considering, in general, 
$$(X,E\to X\times T)\sim (X',E'\to X'\times T)\equiv (\widehat{X},\widehat{E}\to \widehat{X}\times T)$$
where, as explained for example in \cite{keswani3}, $\widehat{X}$ is a sphere bundle 
$S^{2n}\to \widehat{X}\xrightarrow{\pi} X$ and $\widehat{E}$ is the tensor product of
 $(\pi\times \Id_T)^* (E)$ and a certain bundle $V$ built out of the Grassmann bundle of $\widehat{X}$; $V$ is defined
 originally on $\widehat{X}$ and  then extended trivially on all of $\widehat{X}\times T$. 
 Consider the two $T$-families of Dirac-type operators defined by the equivariant Clifford 
 modules $E\to X\times T$ and $E'\to X'\times T$ respectively and denote them briefly
 by $P=(P_\theta)_{\theta\in T}$ and  $P'=(P'_\theta)_{\theta\in T}$ (for this argument 
 we thus forget about the tilde). Let $\E_m$ and $\E'_m$ be the two Hilbert modules
 associated to these data and let $\P$ and $\P '$ be the regular operators
 defined by the two families above.
   Then we want to show that there exist 
\begin{itemize}   
 \item  an orthogonal decomposition of Hilbert modules
   $\E'_m=\E_m\oplus \E_m^\perp $ 
   \item  a continuous $s$-path of $\Gamma$-equivariant first order differential operators $R_s:=(R_{s,\theta})_{\theta\in T}$,
   $s\in [0,2)$,
 on $\widehat{X}$ with  $R_0=P'$ and with regular extensions $\mathcal{R}_{s}$, $s\in [0,2)$;
 \item for any controlled chopping function $\chi$ the path $-\exp (i\pi \chi (\mathcal{R}_{s}))$ is norm continuous
 in the space of bounded operators  in $\E'_m$
 \item  $(-\exp (i\pi \chi (\mathcal{R}_{s})))\longrightarrow (-\exp (i\pi \chi (\mathcal{P})))\oplus \Id_\perp$ as $s\to 2$
 \end{itemize}
 The existence of the  $s$-path $R_s:=(R_{s,\theta})_{\theta\in T}$,
   $s\in [0,2)$, is proved following the arguments in \cite{keswani3}, Subsection 5.2: thus
   we write $P'=P^0 + P^1 +Z^0$ where for each $\theta\in T$, $P^1_\theta$ is a vertical operator 
   on the fiber bundle $S^{2n}\to \widehat{X}\xrightarrow{\pi} X$,  $P^0_\theta$ is a  horizontal operator
   defined in terms of  $P_\theta $ and $Z^0_\theta$ is a 0-th order operator. Define $R_s$, for $s\in [0,1]$ as 
   $R_s:= P^0 + P^1 +(1-s) Z^0$ so that $R_0=P'$ as required.
   Next observe, as in \cite{keswani3}, that
   for each $\theta\in T$ the
   vertical operator $P^1_\theta$ has a one-dimensional kernel, when restricted to each sphere of the sphere
   bundle $S^{2n}\to \widehat{X}\xrightarrow{\pi} X$; using the orthogonal projection onto the null space
   of these
   operators on spheres we obtain an orthogonal decomposition $\E'_m=\mathcal{U}\oplus \mathcal{U}^\perp$
   with $\mathcal{U}$ isomorphic to $\E_m$. 
   We can now define $R_s$ for $s\in [1,2)$; consider $R_1$ and its extension to $\E'_m$ which
   is diagonal with respect to the orthogonal decomposition. The restriction of $\mathcal{R}_1$ to 
   $\mathcal{U}$ is, by definition,  $\P^0$, given that $\P^1$ is zero on $\mathcal{U}$; 
   using the isomorphism between   $\mathcal{U}$ and  $\E_m$,
   $\P^0$ can be  connected to $\P$, since they differ by the extension of  a 0-th order operator $Z_1$
  (it will suffice to consider $P^0+(s-1) Z_1$, $s\in [1,2]$).
   For the restriction of $\mathcal{R}_1=\P^1+\P^0$ to 
   $\mathcal{U}^\perp$ we consider instead the open  path $\P^0 + \frac{1}{2-s}\P^1$, $s\in [1,2)$; summarizing, we have defined
   a continuous $s$-path of regular operators  $\mathcal{R}_{s}$, $s\in [0,2)$. 
   Using the fact that
   $(\P^1)^2$ is strictly positive on   $\mathcal{U}^\perp$ one can prove the stated continuity properties, as well
   as the crucial fact that 
   $(-\exp (i\pi \chi (\mathcal{R}_{s})))\longrightarrow (-\exp (i\pi \chi (\mathcal{P})))\oplus \Id_\perp$ as $s\to 2$.\\
   Putting together the above two constructions, the one for the bordism relation and the one for the bundle modification
   relation, one can end the proof of the first four items in the statement of the Theorem.
 We finally tackle the property that   $-\exp(i\pi\chi(\maB_s))\in \mathcal{I}\maK_{\A_m}$.
From the fact that $\chi$ is controlled, 
it suffices to show that $f(\maB_s)$ is in $\K_{\maA_m}$ if
 $f$ is rapidly decreasing; let us see this property for the case of the cylinder considered
 above.
 With respect to the above decomposition, 
 $$f(\maB_s)=f(\begin{pmatrix} 0&\D_X\cr\D_X&0 \cr \end{pmatrix})\oplus f(\maC_t)\,.$$
 and it suffices to see that $ f(\maC_t)$ is compact.
 Write $ f(\maC_t)= (f(\maC_t) (\maC_t^2)^N  )\circ (\maC_t^2)^{-N}$, where
 we recall that $\maC_t^2$ is positive; since $f$
 is rapidly decreasing the first operator is bounded; thus we are left with the task
 of proving that $(\maC_t^2)^{-N}$ is compact. Recall that $\maC^2_t$ is the restriction
 to $ (\maE_m (X\times \RR))''$
 of $(\D^2\otimes \Id_{2\times 2} + t^{-2} X^2)$, with $X=\begin{pmatrix} x&\partial_x\cr
-\partial_x& -x \cr \end{pmatrix}$.
Write $(\maC_t^2)^{-N}$ in terms of the heat kernel, using  the inverse Mellin transform:
$$(\maC_t^2)^{-N}=\frac{1}{(N-1)!}\int_0^\infty \exp(-t \maC_t^2)t^{N-1}dt\,.$$
Observe that the heat kernel of  $(\D^2\otimes \Id_{2\times 2} + t^{-2} X^2)$ decouples.
Using again the invertibility of $\maC^2_t$, the properties of the heat kernel of $\D^2$
and, more importantly,  of the heat kernel of the harmonic oscillator, it is not difficult 
to end the proof.

\end{proof}
Let $\chi_\epsilon(x):= \chi(\epsilon x)$. Then, up to a harmless stabilization,
 the above theorem allows us to connect 
 $\left(-\exp(i\pi\chi_\epsilon (\D'_m))\oplus-\exp(-i\pi\chi_\epsilon (\D_m) \right)$
 to the identity; we denote by $\gamma^\epsilon_1 \in \mathcal{I}\maK_{\A_m}$,
 $\gamma^\epsilon_1\equiv (\gamma^\epsilon_1(s))_{s\in [0,1]}$
 the resulting path.
 Recall, however, that our goal is rather  to connect
 $\left(-\exp(i\pi\phi_\epsilon (\D'_m))\oplus-\exp(-i\pi\phi_\epsilon (\D_m) \right)$
 to the identity, with $\phi_\epsilon (x)=\frac{2}{\sqrt{\pi}}\int_0^{\epsilon x} e^{-u^2} du$.
Take the linear homotopy 
between the two chopping functions $\chi$ and $\phi$ and consider 
$$\maM(t):= t \left( \chi(\epsilon \D'_m) \oplus -\chi(\epsilon \D_m)\right) + (1-t)
 \left( \phi(\epsilon \D'_m)
 \oplus -\phi(\epsilon \D_m) \right)
$$
Consider the path
 $$\gamma^\epsilon_2 (t)=-\exp(i\pi \maM(t))\,.$$
 
\begin{definition}
The small time path $ST_\epsilon$ is the path obtained by the concatenation
of $\gamma^\epsilon_1$ and $\gamma^\epsilon_2$.
$ST_\epsilon$ is a path   in $\mathcal{I}\K_{\A_m}$
and connects  $\psi_\epsilon(\D'_m)\oplus (\psi_\epsilon(\D_m))^{-1}\equiv 
\left(-\exp(i\pi\phi_\epsilon (\D'_m))\oplus-\exp(-i\pi\phi_\epsilon (\D_m) \right)$ to the identity.
\end{definition}

\subsection{{The determinants of the small time path}}

Let $(X,\maF)$, $X=Z\times_\Gamma T$, be a foliated bundle as in the proof of 
Theorem \ref{theo:equivalent-cycles}; let $L$ be a continuous longitudinally
smooth vector bundle on $X$ as in Theorem \ref{theo:equivalent-cycles} . 
Let $\L_m$ be the associated Hilbert $\maA_m$-module.
Let $\maB^L_m$ be the maximal $C^*$-algebra associated to the groupoid 
$G^Z:=(Z\times Z\times T)/\Gamma$.
Recall the isomorphism $\chi_m : \maB^L_m \to K_{\A_m}(\L_m)$,
and the representations 
$$\pi^{{\rm reg}} :  \maB^L_m\to W^*_\nu(G^Z;L)\,;\quad 
\pi^{{\rm av}}:  \maB^L_m\to W^*_\nu(X,\maF;L)\,.$$
Proceeding  as in Section \ref{loops+bott}, we 
can use $\chi_m^{-1}$ and $\pi^{{\rm reg}}$ in order to define a path $\sigma^{{\rm reg}}(ST_{\epsilon})$ in $\mathcal{I}\K (W^*_\nu(G^Z;L))$. The end-points  of this path 
are $\tau^\nu$ trace class perturbations of the identity; thus, see Remark \ref{paths-vs-loops}, the determinant $w^\nu (\sigma^{{\rm reg}}(ST_\epsilon))$ is well defined and we can set
$$w^\nu_{{\rm reg}} (ST_{\epsilon}):= w^\nu (\sigma^{{\rm reg}}(ST_{\epsilon}))\,.$$
Similarly, 
$$w^\nu_{{\rm av}} (ST_{\epsilon}):= w^\nu_{\maF} (\sigma^{{\rm av}}(ST_{\epsilon}))$$
is well defined.
The goal of this subsection is to indicate a proof of the following 

\begin{theorem}\label{theo:limit-small-path}
As $\epsilon\downarrow 0$ we have
\begin{equation}\label{limit-small-path}
w^\nu_{{\rm reg}} (ST_{\epsilon})- w^\nu_{{\rm av}} (ST_{\epsilon})\longrightarrow 0\,.
\end{equation}
\end{theorem}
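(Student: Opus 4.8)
The statement to be proved is that the difference $w^\nu_{\mathrm{reg}}(ST_\epsilon) - w^\nu_{\mathrm{av}}(ST_\epsilon)$ tends to $0$ as $\epsilon \downarrow 0$. Recall that the small time path $ST_\epsilon$ is the concatenation of two pieces: the path $\gamma_1^\epsilon$ coming from Theorem \ref{theo:equivalent-cycles} (which joins $-\exp(i\pi\chi_\epsilon(\D'_m))\oplus -\exp(-i\pi\chi_\epsilon(\D_m))$ to the identity through a path $-\exp(i\pi\chi(\maB_s))$ built from the bordism/bundle-modification data realizing \eqref{equality-in-komology}), and the short linear-homotopy path $\gamma_2^\epsilon(t) = -\exp(i\pi\maM(t))$ joining $\psi_\epsilon(\D'_m)\oplus(\psi_\epsilon(\D_m))^{-1}$ to $-\exp(i\pi\chi_\epsilon(\D'_m))\oplus -\exp(-i\pi\chi_\epsilon(\D_m))$. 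The plan is to estimate the determinant of each piece separately for the two traces and show that, in each case, the regular and averaged contributions have a difference that vanishes in the limit.

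First I would handle $\gamma_2^\epsilon$. Here the key point is that $\maM(t)$ is a function of the single operator $\D'_m \oplus (-\D_m)$, so $\gamma_2^\epsilon(t)^{-1}\frac{d}{dt}\gamma_2^\epsilon(t)$ is again such a function, and by the de la Harpe--Skandalis formula $w^\nu(\sigma^{\mathrm{reg}}(\gamma_2^\epsilon))$ can be written, exactly as in the proof of the theorem relating $w$ to $\eta$, as $\frac{1}{2\pi i}\tau^\nu$ of an explicit operator built from $\chi_\epsilon$ and $\phi_\epsilon$ applied to the signature family $\tP$; similarly for the averaged trace one gets $\frac{1}{2\pi i}\tau^\nu_{\maF}$ of the same functional expression applied to the leafwise operator $P$. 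Using the Bismut--Freed/Atiyah-type trace formulas (Propositions \ref{TraceNoyau}, \ref{Rapid}, \ref{Rapid-down}) together with the index equality \eqref{c-index-equality} — equivalently the fact, established in Section \ref{Section.Index}, that $\tau^\nu_{\mathrm{reg},*}$ and $\tau^\nu_{\mathrm{av},*}$ agree on the relevant index class — and the dominated convergence as $\epsilon\to 0$ (the integrands are controlled uniformly for small $t$ by the Bismut--Freed estimate), the regular and averaged determinants of $\gamma_2^\epsilon$ converge to the same limit, so their difference $\to 0$. Concretely, one shows $w^\nu_{\mathrm{reg}}(\gamma_2^\epsilon) - w^\nu_{\mathrm{av}}(\gamma_2^\epsilon)$ is a bounded multiple of the difference of two numeric indices (via Calderón/Atiyah-Bott), which is exactly $0$, plus error terms that vanish.

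Next I would handle $\gamma_1^\epsilon$, and this is where the bulk of the work — and the main obstacle — lies. The path $-\exp(i\pi\chi(\maB_s))$ lives in $\mathcal{I}\maK_{\A_m}(\maL_m)$ on the larger Hilbert module $\maL_m$ built over the proper $\Gamma$-manifold $Y$ of Theorem \ref{theo:equivalent-cycles}. One represents it in the two von Neumann algebras $W^*_\nu(G^Z;L)$ and $W^*_\nu(X,\maF;L)$; the endpoints are trace-class perturbations of the identity, so $w^\nu_{\mathrm{reg}}(\gamma_1^\epsilon)$ and $w^\nu_{\mathrm{av}}(\gamma_1^\epsilon)$ are well defined. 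The crucial observation, already used in the proof of Proposition \ref{prop:limit-large} and going back to Keswani, is that the operators $\maB_s$ are Dirac-type globally on $Y$ but behave like a harmonic oscillator along the cylindrical ends, so that as $\epsilon \to 0$ (equivalently, after the rescaling built into $\chi_\epsilon$) the path $\sigma^{\mathrm{reg}}(\gamma_1^\epsilon)$ and $\sigma^{\mathrm{av}}(\gamma_1^\epsilon)$ both converge strongly to paths that are "the same" up to the projections onto the null spaces of the signature operators on $Y$, whose von Neumann dimensions are foliated homotopy invariants (\cite{HL-betti}, \cite{HiSka}). By the normality of $\tau^\nu$ and $\tau^\nu_{\maF}$, the determinant of the concatenation of $\sigma^{\mathrm{reg}}(\gamma_1^\epsilon)$ with the reverse of $\sigma^{\mathrm{av}}(\gamma_1^\epsilon)$ — which one recognizes, after matching endpoints using the orthogonal decomposition $\maL_m = (\maE'_m\oplus\maE_m)\oplus(\maE'_m\oplus\maE_m)^\perp$ of Theorem \ref{theo:equivalent-cycles}(2) — converges to the determinant of a strongly null-homotopic loop, hence to $0$. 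The hard part is making this strong-convergence-to-determinant-limit argument rigorous in the $C^*$-algebraic/von Neumann setting with $\Gamma$-invariant traces: one must control the determinants uniformly (using that $\chi$ is a \emph{controlled} chopping function, so the relevant functions $\chi'$, $\chi^2-1$, $\chi(\chi^2-1)$ are Schwartz with compactly supported Fourier transform, which gives finite-propagation and hence $\Gamma$-compact-support control on the kernels), and then pass the strong limit through the trace using normality together with uniform trace-norm bounds, carefully separating the contribution of the harmonic-oscillator ends (which produces no trace, by the explicit heat-kernel computation at the end of the proof of Theorem \ref{theo:equivalent-cycles}) from the genuinely $2\ell-1$ dimensional part.

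Finally, combining the two estimates: $w^\nu_{\mathrm{reg}}(ST_\epsilon) - w^\nu_{\mathrm{av}}(ST_\epsilon) = \big(w^\nu_{\mathrm{reg}}(\gamma_1^\epsilon) - w^\nu_{\mathrm{av}}(\gamma_1^\epsilon)\big) + \big(w^\nu_{\mathrm{reg}}(\gamma_2^\epsilon) - w^\nu_{\mathrm{av}}(\gamma_2^\epsilon)\big)$, and both parenthesized terms tend to $0$ as $\epsilon\downarrow 0$, which is the assertion. I expect the routine points (existence of the determinants, additivity under concatenation, the de la Harpe--Skandalis manipulations) to be straightforward given the machinery of Sections \ref{sec:modules}--\ref{loops+bott}, while the genuine obstacle is the interchange of strong operator limits with the semifinite traces $\tau^\nu$, $\tau^\nu_{\maF}$ — exactly the point where, as the authors note in the proof of Proposition \ref{prop:limit-large}, Keswani's original covering argument needed correction — and the bookkeeping to ensure the orthocomplement pieces $(\maE'_m\oplus\maE_m)^\perp$ contribute trivially to both determinants.
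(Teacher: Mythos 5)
Your decomposition of $ST_\epsilon$ into $\gamma_1^\epsilon$ and $\gamma_2^\epsilon$ and the strategy of treating the two differences separately matches the paper, but the mechanisms you propose for both pieces are not the ones that work, and for $\gamma_1^\epsilon$ the approach is genuinely wrong. For $\gamma_2^\epsilon$: since $\maM(t)$ is a linear homotopy, the determinant computes exactly to $-\frac12\tau^\nu\bigl(\chi(\epsilon\tP)-\phi(\epsilon\tP)\bigr)$ (resp.\ $\tau^\nu_{\maF^\sqcup}$ of the leafwise analogue). This operator is \emph{not} an index, so the equality $\tau^\nu_{{\rm reg},*}=\tau^\nu_{{\rm av},*}$ on $K_0$ gives you nothing here; and there is no reason the two determinants converge individually (the trace of $(\chi-\phi)(\epsilon\tP)$ need not have a limit as $\epsilon\downarrow 0$). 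The paper instead uses that $\hat\chi$ has support in $[-1,1]$, so $\chi(\epsilon\tP)$ has propagation $\leq\epsilon$, compresses $\phi(\epsilon\tP)$ to an $\epsilon$-neighbourhood of the diagonal, and then runs the localization argument of Proposition \ref{prop:index-equal}: the difference of the regular and averaged traces of an $\epsilon$-localized smoothing kernel is \emph{exactly zero} once $\epsilon$ is below the separation of the isotropy orbits, while the off-diagonal remainders have vanishing diagonal restriction. No limit or dominated convergence is involved.

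For $\gamma_1^\epsilon$ your argument imports the \emph{large-time} mechanism, and it fails here for two reasons. First, as $\epsilon\downarrow 0$ one has $\chi(\epsilon\maB_s)\to 0$ strongly, not $\to\mathrm{sign}(\maB_s)$; the harmonic projections and the homotopy invariance of the $L^2$-Betti numbers are irrelevant to the small-time limit, and strong convergence cannot be passed through the semifinite traces without trace-norm control. Second, and more structurally, you cannot concatenate $\sigma^{{\rm reg}}(\gamma_1^\epsilon)$ with the reverse of $\sigma^{{\rm av}}(\gamma_1^\epsilon)$: these paths live in different von Neumann algebras, so there is no loop whose determinant you could declare to vanish. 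Indeed the individual determinants $w^\nu_{{\rm reg}}(\gamma_1^\epsilon)$, $w^\nu_{{\rm av}}(\gamma_1^\epsilon)$ do \emph{not} tend to a common value determined by null spaces — if they did, the eta invariants themselves would be homotopy invariants, which is false. What actually proves \eqref{limit-small-path-1} is: write $\exp(i\pi z)=h(\pi^2(1-z^2))+(i\pi z)g(\pi^2(1-z^2))$ with $h,g$ entire; establish the polynomial trace-norm bounds $\|\chi^2(\epsilon\tB_s)-\Id\|_1<q(1/\epsilon)$ (via the Fack–Kosaki singular-number estimates for the harmonic oscillator on the cylindrical ends); truncate $h,g$ to polynomials of degree $N_\epsilon\sim 1/\epsilon$, so that the truncated path has propagation $\leq 1$ and its regular and averaged determinants coincide \emph{exactly} by the same localization argument; and control the truncation errors $A_\epsilon$, $C_\epsilon$ by the trace-norm bounds. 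This finite-propagation-plus-truncation scheme is the missing idea in your proposal.
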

\begin{proof}
To simplify the notation we shall  assume that the injectivity
radius of $(\tM_\theta,\tg_\theta)$ is greater or equal to 1 for each $\theta\in T$; we also assume that
for each $\theta\in T$ the distance between $\tm$ and $\tm\,\gamma$ is greater than 1
for each $\tm\in\tM_\theta$ and for each $\gamma\in \Gamma(\theta)$, $\gamma\not= e$.
We begin by a few preliminary remarks. Recall that $ST_\epsilon$ is the concatenation
of two paths: $\gamma^\epsilon_1$ and $\gamma^\epsilon_2$.
Using the fundamental Proposition   \ref{F-Comp} we observe that 
$$\sigma^{{\rm reg}} (\gamma^\epsilon_1 (t)) \equiv \sigma^{{\rm reg}} 
(-\exp(i\pi\chi(\maB_t))
= -\exp(i\pi\chi(\tB_t))$$
and
$$\sigma^{{\rm av}} (\gamma^\epsilon_1 (t)) \equiv \sigma^{{\rm av}} 
(-\exp(i\pi\chi(\maB_t))
= -\exp(i\pi\chi(B_t))$$
with $B_t = ((B_t)_L)_{L\in X/\maF}$ the longitudinal differential operator
induced by the $\Gamma$-equivariant family $\tB_t$.
(Once again, here and before the statement of Theorem  \ref{theo:limit-small-path}
we are using a slight extension 
of the results proved in Section \ref{sec:modules},
allowing for manifolds with cylindrical ends and operators that are modeled like harmonic oscillators
along the ends).
Similarly, up to a harmless stabilization by  $\Id_\perp$ (that will in any case disappear
after taking determinants), we can write 
$$\sigma^{{\rm reg}} (\gamma^\epsilon_2 (t)) = -\exp 
i\pi \left( t\chi(\epsilon \tP) +  (1-t)\phi(\epsilon \tP) \right) \,,\quad
\sigma^{{\rm av}} (\gamma^\epsilon_2 (t)) = -\exp 
i\pi \left( t\chi(\epsilon P) +  (1-t)\phi(\epsilon P) \right) 
$$
where  $\tP$ and $P$ are the signature operators on $(\tM'\sqcup (-\tM))\times T\to T$
and on $(X:=V' \sqcup (-V),\maF^\sqcup)$ respectively (this is the notation we had introduced
in the subsection on the large time path). 
 One can prove that for $j=1,2$  the paths
$\sigma^{{\rm reg}} (\gamma^\epsilon_j)$ and $\sigma^{{\rm av}} (\gamma^\epsilon_j)$
are all made of {\it trace class} perturbations of the identity;
moreover,  the determinants
of these two paths are well defined {\it individually} and without the regularizing
procedure explained in Proposition \ref{prop:approximation}. We shall justify this claim in a moment. This property granted, we can  
  break the proof of \eqref{limit-small-path}
into two distinct statements:
\begin{equation}\label{limit-small-path-1}
w^\nu_{{\rm reg}} (  \gamma^\epsilon_1)- w^\nu_{{\rm av}} (\gamma^\epsilon_1)\longrightarrow 0\,.
\end{equation}
\begin{equation}\label{limit-small-path-2}
w^\nu_{{\rm reg}} (  \gamma^\epsilon_2)- w^\nu_{{\rm av}} (\gamma^\epsilon_2)\longrightarrow 0\,.
\end{equation}
We now tackle \eqref{limit-small-path-2} which is slightly easier since it involves 
exclusively operators on manifolds without boundary.
\\
First we observe 
 that to each operator $\tP_\theta$ and $P_L$ we can 
apply the results of \cite{taylor-pso}, \cite{roe-partitioning}. In particular, using the properties of
$\chi$, which is of controlled type, and $\phi$ we have:
\begin{enumerate}
\item $\chi(\tP_\theta)$ and 
$\chi(P_L)$,  are 
given by  0-th order pseudodifferential operators with Schwartz kernel
localized in an uniform  $R$-neighbourhood of the diagonal (remember that
the Fourier transform of $\chi$ is compactly supported); we shall assume without loss of generality
that $R=1$;
\item $\phi(\tP_\theta)$ and 
$\phi(P_L)$  are each one the sum of a   0-th order pseudodifferential operators with Schwartz kernel
localized in an uniform  $R=1$-neighbourhood of the diagonal and of an integral
operator with  smooth kernel;
\item if $\tilde{\chi}$ denotes the linear chopping function equal to $sign(x)$ for $|x|>1$ and equal to $x$ for $|x|\leq 1$ then $(\chi(\tP_\theta)-\tilde{\chi}(\tP_\theta))_{\theta\in T}$ and
$(\phi(\tP_\theta)-\tilde{\chi}(\tP_\theta))_{\theta\in T}$ are $\tau^\nu$ trace class elements
given by longitudinally smooth kernels (indeed, the differences $\chi-\tilde{\chi}$ 
and $\phi-\tilde{\chi}$ are rapidly decreasing);
\item similarly, $(\chi(P_L)-\tilde{\chi}(P_L))_{L\in X/\maF^\sqcup}$ and $(\phi(P_L)-\tilde{\chi}(P_L))_{L\in X/\maF^\sqcup}$
are $\tau^\nu_{\maF^{\sqcup}}$ trace class elements given by longitudinally smooth kernels;
\item consequently,  
$(\chi(\tP_\theta)-\phi(\tP_\theta))_{\theta\in T}$ and 
$(\chi(P_L)-\phi(P_L))_{L\in X/\maF^\sqcup}$ are both trace class elements
given by longitudinally smooth kernels; indeed it suffices to write
$(\chi(\tP_\theta)-\phi(\tP_\theta))_{\theta\in T}= ((\chi(\tP_\theta)-\tilde{\chi}(\tP_\theta))_{\theta\in T}+
(\tilde{\chi}(\tP_\theta)-\phi(\tP_\theta))_{\theta\in T}$
\end{enumerate}
Notice that these properties imply easily the claim we have made about
the  determinants
of  $\sigma^{{\rm reg}}(\gamma^\epsilon_2)$ and $\sigma^{{\rm av}}(\gamma^\epsilon_2)$.
We go back to our goal, i.e. proving \eqref{limit-small-path-2}.
We  observe that since $\gamma^\epsilon_2$ is defined in terms of 
a linear homotopy, we have, by direct computation,
$$w^\nu_{{\rm reg}}(\gamma^\epsilon_2)=-\frac{1}{2} \tau^\nu 
\left( \chi(\epsilon \tP)-\phi(\epsilon \tP) \right)\quad 
w^\nu_{{\rm av}}(\gamma^\epsilon_2)=-\frac{1}{2} \tau^\nu_{\maF^\sqcup}
\left( \chi(\epsilon P)-\phi(\epsilon P) \right)$$
Write
$$ \tau^\nu 
\left( \chi(\epsilon \tP)-\phi(\epsilon \tP) \right)= \tau^\nu 
\left( (\chi(\epsilon \tP)-(\phi(\epsilon \tP)_\epsilon) - ( \phi(\epsilon \tP)- (\phi(\epsilon \tP)_\epsilon) \right)$$
with $(\phi(\epsilon \tP))_\epsilon$ a compression of $\phi(\epsilon\tP)$ to a $\Gamma$-equivariant
$\epsilon$-neighbourhood of  $\{(\tm,\tm,\theta), \tm\in\tM,
\theta\in T\}$ in $\tM\times\tM\times T$. Both $(\chi(\epsilon \tP)-(\phi(\epsilon \tP))_\epsilon)$
and $ ( \phi(\epsilon \tP)- (\phi(\epsilon \tP))_\epsilon) $ are individually $\tau^\nu$ trace class:
indeed the first term is the $\epsilon$-compression of a longitudinally smooth kernel (since $\chi(\epsilon \tP)$ is already  $\epsilon$-local) and it is therefore $\tau^\nu$  trace class;
the second term can be written as the sum $ (\phi(\epsilon \tP)-\chi(\epsilon\tP))+ (\chi(\epsilon \tP) -(\phi(\epsilon \tP))_\epsilon)$ and both terms are trace class; thus 
$$ \tau^\nu 
\left( \chi(\epsilon \tP)-\phi(\epsilon \tP) \right)= \tau^\nu 
\left( (\chi(\epsilon \tP)-(\phi(\epsilon \tP))_\epsilon \right) - \tau^\nu \left( \phi(\epsilon \tP)- (\phi(\epsilon \tP))_\epsilon \right)$$
A similar expression can be written for $\tau^\nu_{\maF^\sqcup}
\left( \chi(\epsilon P)-\phi(\epsilon P) \right)$.
Consider now the difference 
$w^\nu_{{\rm reg}}(\gamma^\epsilon_2)-w^\nu_{{\rm av}}(\gamma^\epsilon_2)$ which
is the sum   \begin{equation}
\label{sum}
\left( \tau^\nu (
\chi(\epsilon \tP)-\phi(\epsilon \tP))_\epsilon) - \tau^\nu_{\maF^\sqcup} 
(\chi(\epsilon P)-\phi(\epsilon P))_\epsilon) \right) + 
\left( \tau^\nu (
\phi(\epsilon \tP)-\phi(\epsilon \tP))_\epsilon) - \tau^\nu_{\maF^\sqcup} 
(\phi(\epsilon P)-\phi(\epsilon P))_\epsilon) \right)\,.
\end{equation}
As already remarked  the two differences 
$\chi(\epsilon \tP)- (\phi(\epsilon \tP))_\epsilon$ and $\chi(\epsilon P)- (\phi(\epsilon P))_\epsilon$
are given by  longitudinally smooth kernel which are supported in an $\epsilon$-neighbourhood of the diagonal.
Proceeding as in the proof of Proposition \ref{prop:index-equal} we shall now prove that
$ \tau^\nu (
\chi(\epsilon \tP)-\phi(\epsilon \tP)_\epsilon) - \tau^\nu_{\maF^\sqcup} 
(\chi(\epsilon P)-\phi(\epsilon P)_\epsilon) $
is in fact equal to zero for $\epsilon$ small enough.
Indeed, consider the $\Gamma$-equivariant family $\chi(\epsilon \tP)$;
we know that $\chi(\epsilon \tP)\in \Psi^0_c (G,E)$.
Similarly, consider 
$\phi(\epsilon \tP)_\epsilon\in \Psi^0_c (G,E)$.
We know that $\chi(\epsilon \tP)- (\phi(\epsilon \tP))_\epsilon\in \Psi^{-\infty}_c (G,E)$
and that this operator extends to an element $\maP_{\chi,\psi}^\epsilon\in
\maK_{\maA_m}(\maH_m)$. Observe now that
$$(\maP_{\chi,\psi}^\epsilon)\otimes_{\pi^{{\rm reg}}_\theta}\Id= 
\chi(\epsilon \tP_\theta)-(\phi(\epsilon \tP_\theta))_\epsilon\,,\quad 
(\maP_{\chi,\psi}^\epsilon)\otimes_{\pi^{{\rm av}}_\theta} \Id= 
\chi(\epsilon P_L)-(\phi(\epsilon P_L))_\epsilon\,\quad\text{with}\quad L=L_\theta$$
Using 
Theorem \ref{theo:functional-calculus} we thus can write
$$ \tau^\nu (
\chi(\epsilon \tP)-\phi(\epsilon \tP))_\epsilon) - \tau^\nu_{\maF^\sqcup} 
(\chi(\epsilon P)-\phi(\epsilon P))_\epsilon)=
\tau^\nu_{{\rm reg}} ( \maP_{\chi,\psi}^\epsilon) - \tau^\nu_{{\rm av}} 
(\maP_{\chi,\psi}^\epsilon)$$
where we have omitted the isomorphism $\chi_m^{-1}:\maK_{\maA_m}(\maH_m)\to \maB^H_m$.
Taking $\epsilon$ small enough and proceeding precisely as in the proof of 
Proposition \ref{prop:index-equal} we see that the right hand side is equal to
zero for $\epsilon$ small enough (it is in this last step
that we use the fact that  $\maP_{\chi,\psi}^\epsilon$  is given by an $\epsilon$-localized 
smoothing kernel).
Finally, the terms in the second summand of \eqref{sum} are individually
zero since they are trace class elements given by longitudinally smooth kernels which restrict
to zero on the diagonal. 
Summarizing: $w^\nu_{{\rm reg}}(\gamma^\epsilon_2)-w^\nu_{{\rm av}}(\gamma^\epsilon_2)=0$
for $\epsilon$ small enough.

We are left with the task
of proving that $\gamma^\epsilon_1$ has  well defined determinants and that
\begin{equation}\label{limit-gamma-1}
\lim_{\epsilon\to 0} w^\nu_{{\rm reg}}(\gamma^\epsilon_1)-w^\nu_{{\rm av}}(\gamma^\epsilon_1)=0\,.
\end{equation}
To this end we begin by writing explicitly the left hand side:
\begin{equation}\label{make-sense-up}
w^\nu_{{\rm reg}}(\gamma^\epsilon_1)=\frac{1}{2\pi i}\int_0^1 \tau^\nu \left (
( -\exp(-i\pi\chi(\epsilon\tB_t)))\frac{d}{dt} (-\exp(i\pi\chi(\epsilon\tB_t))) \right) dt
\end{equation}
\begin{equation}\label{make-sense-down}
w^\nu_{{\rm av}}(\gamma^\epsilon_1)=\frac{1}{2\pi i}\int_0^1 \tau^\nu_{\maF} \left (
( -\exp(-i\pi\chi(\epsilon B_t)))\frac{d}{dt} (-\exp(i\pi\chi(\epsilon B_t))) \right) dt
\end{equation}
provided the right hand sides make sense. To see why the last statement is true,
we begin by making a general comment  on the traces we are using.
Remember that the two paths of operators $\tB_s$ and $B_s$, $s\in (0,1)$,
are defined on foliated bundles
that might have as leaves manifolds with cylindrical ends.
We define the two relevant von Neumann algebras in the obvious way and we
define the two traces $\tau^\nu$ and $\tau^\nu_{\maF}$ as we did in Subsection
\ref{subsec:traces}. Needless to say, an arbitrary  smoothing operator will not be
trace class on such a foliation, since its Schwartz kernel might not be integrable
in the cylindrical direction. (This is the typical situation for the heat kernel associated
to a Dirac operator which restrict to
a $\RR^+$-invariant operator $\frac{d}{dt} + D_\pa$ along the cylindrical ends.)
We now write
$$\exp(i\pi z)= h(\pi^2 (1-z^2))+ (i\pi z) g(\pi^2 (1-z^2))$$
with $h$ and $g$ entire. Recall that $\chi$ is of controlled type; we shall now see that
this implies that  $1-\chi^2(\tB_t)$
is $\tau^\nu$ trace class and $1-\chi^2(B_t)$ is  $\tau^\nu_\maF$ trace class; moreover
these operators
 are given by longitudinally smooth kernels that are supported within
a uniform $(R=1)$-neighbourhood of the diagonal.
These statements
are clear when $(\tM,\Lambda\to \tM\times T)\sim  (\tM',\Lambda' \to \tM' \times T)$ through
a bundle modification or a direct sum of vector bundles
(indeed, from our discussion of the bundle modification relation in the proof of
Theorem \ref{theo:equivalent-cycles}, it is clear that in this case we remain within the category of foliations
of compact manifolds without boundary and it suffices to apply
\cite{roe-foliation} for the latter property
and \cite{FackKosaki} for the first).
If $(\tM,\Lambda\to \tM\times T)\sim  (\tM',\Lambda' \to \tM' \times T)$ through a bordism, then we use the fact that $\tB_{\theta,t}$ and $(B_t)_L$
are again of bounded propagation speed (this is needed  in order to make claims about
their Schwartz kernel) and restrict to  harmonic oscillators along the cylinders
of the relevant manifolds with cylindrical ends (this is needed in order to make claims
about the trace class property). For the trace class property
we also make use of the results in  \cite{FackKosaki}, proceeding as in \cite{keswani3} but using
singular numbers instead on eigenvalues.\\
Using $\exp(i\pi z)= h(\pi^2 (1-z^2))+ (i\pi z) g(\pi^2 (1-z^2))$
we can then conclude, as in \cite{Kes1} Lemma 4.1.7,
 that
 $$\sigma^{{\rm reg}}(\gamma^\epsilon_1 (t))\equiv -\exp(i\pi\chi(\epsilon\tB_t))\quad \text{and}\quad  \sigma^{{\rm av}}(\gamma^\epsilon_1(t)) \equiv -\exp(i\pi\chi(\epsilon B_t))\,,\quad t\in [0,1]$$ are
 piecewise continuosly differentiable in the $L^1$ norm and that  they both have a well defined
 determinant, as we had claimed (notice that in the proof of Lemma 4.1.7 in \cite{Kes1}
 only the controlled property of $\chi$ is used). \\
 Having justified \eqref{make-sense-up} and \eqref{make-sense-down}, we next  make the following\\
{\bf Claim:} {\it there exists  polynomials $p_1, p_2$ such that, uniformly in }$s\in [0,1]$,
\begin{equation}\label{st-estimate2}
||\chi(\tB_s) -  \chi (\epsilon\tB_s)||_1 < p_1(\frac{1}{\epsilon})\,,\quad || \chi(B_s) -  \chi (\epsilon B_s)||_1 < p_2(\frac{1}{\epsilon})
\end{equation}
Assume the Claim; then using the inequality
$$|| AB ||_1 \leq || A ||_1 || B ||_\infty\,,\quad A\in L^1(\maM,\tau)\cap \maM\,,\quad B\in \maM$$
which is valid in any Von Neumann algebra $\maM$ endowed with a faithful normal trace $\tau$,
one can show, proceeding exactly as in Lemma 4.2.8 of
\cite{Kes1},  that there exist polynomials $q_1$ and $q_2$ such that, uniformly in $s\in [0,1]$,
\begin{equation}\label{st-estimate1}
|| \chi^2 (\epsilon\tB_s) - \Id||_1 < q_1(\frac{1}{\epsilon})\,,\quad || \chi^2 (\epsilon B_s) - \Id||_1 < q_2(\frac{1}{\epsilon})
\end{equation}
We first end the proof of \eqref{limit-gamma-1} using \eqref{st-estimate1}.\\
For any entire function $f(z)=\sum_{n=0}^\infty a_n z^n$
we define $[f(z)]_N:=  \sum_{n=0}^N a_n z^n$.
Consider the entire function $h$ in the decomposition $\exp(i\pi z)= h(\pi^2 (1-z^2))+ (i\pi z) g(\pi^2 (1-z^2))$.
Proceeding as in Lemma 4.2.6 in \cite{Kes1} we show using  the first
inequality in \eqref{st-estimate1}
that for each $\alpha>0$ there exists an $\epsilon>0$ and an integer $N_\epsilon$
such that
\begin{itemize}
\item $|| h(\pi^2(\Id-\chi^2(\epsilon\tB_s))- [h( \pi^2(\Id-\chi^2(\epsilon\tB_s) )]_{N_\epsilon}||_1< \alpha$
\item $[h( \pi^2(\Id-\chi^2(\epsilon\tB_s))]_{N_\epsilon}$ is of propagation less than 1
\end{itemize}
Remark here that $N_\epsilon$ is in fact fixed by $\epsilon$ 
and, with our conventions, can be set to be equal to the integral part of $1/\epsilon$. Thus the left hand side
of the above inequality can be thought of as a positive function of $\epsilon$, converging to
0 when $\epsilon\downarrow 0$. 
A similar statement can be made for the derivative of  $h(\epsilon\tB_s)$ with respect to $s$. Applying the same
reasoning to the second summand in the decomposition  $\exp(i\pi z)= h(\pi^2 (1-z^2))+ (i\pi z) 
g(\pi^2 (1-z^2))$ we conclude as in \cite{Kes1}
Lemma 4.2.10, that for each $\alpha>0$ there exists an $\epsilon>0$ and an integer $N_\epsilon$
such that
\begin{eqnarray}\label{st-estimate3}
& | \int_0^1 \tau^\nu \left (
( -\exp(-i\pi\chi(\epsilon\tB_t)))\frac{d}{dt} (-\exp(i\pi\chi(\epsilon\tB_t))) \right) dt -\\
&\int_0^1 \tau^\nu \left (
([ -\exp(-i\pi\chi(\epsilon\tB_t))]_{N_\epsilon})\frac{d}{dt} ([-\exp(i\pi\chi(\epsilon\tB_t))]_{N_\epsilon}) \right) dt |< \alpha
\end{eqnarray}
Similarly, using the second inequality in the Claim and the second inequality in
\eqref{st-estimate1}, we can prove that
for each $\alpha>0$ there exists a $\delta>0$ and an integer $N_{\delta}$
such that
\begin{eqnarray}\label{st-estimate4}
& | \int_0^1 \tau^\nu_{\maF} \left (
( -\exp(-i\pi\chi(\delta B_t)))\frac{d}{dt} (-\exp(i\pi\chi(\delta B_t))) \right) dt -\\
&\int_0^1 \tau^\nu_{\maF} \left (
([ -\exp(-i\pi\chi(\delta B_t))]_{N_\delta})\frac{d}{dt} ([-\exp(i\pi\chi(\delta B_t))]_{N_\delta}) \right) dt |< \alpha
\end{eqnarray}
Since the left hand sides of the inequalities \eqref{st-estimate3},  \eqref{st-estimate4} can be thought of as
positive functions of $\epsilon$ and $\delta$ converging to 0 as  $\epsilon\downarrow 0$ and
$\delta\downarrow 0$, it is clear that we can ensure the existence of a common value, say $\eta$ and $N_\eta$,
for which both inequalities are satisfied.
Consider again the difference $|w^\nu_{{\rm reg}} (\gamma_1^\epsilon) - w^\nu_{{\rm av}} (\gamma_1^\epsilon) |$ that we rewrite as
$|A_\epsilon+B_\epsilon+C_\epsilon|$ with
$$A_\epsilon:= w^\nu_{{\rm reg}} \gamma_1^\epsilon -
\int_0^1 \tau^\nu \left (
([ -\exp(-i\pi\chi(\epsilon\tB_t))]_{N_\epsilon})\frac{d}{dt} ([-\exp(i\pi\chi(\epsilon\tB_t))]_{N_\epsilon}) \right) dt $$
\begin{eqnarray*}B_\epsilon&:= &  \int_0^1 \tau^\nu \left (
([ -\exp(-i\pi\chi(\epsilon\tB_t))]_{N_\epsilon})\frac{d}{dt} ([-\exp(i\pi\chi(\epsilon\tB_t))]_{N_\epsilon}) \right) dt \\
& - &\int_0^1 \tau^\nu_{\maF} \left (
([ -\exp(-i\pi\chi(\epsilon B_t))]_{N_\epsilon})\frac{d}{dt} ([-\exp(i\pi\chi(\epsilon B_t))]_{N_\epsilon}) \right) dt
\end{eqnarray*}
$$ C_\epsilon:= \int_0^1 \tau^\nu_{\maF} \left (
([ -\exp(-i\pi\chi(\epsilon B_t))]_{N_\epsilon})\frac{d}{dt} ([-\exp(i\pi\chi(\epsilon B_t))]_{N_\epsilon}) \right) dt -  w^\nu_{{\rm av}} \gamma_1^\epsilon
$$
Obviously $|A_\epsilon+B_\epsilon+C_\epsilon|\leq |A_\epsilon |+|B_\epsilon|+|C_\epsilon|$.
We know that for each $\alpha>0$ there exists a common $\epsilon$ such that $|A_\epsilon|<\alpha$
and $|C_\epsilon|<\alpha$; on the other hand, using the
fact that $ [-\exp(i\pi\chi(\epsilon B_t))]_{N_\epsilon}$ is of  propagation equal to 1, we can prove,
proceeding as in Proposition \ref{prop:index-equal},
that there exists $\epsilon$ such that $B_\epsilon=0$.
Thus we have proved \eqref{limit-gamma-1} modulo the Claim.\\
We shall prove the Claim for the particular case of the cylinder; let us prove,
for example, the first inequality. Consider
$$\tB_t= \begin{pmatrix} 0&\tD\cr
\tD&0 \cr \end{pmatrix} +\frac{1}{t}   \begin{pmatrix} x&\partial_x\cr
-\partial_x& -x \cr \end{pmatrix} \quad\text{with}\quad t\in (0,1].
$$
Observe that the left hand side of the first inequality in the Claim is nothing
but the last term in inequality (4.3) in \cite{keswani3}.
Proceed now exactly as in the part of the proof of Lemma 4.7 in \cite{keswani3}
that begins with the inequality (4.3). It is not difficult to realize that the proof given there, i.e. the proof of the first inequality in the Claim, can be easily
adapted to our Von Neumann context using
singular numbers and the results of Fack and Kosaki. More precisely, the operator $\tB_t^2$ can be diagonalized with respect to the eigenfunctions of the operator $X^2$, with
$$X=   \begin{pmatrix} x&\partial_x\cr
-\partial_x& -x \cr \end{pmatrix}.$$ The functional calculus of $\tB_t^2$ is then reuced to the functional calculus of the operator $$
\tD' + \frac{1}{t} \lambda_k, \quad\text{with}\quad
\tD'=\begin{pmatrix} \tD^2 & 0 \cr
0 & \tD^2 \cr \end{pmatrix}
$$
and where  $\lambda_k$ is an eigenvalue of $X^2$ as in \cite{keswani3}  . Now the $L^1$-norm $\|\chi(\tB_s) -  \chi (\epsilon\tB_s)\|_1$ is given by the sum over $k$ of $L^1$-norms in corresponding von Neumann algebras of the operator  $(\chi -  \chi_\epsilon) (\tD' + \lambda_k)$. By \cite{FackKosaki}, this $L^1$-norm is expressed in terms of the singular numbers $\mu^\nu_s(\tD' + \lambda_k)= \mu^\nu_s (\tD') + \lambda_k$. This reduces the estimate to the similar estimate of the singular numbers of $\tD'$ exactly as in \cite{keswani3}. This latter being a leafwise elliptic second order differential operator, we can use the estimate $\mu_s(\tD') \sim s^{2/p}$ where $p$ is the dimension of the leaves, see for instance \cite{BenameurFack}. Hence the proof of the first inequality of the claim is completed following the steps of \cite{keswani3}. 
 The proof of the second inequality
in the Claim is similar. Thus we have proved the Claim and thus \eqref{limit-gamma-1}
in the case of cylinders. For manifolds with cylindrical ends we split the relevant
statements into  purely cylindrical ones and  statements on  compact foliated bundles,
as in \cite{keswani3}. We end here our explanation of the
 proof of  \eqref{limit-gamma-1}.  The proof of Theorem \ref{homotopy-invariance}
 is now complete.

\end{proof}

{\small
\bibliographystyle{plain}
\bibliography{ierofb-arxiv}

\begin{thebibliography}{10}

\bibitem{Atiyah-covering}
M.~F. Atiyah.
\newblock Elliptic operators, discrete groups and von {N}eumann algebras.
\newblock In {\em Colloque ``Analyse et Topologie'' en l'Honneur de Henri
  Cartan (Orsay, 1974)}, pages 43--72. Ast\'erisque, No. 32--33. Soc. Math.
  France, Paris, 1976.

\bibitem{APS1}
M.~F. Atiyah, V.~K. Patodi, and I.~M. Singer.
\newblock Spectral asymmetry and {R}iemannian geometry. {I}.
\newblock {\em Math. Proc. Cambridge Philos. Soc.}, 77:43--69, 1975.

\bibitem{APS2}
M.~F. Atiyah, V.~K. Patodi, and I.~M. Singer.
\newblock Spectral asymmetry and {R}iemannian geometry. {II}.
\newblock {\em Math. Proc. Cambridge Philos. Soc.}, 78(3):405--432, 1975.

\bibitem{BC-enseignement}
Paul Baum and Alain Connes.
\newblock Geometric {$K$}-theory for {L}ie groups and foliations.
\newblock {\em Enseign. Math. (2)}, 46(1-2):3--42, 2000.

\bibitem{BCH}
Paul Baum, Alain Connes, and Nigel Higson.
\newblock Classifying space for proper actions and {$K$}-theory of group {$C\sp
  \ast$}-algebras.
\newblock In {\em $C\sp \ast$-algebras: 1943--1993 (San Antonio, TX, 1993)},
  volume 167 of {\em Contemp. Math.}, pages 240--291. Amer. Math. Soc.,
  Providence, RI, 1994.

\bibitem{BHS}
Paul Baum, Nigel Higson, and Thomas Schick.
\newblock On the equivalence of geometric and analytic {$K$}-homology.
\newblock {\em Pure Appl. Math. Q.}, 3(1):1--24, 2007.

\bibitem{Moulay-triangulation}
Moulay-Tahar Benameur.
\newblock Triangulations and the stability theorem for foliations.
\newblock {\em Pacific J. Math.}, 179(2):221--239, 1997.

\bibitem{BenameurFack}
Moulay-Tahar Benameur and Thierry Fack.
\newblock Type {II} non-commutative geometry. {I}. {D}ixmier trace in von
  {N}eumann algebras.
\newblock {\em Adv. Math.}, 199(1):29--87, 2006.

\bibitem{BH3}
Moulay-Tahar Benameur and James Heitsch.
\newblock The higher harmonic signature for foliations.
\newblock preprint, submitted.

\bibitem{Benameur-Nistor-JFA}
Moulay-Tahar Benameur and Victor Nistor.
\newblock Homology of algebras of families of pseudodifferential operators.
\newblock {\em J. Funct. Anal.}, 205(1):1--36, 2003.

\bibitem{BenameurOyono}
Moulay-Tahar Benameur and Herv{\'e} Oyono-Oyono.
\newblock Index theory for quasi-crystals. {I}. {C}omputation of the gap-label
  group.
\newblock {\em J. Funct. Anal.}, 252(1):137--170, 2007.

\bibitem{Bismut-inventiones}
Jean-Michel Bismut.
\newblock The {A}tiyah-{S}inger index theorem for families of {D}irac
  operators: two heat equation proofs.
\newblock {\em Invent. Math.}, 83(1):91--151, 1985.

\bibitem{BF2}
Jean-Michel Bismut and Daniel~S. Freed.
\newblock The analysis of elliptic families. {II}. {D}irac operators, eta
  invariants, and the holonomy theorem.
\newblock {\em Comm. Math. Phys.}, 107(1):103--163, 1986.

\bibitem{Chang-W}
Stanley Chang and Shmuel Weinberger.
\newblock {On Invariants of Hirzebruch and Cheeger-Gromov}.
\newblock {\em Geom. Topol.}, 7:311--319, 2003.

\bibitem{Cheeger-Gromov-JDG}
Jeff Cheeger and Mikhael Gromov.
\newblock Bounds on the von {N}eumann dimension of {$L\sp 2$}-cohomology and
  the {G}auss-{B}onnet theorem for open manifolds.
\newblock {\em J. Differential Geom.}, 21(1):1--34, 1985.

\bibitem{ConnesSkandalis}
A.~Connes and G.~Skandalis.
\newblock The longitudinal index theorem for foliations.
\newblock {\em Publ. Res. Inst. Math. Sci.}, 20(6):1139--1183, 1984.

\bibitem{Co-LNM}
Alain Connes.
\newblock Sur la th\'eorie non commutative de l'int\'egration.
\newblock In {\em Alg\`ebres d'op\'erateurs (S\'em., Les Plans-sur-Bex, 1978)},
  volume 725 of {\em Lecture Notes in Math.}, pages 19--143. Springer, Berlin,
  1979.

\bibitem{Co}
Alain Connes.
\newblock {\em Noncommutative geometry}.
\newblock Academic Press Inc., San Diego, CA, 1994.

\bibitem{dlH-Ska}
P.~de~la Harpe and G.~Skandalis.
\newblock D\'eterminant associ\'e \`a une trace sur une alg\'ebre de {B}anach.
\newblock {\em Ann. Inst. Fourier (Grenoble)}, 34(1):241--260, 1984.

\bibitem{Dixmier2}
Jacques Dixmier.
\newblock {\em Les alg\`ebres d'op\'erateurs dans l'espace hilbertien
  (alg\`ebres de von {N}eumann)}.
\newblock Les Grands Classiques Gauthier-Villars. [Gauthier-Villars Great
  Classics]. \'Editions Jacques Gabay, Paris, 1996.
\newblock Reprint of the second (1969) edition.

\bibitem{Dixmier1}
Jacques Dixmier.
\newblock {\em Les {$C\sp *$}-alg\`ebres et leurs repr\'esentations}.
\newblock Les Grands Classiques Gauthier-Villars. [Gauthier-Villars Great
  Classics]. \'Editions Jacques Gabay, Paris, 1996.
\newblock Reprint of the second (1969) edition.

\bibitem{FackKosaki}
Thierry Fack and Hideki Kosaki.
\newblock Generalized {$s$}-numbers of {$\tau$}-measurable operators.
\newblock {\em Pacific J. Math.}, 123(2):269--300, 1986.

\bibitem{Go-Lo}
Alexander Gorokhovsky and John Lott.
\newblock Local index theory over \'etale groupoids.
\newblock {\em J. Reine Angew. Math.}, 560:151--198, 2003.

\bibitem{Hei-Laz}
James~L. Heitsch and Connor Lazarov.
\newblock A {L}efschetz theorem for foliated manifolds.
\newblock {\em Topology}, 29(2):127--162, 1990.

\bibitem{HL-betti}
James~L. Heitsch and Connor Lazarov.
\newblock Homotopy invariance of foliation {B}etti numbers.
\newblock {\em Invent. Math.}, 104(2):321--347, 1991.

\bibitem{Hilsum-Skandalis-stabilite}
Michel Hilsum and Georges Skandalis.
\newblock Morphismes {$K$}-orient\'es d'espaces de feuilles et fonctorialit\'e
  en th\'eorie de {K}asparov (d'apr\`es une conjecture d'{A}. {C}onnes).
\newblock {\em Ann. Sci. \'Ecole Norm. Sup. (4)}, 20(3):325--390, 1987.

\bibitem{HiSka}
Michel Hilsum and Georges Skandalis.
\newblock Invariance par homotopie de la signature \`a coefficients dans un
  fibr\'e presque plat.
\newblock {\em J. Reine Angew. Math.}, 423:73--99, 1992.

\bibitem{keswani3}
Navin Keswani.
\newblock Geometric {$K$}-homology and controlled paths.
\newblock {\em New York J. Math.}, 5:53--81 (electronic), 1999.

\bibitem{Kes1}
Navin Keswani.
\newblock Relative eta-invariants and {$C\sp \ast$}-algebra {$K$}-theory.
\newblock {\em Topology}, 39(5):957--983, 2000.

\bibitem{Kes2}
Navin Keswani.
\newblock Von {N}eumann eta-invariants and {$C\sp *$}-algebra {$K$}-theory.
\newblock {\em J. London Math. Soc. (2)}, 62(3):771--783, 2000.

\bibitem{LaMoNi-Documenta}
Robert Lauter, Bertrand Monthubert, and Victor Nistor.
\newblock Pseudodifferential analysis on continuous family groupoids.
\newblock {\em Doc. Math.}, 5:625--655 (electronic), 2000.

\bibitem{LPETALE}
Eric Leichtnam and Paolo Piazza.
\newblock \'{E}tale groupoids, eta invariants and index theory.
\newblock {\em J. Reine Angew. Math.}, 587:169--233, 2005.

\bibitem{Lenz-et-al}
Daniel Lenz, Norbert Peyerimhoff, and Ivan Veseli{\'c}.
\newblock Groupoids, von {N}eumann algebras and the integrated density of
  states.
\newblock {\em Math. Phys. Anal. Geom.}, 10(1):1--41, 2007.

\bibitem{Mathai-JFA}
Varghese Mathai.
\newblock Spectral flow, eta invariants, and von {N}eumann algebras.
\newblock {\em J. Funct. Anal.}, 109(2):442--456, 1992.

\bibitem{Melrose}
Richard~B. Melrose.
\newblock {\em The {A}tiyah-{P}atodi-{S}inger index theorem}, volume~4 of {\em
  Research Notes in Mathematics}.
\newblock A K Peters Ltd., Wellesley, MA, 1993.

\bibitem{MS}
Calvin~C. Moore and Claude~L. Schochet.
\newblock {\em Global analysis on foliated spaces}, volume~9 of {\em
  Mathematical Sciences Research Institute Publications}.
\newblock Cambridge University Press, New York, second edition, 2006.

\bibitem{Moriyoshi-Natsume}
H.~Moriyoshi and T.~Natsume.
\newblock The godbillon-vey cyclic cocycle and longitudinal dirac operators.
\newblock {\em Pacific J. Math.}, 172:483--539, 1996.

\bibitem{Moscovici-Wu-GAFA}
H.~Moscovici and F.-B. Wu.
\newblock Localization of topological {P}ontryagin classes via finite
  propagation speed.
\newblock {\em Geom. Funct. Anal.}, 4(1):52--92, 1994.

\bibitem{Neumann}
Walter~D. Neumann.
\newblock Signature related invariants of manifolds. {I}. {M}onodromy and
  {$\gamma $}-invariants.
\newblock {\em Topology}, 18(2):147--172, 1979.

\bibitem{NWX}
Victor Nistor, Alan Weinstein, and Ping Xu.
\newblock Pseudodifferential operators on differential groupoids.
\newblock {\em Pacific J. Math.}, 189(1):117--152, 1999.

\bibitem{Peric}
Goran Peri{\'c}.
\newblock Eta invariants of {D}irac operators on foliated manifolds.
\newblock {\em Trans. Amer. Math. Soc.}, 334(2):761--782, 1992.

\bibitem{phillips-homotopy}
John Phillips.
\newblock The holonomic imperative and the homotopy groupoid of a foliated
  manifold.
\newblock {\em Rocky Mountain J. Math.}, 17(1):151--165, 1987.

\bibitem{Pia-Sch1}
Paolo Piazza and Thomas Schick.
\newblock Bordism, rho-invariants and the {B}aum-{C}onnes conjecture.
\newblock {\em J. Noncommut. Geom.}, 1(1):27--111, 2007.

\bibitem{Pia-Sch2}
Paolo Piazza and Thomas Schick.
\newblock Groups with torsion, bordism and rho invariants.
\newblock {\em Pacific J. Math.}, 232(2):355--378, 2007.

\bibitem{Ra}
Mohan Ramachandran.
\newblock von {N}eumann index theorems for manifolds with boundary.
\newblock {\em J. Differential Geom.}, 38(2):315--349, 1993.

\bibitem{Raven}
Jeff Raven.
\newblock An equivariant bivariant chern character.
\newblock Ph. Thesis 2004, Pennsylvania State University.

\bibitem{Renault}
Jean Renault.
\newblock {\em A groupoid approach to {$C\sp{\ast} $}-algebras}, volume 793 of
  {\em Lecture Notes in Mathematics}.
\newblock Springer, Berlin, 1980.

\bibitem{roe-foliation}
John Roe.
\newblock Finite propagation speed and {C}onnes' foliation algebra.
\newblock {\em Math. Proc. Cambridge Philos. Soc.}, 102(3):459--466, 1987.

\bibitem{roe-partitioning}
John Roe.
\newblock Partitioning noncompact manifolds and the dual {T}oeplitz problem.
\newblock In {\em Operator algebras and applications, Vol.\ 1}, volume 135 of
  {\em London Math. Soc. Lecture Note Ser.}, pages 187--228. Cambridge Univ.
  Press, Cambridge, 1988.

\bibitem{roe-book}
John Roe.
\newblock {\em Elliptic operators, topology and asymptotic methods}, volume 395
  of {\em Pitman Research Notes in Mathematics Series}.
\newblock Longman, Harlow, second edition, 1998.

\bibitem{shubin}
M.~A. Shubin.
\newblock {\em Pseudodifferential operators and spectral theory}.
\newblock Springer Series in Soviet Mathematics. Springer-Verlag, Berlin, 1987.
\newblock Translated from the Russian by Stig I. Andersson.

\bibitem{taylor-pso}
Michael~E. Taylor.
\newblock {\em Pseudodifferential operators}, volume~34 of {\em Princeton
  Mathematical Series}.
\newblock Princeton University Press, Princeton, N.J., 1981.

\bibitem{Vassout-these}
Stephan Vassout.
\newblock Feuilletages et r\'esidu non commutatif longitudinal.
\newblock Ph.D Thesis, 2001, Universit\' e Paris 6.

\bibitem{Vassout-jfa}
St{\'e}phane Vassout.
\newblock Unbounded pseudodifferential calculus on {L}ie groupoids.
\newblock {\em J. Funct. Anal.}, 236(1):161--200, 2006.

\bibitem{Wei}
Shmuel Weinberger.
\newblock Homotopy invariance of {$\eta$}-invariants.
\newblock {\em Proc. Nat. Acad. Sci. U.S.A.}, 85(15):5362--5363, 1988.

\end{thebibliography}

\end{document}